\documentclass{amsart}
\usepackage[T1]{fontenc}

\usepackage{lmodern}

\usepackage{amssymb,amsmath,amsfonts,amsthm,epsfig,amscd,graphicx,tikz-cd}
\usepackage{stmaryrd,mathdots}
\usepackage{IEEEtrantools}
\usepackage[all,cmtip,poly]{xy}
\usepackage{svn}
\usepackage{mathrsfs}
 \usepackage[utf8]{inputenc}

 \usepackage[pdfusetitle]{hyperref}

\hypersetup{
  colorlinks = false,
}

\newtheorem{thm}[subsubsection]{Theorem}
\newtheorem{lemma}[subsubsection]{Lemma}
\newtheorem{lem}[subsubsection]{Lemma}

\newtheorem{cor}[subsubsection]{Corollary}

\newtheorem{prop}[subsubsection]{Proposition}

\newtheorem{defn}[subsubsection]{Definition}

\theoremstyle{remark}
\newtheorem{remark}[subsubsection]{Remark}
\newtheorem{rem}[subsubsection]{Remark}

\newtheorem{example}[subsubsection]{Example}

\newtheorem{assumption}[subsubsection]{Assumption}

\numberwithin{equation}{subsection}
\def\nummultline{\addtocounter{subsubsubsection}{1}\begin{multline}}
\def\anumequation{\addtocounter{subsection}{1}\begin{equation}}

\newif\iffinalrun
\iffinalrun
\else
\fi

\iffinalrun
  \newcommand{\need}[1]{}
  \newcommand{\mar}[1]{}
\else
  \newcommand{\need}[1]{{\tiny *** #1}}
  \newcommand{\mar}[1]{\marginpar{\raggedright\tiny #1}}
\fi

\newcommand{\A}{\AA}
\def\C{\CC}
\newcommand{\F}{\FF}

\newcommand{\Q}{\QQ}
\newcommand{\R}{\RR}
\newcommand{\Z}{\ZZ}

\newcommand{\m}{\frakm}
\newcommand{\p}{\frakp}

\renewcommand{\AA}{{\mathbb A}}

\newcommand{\CC}{{\mathbb C}}

\newcommand{\FF}{{\mathbb F}}
\newcommand{\GG}{{\mathbb G}}

\newcommand{\QQ}{{\mathbb Q}}
\newcommand{\RR}{{\mathbb R}}
\renewcommand{\SS}{{\mathbb S}}
\newcommand{\TT}{{\mathbb T}}

\newcommand{\ZZ}{{\mathbb Z}}

\renewcommand{\bf}{\ensuremath{\mathbf{f}}}

\newcommand{\cA}{{\mathcal A}}

\newcommand{\cD}{{\mathcal D}}

\newcommand{\cF}{{\mathcal F}}
\newcommand{\cG}{{\mathcal G}}
\newcommand{\cH}{{\mathcal H}}
\newcommand{\cI}{{\mathcal I}}

\newcommand{\cO}{{\mathcal O}}

\newcommand{\cS}{{\mathcal S}}

\newcommand{\cU}{{\mathcal U}}
\newcommand{\cV}{{\mathcal V}}

\newcommand{\cX}{{\mathcal X}}
\newcommand{\cY}{{\mathcal Y}}
\newcommand{\cZ}{{\mathcal Z}}

\newcommand{\frakm}{\mathfrak{m}}

\newcommand{\frakp}{\mathfrak{p}}

\newcommand{\frakX}{\mathfrak{X}}

\newcommand{\Zp}{\Z_p}

\newcommand{\Qp}{\Q_p}

\newcommand{\DXG}[1][m]{\partial X^G_{K(m)}}
\newcommand{\XGP}[1][m]{X^{G,P}_{K_{#1}}}
\newcommand{\YGP}[1][m]{Y^{G,P}_{K_{#1}}}
\newcommand{\XP}[1][m]{X^{P}_{K_{#1}}}

\DeclareMathOperator{\Ad}{Ad}
\DeclareMathOperator{\Aut}{Aut}

\DeclareMathOperator{\End}{End}

\DeclareMathOperator{\Flr}{Fl}
\DeclareMathOperator{\Gal}{Gal}
\DeclareMathOperator{\Gen}{Gen}
\DeclareMathOperator{\Map}{Map}
\DeclareMathOperator{\GL}{GL}

\DeclareMathOperator{\Hom}{Hom}

\DeclareMathOperator{\ord}{ord}

\DeclareMathOperator{\rank}{rank}

\DeclareMathOperator{\Spa}{Spa}
\DeclareMathOperator{\Spec}{Spec}

\DeclareMathOperator{\Spf}{Spf}

\newcommand{\Frob}{\mathrm{Frob}}

\newcommand{\HT}{\mathrm{HT}}

\newcommand{\id}{\mathrm{id}}

\newcommand{\red}{\mathrm{red}}

\newcommand{\forget}{\mathrm{forget}}
\newcommand{\descent}{\mathrm{descent}}
\newcommand{\Can}{\mathrm{Can}}

\newcommand{\et}{\mathrm{\acute{e}t}}
\newcommand{\proet}{\mathrm{pro\acute{e}t}}

\newcommand*{\invlim}{\varprojlim}

\newcommand{\toisom}{\xrightarrow{\sim}}

\newcommand{\mbf}{\mathbf}

\newcommand{\Fl}{\mathscr{F}\!\ell}

\newcommand{\cycl}{\mathrm{cycl}}
\newcommand{\Ha}{\mathrm{Ha}}

\newcommand{\ad}{\mathrm{ad}}

\newcommand{\tGam}{\widetilde{\Gamma}}
\newcommand{\tG}{\widetilde{G}}
\newcommand{\tK}{{\widetilde{K}}}
\newcommand{\tH}{\widetilde{H}}
\newcommand{\tN}{\widetilde{N}}
\newcommand{\tP}{\widetilde{P}}
\newcommand{\tX}{\widetilde{X}}
\newcommand{\tcX}{\widetilde{\mathcal{X}}}
\newcommand{\ocX}{\overline{\mathcal{X}}}
\newcommand{\nc}{\mathrm{nc}}
\newcommand{\Perf}{\mathrm{Perf}}
\newcommand{\Pfd}{\mathrm{Pfd}}

\newcommand{\ol}{\overline}
\newcommand{\ul}{\underline}
\newcommand{\wt}{\widetilde}
\newcommand{\mf}{\mathfrak}
\newcommand{\qp}{\mathrm{qpro\acute{e}t}}
\newcommand{\sub}{\subseteq}

\DeclareMathOperator\Lie{Lie}

\usepackage{colonequals}
\newcommand{\defeq}{\colonequals}

\newcommand{\tensor}{\otimes}
\newcommand{\suchthat}{\;\ifnum\currentgrouptype=16 \middle\fi|\;}

\begin{document}
\title{Shimura varieties at level $\Gamma_1(p^\infty)$ and Galois representations}
\author[A.~Caraiani, D.~Gulotta, C.~Hsu, C.~Johansson, L.~Mocz, E.~Reinecke, S.~Shih]{Ana Caraiani, Daniel R. Gulotta, Chi-Yun Hsu, Christian Johansson, Lucia Mocz, Emanuel Reinecke, Sheng-Chi Shih}
\address{Department of
  Mathematics, Imperial College London,
  London SW7 2AZ, UK}
\email{caraiani.ana@gmail.com}
\address{Department of Mathematics, Columbia University, 2990 Broadway, New York, NY 10027, USA}
\curraddr{Mathematical Institute, University of Oxford, Oxford OX2 6GG, UK}
\email{Daniel.Gulotta@maths.ox.ac.uk}
\address{Department of Mathematics, Harvard University, 1 Oxford Street, Cambridge, MA 02138, USA}
\email{chiyun@math.harvard.edu}
\address{Department of Mathematical Sciences, Chalmers University of Technology and the University of Gothenburg,
  SE-412 96, Sweden}
\email{chrjohv@chalmers.se}
\address{Mathematisches Institut der Universit\"at Bonn, Endenicher Allee 60, 53115 Bonn, Germany}
\email{lmocz@math.uni-bonn.de}
\address{Department of Mathematics, University of Michigan, Ann Arbor, MI 48109, USA}
\email{reinec@umich.edu}
\address{University of Lille, CNRS, UMR 8524 -- Laboratoire Paul Painlev\'e, 59000 Lille, France.}
\email{sheng-chi.shih@univ-lille.fr}

\thanks{A.C.\ was supported in part by a Royal Society University Research Fellowship and by ERC Starting Grant 804176. C.H.\ is partially supported by a Government Scholarship to Study Abroad from Taiwan. C.J.\ was supported in part by the Herchel Smith Foundation. E.R.\ was partially supported by NSF Grant No.~DMS-1501461.}

\maketitle
\begin{abstract}
We show that the compactly supported cohomology of certain $\mathrm{U}(n,n)$ or $\mathrm{Sp}(2n)$-Shimura varieties with $\Gamma_1(p^\infty)$-level vanishes above the middle degree. The only assumption is that we work over a CM field $F$ in which the prime $p$ splits completely. We also give an application to Galois representations for torsion in the cohomology of the locally symmetric spaces for $\mathrm{GL}_n/F$. More precisely, we use the vanishing result for Shimura varieties to eliminate the nilpotent ideal in the construction of these Galois representations. This strengthens recent results of Scholze~\cite{scholze-galois} and Newton-Thorne~\cite{newton-thorne}. 
\end{abstract}

\tableofcontents

\section{Introduction}

\subsection{Statement of results} The first goal of this paper is to study the cohomology of certain Shimura varieties with infinite level at $p$ and prove a vanishing theorem for their compactly supported cohomology above the middle degree. The second goal of this paper is to give an application to Galois representations for torsion in the cohomology of locally symmetric spaces. 

The first question is motivated by a deep conjecture of Calegari--Emerton on the completed (co)homology of general locally symmetric spaces~\cite[Conjecture 1.5]{calegari-emerton}, which in the case of tori is equivalent to the Leopoldt conjecture, cf.~\cite{hill}. This conjecture is motivated by the Langlands reciprocity conjecture and is expected to play an important role in the development of the classical and $p$-adic Langlands programs; see for example~\cite{emerton-icm} and~\cite{gee-newton}. When the locally symmetric spaces do not admit an algebraic structure, the Calegari--Emerton conjecture seems out of reach at the moment, outside the case of low-dimensional examples such as arithmetic hyperbolic $3$-manifolds. For Shimura varieties of Hodge type, Scholze~\cite{scholze-galois} recently made significant progress towards the Calegari--Emerton conjectures. 

Let $(G,X)$ be a Shimura datum of Hodge type.\footnote{To avoid complications in the introduction, we allow the non-standard setup of \cite[\S 4]{scholze-galois} as well as the usual setup, and will not make any explicit distinction between Shimura data and connected Shimura data.}
Let $K^p\subset G(\A^p_f)$ be a sufficiently small compact open subgroup which we fix; this will denote the tame level and we drop it from the notation for simplicity. Choose an integral model of $G$ over $\Z_p$. For $m\in \Z_{\geq 1}$, let 
\[
\Gamma(p^m)\defeq \left\{\gamma\in G(\Z_p)\mid \gamma\equiv \mathrm{Id}\mod{p^m}\right\}%
\]
and let $X_{\Gamma(p^m)}$ denote the corresponding Shimura variety of tame level $K^p$ and level $\Gamma(p^m)$ at $p$. Below, we consider this Shimura variety as a complex analytic space of dimension $d$ and we let $H^i_c$ denote the compactly supported singular cohomology. 

\begin{thm}[Corollary 4.2.2 of~\cite{scholze-galois}]\label{scholze's thm} Let $r\in \Z_{\geq 1}$. If $i>d$, then 
\[
\varinjlim_{m} H^i_c(X_{\Gamma(p^m)}, \Z/p^r\Z) = 0. 
\] 
\end{thm}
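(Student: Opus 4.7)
The plan is to follow Scholze's strategy: pass to infinite level where the Shimura variety becomes perfectoid, transport the cohomology to the Hodge--Tate flag variety via the period map, and bound the result by a primitive-comparison argument that reduces it to coherent cohomology on a $d$-dimensional base.

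First, by Scholze's theorem on the perfectoidness of Shimura varieties of Hodge type at infinite level, the minimal compactification $X^*_\infty \defeq X^*_{\Gamma(p^\infty)}$ exists as a perfectoid space over $C \defeq \Cp$, equipped with compatible maps to each finite-level minimal compactification $X^*_{\Gamma(p^m)}$. Identifying the compactly supported cohomology of $X_{\Gamma(p^m)}$ with the cohomology of the extension-by-zero sheaf on $X^*_{\Gamma(p^m)}$, and passing to the limit, one obtains
\[
\varinjlim_m H^i_c(X_{\Gamma(p^m)}, \Z/p^r\Z) \isom H^i(X^*_\infty, j_!(\Z/p^r\Z)),
\]
where $j \colon X_\infty \into X^*_\infty$ denotes the open immersion of the perfectoid Shimura variety into its minimal compactification.

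Next, apply Scholze's Hodge--Tate period map $\pi_{\HT} \colon X^*_\infty \to \Fl_{G,\mu}$, a $G(\Q_p)$-equivariant map to a proper rigid analytic variety of dimension $d$. The crucial structural input is that there is a basis of affinoid open subsets $U \subset \Fl_{G,\mu}$ whose preimages $\pi_{\HT}^{-1}(U) \subset X^*_\infty$ are affinoid perfectoid. The Leray spectral sequence then reads
\[
E_2^{p,q} = H^p\bigl(\Fl_{G,\mu},\, R^q\pi_{\HT,*}(j_!\Z/p^r\Z)\bigr) \Longrightarrow H^{p+q}\bigl(X^*_\infty,\, j_!(\Z/p^r\Z)\bigr).
\]

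Finally, bound the $E_2$-page via Scholze's primitive comparison in the perfectoid setting. On the affinoid perfectoid opens $\pi_{\HT}^{-1}(U)$, \'etale cohomology with $\Z/p^r\Z$-coefficients is computed, up to almost isomorphism, by the coherent cohomology of $\cO^+/p^r$. Consequently, the sheaves $R^q\pi_{\HT,*}(j_!\Z/p^r\Z)$ are controlled by coherent cohomology on $\Fl_{G,\mu}$, whose dimension is $d$, so total degrees above $d$ on the $E_2$-page vanish. This forces the desired vanishing $H^i(X^*_\infty, j_!(\Z/p^r\Z)) = 0$ for $i > d$.

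The main obstacle is the last step: one has to run primitive comparison for the sheaf $j_!\Z/p^r\Z$ even on affinoid perfectoid opens that meet the boundary $X^*_\infty \setminus X_\infty$. Managing this boundary behavior, and verifying that the coherent-cohomological dimension bound on $\Fl_{G,\mu}$ survives the $j_!$-extension, requires Scholze's detailed analysis of the Hodge--Tate period map near the boundary of the minimal compactification, combined with almost-mathematics techniques.
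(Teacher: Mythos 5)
There is a genuine gap in the final step, and it traces to where you put the target of the Leray spectral sequence and where you apply primitive comparison. You write the Leray spectral sequence with $E_2^{p,q} = H^p(\Fl_{G,\mu}, R^q\pi_{\HT,*}j_!\Z/p^r\Z)$ and then assert ``total degrees above $d$ vanish'' because the flag variety is $d$-dimensional. But $R^q\pi_{\HT,*}j_!\Z/p^r\Z$ is an \'etale sheaf on a $d$-dimensional rigid space, not a coherent sheaf, so the relevant bound on $H^p$ would be $p\le 2d$, not $p\le d$. You cannot recover a bound of $d$ from coherent cohomological dimension, because the sheaves in question are not coherent; invoking primitive comparison locally over affinoid opens does not turn the higher direct image sheaves on $\Fl_{G,\mu}$ into coherent sheaves.

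What Scholze's argument (sketched in \S\ref{strategy}) actually does is different in two places. First, primitive comparison is applied \emph{globally, before} taking Leray: one reduces $H^i_{\et}(\cX^*_{\Gamma(p^\infty)},j_!\F_p)$ to $H^i_{\et}(\cX^*_{\Gamma(p^\infty)},j_!(\cO^+/p)^a)$ using that $\cX^*_{\Gamma(p^\infty)}$ is a proper (perfectoid) space. Second and crucially, one uses the Hodge--Tate period map as a morphism of sites
\[
\pi_{\HT}\colon (\cX^*_{\Gamma(p^\infty)})_{\et} \longrightarrow |\Fl_{G,\mu}|,
\]
i.e.\ the target is the underlying \emph{topological (spectral) space}, not the rigid-analytic or \'etale site of $\Fl_{G,\mu}$. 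The cohomological dimension of this spectral space is $\le d$ for \emph{all} abelian sheaves by Scheiderer's theorem; this is the dimension bound you need, and it has nothing to do with coherent cohomology. With that in place, the remaining point is to show $R^q\pi_{\HT,*}j_!(\cO^+/p)^a = 0$ for $q>0$, which one checks on stalks: fibers over rank-one points of $|\Fl_{G,\mu}|$ are affinoid perfectoid with strongly Zariski closed boundary, and for such spaces $H^q_{\et}(-,j_!(\cO^+/p)^a)=0$ for $q>0$ (cf.\ Proposition~\ref{strongly Zariski closed vanishing}). Your proposal never isolates this vanishing of higher direct images; instead it tries to absorb all degrees into a single coherent-dimension bound on the base, which does not work. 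To fix the proof you need to (i) move primitive comparison to the top of the argument, (ii) replace $\Fl_{G,\mu}$ by $|\Fl_{G,\mu}|$ and cite Scheiderer for the base-dimension bound, and (iii) establish the fiberwise vanishing $R^q\pi_{\HT,*}j_!(\cO^+/p)^a=0$ for $q>0$ using the affinoid-perfectoid-with-strongly-Zariski-closed-boundary structure of the fibers.
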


\noindent See~\cite[Cor. 4.2.3]{scholze-galois} for the direct connection to the Calegari--Emerton conjecture, which is phrased in terms of completed homology and completed Borel--Moore homology. We explain this further in \S~\ref{strategy}; for now, let us mention that Scholze's theory of perfectoid spaces and his $p$-adic Hodge theory for rigid analytic varieties both play a crucial role in the proof of Theorem~\ref{scholze's thm}. 

In this paper, we prove a stronger version of Scholze's result, in the particular case of $\mathrm{U}(n,n)$- and $\mathrm{Sp}_{2n}$-Shimura varieties. More precisely, let $F$ be a CM field in which the prime $p$ splits completely. Let $G/\Q$ be the group defined in \S~\ref{Borel-Serre}: it is either a quasi-split unitary group defined with respect to the extension $F/F^+$, when $F$ is an imaginary CM field with totally real subfield $F^+$, or the Weil restriction of scalars from $F$ to $\Q$ of a symplectic group, when $F$ is a totally real field. After choosing an integral model for $G$, we 
let $K^p\subset G(\A^p_f)$ be a sufficiently small compact open subgroup,
and we set
\[
\Gamma_1(p^m)\defeq \left\{\gamma=(\gamma_{\p})_{\p}\in G(\Z_p)\mid \gamma_{\p}\equiv \left(\begin{array}{cc} I_n  & * \\ 0 & I_n \end{array}\right)\pmod{p^m}\right\}
\] 
for $m\in \Z_{\geq 1}$. (Here, $\frakp$ runs through primes of $F$ above $p$ induced by a fixed CM type $\Psi$ of $F$ when $F$ is imaginary CM, and all primes of $F$ above $p$ when $F$ is totally real.) 
Let $X_{K_p}$ be the Shimura variety for $G$ with tame level $K^p$ and level $K_p$ at $p$. We prove the following result. 

\begin{thm}[Theorem~\ref{main thm non-similitude}]\label{unified main theorem} 
Let $r\in \Z_{\geq 1}$. If $i>d$ and $K_{p,m} \sub \Gamma_1(p^m)$ is compact open for all $m$, then 
\[
 \varinjlim_{m} H^{i}_{c}(X_{K^p K_{p,m}}(\C),\Z/p^r)=0.
 \]
\end{thm}

\noindent The key new idea we use to prove Theorem~\ref{unified main theorem} is to exploit the \emph{Bruhat stratification} on the Hodge--Tate period domain associated to these Shimura varieties. As far as we are aware, this is the first instance where this stratification is considered in the context of Shimura varieties. We note that our result can be used to recover Scholze's result at level $\Gamma(p^\infty)$, by letting $K_{p,m}:=\Gamma(p^m)$. 
On the other hand, by going down to level $\Gamma_1(p^\infty)$, our result goes beyond what Calegari--Emerton have conjectured.   

\begin{remark} The question of vanishing of cohomology of Shimura varieties at \emph{finite level} has been studied extensively recently, motivated in part by the Calegari--Geraghty program for proving modularity beyond the Taylor--Wiles setting~\cite{calegari-geraghty}. At finite level, results have been obtained, among other works, in~\cite{lan-suh, lan-suh2, boyer, caraiani-scholze}. These are all under various assumptions and, except for~\cite{lan-suh2}, deal with compact Shimura varieties. 

For $\mathrm{U}(n,n)$-Shimura varieties, which are non-compact, the strongest result we expect is joint work in progress of one of us (A.C.) 
with Scholze~\cite{caraiani-scholze2}. There, we prove a vanishing result for compactly supported cohomology at finite level, after localizing at a system of Hecke eigenvalues that is sufficiently generic in a precise sense and that also satisfies a version of a non-Eisenstein condition. (We also need to assume for technical reasons that $F$ is not an imaginary quadratic field.) 
The genericity we impose is a representation-theoretic condition, 
which can be thought of as a mod $p$ analogue of temperedness, and the method of proof is completely different 
from that of Theorem~\ref{unified main theorem}. Unlike at level $\Gamma_1(p^\infty)$, the result at finite 
level will not hold true without some kind of non-degeneracy assumption on the system of Hecke eigenvalues. 
For example, if $\bar{\rho}_{\m}$ is a non-generic direct sum of characters, one can show that the corresponding system of Hecke eigenvalues 
$\m$ is in the support of some $H^0(X_K, \F_{\ell})$ and therefore also in the support of $H^{2d}_c(X_K, \F_{\ell})$. 
Moreover, proving the analogue of the main result of~\cite{caraiani-scholze2} for $\mathrm{Sp}_{2n}$-Shimura varieties would 
require substantially more work involving the trace formula, as would the unitary case when $F$ is an imaginary quadratic field.
\end{remark}

\begin{remark} 
We expect the argument presented in this paper to naturally go through for Hodge type Shimura varieties at primes where the group is \emph{split}, and also in some other cases (e.g. for Harris--Taylor Shimura varieties at primes which are split in the imaginary quadratic field). However, some assumption on the prime $p$ appears to be necessary for our current argument to work --- see Remark~\ref{explanation for p split} for more details. We intend to leave the question of generalizing Theorem~\ref{unified main theorem} (in particular, removing the assumption on $p$) for future work.
\end{remark}

As mentioned above, we also give an application of Theorem~\ref{unified main theorem}. More precisely, we use the result to eliminate the nilpotent ideal in the construction of Galois representations associated to torsion in the cohomology of locally symmetric spaces for $\GL_n/F$, due to~\cite{scholze-galois} and refined by~\cite{newton-thorne}. We explain this further below; we do not define all the notions here, but they are made precise in Section~\ref{Borel-Serre}.

Set $M\defeq \mathrm{Res}_{F/\Q}\GL_n$. For a sufficiently small compact open subgroup $K_M\subset M(\A_f)$, let $X^{M}_{K_M}$ be the corresponding locally symmetric space for $M$. Let $S$ be a finite set of primes of $\Q$, containing $p$ as well as all the bad primes. We consider the abstract spherical Hecke algebra $\mathbb{T}^S_M$ away from $S$. It acts in the usual way on $H^*(X^M_{K_M}, \Z_p)$ and gives rise to the Hecke algebra 
\[
\mathbb{T}^S_M(K_M)\defeq \mathrm{Im}\left(\mathbb{T}^S_M\to \mathrm{End}_{\Z_p}\left(H^*(X^{M}_{K_M},\Z_p)\right)\right). 
\]
Let $\m\subset \mathbb{T}^S_M(K_M)$ be a maximal ideal. The following is~\cite[Cor.~5.4.3]{scholze-galois} 
(with slightly different normalizations, which are consistent with~\cite{newton-thorne}). (See \S~\ref{nilpotent intro} for an explanation of any notation
that has not been introduced yet.)

\begin{thm} There exists a unique continuous semisimple Galois representation 
\[
\bar{\rho}_{\m}\colon \mathrm{Gal}(\overline{F}/F)\to \GL_n(\overline{\F}_p)
\]
such that, for every prime $w$ of $F$ above $l\not\in S$, the characteristic polynomial of $\bar{\rho}_{\m}(\Frob_w)$ is equal to the image of
\[
P_{M,w}(X) = X^{n} - T_{1,w}X^{n-1}+\dots + (-1)^iq_{w}^{i(i-1)/2}T_{i,w}X^{n-i} +\dots + q_{w}^{n(n-1)/2}T_{n,w}
\]
modulo $\m$. 
\end{thm}

We continue to assume that $p$ splits completely in $F$ and also assume that $\bar{\rho}_{\m}$ is absolutely irreducible. We
replace $H^*(X^{M}_{K_M}, \Z_p)$ by the complex $R\Gamma(X^M_{K_M}, \Z_p)$, which naturally lives in the 
derived category $D(\Z_p)$ of $\Z_p$-modules. Its cohomology recovers $H^*(X^{M}_{K_M}, \Z_p)$ and the
action of $\mathbb{T}^S_M$ on $H^*(X^{M}_{K_M}, \Z_p)$ lifts to an action on $R\Gamma(X^M_{K_M}, \Z_p)$. 
We consider the Hecke algebra 
\[
\mathbb{T}^S_M(K_M)^{\mathrm{der}}\defeq \mathrm{Im}\left(\mathbb{T}^S_M\to \mathrm{End}_{D(\Z_p)}\left(R\Gamma(X^M_{K_M}, \Z_p)\right)\right). 
\]
This surjects onto $\mathbb{T}^S_M(K_M)$ with kernel a nilpotent ideal. We localize $\mathbb{T}^S_M(K_M)^{\mathrm{der}}$ at $\m$. 
Using Theorem~\ref{unified main theorem}, we prove the following. 

\begin{thm}[Theorem~\ref{eliminating nilpotent ideal}]\label{intro: nilpotent ideal} 
There exists a unique continuous Galois representation 
\[
\rho_{\m}\colon \mathrm{Gal}(\overline{F}/F)\to \GL_n\left(\mathbb{T}^S_{M}\left(K_M,\lambda\right)^{\mathrm{der}}_{\m}\right)
\]
such that, for every prime $w$ of $F$ above $l\not\in S$, the characteristic polynomial of $\rho_{\m}(\Frob_w)$ is equal to 
\[
P_{M,w}(X) = X^{n} - T_{1,w}X^{n-1}+\dots + (-1)^iq_{w}^{i(i-1)/2}T_{i,w}X^{n-i} +\dots + q_{w}^{n(n-1)/2}T_{n,w}.
\]

\end{thm} 

\begin{remark}\label{relevance of nilpotent ideal}\leavevmode
\begin{enumerate}
\item Up to a nilpotent ideal $I\subset \mathbb{T}^S_M(K_M)^{\mathrm{der}}_{\m}$ of nilpotence degree $4$, this is proved in Theorem 1.3 of~\cite{newton-thorne} (see also Corollary 5.4.4 of~\cite{scholze-galois}). We eliminate the nilpotent ideal $I$ and thus prove a more natural statement about the existence of Galois representations.  
\item For simplicity, we have chosen to state this result with trivial $\Z_p$-coefficients in the introduction. However, in Theorem~\ref{eliminating nilpotent ideal} 
we also allow twisted coefficients corresponding to an irreducible algebraic representation of $M$. 
\item Under the assumption that $p$ splits completely in $F$, this finishes the proof of the first part of Conjecture B of~\cite{calegari-geraghty}. The nilpotent ideal does not usually cause problems for applications of Conjecture B to automorphy lifting theorems, since the key point there is to determine the support of a certain patched module and nilpotents do not affect the support. However, eliminating the nilpotent ideal is important for some more subtle questions, such as those concerning Bloch--Kato conjectures for automorphic motives, cf.~\cite{cgh}, or local-global compatibility for the $p$-adic local Langlands correspondence, cf.~\cite{gee-newton}. These potential applications would also need local-global compatibility at $p$ for the Galois representations $\rho_{\m}$, which is still open. 
\end{enumerate}
\end{remark}

To deduce Theorem~\ref{intro: nilpotent ideal} from Theorem~\ref{unified main theorem}, we use a detailed study of the boundary of the Borel--Serre compactification of the locally symmetric spaces for $G$ (in particular the strata relevant to the Levi subgroup $M$) and the derived Hecke algebra introduced in~\cite{newton-thorne}. For a compact open subgroup $K\subset G(\A_f)$, we denote by $X^G_K$ the corresponding locally symmetric space for $G$, with Borel--Serre compactification $X^{G,\mathrm{BS}}_K$ and boundary $\partial X^{G,\mathrm{BS}}_K$. After the work of Newton--Thorne, the unique source of nilpotence was the ambiguity coming from the excision long exact sequence used in the construction of the Galois representation $\rho_{\m}$:
\[
\dotsb \to H^i(X^G_K,\Z/p^r\Z)\to H^i(\partial X^{G,\mathrm{BS}}_K,\Z/p^r\Z)\to H^{i+1}_c(X^G_K,\Z/p^r\Z)\to \dotsb.
\] 
(Recall that this long exact sequence is the excision sequence attached to the decomposition
\[
X^{G,\mathrm{BS}}_K=X^G_K\sqcup \partial X^{G,\mathrm{BS}}_K,
\] 
where we have also used the fact that the open immersion $X^G_K\hookrightarrow X^{G,\mathrm{BS}}_K$ is a homotopy equivalence.) By showing vanishing of compactly supported cohomology above the middle degree, we eliminate this ambiguity, at least for the cohomology of the boundary above the middle degree. 

We emphasize that we genuinely need to use Theorem~\ref{unified main theorem} for this argument, and we could not have made use of Theorem~\ref{scholze's thm} instead. Very roughly speaking, we want to realize the completed cohomology on the side of the locally symmetric spaces for $M$ as a direct summand in the boundary cohomology of the Borel--Serre compactification at some infinite level (for $G$), above the middle degree $d$. This is so that we can make use of the vanishing result to eliminate the ambiguity mentioned above. 

At level $\Gamma(p^{\infty})$, this is not possible as the cohomology of the relevant part of the boundary vanishes above degree $d$. Let $P$ denote the Siegel parabolic of $G$, with Levi subgroup $M$. The relevant part of the boundary can be identified with a union of locally symmetric spaces for $P$. The cohomology of the locally symmetric space for $M$ is supported in degrees $[0,d-1]$, and any shift to higher degrees comes from the cohomology of a torus (the locally symmetric space for the unipotent part of $P$). At level $\Gamma(p^\infty)$, only the degree $0$ part of the cohomology of the torus survives, so the cohomology of $M$ does not contribute above degree $d-1$.\footnote{See~\cite[\S 1.5]{calegari-emerton} for a more in-depth discussion.} 

However, it is possible to realize the completed cohomology of the locally symmetric spaces for $M$ as a direct summand at level $\Gamma_{1}(p^{\infty})$ (up to a twist and degree-shift). This is the content of Theorem \ref{direct summand}, and we remark that this is significantly more subtle at level $\Gamma_1(p^\infty)$ than at finite level. At finite level, the connected components of the boundary stratum corresponding to $P$ are indexed by a certain finite set of double cosets; in order to split off the cohomology of the locally symmetric space for $M$, one simply restricts to the identity double coset. At level $\Gamma_1(p^\infty)$, the boundary stratum corresponding to $P$ is indexed by a pro-finite set of double cosets, and restriction to the identity double coset does not give a direct summand. To prove Theorem \ref{direct summand} we introduce another new idea, namely we apply a $P$-ordinary projector in the sense of Hida theory. We then show that we do obtain a direct summand after applying $P$-ordinary parts, and that this is precisely the completed cohomology of the locally symmetric spaces for $M$ (with an appropriate twist and degree-shift). This is inspired by arguments with ordinary parts developed simultaneously in~\cite[\S 5]{accghlnstt}, but the key difference in this paper is that we consider $P$-ordinary parts.   

\subsection{Strategy}\label{strategy} We now give a short sketch of the proof of Theorem~\ref{unified main theorem}. In fact, we think it might be helpful to the reader to first explain (a modified version of) the proof of Theorem~\ref{scholze's thm}. The perfectoid Shimura variety, minimally compactified, admits a Hodge--Tate period morphism
\[
\pi_{\mathrm{HT}}\colon (\cX^*_{\Gamma(p^\infty)})_{\et}\to |\Fl_{G,\mu}|, 
\] 
where $|\Fl_{G,\mu}|$ is the topological space underlying a certain flag variety of dimension $d$ and we think of $\pi_{\HT}$ as a morphism of sites. The Hodge--Tate period morphism has the property that $\Fl_{G,\mu}$ has an affinoid cover such that the preimage under $\pi_{\HT}$ of every member of the cover is an affinoid perfectoid space, with (strongly) Zariski closed boundary. This implies that the fibers of $\pi_{\HT}$ over points of rank $1$ (a condition that can be formulated on the level of the topological space) are affinoid perfectoid spaces with (strongly) Zariski closed boundary. To prove Theorem~\ref{scholze's thm}, it is enough to show that 
\[
H^i_{\et}(\cX^*_{\Gamma(p^\infty)}, j_{!}\F_p)=0\ \mathrm{for}\ i>d,
\]
where $j\colon\cX_{\Gamma(p^\infty)}\hookrightarrow \cX^*_{\Gamma(p^\infty)}$ is the open immersion. The primitive comparison theorem in $p$-adic Hodge theory (in the form \cite[Theorem 3.13]{scholze-survey}; see also \cite[\S 3, Theorem 8]{faltings}) reduces us to proving that 
\[
H^i_{\et}\left(\cX^*_{\Gamma(p^\infty)}, j_{!}(\cO^+/p)^a\right)=0\ \mathrm{for}\ i>d.
\] 
We can compute the latter cohomology groups using the Leray spectral sequence for $\pi_{\HT}$. Using the fact that the cohomological dimension of the topological space $|\Fl_{G,\mu}|$ is bounded by $d$ (which follows from a theorem of Scheiderer), we see that it is enough to prove for every point $x\in \Fl_{G,\mu}$ that
\[
R^i\pi_{\HT,*}j_{!}(\cO^+/p)^a_x=0\ \mathrm{for}\ i>0.
\]
We reduce to the case of points of rank $1$, where we use the fact that the fibers of $\pi_{\HT}$ are affinoid perfectoid spaces with (strongly) Zariski closed boundary. In this case, we have the desired vanishing for the \'etale cohomology of $j_{!}(\cO^+/p)^a$ (see Proposition~\ref{strongly Zariski closed vanishing}).   

We now explain how to adapt these ideas to prove Theorem~\ref{unified main theorem}, focusing on the case $K_{p,m}=\Gamma_1(p^m)$ (the argument for the general case is identical). We want to use the Leray spectral sequence for the Hodge--Tate period morphism at level $\Gamma_1(p^\infty)$, namely the morphism of sites
\[
\pi_{\HT/N_0}\colon (\cX^*_{\Gamma_1(p^\infty)})_{\et}\to |\Fl_{G,\mu}|/N_0,
\]
where $N_0=\cap_{m\geq 0}\Gamma_1(p^m)$ and we take the quotient $|\Fl_{G,\mu}|/N_0$ on the level of topological spaces. The sheaves $R^i\pi_{\HT/N_0,*}j_{!}(\cO^+/p)^a$ will in general not vanish for $i>0$. This is related to the fact that the Shimura variety $\cX^*_{\Gamma_1(p^\infty)}$ is not a perfectoid space; it is best thought of in the category of diamonds. However, we show that $|\Fl_{G,\mu}|/N_0$ admits a stratification by locally closed strata $|\Fl^w_{G,\mu}|/N_0$, induced from the generalized Bruhat stratification of the algebraic flag variety $\Flr_{G,\mu}$ by Schubert cells, such that for every $x\in |\Fl^w_{G,\mu}|/N_0$ we have
\[
R^i\pi_{\HT/N_0,*}j_{!}(\cO^+/p)^a_x=0\ \mathrm{for}\ i>d-\dim \Fl^w_{G,\mu}.
\]
This is enough to prove the desired vanishing result for compactly supported cohomology. 

In order to control the stalks of $R^i\pi_{\HT/N_0,*}j_{!}(\cO^+/p)^a$ over different Schubert cells, we use crucially that certain open strata in the compactified Shimura variety at intermediate infinite levels are perfectoid. The precise result is Theorem~\ref{general perfectoid stratum}. We prove this theorem by generalizing Scholze's theory of the overconvergent anticanonical tower and combining it with an argument inspired by work of Ludwig~\cite{ludwig}. (A technical detail is that in the course of the proof we replace our connected Shimura varieties with the PEL-type Shimura varieties associated to the corresponding similitude groups. As we are working over a perfectoid field $(C,\cO_C)$, the difference is only on the level of connected components.)  

\begin{example} In the modular curve case, 
we have $G=\mathrm{GL}_2/\Q$ and $\Fl_{G,\mu} = \mathbb{P}^{1,\mathrm{ad}}$. 
The Schubert cells are indexed by the Weyl group of $\mathrm{GL}_2/\Q_p$, 
which consists of two elements $\{1,w\}$ with $w^2 = 1$. The non-canonical locus
is the Schubert cell corresponding to $1$, which can be identified with $\mathbb{A}^{1,\mathrm{ad}}$ and 
is therefore $1$-dimensional. The canonical locus is the Schubert cell corresponding to $w$,
which can be identified with the point $\infty\in \mathbb{P}^{1,\mathrm{ad}}$, 
and is therefore $0$-dimensional. For any rank one point $x\in \mathbb{A}^{1,\mathrm{ad}}$,
the stalk $R\pi_{\HT/N_0,*}j_{!}(\cO^+/p)^a_x$ is only supported in degree $0$
because the non-canonical locus is already perfectoid at level $\Gamma_1(p^\infty)$, 
cf.~\cite{ludwig}. On the other hand, the stalk $R\pi_{\HT/N_0,*}j_{!}(\cO^+/p)^a_\infty$
is only supported in degrees $0$ and $1$. This comes from the cohomological dimension
(for continuous group cohomology) of $\Z_p$: the canonical locus 
is not perfectoid itself, but it has a ``$\Z_p$-cover'', in a sense made precise in
\S~\ref{diamonds and cohomology}, which is perfectoid.  
\end{example}

\begin{remark}\label{explanation for p split} We explain why we need to impose the condition 
that $p$ splits completely in $F$. Roughly, the idea is that the geometry of $\Fl_{G,\mu}$ is 
controlled by the absolute Weyl group of $G_{\Q_p}$, whereas the action of $G(\Q_p)$ on 
$\Fl_{G,\mu}$ only gives us access to the relative Weyl group of $G_{\Q_p}$. This means 
that, when $G_{\Q_p}$ is not split, there will be in general ``absolute'' Schubert cells in 
$\Fl_{G,\mu}$ over which we will not have optimal control. 

For example, let $F=F^+\cdot F_0$, with $F^+$ real quadratic and $F_0$ imaginary quadratic. 
Assume that $p$ splits in $F_0$ but stays inert in $F^+$. Consider a $\mathrm{U}(1,1)$-Shimura variety 
defined with respect to $F/F^+$. Then $\Fl_{G,\mu}$ can be identified with $\mathrm{Res}_{F^+_{p}/\Q_p}\mathbb{P}^{1,\ad}$,
which over $\Spa(C,\cO_C)$ gives the product of two copies of $\mathbb{P}^{1,\ad}_{C}$. Inside $\Fl_{G,\mu}$, 
we have $\mathbb{A}^{1,\ad}_{C}\times \mathbb{A}^{1,\ad}_{C}$, which is the non-canonical locus, and $\infty\times\infty$, which is the canonical locus. 
We can control both of these loci using the methods developed in this paper. However, we also 
have the absolute Schubert cells $\mathbb{A}^{1,\ad}_{C}\times \infty$ and $\infty\times \mathbb{A}^{1,\ad}_{C}$, 
over which we cannot optimally bound the degrees in which $R\pi_{\HT,*}j_{!}(\cO^+/p)^a$ is supported. 

Assume that $F=F^+\cdot F_0$, where $F^+$ is a totally real field of degree $f$. Furthermore, 
assume that $p$ splits in $F_0$, but has arbitrary behavior in $F^+$. By embedding 
a $\mathrm{U}(n,n)$-Shimura variety defined with respect to $F/F^+$ into a $\mathrm{U}(nf, nf)$-Shimura variety 
defined with respect to $F_0/\Q$, we obtain some non-trivial bound such that compactly supported 
cohomology of the $\mathrm{U}(n,n)$-Shimura variety at level $\Gamma_1(p^\infty)$ vanishes for all degrees 
higher than this bound. Unfortunately, this bound is usually higher than the middle degree 
of cohomology. Still, this exercise suggests that Theorem~\ref{unified main theorem} is likely to hold more generally. 
\end{remark}

\subsubsection{Organization of the paper} 
In Section~\ref{diamonds and cohomology}, we collect some results about diamonds and their cohomology that will play a key role in the rest of the paper. This includes new results that may be independently useful such as a Hochschild--Serre spectral sequence for the compactly supported \'etale cohomology of diamonds (Theorem~\ref{hochschildserre}). 
Section~\ref{recollections} is devoted to preliminaries on Shimura varieties. This essentially consists of extending the results of~\cite{scholze-galois} to our Shimura varieties (for the corresponding similitude groups). One subtlety is that we need to consider the anticanonical tower at more general levels. 
Section~\ref{stratifications} discusses the generalized Bruhat decomposition on the Hodge--Tate period domain and proves that certain open strata in the Shimura variety at intermediate levels already have a perfectoid structure (Theorem~\ref{general perfectoid stratum}). 
Section~\ref{vanishing} gives a proof of the main theorem. A key geometric input comes from Proposition~\ref{rational neighborhood}, where we study adic spaces equipped with the action of a profinite group and construct invariant rational neighborhoods. 
Section~\ref{Borel-Serre} introduces the locally symmetric spaces for $\GL_n/F$ and 
gives the application to Galois representations using the derived Hecke algebra introduced by Newton--Thorne. 

\subsection{Acknowledgements} We are very grateful to Peter Scholze for suggesting the idea for this project and for many useful conversations. We are also very grateful to James Newton for many useful conversations and in particular for providing us with a complete proof of Theorem~\ref{factors through classical} (essentially due to~\cite{newton-thorne}), which is crucial for the arguments in Section~\ref{Borel-Serre}. We also thank Ahmed Abbes, Patrick Allen, David Hansen, Kiran Kedlaya, Judith Ludwig and Jack Thorne for useful comments and/or conversations. We are grateful to Mathilde Gerbelli-Gauthier for her contributions to this project at an initial stage. 

This project arose as a continuation of A.C. and C.J.'s project group at the 2017 Arizona Winter School. We thank the organizers of the A.W.S. for the wonderful experience, and we thank Andrea Dotto, Mohammed Zuhair Mullath, Lue Pan, Mafalda Santos, Jun Su and Peng Yu for their participation in the project group. The Winter School was supported by NSF grant DMS-1504537 and by the Clay Mathematics Institute. 

\section{Diamonds and cohomology}\label{diamonds and cohomology}

In this section we recall the definition of diamonds and some facts about them and their cohomology from \cite{diamonds}. For set-theoretic considerations we refer to \cite[\S 4]{diamonds}; we choose a cardinal $\kappa$ as in \cite[Lemma 4.1]{diamonds}, which allows us to consider countable inverse limits of rigid spaces over some (fixed) non-archimedean field, and all perfectoid spaces will be tacitly assumed to be $\kappa$-small (\cite[Definition 4.3]{diamonds}). We will make no further mention of set-theoretic considerations. For details and precise definitions of any concept or terminology, we refer to \cite{diamonds}. We will write ``$(K,K^{+})$ is a perfectoid field'' to mean that $K$ is a perfectoid field and $K^{+}$ is an open and bounded \emph{valuation} subring.

\subsection{Generalities}

To start, we define some categories of perfectoid spaces. The category of all perfectoid spaces will be denoted by $\Pfd$ and the full subcategory of all perfectoid spaces in characteristic $p$ will be denoted by $\Perf$. For a fixed perfectoid Huber pair $(R,R^{+})$, let $\Pfd_{(R,R^{+})}$ denote the slice category of $\Pfd$ of perfectoid spaces over $\Spa(R,R^{+})$. If $R^{+}=R^{\circ}$, we will simply write $\Pfd_{R}$. The tilting equivalence allows us to identify $\Pfd_{(R,R^{+})}$ with $\Pfd_{(R^{\flat},R^{\flat +})}$, which is a slice of $\Perf$.

The category $\Pfd$ carries two important Grothendieck topologies, the pro-\'etale topology and the v-topology, defined in \cite[Definition 8.1]{diamonds} (and they induce topologies with the same name on all the other categories defined above). Both topologies are subcanonical, cf.~\cite[Corollary 8.6, Theorem 8.7]{diamonds}, and we will conflate perfectoid spaces and their corresponding representable sheaves. A diamond is, by definition, a sheaf on $\Perf$ for the pro-\'etale topology which can be written as the quotient of a perfectoid space $X$ by a pro-\'etale equivalence relation \cite[Definitions 11.1, 11.2]{diamonds}. Diamonds turn out to be sheaves for the v-topology as well \cite[Proposition 11.9]{diamonds}. Any diamond $X$ has an underlying topological space, denoted by $|X|$. The underlying set of $|X|$ can be described as the set of equivalence classes of maps of diamonds
$$ x \colon \Spa(K,K^{+}) \to X, $$
where $(K,K^{+})$ is a perfectoid field of characteristic $p$, and two maps $x_{i} \colon \Spa(K_{i},K_{i}^{+}) \to X$ ($i=1,2$) are equivalent if there is a third perfectoid field $(K_{3},K^{+}_{3})$ (of characteristic $p$) with \emph{surjections} $f_{i} \colon \Spa(K_{3},K^{+}_{3}) \to \Spa(K_{i},K^{+}_{i})$ such that $x_{1}\circ f_{1} = x_{2} \circ f_{2}$ \cite[Proposition 11.13]{diamonds}.

All diamonds that appear in this paper are \emph{(locally) spatial} \cite[Definition 11.17]{diamonds}. If $X$ is (locally) spatial, then $|X|$ is a (locally) spectral space, and any quasicompact open subset $|U| \subseteq |X|$ defines an open subfunctor $U\subseteq X$, which is a (locally) spatial diamond \cite[Proposition 11.18, 11.19]{diamonds}. Any locally spatial diamond $X$ has an associated \'etale site $X_{\et}$ consisting of maps $Y \to X$ of diamonds which are \'etale (this implies that $Y$ is automatically locally spatial by \cite[Corollary 11.28]{diamonds}); see \cite[Definition 14.1]{diamonds}. Following \cite[Convention 10.2]{diamonds}, we require all \'etale maps to be locally separated. We also remark that fiber products of (locally) spatial diamonds are (locally) spatial \cite[Corollary 11.29]{diamonds}.

\medskip

There is a functor $Y \mapsto Y^{\lozenge}$ from the category of analytic adic spaces over $\Zp$ to the category of diamonds, defined in \cite[\S 15]{diamonds}. Any $Y^{\lozenge}$ is locally spatial, and it is spatial if and only if $Y$ is qcqs. Moreover, $|Y|=|Y^{\lozenge}|$. Of particular importance to us is that this functor induces an equivalence of \'etale sites $Y_{\et}\cong Y_{\et}^{\lozenge}$ \cite[Lemma 15.6]{diamonds}. In particular, we may compute the \'etale cohomology of rigid analytic varieties and perfectoid spaces using the associated diamonds.

\medskip

Next, we state a general result on inverse limits that we will use many times.

\begin{prop}\label{inverselimit}
Let $(X_{i})_{i\in I}$ be a cofiltered inverse system of (locally) spatial diamonds with qcqs transition maps. Then $X=\varprojlim_{i}X_{i}$ is a (locally) spatial diamond and the natural map $|X| \to \varprojlim_{i}|X_{i}|$ is a homeomorphism.
\end{prop}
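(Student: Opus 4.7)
The plan is to reduce the statement to a compatible inverse limit of affinoid perfectoid spaces, then invoke the classical result that such an inverse limit (with qcqs transitions) is again affinoid perfectoid. First, I would reduce to the case in which each $X_{i}$ is spatial. Fix any $i_{0}\in I$ and a cover of $X_{i_{0}}$ by spatial quasi-compact opens $U^{(\alpha)}_{i_{0}}$. For $j\geq i_{0}$, the preimage $U^{(\alpha)}_{j}\sub X_{j}$ is a quasi-compact open (using that $X_{j}\to X_{i_{0}}$ is qcqs) of the locally spatial $X_{j}$, hence itself spatial; these form a sub-inverse-system with qcqs transitions. Assuming the spatial case of the proposition, $\varprojlim_{j}U^{(\alpha)}_{j}$ is spatial with the correct underlying topological space, and these assemble into an open cover of $X=\varprojlim_{i}X_{i}$, so it suffices to treat the case where each $X_{i}$ is spatial.

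Now assume every $X_{i}$ is spatial. Each $X_{i}$ admits a quasi-pro-\'etale surjection $\tilde{X}_{i}\to X_{i}$ from an affinoid perfectoid space, by the definition of spatial diamonds. To make the system compatible, fix $i_{0}$ and for $j\geq i_{0}$ set
\[
\tilde{X}_{j} \defeq \tilde{X}_{i_{0}}\times_{X_{i_{0}}}X_{j}.
\]
Then $\tilde{X}_{j}$ is a spatial diamond equipped with a qcqs quasi-pro-\'etale map to $X_{j}$, and the transitions $\tilde{X}_{k}\to \tilde{X}_{j}$ remain qcqs. Refining each $\tilde{X}_{j}$ by a further affinoid perfectoid quasi-pro-\'etale cover (and pulling back across the system), one obtains a compatible diagram of affinoid perfectoid covers $\tilde{X}_{j}\to X_{j}$ with qcqs transitions; the same applies to the induced equivalence relations $R_{j}\defeq \tilde{X}_{j}\times_{X_{j}}\tilde{X}_{j}$. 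The inverse limits $\tilde{X}\defeq \varprojlim_{j}\tilde{X}_{j}$ and $R\defeq \varprojlim_{j}R_{j}$ then exist as affinoid perfectoid spaces, given by the completed colimits of the corresponding Huber pairs. Since filtered colimits of sheaves commute with finite limits, $R\rightrightarrows\tilde{X}$ is a pro-\'etale equivalence relation with quotient $X$, exhibiting $X$ as a spatial diamond.

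For the topological statement I would combine three facts: (i) $|\tilde{X}|\toisom \varprojlim_{j}|\tilde{X}_{j}|$ is a homeomorphism, classical for affinoid perfectoid inverse limits with qcqs transitions; (ii) for each spatial $X_{j}$, the v-cover $\tilde{X}_{j}\to X_{j}$ realizes $|X_{j}|$ as the topological quotient of $|\tilde{X}_{j}|$ by $|R_{j}|$; and (iii) in the category of spectral spaces, cofiltered inverse limits along spectral maps commute with quotients by spectral equivalence relations. The main obstacle is this last compatibility: quotients of topological spaces do not commute with inverse limits in general, and the argument must make genuine use of the spectral structure throughout. I would establish it directly, using the description of the underlying set of $|X|$ as equivalence classes of maps $\Spa(K,K^{+})\to X$ from perfectoid fields recalled in \cite[Proposition 11.13]{diamonds}, combined with the standard fact that cofiltered inverse limits of spectral spaces along spectral maps are themselves spectral with the limit topology.
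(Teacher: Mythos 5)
The paper does not prove this proposition: its proof is a one-line citation to \cite[Lemma 11.22]{diamonds}, with a remark that some set-theoretic hypotheses are being elided. Your proposal therefore amounts to re-deriving that foundational lemma from scratch, which is a legitimately different route, but as written it has two genuine gaps.

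The first is the construction of a compatible system of affinoid perfectoid covers. Your $\tilde{X}_j := \tilde{X}_{i_0} \times_{X_{i_0}} X_j$ is a fiber product of a perfectoid space with a diamond over a diamond; it is a spatial diamond with a quasi-pro-\'etale map to $X_j$, but it is not a priori a perfectoid space, let alone an affinoid one. You try to repair this by ``refining each $\tilde{X}_j$ by a further affinoid perfectoid quasi-pro-\'etale cover (and pulling back across the system),'' but this step cannot be performed naively: choosing refinements $V_j \to \tilde{X}_j$ independently for each $j$ destroys compatibility with the transition maps (the index category $I$ is a general cofiltered poset, not $\mathbb{N}$, so there is no inductive construction available), while pulling back a single cover $V_{i_0} \to \tilde{X}_{i_0}$ along the system produces $V_{i_0}\times_{X_{i_0}}X_j$ and so reproduces exactly the original problem one level up. The resolution in \cite{diamonds} goes through strictly totally disconnected perfectoid spaces, which make the relevant pullbacks representable; your proposal does not invoke that machinery, and without it the step does not close.

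The second gap is the topological identification $|X| \to \varprojlim_i |X_i|$. Your claimed fact (iii) --- that cofiltered limits of spectral spaces along spectral maps commute with quotients by spectral equivalence relations --- is precisely where the content is, and you leave it as something you ``would establish directly.'' The description of the underlying set of $|X|$ as equivalence classes of maps from perfectoid fields pins down the set only; the nontrivial point is that the quotient topology on $|X|$ agrees with the limit topology on $\varprojlim_i|X_i|$, and that needs a concrete argument (for instance passing to the constructible topology and using compactness). Relatedly, exhibiting $X$ as the quotient of $\tilde{X}$ by $R$ requires $\tilde{X} \to X$ to remain a surjection of pro-\'etale sheaves after passage to the limit, which is again not automatic for cofiltered limits and needs a compactness-type argument. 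Until both points are filled in, this is a plan rather than a proof.
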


\begin{proof}
This is (a special case of) \cite[Lemma 11.22]{diamonds}; we refer to the statement there for precise set-theoretic conditions, which will always be satisfied in the applications in this paper.
\end{proof}

We finish this subsection with some topological considerations. Let $X$ be a spatial diamond and let $S$ be a spectral space. Assume that we have a surjective spectral map $p \colon |X| \to S$. Given $s\in S$, set $\Gen(s)\defeq \bigcap_{V}V$ where $V$ ranges through the quasicompact opens containing $s$; this is the set of generalizations of $s$. We can put a canonical (spatial) diamond structure on $p^{-1}(\Gen(s))$, by defining $p^{-1}(\Gen(s))=\varprojlim_{V}p^{-1}(V)$, where $p^{-1}(V)$ is a spatial diamond as mentioned above, and using Proposition \ref{inverselimit}. Following a common abuse of notation in the theory of adic spaces, we will simply write $p^{-1}(s)$ for $p^{-1}(\Gen(s))$ and refer to it as the (topological) fiber of $p$ at $s$. When $S=\pi_{0}(|X|)$ and $p$ is the canonical projection, this gives a canonical diamond structure on each connected component of $|X|$.

\subsection{Some results on cohomology}

In this subsection we will record some results on the \'etale cohomology of diamonds that we will use in this paper. We start with a result on inverse systems.

\begin{prop}\label{invlimcoh}
Let $X_{i}$, $i\in I$, be a cofiltered inverse system of spatial diamonds with inverse limit $X$. Let $i\in I$ and assume that $\cF_{i}$ is an abelian sheaf on $(X_{i})_{\et}$, with pullback $\cF_{j}$ to $X_{j}$ for $j\geq i$ and pullback $\cF$ to $X$. Then the natural map
$$ \varinjlim_{j\geq i} H_{\et}^{q}(X_{j},\cF_{j}) \to H^{q}_{\et}(X,\cF) $$
is an isomorphism for all $q$.
\end{prop}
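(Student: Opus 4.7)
The strategy is to follow the template of the noetherian-limit theorem in SGA~4 (Exposé~VII, Théorème~5.7), adapted to the diamond setting, and to deduce from it the comparison of cohomology in all degrees. The key geometric input, which I would establish first, is an approximation statement for the étale sites: that for each qcqs étale morphism $Y\to X$ there exists $j\geq i$ and a qcqs étale morphism $Y_{j}\to X_{j}$ pulling back to $Y\to X$ along the projection $X\to X_{j}$, with the analogous descent holding for morphisms and étale coverings. More formally, the restriction of $X_{\et}$ to qcqs objects should be the $2$-colimit of the corresponding restricted étale sites $(X_{j})_{\et}^{\mathrm{qcqs}}$. Together with Proposition~\ref{inverselimit}, this yields an equivalence of sites under which sections of a pulled-back sheaf $\cF$ over a qcqs étale $U\to X$ coincide with the colimit of sections of the $\cF_{j}$ over the descents $U_{j}\to X_{j}$, settling the case $q=0$.

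Granted the degree-zero case, I would deduce the statement for higher $q$ by a universal $\delta$-functor/effaceability argument. On the abelian category of sheaves $\cH$ on $(X_{i})_{\et}$, consider the collection
\[
G^{q}(\cH) \defeq \varinjlim_{j\geq i} H^{q}_{\et}(X_{j},\cH_{j}),
\]
where $\cH_{j}$ denotes pullback. Because filtered colimits are exact, $\{G^{q}\}$ is a cohomological $\delta$-functor on $\mathrm{Ab}((X_{i})_{\et})$, and by the $q=0$ case it agrees with $\{H^{q}_{\et}(X,\cH|_{X})\}$ in degree $0$. To conclude that they agree in every degree, it is enough to show both are effaceable in positive degrees. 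Godement-style resolutions on $X_{i}$ built from products of pushforwards from geometric points are compatible with pullback along each $X_{j}\to X_{i}$ and along $X\to X_{i}$, and are acyclic for étale cohomology at every level; they therefore simultaneously efface both $\delta$-functors, giving the desired isomorphism for all $q$.

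The main obstacle I anticipate is the approximation step. In the scheme setting this ultimately rests on the fact that qcqs étale morphisms are of finite presentation, and one needs a substitute for this finiteness in the diamond world. I would expect this to be available as a consequence of the theory developed in \cite{diamonds} --- in particular the structural description of étale morphisms between (locally) spatial diamonds together with the good behavior of cofiltered limits of qcqs spatial diamonds along qcqs transition maps --- possibly by first descending a perfectoid chart of $Y$ to some $X_{j}$ and then descending the associated étale equivalence relation. Once this is in hand, everything else in the proof is a formal manipulation of $\delta$-functors and filtered colimits, and no further geometric input is required.
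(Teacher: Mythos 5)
The paper gives no argument here: the entire proof is a one-line citation of \cite[Proposition 14.9]{diamonds}, so there is no "paper's proof" for your sketch to match. Your outline is the SGA~4 (Exp.~VII, Th.~5.7) template and is the right \emph{kind} of argument, but as written it has two issues.

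First, the approximation of the \'etale site is indeed the crux, but your proposed way to handle it is circular: deducing it "as a consequence of the theory developed in \cite{diamonds}" is not materially different from citing \cite[Prop.~14.9]{diamonds} outright, since the descent of qcqs \'etale morphisms of spatial diamonds to finite level \emph{is} the non-formal content of that result. Without a concrete substitute for finite presentation (e.g.\ descending a perfectoid chart of $Y$ and its \'etale equivalence relation to some $X_{j}$, as you suggest but do not carry out), the degree-zero case is not established.

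Second, the effaceability step has a genuine gap. The Godement sheaf on $X_{i}$ is a product of pushforwards from geometric points of $X_{i}$; its pullback along $X_{j}\to X_{i}$ is \emph{not} the Godement sheaf of the pullback (pullback commutes with neither products nor pushforward from a point), and there is no reason for the pullback to remain acyclic on $X_{j}$ or on $X$ for a general qcqs (non-finite) transition map, since $\nu^{*}$ does not then admit an exact left adjoint. So the claim that these resolutions "simultaneously efface both $\delta$-functors" is unjustified, and this is precisely where the naive $\delta$-functor reduction breaks down. The usual way around this — both in SGA~4 VI, 8.7 and, I believe, in Scholze's own argument — is to avoid injective/Godement resolutions entirely and instead compute cohomology via \v{C}ech (hyper)covers that both descend to finite level and stay acyclic at every level; in the diamond setting one has such covers at hand (strictly totally disconnected v-covers, combined with \cite[Propositions~14.7, 14.8]{diamonds} identifying \'etale and v-cohomology), and the comparison then reduces to the degree-zero case applied to the terms of the \v{C}ech complex. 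I would restructure the higher-degree argument along those lines rather than via effaceability.
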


\begin{proof}
This is \cite[Proposition 14.9]{diamonds}.
\end{proof}

Next, we give a generalization of a base change property \cite[Lemma 4.4.1]{caraiani-scholze} to the setting of diamonds. To state it, we first discuss ``geometric points'' on $X_{\et}$, for $X$ a locally spatial diamond. This discussion is local, so we may assume that $X$ is spatial. For any $x\in |X|$, consider the cofiltered category $I_{x}$ of maps $f \colon U \to X$ which are composites of quasicompact open immersions and finite \'etale maps and which satisfy $x\in |f(U)|$ (note that this condition is stable under fiber products over $X$, using \cite[Proposition 12.10]{diamonds} to verify the last condition).
We may form the inverse limit
$$ \ol{x} = \varprojlim_{(U \to X) \in I_{x}} U; $$
this is a spatial diamond by Proposition \ref{inverselimit}. We call it the \emph{geometric point} above $x$. 

\begin{lemma}\label{geometricpt}
We have $\ol{x}=\Spa(C,C^{+})$ for some algebraically closed perfectoid field $(C,C^{+})$, and the image of the natural map $\pi \colon \ol{x} \to X$ contains $x$.
\end{lemma}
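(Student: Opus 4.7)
My plan is to mimic, in the setting of spatial diamonds, the construction of the strict henselization at a geometric point from algebraic geometry. There are three natural steps.

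\textbf{Step 1 (Topological picture).} I would begin by applying Proposition~\ref{inverselimit} to the cofiltered system $I_{x}$. Each morphism $U \to X$ in $I_{x}$ is qcqs, being a composition of quasi-compact open immersions and finite \'etale maps, and the same holds for the transition maps in $I_{x}$. Thus Proposition~\ref{inverselimit} gives that $\overline{x}$ is a spatial diamond with $|\overline{x}| \cong \varprojlim_{I_{x}} |U|$. For each $(U,f) \in I_{x}$ the spectral subspace $|f|^{-1}(x) \subseteq |U|$ is nonempty by the defining condition of $I_{x}$, and the transition maps preserve the fibers over $x$; hence the cofiltered limit of these nonempty spectral subspaces along spectral maps is nonempty, and I can choose a point $\tilde{x} \in |\overline{x}|$ mapping to $x$. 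This already yields the second assertion of the lemma.

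\textbf{Step 2 (Reduction to an affinoid perfectoid subsystem).} Next I would show that the system $I_{x}$ admits a cofinal subsystem consisting of affinoid perfectoid spaces. Since $X$ is spatial, I may choose an affinoid perfectoid quasi-pro-\'etale surjection $\Spa(R,R^{+}) \to X$; lift $\tilde{x}$ to a point $y$ of $\Spa(R,R^{+})$. For any $U \in I_{x}$, the fiber product $U \times_{X} \Spa(R,R^{+})$ is a perfectoid space (since being perfectoid is preserved by quasi-compact open immersions and by finite \'etale maps), and I can shrink to a rational affinoid perfectoid containing the lift of $y$. Threading this construction through the cofiltered system $I_{x}$ produces a cofinal subsystem $(V_{i} = \Spa(R_{i},R_{i}^{+}))$ whose transition maps come from rational localizations and finite \'etale maps, both of which preserve the class of affinoid perfectoid Huber pairs.

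\textbf{Step 3 (Identification with a geometric point).} The inverse limit of such an affinoid perfectoid system is itself affinoid perfectoid of the form $\Spa(C,C^{+})$, where $C^{+}$ is the $\varpi$-adic completion of $\varinjlim R_{i}^{+}$ (for a fixed pseudo-uniformizer $\varpi$) and $C = C^{+}[1/\varpi]$. The topological refinement by quasi-compact opens around $y$ reduces $|\overline{x}|$ to the generizations of a unique maximal point, forcing $C$ to be a field and $C^{+}$ a valuation subring. The inclusion of all finite \'etale covers in $I_{x}$ ensures that $C$ is separably closed, and perfectness upgrades this to algebraic closedness. Combined with Step~1, this produces the required identification $\overline{x} = \Spa(C,C^{+})$ with $(C,C^{+})$ an algebraically closed perfectoid field, and the map $\pi \colon \overline{x} \to X$ hitting $x$.

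The main obstacle I anticipate is in Step~2: producing a cofinal subsystem of $I_{x}$ of affinoid perfectoid type requires careful bookkeeping with the spatial presentation of $X$, since an affinoid perfectoid rational subdomain on the cover $\Spa(R,R^{+})$ must be matched with a genuine element of $I_{x}$ on $X$ itself. Once this cofinality is in place, the remaining arguments are formal manipulations with inverse limits of spectral spaces and perfectoid Huber pairs.
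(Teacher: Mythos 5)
Your Step 1 matches the paper's proof of the second assertion: the paper also reduces to nonemptiness of the cofiltered limit of the spectral fibers $|f|^{-1}(y)$, using compactness in the constructible topology. The trouble is in Steps 2 and 3, where you depart from the paper in a way that creates a genuine gap.

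In Step 2 the "cofinal subsystem of $I_{x}$ of affinoid perfectoid type" is not actually a subsystem of $I_{x}$. The objects of $I_{x}$ are étale maps $U \to X$, and $X$ is merely a spatial diamond; the $U$'s are themselves diamonds, not perfectoid spaces. What you build instead are rational subdomains of $U \times_{X} \Spa(R,R^{+})$, which are spaces over $\Spa(R,R^{+})$, not over $X$. Their inverse limit computes $\ol{x}\times_{X}\Spa(R,R^{+})$ (or a piece thereof), not $\ol{x}$. There is no cofinality argument to invoke, and to conclude anything about $\ol{x}$ you would need a descent step that is both missing and substantial. Separately, in Step 3 your justification that $C$ is a field ("topological refinement by quasicompact opens around $y$ reduces $|\ol{x}|$ to the generizations of a unique maximal point") ignores that $I_{x}$ also contains finite étale covers, which can add new points over $x$ by splitting residue fields; one must therefore argue for connectedness of $\ol{x}$, which is where a real idea is needed.

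The paper avoids all of this by never attempting to exhibit $\ol{x}$ as an explicit inverse limit of affinoid perfectoids. Having shown $\ol{x}\neq\emptyset$, it invokes the abstract characterization in \cite[Propositions 7.16, 11.26]{diamonds}: a nonempty, connected spatial diamond such that every surjective étale map that is a composite of quasicompact open immersions and finite étale maps splits, must be of the form $\Spa(C,C^{+})$ with $(C,C^{+})$ algebraically closed. Connectedness follows because any disconnection of $\ol{x}$ descends to some $U\in I_{x}$ and one of the two pieces is still in $I_{x}$; the splitting property follows because any such étale cover of $\ol{x}$ descends (via \cite[Proposition 11.23]{diamonds}) to some $V \to U$ at finite level with $(V\to X)\in I_{x}$, and then it splits by construction. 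To repair your proof you should replace Steps 2 and 3 with this characterization, or at least prove (i) that $\ol{x}$ is connected and (ii) that the relevant étale covers split, neither of which falls out of your current construction.
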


\begin{proof}
We begin with the last assertion; we claim that $\pi(|\ol{x}|)=\{y\in |X| \mid y\in |f(U)|\,\, \forall\,(f\colon U \to X) \in I_{x}\}=\Gen(x)$. The second equality follows since \'etale maps are open and $I_{x}$ contains all open neighborhoods of $x$. For the first, note that all $|f| \colon |U| \to |X|$ are spectral maps of spectral spaces, so we may use the constructible topology. Since the inclusion $\subseteq$ is clear, it remains to prove the opposite inclusion, which amounts to showing that $\varprojlim_{f \in I_{x}} |f|^{-1}(y)$ is non-empty as long as all $|f|^{-1}(y)$ are non-empty. The $|f|^{-1}(y)$ are compact Hausdorff spaces, so this is true.

\medskip

We now prove the first part. We have shown that $\ol{x}\neq \emptyset$, so by \cite[Propositions 7.16, 11.26]{diamonds} it suffices to show that $\ol{x}$ is connected and that every surjective \'etale map $Y \to \ol{x}$ which can be written as a composite of quasicompact open immersions and finite \'etale maps splits. For connectedness, note that if $\ol{x}=V_{1} \sqcup V_{2}$ then this disconnection comes from some disconnection $(f \colon U=U_{1} \sqcup U_{2} \to X) \in I_{x}$. Without loss of generality $x\in |f(U_{1})|$, but then $(U_{1} \to X) \in I_{x}$ and so $V_{2}$, which is the preimage of $U_{2}$, must be empty, so $\ol{x}$ is connected. To see that any $Y \to \ol{x}$ as above splits, we first note that by \cite[Proposition 11.23]{diamonds} it must come via pullback from some $V \to U $, where $(U \to X) \in I_{x}$ and $V \to U$ is the composite of quasicompact open immersions and finite \'etale maps. Since $Y \to \ol{x}$ is surjective, $x$ is in the image of the natural map $|Y| \to |X|$. Since this map factors via $|Y| \to |V|$, this forces $x$ to be in the image of the natural map $|V| \to |X|$ and hence $(V \to X) \in I_{x}$. Since $V \to U$ splits over $V \to U$, this implies that $Y \to \ol{x}$ splits.
\end{proof}

\begin{cor}
Let $X$ be a locally spatial diamond, let $x\in |X|$ and let $\cF$ be an \'etale sheaf on $X$. The assignment $\cF \mapsto \cF_{\ol{x}}=\varinjlim_{(U \to X) \in I_{x}} \cF (U)$ defines a topos-theoretic point of $X_{\et}$, and the collection $\{\ol{x} \mid x\in |X| \}$ is a conservative family (i.e. $\cF =0$ if and only if $\cF_{\ol{x}}=0$ for all $x$).
\end{cor}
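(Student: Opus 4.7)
The strategy is to factor the stalk functor as pullback followed by global sections along the map $\pi \colon \ol{x} \to X$ from Lemma \ref{geometricpt}. By that lemma, $\ol{x} = \Spa(C,C^{+})$ with $C$ an algebraically closed perfectoid field, and $\ol{x}$ is connected. Using \cite[Proposition 7.16, 11.26]{diamonds} (as in the proof of Lemma~\ref{geometricpt}), every \'etale map onto $\ol{x}$ splits, so $\ol{x}_{\et}$ is equivalent to the punctual topos. In particular, the global sections functor $\Gamma(\ol{x},-) \colon \mathrm{Sh}(\ol{x}_{\et}) \to \mathrm{Set}$ is exact, and composition with the pullback $\pi^{-1}$ gives a fiber functor $\cF \mapsto \Gamma(\ol{x}, \pi^{-1}\cF)$, which is precisely a topos-theoretic point of $X_{\et}$.

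To identify this fiber functor with the filtered colimit in the statement, write $\ol{x} = \varprojlim_{(U \to X) \in I_{x}} U$, a cofiltered limit of spatial diamonds with qcqs transition maps. Applying Proposition \ref{invlimcoh} to $\cF$ (or rather to its restrictions to each $U$) yields
\[
\Gamma(\ol{x}, \pi^{-1}\cF) \;=\; H^{0}(\ol{x}, \cF|_{\ol{x}}) \;=\; \varinjlim_{(U \to X) \in I_{x}} H^{0}(U, \cF|_{U}) \;=\; \varinjlim_{(U \to X) \in I_{x}} \cF(U) \;=\; \cF_{\ol{x}},
\]
which shows that the elementary description of $\cF_{\ol{x}}$ agrees with the topos-theoretic stalk along $\pi$. (Alternatively, one can check directly that $I_{x}^{\op}$ is filtered, using that $I_{x}$ admits fiber products via \cite[Proposition 12.10]{diamonds}, and that the resulting filtered colimit commutes with finite limits and arbitrary colimits of sheaves; the factorization via $\ol{x}_{\et}$ makes both exactness properties automatic.)

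For conservativity, suppose $\cF$ is an \'etale sheaf on $X$ with $\cF_{\ol{x}} = 0$ for every $x \in |X|$; we must show $\cF = 0$. It suffices to show that for every \'etale $V \to X$ and every $s \in \cF(V)$ we have $s = 0$. Working locally and refining by an \'etale cover we may assume $V \to X$ is a composite of quasicompact open immersions and finite \'etale maps, so $(V \to X) \in I_{x}$ for each $x \in |f(V)|$, and $s$ maps to a well-defined element of $\cF_{\ol{x}}$. Vanishing of this element together with the filtered-colimit description furnishes, for each such $x$, a morphism $V_{x} \to V$ in $I_{x}$ with $s|_{V_{x}} = 0$. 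As $x$ ranges over $|f(V)|$, the images of the $V_{x}$ cover $V$ in the \'etale topology, so the sheaf axiom forces $s = 0$. The main subtlety in this last step is making sure that the $V_{x}$ assemble into an honest \'etale cover of $V$; this is where openness of \'etale maps and the spectrality of $|V|$ are used, exactly as in the first part of the proof of Lemma \ref{geometricpt}.
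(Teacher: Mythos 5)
Your first two steps are sound: factoring the stalk functor through the punctual topos $\ol{x}_{\et}$ is a clean way to get exactness, and the identification with the filtered colimit via Proposition~\ref{invlimcoh} is correct. Note, however, that the paper's own proof is a one-line appeal to \cite[Proposition 14.3]{diamonds}: the morphism $\pi \colon \ol{x} \to X$ is quasi-pro-\'etale and carries the closed point of $|\ol{x}|$ to $x$, and that reference packages both the existence of the point and the conservativity of the family. Your reconstruction is more self-contained, which is a legitimate choice, but it puts the burden of proving conservativity on you, and that is where a genuine gap appears.

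The problematic step is the sentence ``As $x$ ranges over $|f(V)|$, the images of the $V_{x}$ cover $V$ in the \'etale topology.'' Each $V_{x} \to V$ is a morphism in $I_{x}$, so its image in $|X|$ contains $x$; but this only guarantees that the image of $V_{x}$ in $|V|$ contains \emph{some} point over $x$, not the particular point $v$ you care about. Concretely, if $V \to X$ has several sheets over $x$, the geometric point $\ol{x}$ selects one of them (the image of the projection $\ol{x} \to V$), and the map $\cF(V) \to \cF_{\ol{x}}$ only ``sees'' the germ of a section along that sheet. A section $s$ that is nonzero on a different sheet can perfectly well map to zero in $\cF_{\ol{x}}$, and the $V_{x}$ you extract from that vanishing will miss the sheet where $s$ lives. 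So the sheaf axiom cannot be invoked. Openness of \'etale maps and spectrality of $|V|$ do not resolve this, since the issue is not openness of the images but the fact that they simply need not contain the right points.

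The standard repair is \'etale localization invariance of stalks: given $v \in |V|$ with $x = f(v)$, form the geometric point $\ol{v}$ of $V$ at $v$ and show that the natural comparison map $(\cF|_{V})_{\ol{v}} \to \cF_{\ol{x}}$ is an isomorphism. This follows from cofinality of the functor $I_{v}^{V} \to I_{x}^{X}$ (send $W \to V$ to $W \to V \to X$; cofinality holds because for any $W' \in I_{x}^{X}$ the pullback $W' \times_{X} V$ lies in $I_{v}^{V}$ and maps to $W'$). With this in hand, vanishing of all the $\cF_{\ol{x}}$ forces vanishing of $(\cF|_{V})_{\ol{v}}$ for \emph{every} $v \in |V|$, which does produce an honest \'etale cover of $V$ on which $s$ restricts to zero. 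Alternatively, one can observe that $\bigsqcup_{x} \ol{x} \to X$ is quasi-pro-\'etale and surjective on topological spaces, hence a v-cover, and conclude by faithfulness of pullback along v-covers; this is essentially what \cite[Proposition 14.3]{diamonds} does. Either route closes the gap, but as written the cover claim does not follow from what precedes it.
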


\begin{proof}
The morphism $\pi \colon \ol{x} \to X$ is quasi-pro-\'etale and maps the closed point of $|\ol{x}|$ to $x$ (since $|\pi|(|\ol{x}|)=\Gen(x)$ by the proof of Lemma \ref{geometricpt}). The corollary now follows from \cite[Proposition 14.3]{diamonds}.
\end{proof}

We now come to the base change result, generalizing \cite[Lemma 4.4.1]{caraiani-scholze}. The result is a special case of \cite[Corollary 16.10(ii)]{diamonds} (we thank David Hansen for pointing this out to us), but we give the (short and simple) proof as the same argument will used in other places.

\begin{prop}\label{basechange}
Let $f \colon Y \to X$ be a qcqs map of locally spatial diamonds and let $x\in |X|$ with corresponding geometric point $\ol{x}$. Set $Y_{\ol{x}}=Y\times_{X}\ol{x}$ and let $\cF$ be an abelian sheaf on $Y_{\et}$. Then the natural map
$$ (R^{i}f_{\et,\ast}\cF)_{\ol{x}} \to H^{i}_{\et}(Y_{\ol{x}},\cF) $$
is an isomorphism for all $i$ (here $\cF$ also denotes the pullback of $\cF$ to $Y_{\ol{x}}$).
\end{prop}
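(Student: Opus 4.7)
The plan is to reduce the statement to a direct application of Proposition~\ref{invlimcoh} via the definition of stalks on the étale site.

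First, since the assertion is local on $X$, I would assume without loss of generality that $X$ (and hence $Y$, via qcqs pullback) is spatial. Next, I unwind the definition of the stalk at the geometric point $\ol{x}$. By construction, $R^{i}f_{\et,\ast}\cF$ is the sheafification of the presheaf $U \mapsto H^{i}_{\et}(Y\times_{X}U,\cF)$ on $X_{\et}$. Since stalks commute with sheafification and the category $I_{x}$ of étale neighborhoods defining $\ol{x}$ is cofiltered, one obtains
\[
(R^{i}f_{\et,\ast}\cF)_{\ol{x}} \;=\; \varinjlim_{(U\to X)\in I_{x}} H^{i}_{\et}(Y\times_{X}U,\cF).
\]
By the description of $\ol{x}$ as $\varprojlim_{(U\to X)\in I_{x}} U$ and compatibility of fiber products with inverse limits, one has $Y_{\ol{x}}=\varprojlim_{(U\to X)\in I_{x}} (Y\times_{X}U)$.

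The core step is then to apply Proposition~\ref{invlimcoh} to the inverse system $(Y\times_{X}U)_{(U\to X)\in I_{x}}$ in order to identify the above colimit with $H^{i}_{\et}(Y_{\ol{x}},\cF)$. To do this one must verify the hypotheses: that each $Y\times_{X}U$ is a spatial diamond and that all transition maps are qcqs. Both follow from the fact that every arrow in $I_{x}$ is a composite of quasicompact open immersions and finite étale maps (hence qcqs), combined with $f$ being qcqs. Indeed, pulling back along $f$, each $Y\times_{X}U \to Y$ is qcqs so $Y\times_{X}U$ is qcqs over the spatial $Y$ and thus spatial (using \cite[Corollary 11.29]{diamonds} and that qcqs étale maps and qc opens preserve spatiality), and each transition map $Y\times_{X}U' \to Y\times_{X}U$ is itself a composite of qc open immersions and finite étale maps, hence qcqs.

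The only subtlety — and what I would expect to be the main thing to check carefully — is this verification that the inverse system $(Y\times_{X}U)$ genuinely satisfies the hypotheses of Proposition~\ref{invlimcoh}, together with justifying that the colimit identification for the stalk commutes with sheafification on $X_{\et}$ (which is standard: topos-theoretic stalks are computed on the presheaf level for filtered colimits along a cofinal system of étale neighborhoods, and $I_{x}$ is cofinal among étale neighborhoods of $\ol{x}$ by construction in Lemma~\ref{geometricpt}). Once both points are in place, combining the displayed equation with Proposition~\ref{invlimcoh} yields the claimed isomorphism for all $i$.
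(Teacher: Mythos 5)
Your proof is correct and follows essentially the same route as the paper's: reduce to $X$ spatial, identify $Y_{\ol{x}}=\varprojlim_{(U\to X)\in I_x}f^{-1}(U)$, and apply Proposition~\ref{invlimcoh}. You spell out two points that the paper's two-line proof leaves implicit — namely that the stalk of $R^if_{\et,\ast}\cF$ at $\ol{x}$ can be computed on the presheaf level (since $\ol{x}$ is a topos-theoretic point, by the Corollary to Lemma~\ref{geometricpt}), and that the inverse system $(f^{-1}(U))_{U\in I_x}$ satisfies the hypotheses of Proposition~\ref{invlimcoh} — but these are exactly the verifications the paper's argument is silently relying on, so there is no real divergence.
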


\begin{proof}
Without loss of generality, $X$ is spatial. Since $\ol{x}=\varprojlim_{(U \to X) \in I_{x}}U$ (using the notation above), we have $Y_{\ol{x}} = \varprojlim_{(U \to X) \in I_{x}}f^{-1}(U)$. By Proposition \ref{invlimcoh}, we have
$$ (R^{i}f_{\et,\ast}\cF)_{\ol{x}} = \varinjlim_{(U\to X)\in I_{x}} H^{i}_{\et}(f^{-1}(U),\cF) \cong H^{i}_{\et}(Y_{\ol{x}},\cF) $$
via the natural map, as desired.
\end{proof}

We will also require a ``topological'' version of the base change result.

\begin{prop}\label{topbasechange}
Let $X$ be a spatial diamond and let $S$ be a spectral space. Assume that there is a spectral map $|f| \colon |X| \to S$; precomposing with the natural morphism of sites $X_{\et} \to |X|$ gives a morphism of sites $f \colon X_{\et} \to S$. Let $s\in S$ and consider the spatial diamond $f^{-1}(s)$ defined just before this section. If $\cF$ is an abelian sheaf on $X_{\et}$, then the natural map
$$ (R^{i}f_{\ast}\cF)_{s} \to H^{i}_{\et}(f^{-1}(s),\cF) $$
is an isomorphism for all $i$.
\end{prop}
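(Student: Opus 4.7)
The plan is to unwind the definition of the stalk of the higher direct image and reduce to Proposition~\ref{invlimcoh}.

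First, I would note that quasicompact open subsets of $S$ are closed under finite intersections and form a basis for the topology of $S$, so they cofinally compute stalks at $s$. Since sheafification commutes with taking stalks, and $R^i f_* \cF$ is by definition the sheafification of the presheaf $V \mapsto H^i_{\et}(|f|^{-1}(V), \cF)$, I obtain a canonical identification
\[
(R^i f_* \cF)_s = \varinjlim_{V \ni s} H^i_{\et}(|f|^{-1}(V), \cF),
\]
where $V$ ranges over the quasicompact open neighborhoods of $s$ in $S$.

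Next, because $|f|$ is spectral and $X$ is spatial, each preimage $|f|^{-1}(V)$ is a quasicompact open subspace of $|X|$ and hence carries the structure of a spatial diamond as an open subfunctor of $X$. The system $\{|f|^{-1}(V)\}_{V\ni s}$ is cofiltered with transition maps given by quasicompact open immersions (in particular qcqs), and by the definition of $f^{-1}(s)$ given just before the statement of the proposition we have an identification
\[
f^{-1}(s) = \varprojlim_{V \ni s} |f|^{-1}(V)
\]
of spatial diamonds. Proposition~\ref{invlimcoh}, applied to the pullback of $\cF$ along this inverse system, then furnishes an isomorphism
\[
\varinjlim_{V \ni s} H^i_{\et}(|f|^{-1}(V), \cF) \xrightarrow{\sim} H^i_{\et}(f^{-1}(s), \cF).
\]
Composing the two displayed identifications yields the desired isomorphism.

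The only subtlety is checking that the composite isomorphism agrees with the natural base change map in the statement; this is a formal bookkeeping exercise tracing through how the pullback of $\cF$ to $f^{-1}(s)$ is defined as the inverse limit of its pullbacks to the $|f|^{-1}(V)$, and how this makes the colimit-to-limit comparison map in Proposition~\ref{invlimcoh} agree with the stalk-wise base change map. No further geometric input is needed beyond Proposition~\ref{invlimcoh} and the fact that quasicompact opens form a cofinal basis of open neighborhoods in a spectral space; this mirrors the proof of Proposition~\ref{basechange}, and I do not expect any real obstacle.
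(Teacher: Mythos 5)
Your proof is correct and takes essentially the same approach as the paper: the paper's one-line proof also writes $\Gen(s)$ as an intersection of quasicompact opens and then invokes Proposition~\ref{invlimcoh} exactly as in the proof of Proposition~\ref{basechange}, which is what you have spelled out.
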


\begin{proof}
We write $\Gen(s)=\bigcap_{s\in U}U$ with $U$ quasicompact open and then proceed as in the proof of Proposition \ref{basechange}.
\end{proof}

We now move on to a version of the Hochschild--Serre spectral sequence involving the lower shriek functor. For this, we will need to define the lower shriek functor for open immersions and the notion of $G$-torsors for profinite groups $G$. We start with the lower shriek functor. Since what we need is rather elementary, we give the definitions instead of appealing to the more abstract constructions of \cite{diamonds}, such as e.g.\ \cite[Definition/Proposition 19.1]{diamonds}. Let $j \colon U \to X$ be an \'etale map of locally spatial diamonds. As usual, we may define an extension by zero functor $j_{!}$ from sheaves on $U_{\et}$ to sheaves on $X_{\et}$ by sheafifying the presheaf
$$ \left( V \overset{f}{\to} X \in X_{\et} \right) \mapsto \bigsqcup_{(V \overset{g}{\to} U) \in U_{\et};\,j\circ g=f}\cF(V \overset{g}{\to} U). $$
One checks that $j_{!}$ is exact and is a left adjoint to $j^{\ast}$. We record the following base change results.

\begin{lemma}\label{shriekbasechange}
Let $f \colon X^{\prime} \to X$ be a map of spatial diamonds and let $j \colon U \to X$ be an \'etale map with pullback $j^{\prime} \colon U^{\prime} \to X^{\prime}$. Denote the map $ U^{\prime} \to U$ by $g$.
\begin{enumerate}
\item Let $\cF$ be an \'etale sheaf on $U$. Then the natural map $j^{\prime}_{!}g^{\ast}\cF \to f^{\ast}j_{!}\cF $ is an isomorphism.

\item Assume that $j$ is a partially proper open immersion and that $g$ is an isomorphism; we use it to identify $U^{\prime}$ and $U$ and think of $g$ as the identity. Let $\cG$ be an abelian sheaf on $U_{\et}$. Then the natural map $j_{!}\cG \to Rf_{\ast}j_{!}^{\prime}\cG$ is an isomorphism.
\end{enumerate}
\end{lemma}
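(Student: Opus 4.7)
The plan is to verify both isomorphisms by checking on stalks at geometric points, using Lemma \ref{geometricpt} and the base change result Proposition \ref{basechange}.

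For part (1), I would first observe that both $f^{\ast}$ and extension by zero are exact, so it suffices to check the assertion on stalks. At a geometric point $\ol{x}'$ of $X'$ over $\ol{x}$ of $X$, the presheaf description of $j_{!}$ given just before the lemma yields the standard formula
\[
(j_{!}\cF)_{\ol{x}} \;\cong\; \bigoplus_{\ol{u}\to\ol{x}} \cF_{\ol{u}},
\]
indexed by geometric points of $U$ lying above $\ol{x}$, and the analogous formula holds for $j'_{!}g^{\ast}\cF$ over $\ol{x}'$. Since $U' = U\times_{X}X'$, the geometric fiber of $j'$ over $\ol{x}'$ is in canonical bijection with the geometric fiber of $j$ over $\ol{x}$, and unwinding the definitions shows that the natural map $j'_{!}g^{\ast}\cF \to f^{\ast}j_{!}\cF$ identifies the two sums term by term.

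For part (2), the natural map $j_{!}\cG \to Rf_{\ast}j'_{!}\cG$ arises by adjunction from the isomorphism $f^{\ast}j_{!}\cG \xrightarrow{\sim} j'_{!}\cG$ obtained from part (1) under the identification $g = \id$. To check that it is an isomorphism on the stalk at a geometric point $\ol{x}$ over $x\in|X|$, I would apply Proposition \ref{basechange} to rewrite the right-hand stalk as $R\Gamma_{\et}(X'_{\ol{x}}, j'_{!}\cG)$, and split into two cases. If $x\in U$, then $\ol{x}$ factors through $U$; since $g = f|_{f^{-1}(U)}$ is an isomorphism, the fiber $X'_{\ol{x}} = X'\times_{X}\ol{x}$ is isomorphic to $\ol{x}$ and lies inside the open sub-diamond $U'$, so
\[
R\Gamma_{\et}(X'_{\ol{x}}, j'_{!}\cG) \;\cong\; R\Gamma_{\et}(\ol{x}, \cG) \;\cong\; \cG_{\ol{x}} \;\cong\; (j_{!}\cG)_{\ol{x}}.
\]

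The main obstacle is the case $x\notin U$, where both sides must vanish; the left-hand side is trivially zero, and the right-hand side is where partial properness enters. By the proof of Lemma \ref{geometricpt}, the image of $\ol{x}\to X$ is $\Gen(x)$. Applying the valuative criterion defining partial properness to rank-$1$ specializations coming from points of $\ol{x} = \Spa(C,C^{+})$ shows that the open $U$ is closed under specialization in $X$: if $y\in U$ specializes to some $z\in X$, then $z\in U$. Contrapositively, if $x\notin U$ then no generalization of $x$ can lie in $U$, so $\Gen(x)\cap U = \emptyset$. Since $j$ is a monomorphism, this forces $U\times_{X}\ol{x} = \emptyset$, and hence $U'_{\ol{x}} = U\times_{X}X'_{\ol{x}} = \emptyset$. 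Therefore $j'_{!}\cG|_{X'_{\ol{x}}}$ is the zero sheaf, and its cohomology vanishes as required.
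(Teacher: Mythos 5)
Your proof is correct and follows essentially the same route as the paper's: reduce to a stalk computation at geometric points, apply Proposition \ref{basechange} (and part (1)) to identify the stalk of $Rf_{\ast}j'_{!}\cG$ with cohomology over the geometric fiber $X'_{\ol{x}}$, and use partial properness to show $U'\times_{X'}X'_{\ol{x}}=\emptyset$ when $x\notin|U|$. The only differences are in the amount of detail — you write out the direct-sum stalk formula for $j_{!}$ in part (1) (which also handles étale $j$ with multiple preimages more transparently than the paper's one-line reduction) and you treat the easy case $x\in|U|$ explicitly in part (2), both of which the paper leaves implicit.
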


\begin{proof}
We begin with part (1). It suffices to check this on geometric points of $X^{\prime}$, so let $z\in |X^{\prime}|$ and let $\ol{z}$ be the corresponding geometric point. The assertion then reduces to checking that $(f^{\ast}j_{!}\cF)_{\ol{z}}=0$ if $\ol{z} \to X^{\prime}$ does not factor through $U^{\prime} \to X^{\prime}$. As usual, one has $(f^{\ast}j_{!}\cF)_{\ol{z}} \cong (j_{!}\cF)_{\ol{f(z)}}$.
If $\ol{z}$ does not factor through $U^{\prime} \to X^{\prime}$, then $\ol{f(z)}$ does not factor through $U \to X$, so $(j_{!}\cF)_{\ol{f(z)}}=0$ and we get the conclusion.

\medskip
We now prove part (2). As above, it suffices to show that $(Rf_{\ast}j^{\prime}_{!}\cG)_{\ol{x}}=0$ for any $x\in |X|\setminus |U|$. By Proposition \ref{basechange}, we have
$$ (R^{i}f_{\ast}j^{\prime}_{!}\cG)_{\ol{x}} \cong H^{i}_{\et}(X^{\prime}_{\ol{x}},h^{\ast}j^{\prime}_{!}\cG) $$
for all $i$, where $h$ is the map $X^{\prime}_{\ol{x}} \to X^{\prime}$. By part (1), the stalk of $h^{\ast}j^{\prime}_{!}\cG$ is $0$ over any geometric point outside $U\times_{X^{\prime}}X^{\prime}_{\ol{x}}$. Since $j$ is partially proper and $x\in |X|\setminus |U|$, $\Gen(x) \cap |U| = \emptyset $ and hence $U\times_{X^{\prime}}X^{\prime}_{\ol{x}} =\emptyset$. Thus, $h^{\ast}j^{\prime}_{!}\cG=0$ and the conclusion follows.
\end{proof}

\medskip

Next, we move on to torsors. Let $G$ be a profinite group. Following \cite[Definition 10.12]{diamonds}, we define a v-sheaf $\ul{G}$
$$ X \mapsto \Hom_{cts}(|X|,G) $$
on $\Perf$, and define a (right) $G$-torsor to be a map $f \colon \widetilde{X} \to X$ of locally spatial diamonds with a right action of $\ul{G}$ on $\widetilde{X}$ over $X$ such that the induced map $\widetilde{X}\times \ul{G} \to \widetilde{X} \times_{X}\widetilde{X}$ is an isomorphism (this is equivalent to the definition given in \cite[Definition 10.12]{diamonds} by \cite[Lemma 10.13]{diamonds}). We may now state and prove our Hochschild--Serre spectral sequence.

\begin{thm}\label{hochschildserre}
Let $G$ be a profinite group and let $\Sigma$ be a set of open normal subgroups of $G$ which form a basis of neighborhoods of the identity. Assume also that $G\in \Sigma$. We let $(X_{N})_{N\in \Sigma}$ be an inverse system of spatial diamonds and we assume that each $X_{N}$ carries a right action of $\ul{G/N}$, and that these actions are compatible. Set $X=X_{G}$ and $\wt{X}=\varprojlim_{N\in \Sigma}X_{N}$.
The spatial diamond $\wt{X}$ carries a right $\ul{G}$-action and we assume that the natural map $\pi \colon \wt{X} \to X$ is surjective on topological spaces. Assume that there is a partially proper open immersion $j \colon U \to X$ with pullback $j_{N} \colon U_{N} \to X_{N}$ such that $U_{N} \to U$ is a $G/N$-torsor. If $\wt{j} \colon \wt{U} \to \wt{X} $ is the pullback, then $\wt{U} \to U$ is a $G$-torsor. Let $\cF$ be an abelian sheaf on $U_{\et}$. Then there is a spectral sequence
$$ E_{2}^{rs}=H^{r}_{cts}(G,H^{s}_{\et}(\wt{X},\wt{j}_{!}\cF)) \implies H^{r+s}_{\et}(X,j_{!}\cF), $$
where $H^{\ast}_{cts}(G,-)$ denotes the continuous group cohomology for $G$, and $H^{\ast}_{\et}(\wt{X},\wt{j}_{!}\cF)$ is given the discrete topology.
\end{thm}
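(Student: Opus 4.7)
The plan is to establish a finite-level Hochschild--Serre spectral sequence for each $N\in\Sigma$ and then pass to the filtered colimit over $N$. For each such $N$, write $\pi_N\colon X_N\to X$ for the natural map and $\pi_{N,U}\colon U_N\to U$ for its restriction, and set $\cF_N\defeq\pi_{N,U}^{\ast}\cF$. By Lemma \ref{shriekbasechange}(1), $j_{N,!}\cF_N$ is the pullback of $j_!\cF$ along $\pi_N$, and similarly $\wt{j}_!\cF$ is the pullback of $j_!\cF$ to $\wt{X}$.

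The technical heart of the argument is the $G/N$-equivariant identification
$$R\pi_{N,\ast}\,j_{N,!}\cF_N \;\isom\; j_!\bigl(\cF\tensor_\Z\Z[G/N]\bigr),$$
where $G/N$ acts by translation on $\Z[G/N]$. To show the left-hand side has vanishing stalks outside $U$, argue as in the proof of Lemma \ref{shriekbasechange}(2): for $x\in|X|\setminus|U|$, partial properness of $j$ yields $\Gen(x)\cap|U|=\emptyset$, hence $U_N\times_{X_N}(X_N)_{\ol{x}}=\emptyset$, and the stalk computation via Proposition \ref{basechange} combined with Lemma \ref{shriekbasechange}(1) produces zero. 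Restricting to $U$, base change along the open immersion $j$ gives $(R\pi_{N,\ast}j_{N,!}\cF_N)|_U=R\pi_{N,U,\ast}\cF_N$; since $\pi_{N,U}$ is a finite \'etale Galois cover with group $G/N$, the standard computation yields $\cF\tensor\Z[G/N]$ concentrated in degree zero with $G/N$ acting by translation. Because $R\pi_{N,\ast}j_{N,!}\cF_N$ is concentrated in degree zero with stalks supported on $U$, the adjunction $j_!j^{\ast}\to\id$ provides the desired equivariant isomorphism.

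Applying $R\Gamma(X,-)$ to both sides and using that $j_!$ and $R\Gamma(X,-)$ commute with the finite direct sum decomposition implicit in $\tensor\Z[G/N]$, one obtains an equivariant isomorphism $R\Gamma(X_N,j_{N,!}\cF_N)\isom R\Gamma(X,j_!\cF)\tensor_\Z\Z[G/N]$. By Shapiro's lemma, $R\Gamma(G/N,V\tensor\Z[G/N])=V$ for any complex $V$ with trivial $G/N$-action, so we obtain a canonical isomorphism $R\Gamma(X,j_!\cF)\isom R\Gamma(G/N,R\Gamma(X_N,j_{N,!}\cF_N))$; the standard filtration on the right-hand factor then yields the finite-level spectral sequence
$$H^r\bigl(G/N,\,H^s(X_N,j_{N,!}\cF_N)\bigr) \implies H^{r+s}(X,j_!\cF).$$

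Finally, apply Proposition \ref{invlimcoh} to the cofiltered system $(X_N)_{N\in\Sigma}$ to obtain $H^s(\wt{X},\wt{j}_!\cF)=\varinjlim_N H^s(X_N,j_{N,!}\cF_N)$ as a discrete $G$-module. Continuous group cohomology satisfies $H^r_{cts}(G,\varinjlim_N M_N)=\varinjlim_N H^r(G/N,M_N)$ for any compatible system of $G/N$-modules, and since filtered colimits are exact and therefore preserve spectral sequences, the finite-level spectral sequences produce the claimed one in the limit. The main delicate point is carefully tracking the $G/N$-equivariance through the key identification in step two, and verifying the compatibility of the finite-level spectral sequences as $N$ varies so that the limit procedure yields a genuinely canonical spectral sequence rather than merely an isomorphism of $E_\infty$-terms.
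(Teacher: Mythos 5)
The key identification in your second paragraph is false: for a nontrivial $G/N$-torsor $\pi_{N,U}\colon U_N\to U$, the sheaf $\pi_{N,U,\ast}\pi_{N,U}^{\ast}\cF$ is only \emph{\'etale-locally} isomorphic to $\cF\otimes_{\Z}\Z[G/N]$; globally it is the twist of $\cF\otimes\Z[G/N]$ by the class of the torsor, and there is no $G/N$-equivariant isomorphism with the untwisted sheaf. For instance, if $U_N\to U$ is a connected finite Galois cover and $\cF=\Z$, then $H^0(U,\pi_{N,U,\ast}\Z)=\Z$ carries the trivial $G/N$-action, whereas $H^0$ of $\cF\otimes\Z[G/N]$ is the free module $\Z[G/N]$; nothing in the hypotheses forces these torsors to be trivial. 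So $R\Gamma(X_N,j_{N,!}\cF_N)$ need not be isomorphic to $R\Gamma(X,j_!\cF)\otimes\Z[G/N]$ as a $G/N$-module, and the appeal to Shapiro's lemma does not go through.

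Even with a corrected finite-level statement---which would have to work with the actual twisted module and commute $R\Gamma(G/N,-)$ past $R\Gamma(X,-)$ by a double-complex argument, rather than termwise through an untwisted tensor factor---the compatibility of the finite-level spectral sequences across $N$, which you flag at the end, becomes a genuine issue once the torsors are no longer trivialized. The paper's proof avoids both problems at once by running a single \v{C}ech-to-derived-functor spectral sequence for the v-cover $\wt{X}\to X$: the isomorphism $\wt{U}\times\ul{G}^{n-1}\cong(\wt{U}_{/U})^{n}$ is canonical precisely because $\wt{U}\to U$ is a torsor (no trivialization is chosen), Lemma~\ref{shriekbasechange}(2) together with partial properness of $j$ transports it to the compactifications, and continuous group cohomology enters only at the very end, via Proposition~\ref{invlimcoh}, by recognizing the \v{C}ech complex as a direct limit of the corresponding complexes at finite level.
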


\begin{proof}
There will be a lot of abuse of notation in this proof regarding pullbacks and lower shrieks; these cause no mathematical trouble by Lemma \ref{shriekbasechange}(1). We will use \cite[Propositions 14.7, 14.8]{diamonds} repeatedly in this proof, which together imply that \'etale cohomology of locally spatial diamonds may be computed on the v-site. Thus, we have $H_{\et}^{\ast}(X,j_{!}\cF)=H_{v}^{\ast}(X,j_{!}\cF)$. The map $\pi$ is a v-cover by \cite[Lemma 12.11]{diamonds} since it is quasicompact and surjective on topological spaces. We therefore have a \v{C}ech-to-derived functor spectral sequence
$$ E_{2}^{rs}=\check{H}^{r}(\mf{U},\cH^{s}(j_{!}\cF)) \implies H^{r+s}_{v}(X,j_{!}\cF) $$
where $\mf{U}$ is the v-cover $(\wt{X} \to X)$ and $\cH^{s}$ denotes the derived functors of the inclusion from v-sheaves to v-presheaves on $\wt{X}$. It remains to show that
$$ \check{H}^{r}(\mf{U},\cH^{s}(j_{!}\cF))=H^{r}_{cts}(G,H_{\et}^{s}(\wt{X},\wt{j}_{!}\cF)). $$
The \v{C}ech complex computing the left-hand side is
\[ H_{\et}^{s}(\wt{X},\wt{j}_{!}\cF) \to H_{\et}^{s}(\wt{X}\times_{X}\wt{X},\wt{j}_{!}\cF) \to \dotsb \to H_{\et}^{s}((\wt{X}_{/X})^{n},\wt{j}_{!}\cF) \to \dotsb, \label{cech} \tag{$\ast$} \]
where $(\wt{X}_{/X})^{n}$ denotes the fiber product $\wt{X} \times_{X} \dots \times_{X}\wt{X}$ with $n$ factors, and we have equated v-cohomology and \'etale cohomology. There is a morphism
$$ f=(f_{n})_{n\geq 1} \colon (\wt{X}\times \ul{G}^{n-1})_{n\geq 1} \to ((\wt{X}_{/X})^{n})_{n\geq 1}, $$
$$ f_{n}(x,g_{1},\dots,g_{n-1})=(x,xg_{1},\dots,xg_{n-1}), $$
of simplicial spatial diamonds. Since $\wt{U} \to U$ is a $G$-torsor, $f$ restricts to an isomorphism 
$$ (\wt{U}\times \ul{G}^{n-1})_{n\geq 1} \cong ((\wt{U}_{/U})^{n})_{n\geq 1}. $$ 
By abuse of notation, we also write $\wt{j}$ for the open immersions $\wt{U}\times\ul{G}^{n-1} \to \wt{X} \times \ul{G}^{n-1}$ and the open immersions $(\wt{U}_{/U})^{n} \to (\wt{X}_{/X})^{n}$. The natural map $\wt{j}_{!}\cF \to Rf_{n,\ast}\wt{j}_{!}\cF$ is an isomorphism of \'etale sheaves on $(\wt{X}_{/X})^{n}$ by Lemma \ref{shriekbasechange}(2), since the immersion $(\wt{U}_{/U})^{n} \to (\wt{X}_{/X})^{n}$ is partially proper (this follows from partial properness of $U \to X$ because partial properness is stable under base change). Thus, the complex (\ref{cech}) is equal to
$$ H_{\et}^{s}(\wt{X},\wt{j}_{!}\cF) \to H_{\et}^{s}(\wt{X}\times \ul{G},\wt{j}_{!}\cF) \to \dotsb \to H_{\et}^{s}(\wt{X}\times \ul{G}^{n-1},\wt{j}_{!}\cF) \to \dotsb $$
which in turn is equal to the direct limit over $N\in \Sigma$ of the complexes
$$ H_{\et}^{s}(X_{N},j_{N,!}\cF) \to H_{\et}^{s}(X_{N}\times \ul{G/N},j_{N,!}\cF) \to \dotsb \to H_{\et}^{s}(X_{N}\times \ul{G/N}^{n-1},j_{N,!}\cF) \to \dotsb $$
by Proposition \ref{invlimcoh}. As these complexes compute $H^{\ast}(G/N, H^{s}_{\et}(X_{N},j_{N,!}\cF))$, we deduce that $\check{H}^{r}(\mf{U},\cH^{s}(j_{!}\cF))=H^{r}_{cts}(G,H_{\et}^{s}(\wt{X},\wt{j}_{!}\cF))$, as desired, by taking direct limits.
\end{proof}

We finish this section with some facts about spectral spaces that we will need. First, recall that if $S$ is a spectral space, then it carries a notion of (Krull) dimension $\dim S$, where $1+\dim S$ is defined as the supremum of the lengths of finite chains of specializations. 

\begin{lemma}\label{spectral quotient}
Let $S$ be a spectral space with a continuous right action of a profinite group $G$. Then $S/G$ is a spectral space, and the natural map $S \to S/G$ is spectral, open and generalizing. Moreover, if $\dim S$ is finite, then so is $\dim S/G$ and $\dim S = \dim S/G$.
\end{lemma}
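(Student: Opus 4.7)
\medskip

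\noindent \textbf{Plan.} My strategy is to reduce the assertion that $S/G$ is spectral to the classical case of a finite group acting on a finite $T_{0}$ space, via an inverse limit argument; the statements about $\pi$ and dimension then follow from standard arguments combined with manipulations in the constructible topology.

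The key preliminary step is the claim that for every quasicompact open $U \subseteq S$, the stabilizer $\{g \in G : gU = U\}$ is an open subgroup of $G$. Since $U$ is clopen in the constructible topology $S^{\mathrm{cons}}$ (a compact Hausdorff totally disconnected space) and the continuous $G$-action on $S$ induces a continuous $G$-action on $S^{\mathrm{cons}}$, this follows from a uniform-continuity argument using compactness of $G$. I expect this to be the main technical subtlety of the proof.

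Armed with this, I would write $S = \varprojlim_{i} S_{i}$ as a cofiltered limit of finite $T_{0}$ spaces such that each $S_{i}$ carries a $G$-action factoring through a finite quotient $G_{i}$ of $G$; cofinality of such systems follows from the open stabilizer property, since any finite constructible partition of $S$ admits a $G$-stable refinement given by the common refinement of its finitely many $G$-translates. Each $S_{i}/G_{i}$ is then a finite $T_{0}$ space, and one checks that the natural map $S/G \to \varprojlim_{i} S_{i}/G_{i}$ is a homeomorphism, presenting $S/G$ as a cofiltered inverse limit of finite $T_{0}$ spaces and hence as a spectral space.

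For the remaining claims about $\pi$: openness is immediate from $\pi^{-1}(\pi(U)) = GU$ being a union of open sets. For spectrality, any quasicompact open $W \subseteq S/G$ is covered by finitely many $\pi(U_{\alpha})$ with $U_{\alpha}$ quasicompact open in $S$, and each $GU_{\alpha}$ is a finite union of $G$-translates of $U_{\alpha}$ by the key lemma, hence quasicompact. The generalizing property then follows from openness by a finite-intersection-property argument in the constructible topology: given $[s']$ a generalization of $\pi(s)$, the sets $U \cap \pi^{-1}([s'])$ indexed by quasicompact open neighborhoods $U$ of $s$ form a filtered family of nonempty closed subsets of $S^{\mathrm{cons}}$, whose intersection yields a generalization of $s$ in $\pi^{-1}([s'])$. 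Finally, the lifting property (applied iteratively) gives $\dim S \geq \dim S/G$; for the reverse inequality, assuming $\dim S < \infty$, one checks that two distinct points in a single $G$-orbit cannot be in specialization relation, since otherwise iterating the element $g$ with $gs_{1} = s_{2}$ along the chain produces an infinite two-sided chain of specializations which, together with the $T_{0}$ axiom, yields $gs_{1} = s_{1}$ and a contradiction. Hence every chain in $S$ projects without collapse to a chain of equal length in $S/G$, giving $\dim S \leq \dim S/G$.
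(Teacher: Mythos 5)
The paper itself does not prove this lemma; it simply cites \cite[Lemma 3.2.3]{extensions} and remarks that openness of the quotient map is automatic for group quotients. You therefore take a genuinely different, self-contained route, and its overall shape is right: present $S$ as a $G$-equivariant cofiltered limit of finite $T_0$ spaces (made possible by the open-stabilizer property of quasicompact opens), quotient at each finite stage, check that $S/G \to \varprojlim_i S_i/G_i$ is a homeomorphism, and treat the remaining assertions about $\pi$ and the dimension by direct manipulation with quasicompact opens and the finite-intersection property in $S^{\mathrm{cons}}$. The pieces of the dimension argument (lifting chains up via the generalizing property, pushing chains down by noting that two distinct orbit-mates cannot be comparable when $\dim S < \infty$) are also correct.

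However, the key preliminary step as written is circular. You assert that ``the continuous $G$-action on $S$ induces a continuous $G$-action on $S^{\mathrm{cons}}$'' and then deduce open stabilizers from a compactness argument on $S^{\mathrm{cons}}$. But that intermediate assertion is not free: a continuous map of spectral spaces need not be continuous for the constructible topologies, and in fact continuity of the action on $S^{\mathrm{cons}}$ is \emph{equivalent} to the open-stabilizer property you want to establish. The non-circular order of operations is to prove open stabilizers directly in $S$: given a quasicompact open $U$ and $s\in U$, continuity of the action at $(s,1)$ gives a quasicompact open $V_s\ni s$ and an open subgroup $H_s\leq G$ with $V_s H_s\subseteq U$; covering the quasicompact $U$ by finitely many $V_{s_i}$ and putting $H=\bigcap_i H_{s_i}$ gives $UH\subseteq U$, whence $UH=U$ since $H=H^{-1}$. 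Constructible continuity of the action (and in particular compactness of orbits in $S^{\mathrm{cons}}$, which your FIP argument for the generalizing property implicitly uses) should then be derived as a consequence, not taken as the starting point. Two smaller items you should spell out if you write this up in full: that each $S_i/G_i$ is $T_0$ (because $G_i$ acts by automorphisms of the finite specialization poset; a short antisymmetry argument using finite order does it), and that in the dimension step it is precisely the finiteness of $\dim S$ that excludes the infinite strict chain $s \leadsto sg \leadsto sg^2 \leadsto \cdots$, while the $T_0$ axiom alone handles the case where the chain cycles.
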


\begin{proof}
This is \cite[Lemma 3.2.3]{extensions} (note that openness of the quotient map is automatic for group quotients).
\end{proof}

We will also need the following:

\begin{lemma}\label{homeomorphism onto orbit}
Let $S$ be a spectral space with a continuous right action of a profinite group $G$. Let $s\in S$ be a point with no proper generalizations and let $G_{s} \subseteq G$ be the stabilizer of $s$. Then $G_{s}$ is closed and the natural map $G_{s}\backslash G \to S$ given by $G_{s}g \mapsto sg$ is a homeomorphism onto the orbit of $s$ (with the subspace topology).
\end{lemma}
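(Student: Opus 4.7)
The plan is to prove the two assertions in turn.

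For closedness of $G_s$, consider the continuous orbit map $\mu \colon G \to S$, $g \mapsto sg$. For any $g \notin G_s$ we have $sg \neq s$, and since right multiplication by $g$ is a self-homeomorphism of $S$, the point $sg$ also has no proper generizations. Thus $sg \notin \overline{\{s\}}$, so $\mu^{-1}(S \setminus \overline{\{s\}})$ is an open neighborhood of $g$ disjoint from $G_s$; hence $G \setminus G_s$ is open.

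The induced map $\bar\mu \colon G_s \backslash G \to S$ is then well-defined, continuous, injective, and has image $Y \defeq sG$. Since $G_s \backslash G$ is compact, to conclude that $\bar\mu \colon G_s \backslash G \to Y$ is a homeomorphism it suffices to show $\bar\mu$ is open. A base for the topology on $G_s \backslash G$ consists of images of cosets $g_0 N$ with $g_0 \in G$ and $N$ an open normal subgroup of $G$, which map to $sg_0 N$ under $\bar\mu$. So the task reduces to showing that each $sg_0 N$ is open in $Y$ with the subspace topology from $S$.

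Fix such a pair $(g_0, N)$. I would apply Lemma \ref{spectral quotient} to the restricted $N$-action on $S$ to obtain a spectral, open, and generalizing quotient map $\pi_N \colon S \to S/N$. Normality of $N$ ensures that the $G$-action descends to a continuous action of the finite group $Q \defeq G/N$ on $S/N$. Setting $\bar s \defeq \pi_N(s)$, the key claim is that the finite orbit $\bar s \cdot Q \subseteq S/N$ is a discrete subspace. First, $\bar s \cdot Q$ consists of maximal points of $S/N$: any generization $\bar t$ of $\bar s q$ lifts, via the generalizing property of $\pi_N$, to a generization $t \in S$ of some point of $\pi_N^{-1}(\bar s q) = sqN$, but every element of $sqN$ is a translate of the maximal point $s$ under a self-homeomorphism of $S$ and hence itself maximal, forcing $\bar t = \bar s q$. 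Second, a finite set of maximal points in any spectral space is automatically discrete in the subspace topology, since distinct maximal points cannot lie in each other's closures, so singletons are closed in the finite subspace, which is thus $T_1$ and hence discrete.

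Discreteness provides an open $U \subseteq S/N$ with $U \cap \bar s Q = \{\bar s g_0\}$; since $\pi_N(Y) = \bar s Q$ and $\pi_N^{-1}(\bar s g_0) = sg_0 N \subseteq Y$, we obtain
\[
\pi_N^{-1}(U) \cap Y = \pi_N^{-1}(\{\bar s g_0\}) = sg_0 N,
\]
exhibiting $sg_0 N$ as open in $Y$. The main hurdle will be the discreteness claim for the finite orbit $\bar s \cdot Q$, which is where the generalizing property of $\pi_N$ from Lemma \ref{spectral quotient} is crucial; once that is in hand, the rest is bookkeeping.
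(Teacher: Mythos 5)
Your proof is correct, and it takes a genuinely different route from the paper's. The paper argues that, since $G_s\backslash G$ is quasicompact, it suffices to show that the orbit $Y=sG$ is Hausdorff, and this follows because the subspace of rank-one points of a spectral space is Hausdorff: for two distinct rank-one points $x,y$ one has $\bigcap_{U\in J_x, V\in J_y} U\cap V=\emptyset$ (with $J_x$, $J_y$ the quasicompact opens containing $x$, $y$), and since each $U,V$ is closed in the compact Hausdorff constructible topology, some single pair already satisfies $U\cap V=\emptyset$. You instead show directly that the orbit map is open, by fixing an open normal $N\leq G$, passing to $S/N$ via Lemma~\ref{spectral quotient}, and using the generalizing property of $S\to S/N$ to show that the finite orbit $\bar s\cdot(G/N)$ consists of maximal points and is therefore discrete; pulling back a separating open set exhibits each $sg_0N$ as open in $Y$. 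Both proofs ultimately rest on the same observation that the orbit consists of maximal points; the paper's is shorter and uses only constructible-topology generalities, while yours needs Lemma~\ref{spectral quotient} but has the merit of constructing the relevant open sets explicitly. One minor remark: the quasicompactness of $G_s\backslash G$ plays no role in your reduction to openness (a continuous, open, injective map onto its image is always a homeomorphism), whereas it is genuinely needed in the paper's Hausdorffness route. Your explicit argument that $G_s$ is closed is also a nice addition, as the paper leaves that point implicit.
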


\begin{proof}
The map is continuous and bijective and $G_{s}\backslash G$ is quasicompact, so the orbit is quasicompact and it suffices to show that it is also Hausdorff. Since $G$ acts by homeomorphisms, all $sg$ have no proper generalizations. The result now follows since the subspace of points with no proper generalizations is Hausdorff by \cite[Tag 0904]{stacks-project}.
\end{proof}

\subsection{Structure sheaves}\label{structure sheaves}

In this section, we fix a perfectoid field $(K,K^{+})$ and a pseudouniformizer $\varpi\in K$ and work with diamonds over $\Spa(K,K^{+})$. Let $X$ be a locally spatial diamond over $(K,K^{+})$. Consider the quasi-pro-\'etale site $X_{\qp}$ which consists of all quasi-pro-\'etale maps $Y \to X$ of locally spatial diamonds (see \cite[Definition 10.1]{diamonds})
with the quasi-pro-\'etale topology \cite[Definition 14.1]{diamonds}. By the definition of diamonds, the perfectoid spaces which are quasi-pro-\'etale over $X$ form a basis of $X_{\qp}$. Since we are working over $(K,K^{+})$, we think of these as perfectoid spaces over $(K,K^{+})$, using the tilting equivalence. The assignment
$$ Y \mapsto \cO_{Y}^{+}(Y) $$
defines a sheaf $\cO_{X}^{+}$ (of $K^{+}$-modules) on perfectoid spaces in $X_{\qp}$ by \cite[Theorem 8.7]{diamonds} (indeed even for the v-topology). It extends uniquely to a sheaf on $X_{\qp}$ which we will denote by $\cO_{X}^{+}$. We also get the quotient sheaf $\cO_{X}^{+}/\varpi$, which is what we will be interested in. Note that if $f\colon Y \to X$ is a quasi-pro-\'etale morphism of locally spatial diamonds over $(K,K^{+})$, then $f^{\ast}(\cO^{+}_{X}/\varpi)=\cO^{+}_{Y}/\varpi$ since $Y_{\qp}$ is a slice of $X_{\qp}$.

\begin{remark}
We emphasize that the sheaf $\cO_{X}^{+}$ depends not only on the diamond $X$, but also on the structure map $X \to \Spa(K,K^{+})$. We have nevertheless chosen not to include $(K,K^{+})$ in the notation, in part to keep it simple, and also because all diamonds (and perfectoid spaces) in this paper will naturally live over a perfectoid field which should be clear from the context. We hope that this does not cause any confusion.
\end{remark}

We thank Peter Scholze for pointing out that the map in the following lemma should be an isomorphism, and not merely an almost isomorphism (the fact that it is an almost isomorphism suffices for the applications in this paper).

\begin{lemma}
Let $X$ be a locally spatial diamond over $(K,K^{+})$. Let $\nu \colon X_{\qp} \to X_{\et}$ be the natural morphism of sites. Then the natural map $\nu^{\ast}\nu_{\ast}(\cO^{+}_{X}/\varpi) \to \cO_{X}^{+}/\varpi$ is an isomorphism.
\end{lemma}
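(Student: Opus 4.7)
The plan is to check the isomorphism on stalks at a conservative family of points of $X_{\qp}$. For each $x\in |X|$, the associated geometric point $\bar x = \Spa(C,C^+)\to X$ from Lemma~\ref{geometricpt} is quasi-pro-\'etale, so it defines a topos-theoretic point of $X_{\qp}$ in the manner of \cite[Proposition 14.3]{diamonds}. The family $\{\bar x \mid x\in |X|\}$ is conservative on $X_{\qp}$ by essentially the same argument as in the Corollary following Lemma~\ref{geometricpt}, so it suffices to show that the map induces an isomorphism on stalks at each such $\bar x$.

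For the right-hand stalk, the pullback of $\cO_X^+$ along the map $\bar x \to X$ is the sheaf $\cO_{\bar x}^+$, whose value on $\bar x = \Spa(C,C^+)$ is $C^+$. Hence $(\cO_X^+/\varpi)_{\bar x}=C^+/\varpi$.

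For the left-hand stalk, the morphism of sites $\nu$ sends the quasi-pro-\'etale point $\bar x$ to the \'etale point associated to the cofinal inverse system $I_{\bar x}$ of \'etale neighborhoods of $\bar x$. Consequently,
\[
(\nu^*\nu_*(\cO_X^+/\varpi))_{\bar x} \;=\; \varinjlim_{U\in I_{\bar x}} (\nu_*(\cO_X^+/\varpi))(U) \;=\; \varinjlim_{U\in I_{\bar x}} (\cO_X^+/\varpi)(U),
\]
where the last equality uses that each $U\in I_{\bar x}$ is \'etale, hence quasi-pro-\'etale, over $X$. Localizing on $X$, we may assume that $X$ is spatial, in which case each $U\in I_{\bar x}$ is spatial, and $\bar x = \varprojlim_{U\in I_{\bar x}} U$ is a cofiltered limit of spatial diamonds with qcqs transition maps. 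The $H^0$ case of Proposition~\ref{invlimcoh} then identifies the colimit above with $(\cO_X^+/\varpi)(\bar x)=C^+/\varpi$.

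Unwinding the definitions, the natural map under consideration identifies these two copies of $C^+/\varpi$ via the identity, so the argument is complete. The main point requiring care is purely bookkeeping: one needs to verify that the image of $\bar x$ under $\nu$ is indeed the \'etale geometric point associated to $I_{\bar x}$, but this follows directly from the fact that stalks of a pullback sheaf are computed as colimits over the appropriate neighborhoods in the coarser site, combined with the explicit description of $I_{\bar x}$ as a cofinal system of \'etale neighborhoods of $\bar x$ in $X$.
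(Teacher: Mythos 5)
Your strategy is to check the natural map on stalks at the geometric points $\bar{x}$, but the proposal has two gaps, and the second one is circular.

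First, the conservativity claim. The Corollary following Lemma~\ref{geometricpt} provides a conservative family of points for $X_{\et}$, not for $X_{\qp}$, and it is not ``essentially the same argument'' to extend it. In fact the extension fails: already for $X=\Spa(C,\cO_C)$ with $C$ algebraically closed, the quasi-pro-\'etale site contains all profinite objects $X\times\ul{S}$, so $X_{\qp}^{\sim}$ is (roughly) a topos of condensed sets, and evaluation at the single object $\bar{x}=X$ does not detect whether a sheaf vanishes. So the stalk-checking reduction is not available on $X_{\qp}$.

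Second, and more fundamentally, the step invoking Proposition~\ref{invlimcoh} to conclude $\varinjlim_{U\in I_{\bar x}}(\cO_X^+/\varpi)(U)=C^+/\varpi$ begs the question. That proposition concerns an abelian \'etale sheaf $\cF_i$ on some $(X_i)_{\et}$, with the sheaves on the tower and on the limit defined as \emph{\'etale} pullbacks of $\cF_i$. The quantity $(\cO_X^+/\varpi)(U_j)$ is $(\cO_{U_j}^+/\varpi)(U_j)$, and while $\cO^+/\varpi$ is preserved under \emph{quasi-pro-\'etale} pullback, the assertion that $\cO_{U_j}^+/\varpi$ is the \emph{\'etale} pullback of $\cO_{U_i}^+/\varpi$ (equivalently, that $\cO^+/\varpi$ is an \'etale sheaf) is precisely the statement of the lemma. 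Applying Proposition~\ref{invlimcoh} instead to the genuine \'etale sheaf $\nu_*(\cO_X^+/\varpi)$ gives nothing new: in degree $0$, the conclusion is that the colimit equals the \'etale stalk of $\nu_*(\cO_X^+/\varpi)$ at $\bar{x}$, which is by definition this same colimit, and identifying it with $C^+/\varpi$ is again exactly the content of the lemma.

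The paper's proof avoids both issues. It first observes that the lemma is equivalent to $\cO_X^+/\varpi$ being an \'etale sheaf, then uses \cite[Theorem~14.12(ii)]{diamonds} to check this v-locally, reducing to $X$ strictly totally disconnected. There the vanishing of higher \'etale cohomology gives $(\cO_X^+/\varpi)(Y_i)=A_i^+/\varpi$ for affinoid pro-\'etale $Y_i=\Spa(A_i,A_i^+)$, after which the remaining computation is a direct comparison with the presheaf quotient of $\cO_X^+$ by $\varpi$. Some version of this reduction seems necessary to break the circularity in your colimit computation; it cannot be replaced by an appeal to Proposition~\ref{invlimcoh}.
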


\begin{proof}
First, we remark that the category of \'etale sheaves on $X$ embeds fully faithfully into the category of quasi-pro-\'etale sheaves on $X$ by \cite[Proposition 14.8]{diamonds} via $\nu^{\ast}$. Thus, we may rephrase the Lemma as saying that $\cO_{X}^{+}/\varpi$ is an \'etale sheaf on $X$. By \cite[Theorem 14.12(ii)]{diamonds}, this may be checked quasi-pro-\'etale locally on $X$ (indeed v-locally), so we may assume that $X$ is a strictly totally disconnected perfectoid space. In this case, quasi-pro-\'etale maps are the same as pro-\'etale maps by definition, and the topos of the site $X_{\proet}^{\mathrm{aff}}$ of affinoid pro-\'etale maps $Y \to X$ is equivalent to the topos of $X_{\qp}$, so we may work on $X_{\proet}^{\mathrm{aff}}$. So, let $Y=\varprojlim_{i}Y_{i} \to X$ be an affinoid pro-\'etale map with $Y=\Spa(A,A^{+})$, and with $Y_{i}=\Spa(A_{i},A_{i}^{+})$ \'etale over $X$. Let $F$ denote the sheaf quotient of $\cO_X^+$ by $\varpi$ on $X_\et$. Since $X$ has no higher \'etale cohomology, $F(Y_i)=A_i^+/\varpi$ for all $i$. We may then compute
\[
\nu^\ast F(Y) = \varinjlim_i F(Y_i) = \varinjlim_i A_i^+/\varpi = A^+/\varpi.
\]   
In particular, $\nu^\ast F$ is equal to the presheaf quotient of $\cO_X^+$ by $\varpi$ on $X_{\proet}^{\mathrm{aff}}$. But $\nu^\ast F$ is also a sheaf, so we must have $\nu^\ast F= \cO_X^+/\varpi$, which proves the lemma.
\end{proof}

Thus, we may think of $\cO_{X}^{+}/\varpi$ as a sheaf on $X_{\et}$. In this paper we will mostly be interested in the corresponding almost sheaf $(\cO^{+}_{X}/\varpi)^{a}$. We note that, on $X_{\qp}$, $(\cO^{+}_{X}/\varpi)^{a}$ can be described as the almost sheaf given by
$$ \left( \Spa(R,R^{+}) \to X \right) \mapsto (R^{+}/\varpi)^{a} $$
on affinoid perfectoid $\Spa(R,R^{+}) \to X \in X_{\qp}$. This follows from \cite[Proposition 8.5(iii)]{diamonds}, since $\Spa(R,R^{+})_{\qp}$ and $\Spa(R,R^{+})_{\proet}$ (defined as in \cite[Definition 8.1(ii)]{diamonds}) define the same topos. We have the following compatibility with the usual $\cO_{X}^{+}/\varpi$ on rigid spaces.

\begin{lemma}\label{compatibility of structure sheaves}
Let $Z$ be a rigid space over $(K,\cO_{K})$ and consider the usual sheaf $\cO_{Z}^{+}/\varpi$ on $Z_{\et}$. Then, under the equivalence $Z_{\et}\cong Z_{\et}^{\lozenge}$, $(\cO_{Z}^{+}/\varpi)^{a}=(\cO_{Z^{\lozenge}}^{+}/\varpi)^{a}$.
\end{lemma}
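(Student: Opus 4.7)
The plan is to show that the natural comparison map $(\cO_Z^+/\varpi)^a \to \nu_*(\cO_{Z^{\lozenge}}^+/\varpi)^a$ between almost sheaves on $Z_{\et}\cong Z^{\lozenge}_{\et}$ is an isomorphism, where $\nu \colon Z^{\lozenge}_{\qp} \to Z^{\lozenge}_{\et}$ is as in the preceding lemma. The map itself is induced from the observation that for any affinoid $Y=\Spa(A',A'^+) \to Z$ which is étale and any quasi-pro-\'etale cover of $Y^{\lozenge}$ by an affinoid perfectoid $\Spa(B,B^+) \to Y^{\lozenge}$, the ring map $A'^+ \to B^+$ is a section of $\cO_{Z^{\lozenge}}^+$ over that cover. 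Since both sides are almost sheaves on $Z_{\et}$, it suffices to check equality on the basis of affinoid \'etale opens. On such an open, Huber's theory tautologically gives $(\cO_Z^+/\varpi)^a(Y) = (A'^+/\varpi)^a$.

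For the diamond side, I would pick a quasi-pro-\'etale cover of $Y^{\lozenge}$ by an affinoid perfectoid $\tilde Y = \Spa(B,B^+)$; this exists since $Y^{\lozenge}$ is a locally spatial diamond. Letting the iterated fiber products $(\tilde Y_{/Y^{\lozenge}})^n$ be affinoid perfectoid $\Spa(B_n,B_n^+)$, the defining property of $\cO_{Z^{\lozenge}}^+$ on affinoid perfectoids and the descent property of almost sheaves for quasi-pro-\'etale covers identify $(\cO_{Y^{\lozenge}}^+/\varpi)^a(Y^{\lozenge})$ with the equalizer of
\[
(B^+/\varpi)^a \rightrightarrows (B_2^+/\varpi)^a.
\]
To see that this equalizer equals $(A'^+/\varpi)^a$, I would invoke the almost acyclicity of $\cO^+/\varpi$ for perfectoid covers of rigid-analytic spaces, a version of Scholze's primitive comparison result, which asserts that the derived sections of $(\cO^+/\varpi)^a$ on such a cover are concentrated in degree $0$ with value $(A'^+/\varpi)^a$. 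Chasing through the construction shows that the resulting isomorphism is the comparison map.

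The main obstacle is that $Z$ is an arbitrary rigid space, not necessarily smooth, so one has to invoke the almost acyclicity result in appropriate generality; this is available through the diamonds framework, which treats $\cO^+/\varpi$ uniformly as a v-sheaf whose values on affinoid perfectoids are fixed by definition. A subsidiary point is ensuring that the map to $Y^{\lozenge}$ may be taken quasi-pro-\'etale and surjective by an affinoid perfectoid space; this is a standard feature of locally spatial diamonds. Once both ingredients are in place, naturality of the comparison identifies the right-hand equalizer with $(A'^+/\varpi)^a$ in a way compatible with the tautological map from the left-hand side, yielding the claimed equality of almost sheaves.
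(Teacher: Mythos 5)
The main gap is in the assertion that ``Huber's theory tautologically gives $(\cO_Z^+/\varpi)^a(Y) = (A'^+/\varpi)^a$'' for an affinoid \'etale $Y=\Spa(A',A'^+)$. This is not a tautology. The sheaf $\cO_Z^+/\varpi$ on $Z_{\et}$ is the quotient \emph{sheaf}, not the presheaf quotient, and the long exact sequence attached to $0 \to \cO_Y^+ \to \cO_Y^+ \to \cO_Y^+/\varpi \to 0$ gives
\[
0 \to A'^+/\varpi \to H^0_{\et}(Y,\cO^+_Y/\varpi) \to H^1_{\et}(Y,\cO^+_Y)[\varpi] \to 0,
\]
so identifying the value of the almost sheaf with $(A'^+/\varpi)^a$ requires knowing that the $\varpi$-torsion in $H^1_{\et}(Y,\cO^+_Y)$ is almost zero. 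Controlling this is exactly the nontrivial content you invoke later (under the heading of a ``primitive comparison result'') to compute the diamond side via the \v{C}ech equalizer. In other words, the same theorem you use to evaluate $\nu_*(\cO_{Z^\lozenge}^+/\varpi)^a$ on $Y^{\lozenge}$ is needed to evaluate $(\cO_Z^+/\varpi)^a$ on $Y$; as written, your argument treats one side as free and pays for the other, which is a genuine gap.

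The paper sidesteps this entirely by never evaluating either sheaf on affinoid \emph{rigid} opens. Instead, using \cite[Corollary~3.17]{scholze-padicHodge}, one computes the values of $(\cO_Z^+/\varpi)^a$ on Scholze's pro-\'etale site $Z_{\proet}$, where affinoid perfectoid objects form a basis; one then observes that $Z_{\proet}$ includes into $Z^\lozenge_{\qp}$ compatibly with covers, and that on affinoid perfectoids $\Spa(R,R^+)$ \emph{both} sheaves give $(R^+/\varpi)^a$ --- on the rigid side by \cite[Lemma~4.10]{scholze-padicHodge}, and on the diamond side by the defining description given just before the lemma. The comparison thus happens at the level of a common basis of affinoid perfectoids, which is what makes the identification genuinely tautological. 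Your argument can be salvaged by replacing the ``tautological by Huber'' step with the pro-\'etale comparison and by making the comparison on affinoid perfectoids rather than affinoid rigid opens --- at which point it collapses to the paper's proof. Finally, the result you need is the pro-\'etale/\'etale almost comparison for $\cO^+/\varpi$, which is a different statement from the primitive comparison theorem (the latter compares $H^i_{\et}(X,\F_p)\otimes\cO_C/p$ with $H^i_{\et}(X,\cO^+_X/p)$), so the attribution should be corrected.
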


\begin{proof}
By \cite[Corollary 3.17]{scholze-padicHodge} we may compute the values of $(\cO_{Z}^{+}/\varpi)^{a}$ on the pro-\'etale site $Z_{\proet}$ as defined in \cite{scholze-padicHodge}; by the equivalence $Z_{\et}\cong Z^{\lozenge}_{\et}$ and Proposition \ref{inverselimit}, $Z_{\proet}$ is naturally a subcategory of $Z^{\lozenge}_{\qp}$, and any cover in $Z_{\proet}$ is a cover in $Z^{\lozenge}_{\qp}$ (but we make no assertion about the converse). By \cite[Lemma 4.10]{scholze-padicHodge}, $(\cO_{Z}^{+}/\varpi)^{a}(U)=(R^{+}/\varpi)^{a}$ on any affinoid perfectoid $U=\Spa(R,R^{+}) \to Z \in Z_{\proet}$. Since these form a basis for $Z_{\proet}$ and are also affinoid perfectoid as elements of $Z^{\lozenge}_{\qp}$, the lemma follows from the description of $(\cO_{Z^{\lozenge}}^{+}/\varpi)^{a}$ given just before this lemma.
\end{proof}

Let $X$ be a locally spatial diamond over $(K,K^{+})$. Since $(\cO^{+}_{X}/\varpi)^{a}$ on $X_{\et}$ is preserved under pullback by quasi-pro-\'etale maps, it will be convenient for us to record some (perhaps a priori surprising) examples of quasi-pro-\'etale maps.

\begin{prop}\label{finite maps}
Let $f \colon Y \to X$ be a finite map of rigid spaces over $(K,\cO_{K})$. Then the associated map $f^{\lozenge} \colon Y^{\lozenge} \to X^{\lozenge}$ of diamonds is quasi-pro-\'etale.
\end{prop}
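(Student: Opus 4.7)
The approach is to use the following criterion, essentially built into the definition of quasi-pro-\'etaleness in \cite[Definition 10.1]{diamonds}: a map of locally spatial diamonds is quasi-pro-\'etale if and only if, after base change along any map $T \to X^{\lozenge}$ from a strictly totally disconnected perfectoid space $T$, the resulting projection is pro-\'etale. The property is local on the target, so I may reduce to the affinoid case $X=\Spa(A,A^{+})$, $Y=\Spa(B,B^{+})$ with $B$ a finite $A$-algebra. Moreover, since perfectoid rings are reduced, the diamond functor is insensitive to nilpotent thickenings and I may replace $B$ by $B_{\red}$ from the outset.

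Now fix a strictly totally disconnected perfectoid space $T=\Spa(C,C^{+})$ with a map $T \to X^{\lozenge}$; by the tilting correspondence this corresponds to an untilt $T^{\sharp}=\Spa(C^{\sharp},C^{\sharp,+})$ together with a map of adic spaces $T^{\sharp} \to X$. Since $B$ is finite (hence already complete) over $A$, the fiber product $Y\times_{X}T^{\sharp}$ is represented by $\Spa(B\otimes_{A}C^{\sharp},D^{+})$ for the evident integral structure, and its associated diamond represents $Y^{\lozenge}\times_{X^{\lozenge}}T$. The task reduces to showing that $\Spa(B\otimes_{A}C^{\sharp},D^{+})^{\lozenge} \to T$ is pro-\'etale.

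This is where I would do the real work. The hypothesis on $T$ means that $|T^{\sharp}|$ is totally disconnected and the completed residue field at every point of $T^{\sharp}$ is algebraically closed. Pointwise then, the fiber of $B\otimes_{A}C^{\sharp}$ at any $x\in|T^{\sharp}|$ is a finite $k(x)$-algebra over an algebraically closed field, so (after killing nilpotents, which the diamond functor does automatically) it is a finite product of copies of $k(x)$. The total disconnectedness of $T$ together with Lemma~\ref{spectral quotient}-style control over the spectral space then allows one to spread this pointwise trivialization to a finite clopen decomposition: one partitions $|T|$ into clopen pieces on which the number of points in the geometric fiber is constant, and over each such piece the finite map is actually finite \'etale. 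Finite \'etale maps are pro-\'etale, and pro-\'etaleness glues over the clopen decomposition, giving the required conclusion.

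\textbf{Main obstacle.} The delicate step is the passage from pointwise to clopen-local trivialization: one has to verify that the rank stratification on $|T|$ really is by clopen (not merely constructible) subsets, and that on each stratum the finite algebra $B\otimes_{A}C^{\sharp}$ is genuinely \'etale rather than merely generically so. This hinges on the combination of (i) algebraic closedness of all residue fields, which forces any finite \'etale cover to be split on each connected component of $T^{\sharp}$, with (ii) the rich supply of idempotents in $C^{\sharp}$ coming from total disconnectedness, which is what globalizes a pointwise splitting to a clopen one. A subsidiary issue is separating out nilpotents in $B\otimes_{A}C^{\sharp}$ before invoking the \'etale triviality, but this is handled cleanly by the reduction to the reduced case above.
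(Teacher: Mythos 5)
Your framework --- base change to a test object, use that $(\cdot)^{\lozenge}$ commutes with fiber products and kills nilpotents, then check pro-\'etaleness --- is the right skeleton, but you are making the problem harder than necessary by reducing only to strictly totally disconnected test spaces via the raw Definition 10.1 of \cite{diamonds}. The globalization issue you flag as the ``main obstacle'' is real, and your sketch of how to overcome it is not yet a proof: you would need to verify that the rank stratification is genuinely clopen and not merely constructible, and that the finite morphism is actually finite \'etale on each clopen rank stratum, neither of which is immediate. The paper sidesteps all of this by invoking \cite[Proposition 13.6]{diamonds}, which says that for a map of locally spatial diamonds it suffices to check that the pullback to $\Spa(C,\cO_{C})$ is pro-\'etale for every map from a geometric point, where $(C,\cO_{C})$ is a complete algebraically closed perfectoid field. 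Over such a point the difficulty disappears entirely: a reduced finite algebra over the algebraically closed nonarchimedean field $C^{\sharp}$ is a finite product of copies of $C^{\sharp}$, so $Z^{\red}$ is a finite disjoint union of copies of $\Spa(C^{\sharp},\cO_{C^{\sharp}})$, which is visibly pro-\'etale (indeed split finite \'etale). Your route is probably completable with care, but swapping in the stronger criterion turns the whole argument into precisely the pointwise observation you already made, with no spreading-out step left to do.
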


\begin{proof}
  By \cite[Proposition 13.6]{diamonds}, we need to check that the pullback of $f^\lozenge$ along every $\Spa(C,\cO_{C}) \to X^{\lozenge}$, with $(C,\cO_{C})$ complete algebraically closed, is pro-\'etale. By definition of $X^{\lozenge}$, such a map corresponds precisely to a map $\Spa(C^{\sharp},\cO_{C^{\sharp}}) \to X$ of adic spaces over $(K,\cO_{K})$, where $C^{\sharp}$ is the untilt of $C$ over $K$. Since the diamondification functor commutes with fiber products, we are left with checking that if $Z \to \Spa(C^{\sharp},\cO_{C^{\sharp}})$ is finite, then $Z^{\lozenge} \to \Spa(C^{\sharp},\cO_{C^{\sharp}})$ is pro-\'etale. But, since $Z^{\lozenge}=(Z^\red)^{\lozenge}$ (by definition and reducedness of perfectoid rings) and $Z^\red$ is a finite set of copies of $\Spa(C^{\sharp},\cO_{C^{\sharp}})$, this is true.
\end{proof}

Note that in the case of closed immersions one can be more explicit; see (the argument in) \cite[Remark 7.9]{diamonds}.

\medskip

For our next result, we use the notion of a \emph{rank one point}, following \cite{extensions}. Let $X$ be a locally spatial diamond (not necessarily over $(K,K^{+})$). By definition (\cite[Definition 3.2.1]{extensions}), $x\in |X|$ is a rank one point if it satisfies any of the following equivalent conditions:
\begin{enumerate}
\item $x$ has no proper generalizations in $|X|$;

\item There is a perfectoid field $(L,\cO_{L})$ and a map $\Spa(L,\cO_{L}) \to X$ with topological image $x$;

\item $x= \bigcap |U|$ where $|U|$ ranges over the quasicompact opens of $|X|$ containing $x$.
\end{enumerate}
Equivalently, one may define the set of rank one points as the image of the ``Berkovich space'' $|X|^{B}$ inside $|X|$; see \cite[Definition 13.7]{diamonds}. We then get the following version of \cite[Lemma 4.4.2]{caraiani-scholze}.

\begin{lemma}\label{rankonepointsisom}
Let $X$ be a locally spatial diamond over $(K,K^{+})$ and let $\iota \colon U \to X$ be a quasicompact open immersion such that $|U|\subseteq |X|$ contains all rank one points of $|X|$. Let $j \colon V \to X$ be a partially proper open immersion; write $j$ also for the immersion $U \cap V \to U$. Then, for any $i$, the natural map
$$ H^{i}_{\et}(X,j_{!}(\cO^{+}_{V}/\varpi)^{a} ) \to H^{i}_{\et}(U,j_{!}(\cO^{+}_{U\cap V}/\varpi)^{a}) $$
is an isomorphism.
\end{lemma}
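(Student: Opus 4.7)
The plan is to show that the adjunction map $\alpha \colon \mathcal{F} \to R\iota_{\ast} \iota^{\ast} \mathcal{F}$, with $\mathcal{F} \defeq j_{!}(\cO^{+}_{V}/\varpi)^{a}$, is an almost isomorphism in $D(X_{\et})$; applying $R\Gamma(X, -)$ to $\alpha$ then produces the restriction map in the statement and proves the lemma. First I would reduce to checking $\alpha$ on geometric stalks via Proposition \ref{basechange} together with Lemma \ref{shriekbasechange}(1). Let $\bar{x} = \Spa(C, C^{+}) \to X$ be a geometric point with closed image $x \in |X|$. If $x \in |U|$, then $\alpha_{\bar{x}}$ is trivially an isomorphism. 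Otherwise, the hypothesis on $U$ forces $x$ not to be a rank one point, so $C^{+} \subsetneq \cO_{C}$, while $U_{\bar{x}} \defeq U \times_{X} \bar{x}$ is nonempty and contains the unique rank one point of $\bar{x}$.

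Next I would split on the position of $\bar{x}$ relative to $V$, using partial properness of $j$ exactly as in the proof of Lemma \ref{shriekbasechange}(2): either $\Gen(x) \cap |V| = \emptyset$, in which case $\bar{x}$ does not meet $V$ and both $\mathcal{F}_{\bar{x}}$ and $\iota^{\ast}\mathcal{F}|_{U_{\bar{x}}}$ vanish; or $\Gen(x) \subseteq |V|$, so $\bar{x} \to X$ factors through $V$, and then $\mathcal{F}_{\bar{x}} = (C^{+}/\varpi)^{a}$ while $\iota^{\ast}\mathcal{F}|_{U_{\bar{x}}} = (\cO^{+}/\varpi)^{a}$ by Lemma \ref{shriekbasechange}(1). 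Since $U_{\bar{x}}$ is a quasicompact open of the affinoid perfectoid $\Spa(C, C^{+})$ containing the rank one point $\Spa(C, \cO_{C})$, it is itself affinoid perfectoid of the form $\Spa(C, C'^{+})$ for some open integrally closed subring $C^{+} \subseteq C'^{+} \subseteq \cO_{C}$. By \cite[Theorem 8.7]{diamonds} applied to $U_{\bar{x}}$, $R\Gamma(U_{\bar{x}}, (\cO^{+}/\varpi)^{a}) = (C'^{+}/\varpi)^{a}$ concentrated in degree $0$, so the lemma reduces to the almost equality $(C^{+}/\varpi)^{a} = (C'^{+}/\varpi)^{a}$ as almost $K^{+}/\varpi$-modules.

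To establish this almost equality, I would use the key fact that $R^{\circ\circ} \subseteq R^{+}$ for any perfectoid Huber pair $(R, R^{+})$. Openness of $R^{+}$ provides $\varpi^{N} R^{\circ} \subseteq R^{+}$ for some $N \geq 1$; then for any $a \in R^{\circ}$ and $k \geq 0$, the identity $(\varpi^{1/p^{k}} a)^{p^{k} N} = \varpi^{N} a^{p^{k} N} \in \varpi^{N} R^{\circ} \subseteq R^{+}$, together with integral closedness of $R^{+}$ in $R^{\circ}$, forces $\varpi^{1/p^{k}} a \in R^{+}$. Taking the union over $k$ yields $R^{\circ\circ} = \bigcup_{k} \varpi^{1/p^{k}} R^{\circ} \subseteq R^{+}$. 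Since $\mathfrak{m}_{K} \subseteq K^{\circ\circ} \subseteq R^{\circ\circ}$, the quotients $\cO_{C}/C^{+}$ and $\cO_{C}/C'^{+}$ are annihilated by $\mathfrak{m}_{K}$, giving $(C^{+}/\varpi)^{a} = (\cO_{C}/\varpi)^{a} = (C'^{+}/\varpi)^{a}$ as desired. The main obstacle I anticipate is the case analysis at geometric points with $x \notin |U|$, in particular the partial-properness dichotomy; once the key fact $R^{\circ\circ} \subseteq R^{+}$ is in hand, the rest of the comparison is essentially formal.
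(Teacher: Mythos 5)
Your proof is correct and follows essentially the same route as the paper: reduce the adjunction map to geometric stalks via Proposition \ref{basechange} and Lemma \ref{shriekbasechange}, note that after base change $U$ pulls back to some $\Spa(C,C'^+)$ with $C^+\subseteq C'^+\subseteq\cO_C$ while partial properness forces the pullback of $V$ to be empty or everything, and then check $C^+/\varpi\to C'^+/\varpi$ is an almost isomorphism. Two cosmetic remarks: the inclusion $R^{\circ\circ}\subseteq R^+$ holds for any Huber pair by openness plus integral closedness (no perfectoidness needed, so the $\varpi^{1/p^k}$ detour is superfluous), and the vanishing of higher \'etale cohomology on $\Spa(C,C'^+)$ is better attributed to the splitting of \'etale covers (or \cite[Prop.~7.13]{scholze-perfectoid}) than to \cite[Theorem 8.7]{diamonds}, which only gives the $H^0$ statement.
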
 

\begin{proof}
It suffices to prove that the adjunction map $j_{!}(\cO_{V}^{+}/\varpi)^{a} \to R\iota_{\et,\ast}j_{!}(\cO_{U\cap V}^{+}/\varpi)^{a}$ is an isomorphism, which we may check on stalks on geometric points $\ol{x}$ of $X$. By Proposition \ref{basechange} and Lemma \ref{shriekbasechange}, this reduces us to proving the special case of the proposition when $X=\Spa(C,C^{+})$ for some algebraically closed extension $(C,C^{+})$ of $(K,K^{+})$. Since $U$ contains all rank one points of $X$, we must have $U=\Spa(C,C^{++})$ for some open bounded valuation subring $C^{++} \supseteq C^{+}$. Since $j$ is partially proper, we either have $V=X$ or $V=\emptyset$. In the latter case, the lemma is trivial. In the former case, we are left with checking that $C^{+}/\varpi \to C^{++}/\varpi $ is an almost isomorphism (which it is), and that higher cohomology vanishes, which it does since all \'etale covers of $X$ are split.
\end{proof}

\section{Preliminaries on Shimura varieties}
\label{recollections}
The following notation will be used through Section~\ref{vanishing}. 
We fix an integer $n\geq 1$. Let $F$ be a CM field with totally real subfield $F^+$; we allow both the totally real case $F=F^+$ and the imaginary CM case $F\neq F^+$. Let $f = [F:\QQ]$ and let $c$ denote complex conjugation in $\Gal(F/F^+)$ (this is independent of the choice of embedding $F \hookrightarrow \C$); we will also let $c$ denote complex conjugation on $\C$.
Let $p$ be a rational prime that \emph{splits completely} in $F$.  We fix an algebraically closed and complete extension $C$ of $\Q_p$ with ring of integers $\cO_C$ (one may take $C = \C_p$, for example). We fix a choice of $i=\sqrt{-1}\in \C$.

\subsection{Groups and Shimura varieties}\label{3.1}
\subsubsection{Symplectic and unitary similitude groups}\label{subsect:unitary_group} Let $\Psi_n$ denote the $n \times n$ matrix with $1$'s on the anti-diagonal and $0$'s elsewhere. We set
\[
J_n = \begin{pmatrix}
0 & \Psi_n \\ -\Psi_n & 0
\end{pmatrix}.
\]
Consider the free $\cO_F$-module $L \defeq \cO_F^{2n}$ of rank $2n$.
Then $J_n$ determines a non-degenerate alternating pairing
\[
  \psi \colon L \times L \rightarrow \Z,\quad \psi(x,y) \defeq \mathrm{Tr}_{\cO_F/\Z}(x^t J_n y^c),
\]
which is perfect when localized at any rational prime which is unramified in $F$. We define $G$ to be the group scheme over $\Z$ given on $R$-points, for $R$ a ring, by
\[
G(R) = \{ (g,r) \in \Aut_{\cO_F\otimes_{\Z}R}( L \tensor_\Z R) \times R^\times \mid \psi(gx,gy)=r \cdot \psi(x,y) \text{ for all $x,y \in L \tensor_\Z R$} \}.
\]
When $F$ is totally real, $G$ is a symplectic similitude group; when $F$ is imaginary CM, $G$ is a unitary similitude group (in both cases, we note that the similitude factor is required to lie in $\GG_{m,\Z}$). We remark that in a pair $(g,r)\in G(R)$, $r$ is determined uniquely by $g$ and hence projection onto the first factor defines an injection $G \to \mathrm{Res}_{\cO_F/\Z}(\GL_{2n,\cO_F})$. We will use this to represent elements of $G$ by $2 \times 2$ block $n\times n$ matrices, as with $J_n$ above. $G$ is manifestly a subgroup scheme of the \emph{symplectic similitude group} $\tG/\Z$, defined on $R$-points by 
\[
\tG(R) = \{ (g,r) \in \Aut_{R}( L \tensor_\Z R) \times R^\times \mid \psi(gx,gy)=r \cdot \psi(x,y) \text{ for all $x,y \in L \tensor_\Z R$} \}.
\]
Since $\psi$ is perfect when localized at primes $\ell$ unramified in $F$, we see that $\tG_{\Z_{(\ell)}} \cong \mathrm{GSp}_{2nf, \Z_{(\ell)}}$ for those $\ell$. 

\subsubsection{Shimura data}\label{subsubsect:Sh_datum}
In this section we recall the integral PEL data attached to $G$ and $\tG$, starting with $G$. In addition to the $\cO_F$-module $L$ and the pairing $\psi$, we need an $\R$-algebra homomorphism
\[
h \colon \C \to \End_{\cO_F \otimes_\Z \R}(L\otimes_\Z \R)
\] 
such that $\psi(h(z)x,y)=\psi(x,h(z^c)y)$ and such that $(x,y) \defeq \psi(x,h(i)y)$ is symmetric and positive definite. We set $h(i) = -J_n$; this uniquely determines the $\R$-algebra homomorphism $h$ and one checks by direct calculation that it has the required properties. The triple $(L,\psi,h)$ is then an integral PEL datum for $\cO_F$ according to \cite[Definition 1.2.13]{lan-thesis} (where this notion is called a PEL-type $\cO_F$-lattice). Restricting $h$ to $\C^\times$ gives a homomorphism $h : \SS \to G_{\R}$ of real algebraic groups, where $\SS = \mathrm{Res}_{\C/\R}\GG_{m,\C}$ is the Deligne torus. The Shimura datum of $G$ is then $(G_\Q, X)$, where $X$ is the $G(\R)$-conjugacy class of $h$. 

\medskip

To define the integral PEL datum attached to $\tG$, we simply define $\wt{h} : \C \to \End_{\R}(L\otimes_\Z \R)$ by postcomposing $h$ with the natural map $\End_{\cO_F \otimes_\Z \R}(L\otimes_\Z \R) \to \End_{ \R}(L\otimes_\Z \R)$. $(L,\psi,\wt{h})$ is then an integral PEL datum for $\Z$, and the corresponding Shimura datum $(\tG_\Q,\wt{X})$ is a Siegel Shimura datum (here $\wt{X}$ is the $\tG(\R)$-conjugacy class of $\wt{h}$). 

\medskip

We end with a short discussion of the Hodge cocharacter. Recall that the Hodge cocharacter for $G$ is given by the composition
\[ \mu^\prime \colon \GG_{m,\CC} \xrightarrow{z \mapsto (z,1)} \GG_{m,\CC} \times \GG_{m,\CC} \simeq \SS_\CC \xrightarrow{h_\CC} G_\CC. \] 
Using the definition, one checks (by a calculation entirely analogous to the classical case of $\GL_{2,\Q}$) that $\mu^\prime$ is conjugate over $\C$ to the cocharacter $\mu : \GG_{m,\Z} \to G$ defined by 
\[
\mu(z) = \begin{pmatrix} zI_n & 0 \\ 0 & I_n \end{pmatrix},
\]
\medskip
where $I_n$ denotes the $n \times n$ identity matrix. In particular, the reflex field of $(G_\Q,X)$ is $\Q$. By composing $\mu$ with $G \to \tG$, we get a Hodge cocharacter $\wt{\mu}$ for $\tG$. 

\subsubsection{Shimura varieties and canonical models}
Given a neat open compact subgroup $K$ of $G(\AA_f)$, the Shimura variety for $G$ with level $K$ is the double quotient 
\[
G(\QQ) \backslash X \times G(\AA_f) / K,
\]
which has the structure of a complex manifold. We will only consider neat $K$. As is well known, the above double quotient may be viewed as the complex points of a moduli space parametrizing abelian varieties with certain extra structures. We briefly recall these moduli spaces over $\Q$ (for general $K$) and $\Z_{(p)}$ (when $K=K^p G(\Zp)$ with $K^p \sub G(\A_f^p)$), following Deligne and Kottwitz. We refer to \cite[\S 5]{kottwitz-shimura} and \cite[\S 1-2]{lan-thesis} for more detailed information and proofs.

\medskip

Over $\Q$, initially consider as test objects the category of connected, locally noetherian $\QQ$-schemes. For any such test object $S$, one defines a functor $S \mapsto X_K(S)$ by letting $X_K(S)$ be the set of equivalence classes of quadruples $(A,\lambda, \iota, \eta K)$, where
\begin{itemize}
 \item $A$ is an abelian scheme over $S$;
 \item $\lambda\colon A \rightarrow A^\vee$ is a polarization of $A$ (i.e.\ an isogeny inducing a polarization over each geometric fiber);
 \item $\iota\colon F \hookrightarrow \End^0(A) \defeq \End(A)\tensor_\ZZ \QQ$
       is a $\Q$-algebra homomorphism satisfying $\lambda \circ \iota(x^c)=\iota(x)^\vee \circ \lambda$ for all $x\in F$, where $-^\vee$ denotes the dual quasi-isogeny;
 \item For any geometric point $\bar s$ of $S$, $\eta K$ is a $\pi_1^{\et}(S,\bar s)$-invariant $K$-orbit of $F \tensor_\QQ \AA_f$-isomorphisms
       \[
	\eta\colon L \tensor_\Z \AA_f \stackrel{\sim}{\rightarrow} V_f(A_{\bar{s}})
       \]
       identifying the pairing $\psi \tensor_\Z \AA_f$ with the $\lambda_{\bar{s}}$-Weil pairing up to an $\AA_f^\times$-multiple, 
       where $V_f(A_{\bar{s}})$ is the rational adelic Tate module of the geometric fiber $A_{\bar{s}}$ at $\bar s$. 
 \end{itemize} 
 
The quadruples $(A, \lambda, \iota, \eta \widetilde{K})$ in 
this moduli description are further required to satisfy Kottwitz's determinant condition; we refer to \cite[\S 5]{kottwitz-shimura} for the precise formulation of this.
Two quadruples $(A, \lambda, \iota, \eta K)$ and $(A', \lambda', \iota', \eta' K)$ are equivalent if there is a 
quasi-isogeny between $A$ and $A'$, which is compatible with the 
polarizations $\lambda$ and $\lambda'$ up to a $\QQ^\times$-multiple, respects the $F$-actions induced by $\iota$ 
and $\iota'$, and takes the $K$-orbit $\eta K$ to $\eta' K$.

\medskip

This moduli problem can be extended to locally noetherian $\Q$-schemes $S$ which are not necessarily connected by taking a disjoint union over the sets corresponding to the connected components of $S$, and is then representable by a smooth quasi-projective scheme $X_K$ over $\QQ$, whose $\CC$-points $X_K(\CC)$ can be canonically identified with the double quotient $G(\QQ) \backslash X \times G(\AA_f) / K$.%
\footnote{In our case, since $\dim_F V =2n$ is even, the Hasse principle holds and we get a single copy of the
double quotient \cite[\S 7]{kottwitz-shimura}.} We recall that there is a right action of $\tG(\A_f)$ on the tower $(X_K)_K$
by precomposition $\eta\mapsto \eta\circ g$ on the level structures, which we will make much use of later. We also note in passing that our moduli problem uses a ``definition by isogeny classes'' \cite[\S 1.4.2]{lan-thesis} whereas, for example, \cite{scholze-galois} (which we will compare with later) uses a ``definition by isomorphism classes'' \cite[\S 1.4.1]{lan-thesis} when $K \sub G(\widehat{\Z})$. These two formulations are equivalent, as demonstrated in \cite[\S 1.4.3]{lan-thesis}.

\medskip 

When $K=K^p G(\Zp)$ with $K^p \sub G(\A_f^p)$, Kottwitz has constructed a smooth quasi-projective (integral, canonical) model of $X_K$ over $\Z_{(p)}$, which represents an extension of the above moduli problem (suitably modified) to locally Noetherian $\Z_{(p)}$-schemes. We refer to \cite[\S 5]{kottwitz-shimura} for the details. By abuse of notation, we will also denote this model by $X_K$. Finally, the whole discussion above holds with $G$ replaced by $\tG$; in this case the models were constructed by Mumford. We will use the notation $\wt{X}_{\tK}$ for the (models of the) Shimura varieties for $\tG$, for $\tK \sub \tG(\A_f)$ a neat open compact subgroup. Whenever $K \sub \tK$, we have a natural finite map $X_K \to \wt{X}_{\tK}$.

\subsubsection{Compactifications of canonical models}
In this paper we will need to consider two types of compactifications of our Shimura varieties. The first is the minimal (Satake--Baily--Borel) compactification. We will denote all minimal compactifications by a superscript $-^\ast$. For the varieties $X_K$ and $\wt{X}_{\tK}$ over $\Q$, the minimal compactifications $X_K^\ast$ and $\wt{X}_{\tK}^\ast$ were constructed by Pink \cite{pink-thesis}. Over $\Z_{(p)}$, the minimal compactifications were constructed by Faltings--Chai \cite{faltings-chai} and Lan \cite{lan-thesis}. They are normal, and projective over the base. The right action of $G(\A_f)$ and $\tG(\A_f)$ extends to the tower of minimal compactifications $(X^\ast_K)_K$ and $(\wt{X}^\ast_{\tK})_{\tK}$, respectively, and when $K \sub \tK$, the finite map $X_K \to \wt{X}_{\tK}$ extends to a finite map $X_K^\ast \to \wt{X}_{\tK}^\ast$.

\medskip

For the Shimura varieties over $G$, we will mainly use compactifications that are different from the minimal compactifications, following \cite[\S 4.1]{scholze-galois}. In the following discussion, fix $K$ and assume that $K = \tK \cap G(\A_f)$. For sufficiently small such $\tK$, the map $X_K \to \wt{X}_{\tK}$ is a closed immersion by \cite[Proposition 1.15]{deligne-tshimura}, but this need to hold for the maps $X_K^\ast \to \wt{X}_{\tK}^\ast$. Following Scholze, we define the ``ad hoc''-compactification $\ol{X}_K$ of $X_K$ to be the universal finite map $X_K^\ast \to \ol{X}_K$ over which all the 
$X_K^\ast \to \wt{X}_{\tK}^\ast$ vanish; as noted by Scholze $\ol{X}_K$ is the scheme-theoretic image of $X_K^\ast \to \wt{X}_{\tK}^\ast$ for sufficiently small $\tK$. The right action of $G(\A_f)$ extends to the tower of ad hoc compactifications $(\ol{X}_K)_K$.

\subsubsection{Parabolic and level subgroups}\label{levels} We finish this subsection by defining some subgroups which we will need throughout the paper.
Recall that we can represent elements of $G$ by $2\times 2$ block $n\times n$ matrices. We define closed subgroup schemes $P_\mu \sub G$ and $N \sub G$ on $R$-points ($R$ a ring) by  
\begin{align*}
  P_\mu(R) & \defeq \left\{(g,r)\in G(R) \mid g = \left(\begin{array}{cc} * & *\\ 0 & * \end{array}\right) \right\}; \\
  N(R) & \defeq \left\{(g,r)\in G(R) \mid g = \left(\begin{array}{cc} I_n  & * \\ 0 & I_n \end{array}\right) \right\},
\end{align*}
where $*$ means arbitrary $n\times n$ matrix and we recall that $I_n$ is the $n\times n$ identity matrix. Equivalently, we may define $P_\mu \sub G$ as the stabilizer of $\cO_F^n \oplus 0 \sub L$, and $N \sub G$ as the subgroup which acts trivially on $\cO_F^n \oplus 0 $ and on the quotient $L/(\cO_F^n \oplus 0)$. One may also describe $P_\mu(\C)$ as 
\[
P_\mu(\C) = \left\{ g\in G(\C) \mid \lim_{t\to 0} \ad(\mu(t))g \text{ exists}\right\}.
\]
In particular, $P_{\mu,\C}$ is the parabolic denoted by $P_\mu$ (for our choice of $\mu$) in \cite[\S 2.1]{caraiani-scholze}. When $\ell$ is a prime unramified in $F$, $G_{\Z_{(\ell)}}$ is reductive and $P_{\mu,\Z_{(\ell)}}$ is a parabolic subgroup of $G$, with unipotent radical $N_{\Z_{(\ell)}}$. We then define, for any integer $m\geq 0$, some open subgroups of $G(\Zp)$ by
\begin{align*}
  \Gamma_0(p^m) & \defeq \left\{(g,r)\in G(\Zp) \mid g \equiv \left(\begin{array}{cc} * & *\\ 0 & * \end{array}\right) \bmod p^m \right\}; \\
  \Gamma_1(p^m) &\defeq \left\{(g,r)\in G(\Zp) \mid g \equiv  \left(\begin{array}{cc} I_n  & * \\ 0 & I_n \end{array}\right) \bmod p^m\right\}; \\
  \Gamma(p^m) & \defeq \left\{(g,r) \in G(\Zp) \mid g \equiv \left(\begin{array}{cc} I_n & 0 \\ 0 & I_n \end{array}\right) \bmod p^m \right\}.
\end{align*}
Finally, we make similar definitions for $\tG$ and the analogous statements hold; we define $\tP_{\mu} \sub \tG$ as the stabilizer of $\cO_F^n \oplus 0 \sub L$, and $\tN \sub \tG$ as the subgroup which acts trivially on $\cO_F^n \oplus 0 $ and on the quotient $L/(\cO_F^n \oplus 0)$ (we have elected to use the notation $\wt{P}_\mu$ instead of $\wt{P}_{\wt{\mu}}$ for simplicity). We then put  
\begin{align*}
  \wt{\Gamma}_0(p^m) & \defeq \left\{(g,r)\in \tG(\Zp) \mid (g \bmod p^m) \in \wt{P}_{\mu}(\Z/p^m) \right\}; \\
  \wt{\Gamma}_1(p^m) & \defeq \left\{(g,r)\in \tG(\Zp) \mid (g \bmod p^m) \in \tN(\Z/p^m) \right\}; \\
  \wt{\Gamma}(p^m) & \defeq \left\{(g,r) \in \tG(\Zp) \mid g \equiv I_{2n} \bmod p^m \right\}.
\end{align*}

\subsection{The anticanonical tower}\label{subsect: anticanonical tower}

We now focus on the $p$-adic geometry of our Shimura varieties. Our first goal is to prove some analogues and partial refinements of results from \cite[\S 3.2]{scholze-galois} for the Shimura varieties of $G$. Throughout most of our arguments, the tame levels $K^p$ and $\tK^p$ will be fixed, so for simplicity we will drop them from the notation unless otherwise noted. We will make the assumption that these fixed $K^p$ and $\tK^p$ are ``small'' (in the terminology of \cite{newton-thorne}), meaning that they are contained in the kernel of the reduction modulo $N$ map on $G(\widehat{\Z}^p)$ and $\tG(\widehat{\Z}^p)$, respectively, for some integer $N\geq 3$ coprime to $p$. This is done to be able to apply the results of \cite[\S 3]{scholze-galois}; for all other results it suffices that the tame levels are neat.

\subsubsection{Formal and Adic Models}\label{subsubsect:formal_adic_models}
Recall that we have fixed a complete non-archimedean algebraically closed extension $C$ of $\Q_{p}$. Ultimately, we will work over $C$ in this paper, so to keep the notation uniform throughout we will stick to working over $C$ in the remainder of this section, but we remark that we could have chosen to work over any perfectoid extension of $\Qp^\cycl$, the completion of $\Qp(\zeta_{p^{\infty}})$.\footnote{We could also work over $\Qp$, as long as some minor adjustments are made to some of the statements.} We set $X^* \defeq X^*_{G(\Zp),\Zp}$ and we
let $\mathfrak{X}^*_{\Z_p}$ be the formal completion of $X^*$ 
along $p=0$. This is a formal scheme over $\Spf \Z_p$. We let $\mathfrak{X}^*$ denote
the base change of $\mathfrak{X}^*_{\Z_p}$ to $\Spf \cO_{C}$ and
$\cX^*$ denote the generic fiber of $\mathfrak{X}^*$, viewed as an adic space. This is a proper adic space over $\Spa (C,\cO_{C})$. We define $\wt{X}^*$, $\wt{\frakX}^*_{\Zp}$, $\wt{\frakX}^*$ and $\wt{\cX}^*$ analogously for $\tG$.

\medskip

For any level $K_p\subset G(\Zp)$, we let $\ol{\cX}_{K_p}$ and $\cX^\ast_{K_p}$ be the adic spaces corresponding to the base change to $C$ of the $\Q$-schemes $\ol{X}_{K_p}$ and $X^\ast_{K_p}$, respectively.\footnote{If $Y$ is a scheme locally of finite type over $\Spec\ C$, we take $Y^{\mathrm{ad}}\defeq Y\times_{C}\Spa\ (C, \cO_C)$, which is an adic space locally of finite type over $\Spa (C, \cO_C)$, where the fiber product is in the sense of~\cite[Prop.~3.8]{huber-adic-spaces}.} Note that $\cX^\ast = \cX^\ast_{G(\Zp)}$. We define $\cX_{K_p}$ similarly, as the adic space corresponding to the base change to $C$ of the $\Q$-scheme $X_{K_p}$. This is not quasicompact, but it is open in $\ol{\cX}_{K_p}$. The closed complement is the boundary $\cZ_{K_p}\defeq \ol{\cX}_{K_p} \setminus \cX_{K_p}$. Again, we define $\wt{\cX}^\ast_{\tK_p}$, $\wt{\cX}_{\tK_p}$ and $\wt{\cZ}_{\tK_p} = \wt{\cX}^\ast_{\tK_p} \setminus \wt{X}_{\tK_p}$ analogously for $\tG$. Although we will only need the following result for $\tG$, we state it in general. For the analogue for schemes see~\cite[Cor. 7.2.5.2]{lan-thesis}.

\begin{lemma}\label{adic quotient} For two compact open subgroups $K' \sub K$ such that $K'$ is normal in $\tK$, $\cX^\ast_{K}$ can be identified with the quotient of $\cX^\ast_{K'}$ by the finite group $K/K'$. The analogue holds for $\tG$.
\end{lemma}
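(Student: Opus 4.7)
The plan is to deduce this from the scheme-theoretic analogue due to Lan \cite[Cor.~7.2.5.2]{lan-thesis}, which establishes that the $\Q$-scheme $X^\ast_{\tK}$ is the quotient of $X^\ast_{\tK'}$ by the finite group $\tK/\tK'$. Since $\Q \to C$ is flat and taking quotients of $\Q$-schemes by finite groups commutes with flat base change, the same identification holds for the projective $C$-schemes $X^\ast_{\tK, C}$ and $X^\ast_{\tK', C}$. It then remains to show that this quotient identification is preserved under the analytification functor $Y \mapsto Y^{\mathrm{ad}}$ from finite-type $C$-schemes to adic spaces over $\Spa(C, \cO_C)$.

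For the analytification step, I would argue locally on $X^\ast_{\tK, C}$. Since $X^\ast_{\tK', C}$ is projective and $\tK/\tK'$ is finite, one can average any very ample line bundle to obtain a $\tK/\tK'$-equivariant very ample line bundle, and thereby an affine open cover of $X^\ast_{\tK, C}$ by opens $U = \Spec A$ whose preimages in $X^\ast_{\tK', C}$ have the form $V = \Spec B$ with $V \to U$ a $\tK/\tK'$-equivariant finite morphism satisfying $A = B^{\tK/\tK'}$. Analytifying, we get an open cover of $\cX^\ast_{\tK}$ by $U^{\mathrm{ad}}$ with preimages $V^{\mathrm{ad}}$ in $\cX^\ast_{\tK'}$, and the problem reduces to showing that, for each such pair, the natural map $V^{\mathrm{ad}} \to U^{\mathrm{ad}}$ exhibits $U^{\mathrm{ad}}$ as the quotient of $V^{\mathrm{ad}}$ by $\tK/\tK'$.

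To verify this, I would further cover $U^{\mathrm{ad}}$ by open affinoids $\Spa(A', A'^+)$; the preimage in $V^{\mathrm{ad}}$ is then an affinoid $\Spa(B', B'^+)$ with $B' = B \otimes_A A'$ and $B'^+$ the integral closure of $A'^+$ in $B'$, since $V \to U$ is finite. As $A \to A'$ is flat and $\tK/\tK'$ is finite, taking $(\tK/\tK')$-invariants commutes with this base change, giving $A' = (B')^{\tK/\tK'}$ and $A'^+ = (B'^+)^{\tK/\tK'}$. By the standard construction of quotients of affinoid adic spaces by finite groups, this means precisely that $\Spa(A', A'^+) = \Spa(B', B'^+)/(\tK/\tK')$. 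Gluing over the covers gives the desired identification.

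The main technical point to verify carefully is the compatibility of the invariant functor with the passage from finite-type $C$-algebras to their affinoid completions (including the integral structures), but this reduces to the well-known fact that invariants under a finite group action commute with flat base change, so no essential difficulty arises.
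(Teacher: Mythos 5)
Your proof is correct, and it follows the alternative route that the authors themselves sketch in the remark immediately after the lemma: ``It is also possible to deduce Lemma~\ref{adic quotient} from the case of schemes, using that the adification functor from schemes to adic spaces preserves quotients by finite groups (this can be checked with a bit of work from the definitions).'' You have carried out that ``bit of work.'' The paper's own proof instead exploits that the quotient structure is already known on the \emph{open} locus, where $\cX_{\tK'} \to \cX_{\tK}$ is a $\tK/\tK'$-torsor, and then propagates it across the boundary using normality together with Bartenwerfer's extension theorem (Theorem~\ref{bartenwerfer} and Corollary~\ref{abstract adic quotient}): bounded functions on the complement of a nowhere dense Zariski closed subset of a normal rigid space extend, so $\cO(Y) = \cO(\widetilde{Y})^H$ can be checked on a dense open. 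Both approaches use Lan's scheme-theoretic result as input; the paper's route is shorter given that it already has the torsor statement on the open part, while yours is more self-contained and makes no appeal to normality or to the Koecher-type extension principle.

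Two minor points worth making explicit in your write-up. First, when you pass from $A' = (B')^{\tK/\tK'}$ to $A'^+ = (B'^+)^{\tK/\tK'}$, the observation that carries the argument is that $(B'^+)^{\tK/\tK'} = B'^+ \cap A'$ is the integral closure of $A'^+$ in $A'$, and $A'^+$ is already integrally closed in $A'$ by the axioms of a ring of integral elements. Second, when you invoke ``the standard construction of quotients of affinoid adic spaces by finite groups,'' you are implicitly using that $\mathrm{char}\,C = 0$ (so $|\tK/\tK'|$ is invertible), which provides a continuous Reynolds operator and guarantees that the subspace topology on the invariants coincides with its intrinsic affinoid topology; this is what makes $\Spa((B')^H, (B'^+)^H)$ the categorical quotient. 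Neither gap is serious, but both are worth naming.
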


\begin{proof}

We adapt 
the proof in~\cite[Cor. 7.2.5.2]{lan-thesis} to the corresponding adic spaces. Note that both $\cX^*_{K}$ and $\cX^*_{K'}$ are normal, since they are analytifications of normal algebraic varieties. Note also that on the open part, $\cX_{K'}$ is a $(K/K')$-torsor over $\cX_K$. (This holds true for the algebraic varieties, and the analytification functor preserves torsors for finite groups, as it preserves fiber products.) The result now follows from Theorem~\ref{bartenwerfer} and Corollary~\ref{abstract adic quotient} below. 
\end{proof}

\begin{thm}[{\cite[\S3]{bartenwerfer}}]\label{bartenwerfer} Let $Y$ be a normal rigid space over $C$, $Z\subset Y$ a
nowhere dense Zariski closed subset, with open complement $U$. 
Let $j\colon U\hookrightarrow Y$ be the open immersion.
Then $\cO^+_{Y} \toisom j_*\cO^+_{U}$ and $\cO_{Y} \toisom (j_*\cO^+_{U})[1/p]$. In particular, if $Y$ is affinoid and $f\in \cO_Y(U)$ is bounded, then $f$ extends uniquely to an element of 
$\cO_Y(Y)$, so $\cO_Y(Y)\toisom \cO^+_Y(U)[1/p]$. 
\end{thm}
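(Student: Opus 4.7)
The plan is to reduce to the affinoid case and then exploit normality of $A$ together with a density/spectral-norm argument. First, both assertions are Zariski-local on $Y$, so we may assume $Y = \Spa(A, A^\circ)$ with $A$ a normal affinoid $C$-algebra; passing to irreducible components, we may further assume that $A$ is a normal domain. Under these reductions, the isomorphism $\cO_Y \toisom (j_* \cO_U^+)[1/p]$ is a formal consequence of $\cO_Y^+ \toisom j_* \cO_U^+$, since inverting $p$ is a filtered colimit (hence preserves the sheaf condition) and on any affinoid $V \subset Y$ one has $\cO_V(V) = \cO_V^+(V)[1/p]$. So the real content is the isomorphism of integral structure sheaves.

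Injectivity of $\cO_Y^+ \to j_* \cO_U^+$ is immediate: $A$ is a domain and $U$ is nonempty (as $Z$ is nowhere dense), so $A \to \cO_U(U)$ is injective, and it restricts to an injection on $A^\circ$.

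For surjectivity, let $f \in \cO_U^+(U)$ be given. After a further Zariski-local reduction we may assume $Z = V(h)$ is principal and write $U$ as the increasing union of rational subdomains $U_n = \{y \in Y : |h(y)| \geq |p|^n\}$. On each $U_n$, the restriction $f_n$ of $f$ satisfies $|f_n|_{\sup} \leq 1$. To upgrade the system $(f_n)$ to an element of $A^\circ$, I would use the normality identity $A = \bigcap_{\mathrm{ht}(\mathfrak{p}) = 1} A_\mathfrak{p}$. For every height-$1$ prime $\mathfrak{p} \neq (h)$, the generic point of $V(\mathfrak{p})$ lies in some $U_n$, so $f \in A_\mathfrak{p}$; for $\mathfrak{p} = (h)$, the uniform bound $|f_n|_{\sup} \leq 1$, combined with the maximum modulus principle and the $(h)$-adic valuation on the DVR $A_{(h)}$, shows that $f$ has non-negative order along $V(h)$, so $f \in A_{(h)}$ as well. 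Hence $f \in A$. A final application of the spectral-norm characterization of $A^\circ$ on a reduced affinoid, together with density of $U \cap \Max A$ in $\Max A$ (which again follows from nowhere-density of $Z$), yields $|f|_{\sup} \leq 1$, i.e., $f \in A^\circ$.

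The main obstacle is precisely the codimension-$1$ extension step at $\mathfrak{p} = (h)$: without the bounded hypothesis, $f$ could genuinely have a pole along $V(h)$ (as with $1/z$ on a punctured disk, which is why the naive statement $\cO_Y = j_*\cO_U$ fails), and without normality there is no well-defined order along $V(h)$ to which one could appeal. Bartenwerfer's original proof treats this essentially by a careful analysis of admissible coverings paired with the maximum modulus principle; the outline above simply repackages that analysis in the language of algebraic normality.
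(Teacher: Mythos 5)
The paper does not prove this statement itself; it is cited directly from Bartenwerfer (Section~3), so there is no internal proof to compare against. Read on its own, your proposal contains a genuine gap at the crucial step.

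The problem is the appeal to the normality identity $A = \bigcap_{\mathrm{ht}\,\mathfrak{p}=1} A_{\mathfrak{p}}$ together with the claim that ``$f\in A_{\mathfrak{p}}$'' for each height-one prime $\mathfrak{p}$. That identity is an equality of subrings of $\Frac(A)$, so it can only be applied once you know $f\in\Frac(A)$ — and that is precisely what is not yet available. An element $f\in\cO_Y(U)$ lives in $\varprojlim_n \cO_Y(U_n)$, which is in general far larger than $\Frac(A)$: for $A = C\langle z\rangle$, $h=z$, this limit is the ring of all Laurent series converging on the punctured closed disk, a transcendental object containing vastly more than meromorphic functions. So the step ``for $\mathfrak{p}\neq (h)$ we have $f\in A_{\mathfrak{p}}$'' already presupposes that $f$ is a ratio of two elements of $A$, which is essentially what the theorem is supposed to deliver; the localization argument cannot even get started without it.

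The actual content of Bartenwerfer's theorem is the non-archimedean removable singularity statement: that a bounded holomorphic function on the complement of a divisor is already holomorphic. Establishing this requires a genuinely analytic input — a power-series/spectral-norm argument, typically run after a Noether normalization to reduce to the polydisk and then descended to the normal affinoid. Once one knows that $f$ lies in $A[1/h]$, your analysis at the prime $(h)$ using boundedness and the discrete valuation does finish the codimension-one extension, and the normality identity then handles higher codimension, so that part of the outline is sound. But the repackaging ``in the language of algebraic normality'' silently skips the analytic heart of the proof, and that heart cannot be recovered by commutative algebra alone.
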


\begin{cor}\label{abstract adic quotient} Let $f\colon \widetilde{Y}\to Y$ be a finite 
morphism of normal rigid analytic spaces. Assume that there exists a finite group $H$ acting on $\widetilde{Y}$,
acting trivially on $Y$, and such that $f$ is $H$-equivariant. Assume that there exists a 
Zariski open and dense subset $U\subseteq Y$ such that $\widetilde{U}\defeq f^{-1}(U)$
is Zariski open and dense in $\widetilde{Y}$ and such that $f$ identifies $U$ with
the quotient $\widetilde{U}/H$. Then $Y=\widetilde{Y}/H$.  
\end{cor}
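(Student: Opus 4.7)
The assertion is local on $Y$, so I would first reduce to the case where $Y = \Spa(B,B^+)$ is a normal affinoid (and, after passing to a connected component, irreducible). Since $f$ is finite, $\widetilde{Y} = \Spa(A,A^+)$ is then also affinoid, where $A$ is a finite $B$-algebra on which $H$ acts by $B$-algebra automorphisms preserving $A^+$. In this affinoid normal setting the quotient $\widetilde{Y}/H$ is $\Spa(A^H,(A^+)^H)$, so what I would need to prove is that the natural inclusion $B \hookrightarrow A^H$ (automatic from $H$ acting trivially on $Y$) is an equality, together with the corresponding matching of the $+$-rings.

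For the equality $B = A^H$, let $a \in A^H$ and set $V \defeq U \cap Y$, $\widetilde{V}\defeq f^{-1}(V)$; these are Zariski open and dense in $Y$ and $\widetilde{Y}$ respectively. By hypothesis $f$ identifies $V$ with $\widetilde{V}/H$, so the $H$-invariant section $a|_{\widetilde{V}}$ descends uniquely to a section $b \in \cO_Y(V)$. Since $a$ is bounded on $\widetilde{Y}$ and $\widetilde{V}\twoheadrightarrow V$, the section $b$ is bounded on $V$. Applying the second assertion of Theorem~\ref{bartenwerfer} to the normal affinoid $Y$, the section $b$ extends uniquely to some $\widetilde{b}\in B = \cO_Y(Y)$. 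Now $f^*\widetilde{b}$ and $a$ agree on the Zariski dense open $\widetilde{V}$ of the normal space $\widetilde{Y}$, so by the injectivity half of Theorem~\ref{bartenwerfer} (i.e.\ $\cO_{\widetilde{Y}}\hookrightarrow j_*\cO_{\widetilde{V}}$) applied to $\widetilde{Y}$, one gets $f^*\widetilde{b}=a$, hence $a\in B$.

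To match the integral structures, any $a \in (A^+)^H$ lies in $A^H = B$ and is power-bounded in $A$; since $A$ is module-finite over $B$, this forces $a$ to be power-bounded in $B$, i.e.\ $a\in B^+$. The opposite inclusion $B^+\subseteq (A^+)^H$ is immediate from $B\subseteq A^H$ and $B^+\subseteq A^+$. This completes the identification $Y = \widetilde{Y}/H$.

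The essential technical input is Theorem~\ref{bartenwerfer}, which is used twice: once to extend a bounded $H$-invariant section from the dense open $V$ to all of $Y$, and once to identify this extension with $a$ on all of $\widetilde{Y}$. The main obstacle I anticipate is bookkeeping in the reduction step — making sure that after localizing to a connected normal affinoid of $Y$ one still has the quotient identification on a non-empty open, dense by normality plus irreducibility — but this is routine. The $+$-structure matching is essentially formal once $B=A^H$ is known.
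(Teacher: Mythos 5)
Your proposal is correct and takes essentially the same route as the paper: reduce to the normal affinoid case, use the hypothesis $U=\widetilde{U}/H$ to identify $H$-invariant sections over the dense open, and then invoke Theorem~\ref{bartenwerfer} (once for $Y$, once for $\widetilde{Y}$) to pass from bounded functions on the dense open to global functions. The only cosmetic differences are that you argue element-by-element rather than with the sub-presheaf $\cO^{bd}$ all at once, and that you spell out the $+$-ring matching, which the paper leaves implicit.
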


\begin{proof} By finiteness and the construction of quotients by finite groups, 
we are reduced to the case when $Y$ is affinoid, and then we need to prove that
$\cO_Y(Y)=\cO_{\widetilde{Y}}(\widetilde{Y})^{H}$. 
Since $U=\widetilde{U}/H$, $\cO_Y(U)=\left(\cO_{\widetilde{Y}}(\widetilde{U})\right)^{H}$, 
and from this it follows that $\cO^{bd}_Y(U)=\left(\cO^{bd}_{\widetilde{Y}}(\widetilde{U})\right)^{H}$, 
where $\cO^{bd}_Y(U)\defeq \cO^+_Y(U)[1/p]$ denotes the sub-presheaf of bounded functions. 
By Theorem~\ref{bartenwerfer}, $\cO^{bd}_Y(U)=\cO_Y(Y)$ and 
$\cO^{bd}_{\widetilde{Y}}(\widetilde{U})=\cO_{\widetilde{Y}}(\widetilde{Y})$, and the result follows. 
\end{proof}

\begin{remark} It is also possible to deduce Lemma~\ref{adic quotient} from the case of schemes, using that the adification functor from schemes to adic spaces preserves quotients by finite groups (this can be checked with a bit of work from the definitions).
\end{remark} 

\subsubsection{Notation for ``infinite level'' Shimura varieties}\label{notation for infinite level}
Before proceeding, we will set out the notation for infinite level Shimura varieties. We mostly discuss the case of $G$; the case of $\tG$ is entirely analogous. Recall that we have fixed a prime-to-$p$-level. Our finite level Shimura varieties $\cX_K$ are then indexed by the open subgroups $K \sub G(\Zp)$. Our infinite level Shimura varieties will arise as limits of towers $(\cX_K)_K$, where the $K$ run through a cofiltered inverse system of open subgroups. Thus, the resulting limit (if it exists in a suitable sense) will only depend on the intersection $H=\bigcap K$, which is a closed subgroup of $G(\Zp)$. It therefore makes sense to make the following general definition:

\begin{defn}
Let $H \sub G(\Zp)$ be a closed subgroup. We define a locally spatial diamond $\cX_H$ by
\[ 
\cX_H \defeq \varprojlim_{H \sub K}\cX^{\lozenge}_K, 
\]
where the limit ranges over the open subgroups $K$ with $H \sub K \sub G(\Zp)$. Note that this exists by Proposition \ref{inverselimit}. We define $\ol{\cX}_H$ and $\cX^{\ast}_{H}$ similarly; these are spatial diamonds (again by Proposition \ref{inverselimit}). We also define $\wt{\cX}_{\tH}$ and $\wt{\cX}^\ast_{\tH}$ analogously for $\tG$, when $\tH \sub \tG(\Zp)$ is a closed subgroup.
\end{defn} 

We have a natural identification $|\cX_H|=\invlim_{H \sub K}|\cX_K|$ by Proposition \ref{inverselimit}, and similarly for the other infinite level Shimura varieties.

\begin{rem} \leavevmode
\begin{enumerate}
\item When $H = K$ is open, the above definition gives $\cX_K \defeq \cX_K^{\lozenge}$. This abuse of notation will be in place throughout this paper. It can be justified by the fact that the diamondification functor is fully faithful on \emph{normal} rigid spaces; see \cite[Proposition 10.2.4]{berkeley-notes} and \cite[Theorem 8.2.3]{kedlaya-liu-2}. Also, it should be clear from the context whether $\cX_K$ is regarded as a rigid space or a diamond.

\item If $(X_{i})_{i\in I}$ is an inverse system of rigid spaces with qcqs transition maps, and $X$ is a perfectoid space with $X \sim \varprojlim_{i}X_{i}$ in the sense of \cite[Definition 2.4.1]{scholze-weinstein}, then $X=\varprojlim_{i}X_{i}^{\lozenge}$ as diamonds by \cite[Proposition 2.4.5]{scholze-weinstein}.
\end{enumerate}
\end{rem}

With this definition, we may extend the towers $(\cX_K)_K$ and $(\cX^{\ast}_K)_K$, for open subgroups $K$, to towers $(\cX_H)_H$ and $(\cX^{\ast}_H)_H$ where we index over all closed subgroups. The right actions of $G(\Qp)$ on $(\cX_K)_K$, $(\ol{\cX}_K)_K$ and $(\cX^{\ast}_K)_K$ extend naturally to right actions on $(\cX_H)_H$, $(\ol{\cX}_K)_K$ and $(\cX^{\ast}_H)_H$. If $H \sub G(\Zp)$ is normal then $G(\Zp)/H$ acts on $\cX_H$, $\ol{\cX}_H$ and $\cX^\ast_H$. Note that, for arbitrary $H$, if $g\in G(\Qp)$ and $H,g^{-1}H g \sub G(\Zp)$, then right multiplication of $g$ induces an isomorphism $\cX_H \toisom \cX_{g^{-1}H g}$ of diamonds, and similarly for the compactifications.

\medskip

Next, we establish notation for some closed subgroups that will occur frequently. Recall that $P_\mu$ and $N$ were defined in \S \ref{levels}.

\begin{defn}\label{frequent infinite level subgroups}
We let $\mbf{1} \sub G(\Zp)$ denote the trivial subgroup, and define
\[
 P_{\mu,0} \defeq P_\mu(\Zp) = \bigcap_{m\geq 0}\Gamma_{0}(p^{m}),\,\,\,\, N_{0} \defeq N(\Zp) = \bigcap_{m\geq 0}\Gamma_{1}(p^{m}).
\]
We define $\wt{\mbf{1}}$, $\tP_{\mu,0}$ and $\tN_0$ analogously for $\tG$.
\end{defn}

Note that $\mbf{1}=\bigcap_{m\geq 0}\Gamma(p^{m})$. We finish by recording some results about group actions. We begin with a result for open Shimura varieties.

\begin{lemma}\label{torsor}
Let $H_1 \sub H_2$ be closed subgroups of $G(\Zp)$ and assume that $H_1$ is normal in $H_2$. Then $\cX_{H_{1}} \to \cX_{H_{2}}$ is a $H_{2}/H_{1}$-torsor. The analogue for $\tG$ holds.
\end{lemma}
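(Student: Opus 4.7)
The plan is to reduce the statement to the case of open subgroups, which is essentially contained in (the proof of) Lemma~\ref{adic quotient}. I would fix a cofinal decreasing sequence of open normal subgroups $\tK_m \sub \tG(\Zp)$ (say $\tK_m=\tGam(p^m)$) and set $\tK_{i,m}\defeq \tH_i\cdot \tK_m$ for $i=1,2$ and $m\geq 0$. Each $\tK_{i,m}$ is then an open subgroup of $\tG(\Zp)$ containing $\tH_i$, one has $\tK_{1,m}\sub \tK_{2,m}$, and the family $(\tK_{i,m})_m$ is cofinal in the system of all open subgroups containing $\tH_i$ (since for any open $\tK\supseteq \tH_i$ one has $\tK_m\sub \tK$ for $m\gg 0$, whence $\tK_{i,m}\sub \tK$). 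In particular $\cX_{\tH_i}=\varprojlim_m \cX_{\tK_{i,m}}$ by the definition of the infinite-level diamonds.

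The key group-theoretic input is then that $\tK_{1,m}$ is normal in $\tK_{2,m}$ and that the natural map $\varprojlim_m \tK_{2,m}/\tK_{1,m}\toisom \tH_2/\tH_1$ is a topological isomorphism. The first would follow from the normality of $\tH_1$ in $\tH_2$, the normality of $\tK_m$ in $\tG(\Zp)$, and the commutator identity $[h_1,k]=h_1^{-1}k^{-1}h_1 k\in \tK_m$ for $h_1\in \tH_1$, $k\in\tK_m$, which yields $k^{-1}\tH_1 k\sub \tH_1\tK_m=\tK_{1,m}$. For the second, the Dedekind modular law (using $\tH_1\sub \tH_2$) identifies the kernel of the natural surjection $\tH_2\onto \tK_{2,m}/\tK_{1,m}$ with $\tH_1\cdot(\tH_2\cap \tK_m)$, and $\{\tH_2\cap \tK_m\}_m$ is a neighborhood basis of the identity in $\tH_2$, so the inverse system has limit $\tH_2/\tH_1$.

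Granted these preliminaries, the proof of Lemma~\ref{adic quotient} already provides, for each $m$, a $\tK_{2,m}/\tK_{1,m}$-torsor structure on $\cX_{\tK_{1,m}}\to\cX_{\tK_{2,m}}$ (it is obtained at the level of rigid analytic spaces, and passes to diamonds because the diamondification functor preserves fiber products); equivalently, the natural map
\[
\cX_{\tK_{1,m}}\times\ul{\tK_{2,m}/\tK_{1,m}}\to \cX_{\tK_{1,m}}\times_{\cX_{\tK_{2,m}}}\cX_{\tK_{1,m}}
\]
is an isomorphism. To conclude, I would pass to the inverse limit in $m$: cofiltered limits of (locally) spatial diamonds commute with finite limits by Proposition~\ref{inverselimit}, and for a profinite group $G=\varprojlim G_i$ with finite quotients one has $\ul{G}=\varprojlim\ul{G_i}$ as v-sheaves, so both sides of the sought isomorphism at the level of $\tH_i$ are identified with the inverse limit of the finite-level torsor isomorphisms. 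The right action of $\tH_2$ on $\cX_{\tH_1}$, induced from the right $\tG(\Qp)$-action on the tower (well defined because $\tH_2$ normalizes $\tH_1$), then visibly factors through $\tH_2/\tH_1$, since $\tH_1$ acts trivially on each $\cX_\tK$ with $\tK\supseteq \tH_1$.

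The main obstacle I expect is purely bookkeeping: one has to arrange the group theory so that the finite-level torsor isomorphisms are compatible in $m$, and check that $\ul{(\cdot)}$, fiber product of diamonds, and cofiltered inverse limit interact as expected. The choice $\tK_{i,m}=\tH_i\tK_m$ (rather than e.g.\ $\tH_i\cap \tK_m$, which need not be open) is the point that makes the reduction go through.
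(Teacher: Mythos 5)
Your proposal is correct and follows essentially the same route as the paper: set $\tK_{i,m}=\tH_i\tGam(p^m)$, obtain compatible finite-level torsor isomorphisms (from Lemma~\ref{adic quotient}), and pass to the inverse limit over $m$. You supply the group-theoretic details the paper leaves implicit (normality of $\tK_{1,m}$ in $\tK_{2,m}$ via the commutator argument, and the identification $\varprojlim_m \tK_{2,m}/\tK_{1,m}\cong \tH_2/\tH_1$ via the modular law), and you also correct a small typo in the paper's displayed torsor isomorphism, whose left-hand factor should be $\cX_{\tK_{1,m}}$ rather than $\cX_{\tK_{2,m}}$.
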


\begin{proof}
Set $K_{i,m}=H_{i}\Gamma(p^m)$ for $i=1,2$ and $m\geq 0$. Then $K_{1,m}$ is normal in $K_{2,m}$ and hence $\cX_{K_{1,m}} \to \cX_{K_{2,m}}$ is a $K_{2,m}/K_{1,m}$-torsor compatibly in $m$, i.e.\ we have compatible isomorphisms
\[
 \cX_{K_{2,m}} \times \ul{K_{2,m}/K_{1,m}} \toisom \cX_{K_{1,m}} \times_{\cX_{K_{2,m}}} \cX_{K_{1,m}}
 \]
(this holds for rigid spaces and follows formally for diamonds, since fiber products are preserved). Taking the inverse limit over $m$ gives us the isomorphism $\cX_{H_{2}} \times \ul{H_{2}/H_{1}} \toisom \cX_{H_{1}} \times_{\cX_{H_{2}}} \cX_{H_{1}}$, so $\cX_{H_{1}} \to \cX_{H_{2}}$ is a $H_{2}/H_{1}$-torsor as desired. 
\end{proof}

We now move on to compactifications, where we will content ourselves with proving statements at the level of topological space. We will only need the first lemma for $\tG$, but we state it in general.

\begin{lemma}\label{exhibiting a diamond as a quotient}
Let $H$ be a closed subgroup of $G(\Zp)$. Then $|\cX^{\ast}_H|\cong |\cX^{\ast}_{\mbf{1}}|/H$ via the natural map $|\cX^{\ast}_{\mbf{1}}| \to |\cX^{\ast}_H|$ (and similarly for open Shimura varieties, and for $\tG$).
\end{lemma}

\begin{proof}
We prove the statement for minimal compactifications; the statement for open Shimura varieties follows from Lemma \ref{torsor}. For each $m\geq 0$, we have $|\cX^{\ast}_{H \Gamma(p^{m})}|=|\cX^{\ast}_{\Gamma(p^{m})}|/H$ via the natural map $|\cX^{\ast}_{\Gamma(p^{m})}| \to |\cX^{\ast}_{H \Gamma(p^{m})}|$ by Lemma \ref{adic quotient}.  Taking inverse limits, one obtains $\varprojlim_{m\geq 0}|\cX^\ast_{H\Gamma(p^m)}| \cong \left( \varprojlim_{m\geq 0} |\cX^\ast_{\Gamma(p^m)}| \right) /H $ (using that $H$ is compact Hausdorff). We are now done, using that $|\cX_{\mbf{1}}^{\ast}|=\invlim_{m\geq 0}|\cX_{\Gamma(p^{m})}^{\ast}|$ and $|\cX^{\ast}_H|=\invlim_{m\geq 0}|\cX_{H\Gamma(p^{m})}^{\ast}|$.
\end{proof}

Our task is now to prove the corresponding statement for ad hoc compactifications. Because of the way the ad hoc compactifications are defined, it is easiest not to work with a fixed $K^p$, so we drop this from now on until the beginning of \S \ref{sec: anticanonical}; we make definitions analogous to the above for closed subgroups $H \sub G(\A_f)$. We will break up the proof into a sequence of short results.

\begin{prop}\label{ad hoc closure}
Let $K \sub G(\A_f)$ be an open compact subgroup (always neat). $\cX_K$ is dense in $\ocX_K$ (in the analytic topology). Moreover, if $\tK \sub \tG(\A_f)$ is a sufficiently small compact open subgroup, in the sense that $\ol{X}_K$ is the scheme-theoretic image of $X_K^* \to \tX^*_\tK$, then $|\ocX_K|$ is the closure of $|\cX_K|$ inside $|\tcX^\ast_\tK|$.
\end{prop}

\begin{proof}
The first statement follows since $\cX_K$ is dense $\cX^\ast_K$ in the analytic topology and $\cX^\ast_K \to \ocX_K$ is surjective. The second statement then follows since $\ocX_K$ is closed (indeed Zariski closed) in $\tcX_\tK^\ast$.
\end{proof}

Next, we state a general lemma on diamonds of rigid spaces. If $Z$ is a topological space, there is a v-sheaf $\ul{Z}$ defined just before \cite[Definition 10.12]{diamonds}.

\begin{lemma}\label{ad hoc Zariski closed}
Let $S \to T$ be a Zariski closed immersion of rigid spaces. Then $S^\lozenge = \ul{|S|}\times_{\ul{|T|}}T^\lozenge$.
\end{lemma}

\begin{proof}
The map $S^\lozenge \to T^\lozenge$ is quasicompact and an injection (at the level of sheaves), since if $Z$ is a perfectoid space and $f,g : Z \to S$ are two maps which are equal after composing with $S \to T$, then they must already be equal. The statement then follows from \cite[Proposition 11.20]{diamonds}.
\end{proof}

We may then give a convenient description of the diamond of the ad hoc compactification.

\begin{cor}\label{ad hoc description of diamond}
Let $K \sub G(\A_f)$ be an open compact subgroup. Then $\ocX^\lozenge_K$ is the diamond attached to the closure of $|\cX_K|$ inside $|\tcX_K^\ast|$. 
\end{cor}

\begin{proof}
Let $\tK \sub \tG(\A_f)$ be a sufficiently small open compact subgroup throughout this proof, in the sense of Proposition \ref{ad hoc closure}. Proposition \ref{ad hoc closure} and Lemma \ref{ad hoc Zariski closed} show that, if $S_{\tK}$ is the closure of $|\cX_K|$ in $|\tcX_\tK^\ast|$, then $\ocX_K^\lozenge = S_{\tK} \times_{\ul{|\tcX_\tK^\ast|}}\tcX_\tK^{\ast,\lozenge}$. Taking inverse limits over $\tK$ we get
\[
\ocX_K^\lozenge = S_K \times_{\ul{|\tcX_K^\ast|}}\tcX_K^{\ast}
\]
where $S_K = \varprojlim_{\tK}S_\tK$, so it suffices to prove that $S_K$ is the closure of $|\cX_K|$ in $|\tcX_K^\ast|$. By the definition of the inverse limit topology, a point $x \in 
|\tcX_K^*|$ is in the closure of $|\cX_K|$ if and only if for every 
$\tK$, every open neighborhood of the image of $x$ in 
$|\tcX_{\tK}^*|$ intersects $|\cX_K|$, and this is equivalent to the image of $x$ in each $|\tcX_{\tK}^*|$ being in the closure of $|\cX_K|$. This finishes the proof.
\end{proof}

We now get to the analogue of Lemma \ref{exhibiting a diamond as a quotient}.

\begin{prop}\label{ad hoc exhibiting a diamond as a quotient}
Let $H'\sub H \sub G(\A_f)$ be compact subgroups, with $H'$ normal in $H$. Then $|\ocX_H| = |\ocX_{H'}|/(H/H')$.
\end{prop}

\begin{proof}
It suffices to prove the case when $H'$ is open, the general case then follows as in the proof of Lemma \ref{exhibiting a diamond as a quotient} by taking inverse limits. The map $|\tcX^\ast_{H'}| \to |\tcX^\ast_H|$ is open, so taking closures commutes with preimages. In particular, it follows that $|\ocX_{H'}|$ is the preimage of $|\ocX_H|$. Since $|\tcX^\ast_H| = |\tcX^\ast_{H'}|/(H/H')$ by Lemma \ref{exhibiting a diamond as a quotient}, the proposition follows.
\end{proof}

\subsubsection{The anticanonical tower}\label{sec: anticanonical} In this section, we adapt the construction of the anticanonical tower over a neighborhood of the ordinary locus for the Shimura varieties of $\tG$ as in~\cite[Sec.~3]{scholze-galois} to the Shimura varieties of $G$. Our strategy is to deduce results for $G$ from those for $\tG$, but we will also need some refinements of the results of~\cite[Sec.~3]{scholze-galois} for $\tG$.

\medskip

The special fiber of the integral model $X^\ast$ admits a Newton stratification -- see, for example,~\cite[\S 3.3]{lan-stroh}. The reflex field of $G$ is $\Q$, so~\cite[Thm.\ 1.6.3]{wedhorn-thesis} 
(together with~\cite[\S 3.3]{lan-stroh} for the extension to the boundary) implies that there exists an open dense ordinary stratum in $X^*_{\overline{\mathbb{F}}_p}$. As usual, one can also recover the ordinary stratum as the complement of the vanishing locus of 
the Hasse invariant $\Ha$. Over $X^\ast$, we have the Hodge line bundle $\omega$, and the Hasse invariant $\Ha$ is a section of $\omega_{\overline{\mathbb{F}}_p}^{\otimes (p-1)}$. The above discussion also holds for $\tG$, and we note that $\Ha$ on $X^{\ast}_{\ol{\F}_{p}}$ is the pullback of the Hasse invariant on $\wt{X}^{\ast}_{\ol{\F}_{p}}$.

\medskip

Let us now recall the results from \cite[\S 3]{scholze-galois} for $\tG$ that we need (partially to set up notation), and prove an additional result that will be crucial for us in this paper (Proposition~\ref{anticanonical tower at intermediate levels Siegel}). 
We start by recalling the anticanonical tower for $\tG$. In this discussion, we will work over $\Qp$ instead of $C$ until further notice to simplify referencing to \cite{scholze-galois}, so all Shimura varieties for $\tG$ (of finite or infinite level) are considered to be defined over $\Qp$; we will sometimes add a subscript $\Qp$ when we wish to emphasize this. Let $0\leq \epsilon <1/2$. The anticanonical locus $\wt{\cX}^{\ast}_{\tGam_{0}(p)}(\epsilon)_{a}$ of level $\tGam_{0}(p)$ and radius of overconvergence $\epsilon$ is (essentially) defined in \cite[Theorem 3.2.15(iii)]{scholze-galois}. This is an open subset of $\wt{\cX}^{\ast}_{\tGam_{0}(p)}$, defined as the image of the map
\[
 \wt{\cX}^{\ast}(\epsilon) \to \wt{\cX}^{\ast}_{\tGam_{0}(p)},
\]
where $\wt{\cX}^{\ast}(\epsilon)$ is the locus where $|Ha|\geq p^{\epsilon}$ (here and elsewhere, this condition is defined in terms of local (integral) lifts as usual, and independent of the choices of lifts) and the map sends a principally polarized abelian variety $A$ (with tame level structure) to $(A/\Can,A[p]/\Can)$ (with the induced tame level structure), where $\Can \sub A[p]$ is the canonical subgroup. For any closed subgroup $\tH \sub \tGam_{0}(p)$, we define
\[
  \wt{\cX}^{\ast}_{\tH}(\epsilon)_{a} \defeq \wt{\cX}^{\ast}_{\tGam_{0}(p)}(\epsilon)_{a} \times_{\wt{\cX}^{\ast}_{\tGam_{0}(p)}} \wt{\cX}^{\ast}_{\tH}.
\]
When $\tH$ is open, we primarily view this as a rigid space.
By \cite[Corollary 3.2.19]{scholze-galois} (see remark below), there is a unique affinoid perfectoid space $\wt{\cX}_{\tP_{\mu,0}\cap \tG^{\mathrm{der}}(\Zp), \Qp}^\ast(\epsilon)_a$ such that
\[
\wt{\cX}^{\ast}_{\tP_{\mu,0} \cap \tG^\mathrm{der}(\Zp), \Qp}(\epsilon)_a\sim \varprojlim_m \mathcal{X}^\ast_{\tGam_{0}(p^m)^{\prime}, \Qp}(\epsilon)_{a},
\]
where $\tG^\mathrm{der}\sub \tG$ is the subgroup where the similitude factor is $1$, and $\tGam_{0}(p^{m})^{\prime}$ is the subgroup of $\tGam_{0}(p^{m})$ given by imposing the condition that the similitude factor is congruent to $1$ modulo $p^m$. Moreover, the boundary 
\[
\wt{\cZ}_{\tP_{\mu,0}\cap \tG^\mathrm{der}(\Zp), \Qp}(\epsilon)_{a} \sub \wt{\cX}^{\ast}_{\tP_{\mu,0}\cap \tG^\mathrm{der}(\Zp),\Qp}(\epsilon)_{a}
\]
is strongly Zariski closed, in the sense of~\cite[Definition 2.2.6]{scholze-galois}\footnote{In fact, the notions of Zariski closed and strongly Zariski closed, as defined in \cite[\S 2]{scholze-galois}, agree by \cite[Remark 7.5]{prismatic}, which has appeared since the first version of our paper was published. We have elected to keep the distinction in this paper, since the strong Zariski closure statements we need follow easily from those in \cite{scholze-galois}.}. 

\begin{rem}
The $\Gamma_{0}(p^{m})$-level structures of \cite[Definition 3.1.1]{scholze-galois} are slightly different to level structures $\tGam_{0}(p^{m})^{\prime}$ in this paper; the level structure $\Gamma_{0}(p^{\infty})$ corresponds to the kernel of the determinant map on $\tP_{\mu,0}$ in our notation. We assume that this is a typo and that the intention was to instead let it correspond to the kernel of the similitude map (this is what matches with the arguments of \cite[\S 3]{scholze-galois}). We also remark that the spaces constructed in \cite[\S 3]{scholze-galois} morally live over $\Qp$, in the sense that they are limits of rigid spaces defined over $\Qp$, but the limit admits a morphism to $\Qp^\cycl$ given by the similitude factor, which allows them to be regarded as spaces over $\Qp^\cycl$. This is the reason for working over $\Qp$ in the current discussion.
\end{rem}

Going further up, by \cite[Proposition 3.2.34, Theorem 3.2.36]{scholze-galois} there are unique affinoid perfectoid spaces $\wt{\cX}^{\ast}_{\tN_{0},\Qp}(\epsilon)_a$ and $\wt{\cX}^{\ast}_{\wt{\mbf{1}},\Qp}(\epsilon)_a$ such that
\[ 
\wt{\cX}^{\ast}_{\tN_{0},\Qp}(\epsilon)_a \sim \varprojlim_{m} \wt{\cX}^{\ast}_{\tGam_1(p^m),\Qp}(\epsilon)_{a}; 
\]
\[ \wt{\cX}^{\ast}_{\wt{\mbf{1}},\Qp}(\epsilon)_{a} \sim \varprojlim_{m} \wt{\cX}^{\ast}_{\tGam(p^m),\Qp}(\epsilon)_{a} . 
\]
Moreover, the boundaries $\wt{\cZ}_{\tN_{0},\Qp}(\epsilon)_{a} \sub \wt{\cX}^\ast_{\tN_{0},\Qp}(\epsilon)_{a}$ and $\wt{\cZ}_{\wt{\mbf{1}},\Qp}(\epsilon)_{a} \sub \wt{\cX}^{\ast}_{\wt{\mbf{1}},\Qp}(\epsilon)_{a}$, respectively, are strongly Zariski closed, and functions on $\wt{\cX}_{\wt{\mbf{1}},\Qp}(\epsilon)_{a}$ extend uniquely to $\wt{\cX}^{\ast}_{\wt{\mbf{1}},\Qp}(\epsilon)_{a}$. It will be essential for the arguments of this paper to have a slightly more general result. Before we state and prove this generalization, we recall \cite[Lemma 3.2.24(iii)]{scholze-galois}\footnote{In part because the notation we use is incompatible with the notation used in \cite[Lemma 3.2.24(iii)]{scholze-galois}} which will be used in the proof. For all levels $\tH$, we use $\wt{\cX}^{gd}_{\tH}(\epsilon)_{a}$ and $\wt{\cX}^{gd}_{\tH}$ to denote the good reduction loci in $\wt{\cX}_{\tH}(\epsilon)_{a}$ and $\wt{\cX}_{\tH}$ respectively; these are quasicompact open subspaces (or diamonds).

\begin{lemma}\label{3.2.24} \cite[Lemma 3.2.24(iii)]{scholze-galois}
Let $\cY_{m}^{\ast} \to \wt{\cX}^{\ast}_{\tGam_{0}(p^m)^\prime,\Qp}(\epsilon)_{a}$ be finite, \'etale away from the boundary, and assume that $\cY^{\ast}_{m}$ is normal and that no irreducible component of $\cY_{m}^{\ast}$ maps to the boundary of $\wt{\cX}^{\ast}_{\tGam_{0}(p^{m})^\prime,\Qp}(\epsilon)_{a}$. In particular, $\cY^{gd}_{m}\defeq \wt{\cX}^{gd}_{\tGam_{0}(p^{m})^{\prime},\Qp}(\epsilon)_{a} \times_{\wt{\cX}^{\ast}_{\tGam_{0}(p^{m})^{\prime},\Qp}(\epsilon)_{a}} \cY_{m}^{\ast}$ is finite \'etale over $\cX^{gd}_{\tGam_{0}(p^{m})^{\prime},\Qp}(\epsilon)_{a}$. For any $m^{\prime} \geq m$, let $\cY_{m^{\prime}}^{\ast}$ be the normalization of $\wt{\cX}^{\ast}_{\tGam_{0}(p^{m^{\prime}})^{\prime},\Qp}(\epsilon)_{a}\times_{\wt{\cX}^{\ast}_{\tGam_{0}(p^{m})^{\prime},\Qp}(\epsilon)_{a}} \cY_{m}^{\ast}$ and let $\cY_{m^{\prime}}^{gd}$ be the preimage of $\cY_{m}^{gd}$. Let $\cY_{\infty}^{gd}=\cY_{m}^{gd} \times_{\wt{\cX}^{gd}_{\tGam_{0}(p^{m})^{\prime},\Qp}(\epsilon)_{a}} \wt{\cX}^{gd}_{\tP_{\mu,0}\cap \tG^\mathrm{der}(\Zp), \Qp}(\epsilon)_{a}$; this exists as a perfectoid space since $\cY_{m}^{gd} \to \wt{\cX}^{gd}_{\tGam_{0}(p^{m})^{\prime},\Qp}(\epsilon)_{a}$ is finite \'etale. 
The spaces $\cY_{m^{\prime}}^{\ast}$ are affinoid for $m^{\prime}$ sufficiently large, so write $\cY_{m^{\prime}}^{\ast}=\Spa(S_{m^{\prime}},S_{m^{\prime}}^{+})$. Assume further that $S_{\infty} \defeq H^{0}(\cY^{gd}_{\infty},\cO_{\cY_{\infty}^{gd}})$ is a perfectoid $\Qp^{\cycl}$-algebra and define $\cY_{\infty}^{\ast} \defeq \Spa(S_{\infty},S_{\infty}^{\circ})$. Then $\cY_{\infty}^{\ast} \sim \invlim_{m^{\prime}}\cY_{m^{\prime}}^{\ast}$.
\end{lemma}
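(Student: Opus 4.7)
The plan is to reduce the statement to the already-established tilde-limit on the good reduction locus, using Bartenwerfer's theorem (Theorem \ref{bartenwerfer}) to transport global sections from $\cY_{m'}^{gd}$ to $\cY_{m'}^{\ast}$ at each finite level. First, I would check that $\cY_\infty^{gd} \sim \varprojlim_{m'} \cY_{m'}^{gd}$: since $\cY_m^{gd}$ is finite \'etale over $\cX^{\prime,gd}_{\tGam_{0}(p^{m})^{\prime\prime},\Qp}(\epsilon)_{a}$, pulling back along the tower yields an already normal object, so the $\cY_{m'}^{gd}$ are honest fiber products. The tilde-limit $\cX^{\prime,gd}_{\tP_{\mu,0}^{\prime}\cap G^{\prime}(\Zp),\Qp}(\epsilon)_{a} \sim \varprojlim_{m'} \cX^{\prime,gd}_{\tGam_{0}(p^{m'})^{\prime\prime},\Qp}(\epsilon)_{a}$, which was recalled in the excerpt, is preserved under finite \'etale base change, and the desired limit for $\cY_\infty^{gd}$ follows.

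Next I would apply Bartenwerfer. For $m'$ large enough that $\cY_{m'}^{\ast}=\Spa(S_{m'},S_{m'}^+)$ is affinoid, the assumption that no irreducible component of $\cY_m^{\ast}$ maps into the boundary propagates up the tower: the transition map sends boundary to boundary, is essentially finite flat away from the cusps, and the normalization step introduces no new components over the boundary. Hence $\cY_{m'}^{gd} \subseteq \cY_{m'}^{\ast}$ is Zariski open with nowhere dense complement, and Theorem \ref{bartenwerfer} yields $S_{m'}^+ = H^0(\cY_{m'}^{\ast},\cO^+) = H^0(\cY_{m'}^{gd},\cO^+)$ together with $S_{m'} = S_{m'}^+[1/p]$. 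Combining with the first step, the $p$-adic completion of $\varinjlim_{m'} S_{m'}^+$ equals $S_\infty^\circ = H^0(\cY_\infty^{gd},\cO^+)$; inverting $p$, the direct limit $\varinjlim S_{m'}$ has dense image in $S_\infty$, so $(S_\infty,S_\infty^\circ)$ is the completed direct limit of the Huber pairs $(S_{m'},S_{m'}^+)$. The tilde-equivalence $\cY_\infty^{\ast} = \Spa(S_\infty,S_\infty^\circ) \sim \varprojlim_{m'} \cY_{m'}^{\ast}$ is then formal, using the standard fact that the adic spectrum of such a completed direct limit of affinoid Huber pairs recovers the topological inverse limit of spectra.

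The step I expect to require the most care is the interaction between Bartenwerfer's identification and the perfectoid limit: one must verify that $\cO_{\cY_{m'}^{\ast}}^+ \toisom j_\ast \cO_{\cY_{m'}^{gd}}^+$ holds with exactly the integral structure matching $S_{m'}^+$, so that the direct limit of these isomorphisms, together with $p$-adic completion, produces $S_\infty^\circ$ on the nose rather than some merely commensurable subring. Propagation of the ``no irreducible component maps to the boundary'' hypothesis up the tower also needs a small amount of geometric input about the Siegel Shimura varieties at $\tGam_0(p^{m'})^{\prime\prime}$-level but should be routine from the structure of the minimal compactification.
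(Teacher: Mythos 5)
The paper gives no proof of this lemma: it is stated verbatim (modulo a change of notation, as the footnote explains) as a citation of \cite[Lemma 3.2.24(iii)]{scholze-galois}, so there is no proof here to compare against and the relevant argument is Scholze's.

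Evaluating your proposal on its own terms, there is a genuine gap at the Bartenwerfer step. You assert that $\cY^{gd}_{m'} \subseteq \cY^{\ast}_{m'}$ is Zariski open with nowhere dense complement and then invoke Theorem~\ref{bartenwerfer} to conclude $S^+_{m'} = H^0(\cY^{gd}_{m'},\cO^+)$. But the good reduction locus is, by construction, the preimage under the specialization map of the open (non-boundary) formal subscheme of Scholze's formal model for the anticanonical locus; its complement in the affinoid $\cX^{\prime,\ast}_{\tGam_0(p^{m'})^{\prime\prime},\Qp}(\epsilon)_a$ is a \emph{tube} over the boundary of the formal model, which is closed but in general not Zariski closed. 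The paper in fact emphasizes that the good reduction loci are \emph{quasicompact} open subspaces, which is already incompatible with the complement being a nowhere dense Zariski closed subset (a Zariski open in an affinoid is an increasing union of rational subsets and is typically not quasicompact). Bartenwerfer's theorem lets you extend bounded functions across a Zariski closed complement, i.e., across the cusps from $\cY_{m'}$ to $\cY^{\ast}_{m'}$; the genuinely delicate step is bridging from $\cY^{gd}_{m'}$ all the way out to $\cY_{m'}$, and this does not follow from Riemann-extension considerations alone. In Scholze's argument the identification of bounded sections on the good reduction locus with global sections on $\cY^{\ast}_{m'}$ is established by working with the formal model directly (normality and a codimension bound on the boundary of the formal model imply that global sections of the formal model agree with those over its good-reduction open), and it is precisely this formal-model input that your sketch omits. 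Your first paragraph, establishing $\cY^{gd}_\infty \sim \varprojlim_{m'} \cY^{gd}_{m'}$ by finite \'etale base change, and the final assembly once the bridge from $\cY^{gd}_{m'}$ to $S_{m'}$ is in place are both sound, but the bridge itself is the essential content and needs more than Bartenwerfer.
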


To simplify notation, we will not write out the subscripts `$\Qp$' for most of the discussion below until after the proof of Corollary \ref{anticanonical tower at intermediate levels 2 Siegel Qp}, and we will put $\tP_{\mu,0}^{\prime} \defeq \tP_{\mu,0} \cap \tG^\mathrm{der}(\Zp)$. Returning to the situation of the lemma above and keeping in mind these conventions, we recall that $\wt{\cX}^{\ast}_{\tGam_{1}(p^{m}) \cap \tP_{\mu,0}^{\prime}}(\epsilon)_{a}$ is proved to be perfectoid via Lemma \ref{3.2.24}. In particular,
\[
H^{0}(\wt{\cX}^{\ast}_{\tGam_{1}(p^{m})\cap \tP_{\mu,0}^{\prime}}(\epsilon)_{a},\cO_{\wt{\cX}^{\ast}_{\tGam_{1}(p^{m})\cap \tP_{\mu,0}^{\prime}}(\epsilon)_{a}}) = H^{0}(\wt{\cX}^{gd}_{\tGam_{1}(p^{m}) \cap \tP_{\mu,0}^{\prime}}(\epsilon)_{a},\cO_{\wt{\cX}^{gd}_{\tGam_{1}(p^{m}) \cap \tP_{\mu,0}^{\prime}}(\epsilon)_{a}})
\]
is a perfectoid $\Qp^{\cycl}$-algebra. As $\wt{\cX}^{\ast}_{\tGam(p^{m})}(\epsilon)_{a} \to \wt{\cX}^{\ast}_{\tGam_{1}(p^{m})}(\epsilon)_{a}$ is finite \'etale by \cite[Lemma 3.2.35]{scholze-galois}), one sees that 
\[
\wt{\cX}^{\ast}_{\tGam(p^{m}) \cap \tP_{\mu,0}^{\prime}}(\epsilon)_{a} = \wt{\cX}^{\ast}_{\tGam(p^{m})}(\epsilon)_{a}\times_{\wt{\cX}^{\ast}_{\tGam_{1}(p^{m})}(\epsilon)_{a}} \wt{\cX}^{\ast}_{\tGam_{1}(p^{m}) \cap \tP_{\mu,0}^{\prime}}(\epsilon)_{a}
\]
is affinoid perfectoid and a direct calculation using that $\wt{\cX}^{\ast}_{\tGam(p^{m})}(\epsilon)_{a} \to \wt{\cX}^{\ast}_{\tGam_{1}(p^{m})}(\epsilon)_{a}$ is finite \'etale shows that
\[
 H^{0}(\wt{\cX}^{\ast}_{\tGam(p^{m}) \cap \tP_{\mu,0}^{\prime}}(\epsilon)_{a},\cO_{\wt{\cX}^{\ast}_{\tGam(p^{m}) \cap \tP_{\mu,0}^{\prime}}(\epsilon)_{a}}) = H^{0}(\wt{\cX}^{gd}_{\tGam(p^{m}) \cap \tP_{\mu,0}^{\prime}}(\epsilon)_{a},\cO_{\wt{\cX}^{gd}_{\tGam(p^{m}) \cap \tP_{\mu,0}^{\prime}}(\epsilon)_{a}}),
\]
so the right hand side is perfectoid.

\begin{prop}\label{anticanonical tower at intermediate levels Siegel} We work over $\Qp$. Let $\tK \subset \tG(\Z_p)$ be a compact open subgroup. There exists an affinoid perfectoid space 
\[
\wt{\cX}^{\ast}_{\tK \cap \tP_{\mu,0}^{\prime}}(\epsilon)_a \sim \varprojlim_{m} \wt{\cX}^{\ast}_{\tK \cap \tGam_{0}(p^{m})^{\prime}}(\epsilon)_a .
\]
Moreover, the boundary $\wt{\cZ}_{\tK \cap \tP_{\mu,0}^{\prime}}(\epsilon)_a \subset \wt{\cX}^{\ast}_{\tK \cap \tP_{\mu,0}^{\prime}}(\epsilon)_a$ is strongly Zariski closed. 
\end{prop}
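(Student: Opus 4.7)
The strategy is to reduce to the already-established case $\tK^{\prime}=\tG^{\prime}(\Zp)$ (i.e.\ the perfectoidness of $\cX^{\prime,\ast}_{\tP_{\mu,0}^{\prime\prime}}(\epsilon)_{a}$ recalled just before the proposition) via an application of Lemma~\ref{3.2.24}, using that $\tK^{\prime}\cap\tP_{\mu,0}^{\prime\prime}$ has finite index in $\tP_{\mu,0}^{\prime\prime}$. For $m$ large enough, set
\[
\cY_{m}^{\ast} \defeq \cX^{\prime,\ast}_{\tK^{\prime}\cap\tGam_{0}(p^{m})^{\prime\prime}}(\epsilon)_{a},
\]
so that the natural map $\cY_{m}^{\ast}\to \cX^{\prime,\ast}_{\tGam_{0}(p^{m})^{\prime\prime}}(\epsilon)_{a}$ is finite (and \'etale away from the boundary), since $\tK^{\prime}\cap\tGam_{0}(p^{m})^{\prime\prime}$ is a finite-index open subgroup of $\tGam_{0}(p^{m})^{\prime\prime}$ and the latter Shimura variety is the finite group quotient of $\cY_{m}^{\ast}$ by $\tGam_{0}(p^{m})^{\prime\prime}/(\tK^{\prime}\cap\tGam_{0}(p^{m})^{\prime\prime})$ (apply Lemma~\ref{adic quotient}, suitably adapted to this situation, with the Siegel analogue $X^{\prime,\ast}$ of $X^{\ast}$). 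It is also normal, being a Shimura variety, and no irreducible component maps into the boundary, so the hypotheses of Lemma~\ref{3.2.24} are satisfied at finite level.

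First, I would verify that $\cY_{m}^{\ast}$ is affinoid for $m$ large enough: as already noted in the text preceding the proposition, $\cX^{\prime,\ast}_{\tGam_{0}(p^{m})^{\prime\prime}}(\epsilon)_{a}$ is affinoid for $m\gg 0$ (since its inverse limit is an affinoid perfectoid), and a finite map with affinoid target has affinoid source. Second, I need to check that $S_{\infty}=H^{0}(\cY_{\infty}^{gd},\cO_{\cY_{\infty}^{gd}})$ is a perfectoid $\Qp^{\cycl}$-algebra, where $\cY_{\infty}^{gd}$ is the good reduction locus of $\cX^{\prime,\ast}_{\tK^{\prime}\cap \tP_{\mu,0}^{\prime\prime}}(\epsilon)_{a}$. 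Here $\cY_{\infty}^{gd}$ is the fiber product of $\cY_{m}^{gd}$ (which is finite \'etale over $\cX^{\prime,gd}_{\tGam_{0}(p^{m})^{\prime\prime}}(\epsilon)_{a}$) with the affinoid perfectoid space $\cX^{\prime,gd}_{\tP_{\mu,0}^{\prime\prime}}(\epsilon)_{a}$ over $\cX^{\prime,gd}_{\tGam_{0}(p^{m})^{\prime\prime}}(\epsilon)_{a}$; this fiber product is affinoid perfectoid by the almost purity theorem, so $S_{\infty}$ is perfectoid. Applying Lemma~\ref{3.2.24} then produces an affinoid perfectoid $\cY_{\infty}^{\ast}=\Spa(S_{\infty},S_{\infty}^{\circ})$ with
\[
\cY_{\infty}^{\ast} \sim \varprojlim_{m^{\prime}}\cY_{m^{\prime}}^{\ast} = \varprojlim_{m^{\prime}} \cX^{\prime,\ast}_{\tK^{\prime}\cap \tGam_{0}(p^{m^{\prime}})^{\prime\prime}}(\epsilon)_{a}.
\]
By construction $\cY_{\infty}^{\ast}$ must then agree with $\cX^{\prime,\ast}_{\tK^{\prime}\cap \tP_{\mu,0}^{\prime\prime}}(\epsilon)_{a}$, giving the first claim.

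For the strong Zariski closedness of the boundary, I would argue by pulling back along the natural finite map $\cX^{\prime,\ast}_{\tK^{\prime}\cap \tP_{\mu,0}^{\prime\prime}}(\epsilon)_{a}\to \cX^{\prime,\ast}_{\tP_{\mu,0}^{\prime\prime}}(\epsilon)_{a}$ (which can be thought of as the limit of the finite maps $\cY_{m}^{\ast}\to \cX^{\prime,\ast}_{\tGam_{0}(p^{m})^{\prime\prime}}(\epsilon)_{a}$ constructed above). Since the map is finite \'etale on the good-reduction locus, the boundary of the source is exactly the preimage of the boundary of the target. Strong Zariski closedness (as in~\cite[Definition 2.2.6]{scholze-galois}) is preserved under such finite pullbacks of affinoid perfectoid spaces, so the known strong Zariski closedness in the case $\tK^{\prime}=\tG^{\prime}(\Zp)$ transfers to the general case.

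The main obstacle, I expect, will be the bookkeeping around Lemma~\ref{3.2.24}: one has to ensure that the normalisations $\cY_{m^{\prime}}^{\ast}$ appearing in that lemma really do coincide with $\cX^{\prime,\ast}_{\tK^{\prime}\cap \tGam_{0}(p^{m^{\prime}})^{\prime\prime}}(\epsilon)_{a}$ for all $m^{\prime}\geq m$, and that the passage to the limit $\cY_{\infty}^{gd}$ behaves well at the boundary. Both reduce to the fact that the finite level Shimura varieties $\cX^{\prime,\ast}_{\tK^{\prime}\cap\tGam_{0}(p^{m})^{\prime\prime}}(\epsilon)_{a}$ are already normal and that their boundary behaviour is controlled by the boundary behaviour at level $\tGam_{0}(p^{m})^{\prime\prime}$, but this requires a careful check using Lan's theory of toroidal/minimal compactifications.
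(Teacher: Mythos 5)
Your overall framework — reducing to Lemma~\ref{3.2.24} by checking its hypotheses for $\cY_m^\ast = \cX^{\prime,\ast}_{\tK^{\prime}\cap\tGam_0(p^m)^{\prime\prime}}(\epsilon)_a$ — matches the paper's proof, and your treatment of strong Zariski closedness via pullback is in the same spirit as the paper's use of \cite[Lemma~2.2.9]{scholze-galois}. However, there is a genuine gap in the step where you verify the critical hypothesis of Lemma~\ref{3.2.24}, namely that $S_\infty = H^0(\cY_\infty^{gd},\cO_{\cY_\infty^{gd}})$ is a perfectoid $\Qp^{\cycl}$-algebra.

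You assert that $\cY_\infty^{gd}$ is ``affinoid perfectoid by the almost purity theorem,'' on the grounds that it is a finite \'etale fiber product with ``the affinoid perfectoid space $\cX^{\prime,gd}_{\tP_{\mu,0}^{\prime\prime}}(\epsilon)_a$.'' But nothing establishes that this \emph{good-reduction locus} is affinoid: what is known (from \cite[Cor.~3.2.19]{scholze-galois} and the discussion preceding the proposition) is that the \emph{minimally compactified} space $\cX^{\prime,\ast}_{\tP_{\mu,0}^{\prime\prime}}(\epsilon)_a$ is affinoid perfectoid, and that for the levels $\tGam_1(p^m)^{\prime}\cap\tP_{\mu,0}^{\prime\prime}$ and $\tGam(p^m)^{\prime}\cap\tP_{\mu,0}^{\prime\prime}$ one has $H^0(\ast) = H^0(gd)$ as perfectoid rings. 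The good-reduction locus is only a quasicompact open subset of the affinoid $\ast$-space; there is no reason for it to be affinoid, and indeed the whole structure of Lemma~\ref{3.2.24} (which explicitly poses perfectoidness of $S_\infty$ as a hypothesis, and works with the normalization of the compactified tower rather than the open gd locus) is designed to circumvent exactly this issue. Almost purity gives you that $\cY_\infty^{gd}$ is a \emph{perfectoid space}, but not that it is affinoid, so the conclusion ``so $S_\infty$ is perfectoid'' does not follow.

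The paper closes this gap by a different device. It chooses $m$ with $\tGam(p^m)^{\prime}\subset\tK^{\prime}$, sets $H = \tK^{\prime}/\tGam(p^m)^{\prime}$, and uses the fact (already established) that $S_\infty \defeq H^0(\cX^{\prime,gd}_{\tGam(p^m)^{\prime}\cap\tP_{\mu,0}^{\prime\prime}}(\epsilon)_a,\cO)$ is a perfectoid $\Qp^{\cycl}$-algebra. Since $\cX^{\prime,gd}_{\tGam(p^m)^{\prime}\cap\tP_{\mu,0}^{\prime\prime}}(\epsilon)_a \to \cX^{\prime,gd}_{\tK^{\prime}\cap\tP_{\mu,0}^{\prime\prime}}(\epsilon)_a$ is an $H$-torsor, $S_\infty' = (S_\infty)^H$, and then one invokes \cite[Prop.~3.6.22]{kedlaya-liu} (invariants of a perfectoid algebra under a finite group action are perfectoid) to conclude. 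You should replace your ``almost purity'' step with this invariants argument.
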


\begin{proof} We check that the conditions of Lemma \ref{3.2.24} apply to $\cY_m^\ast\defeq \wt{\cX}^{\ast}_{\tK \cap \tGam_0(p^m)^{\prime}}(\epsilon)_a$ for any large enough $m$. The projection map 
\[
\wt{\cX}^{\ast}_{\tK \cap \tGam_0(p^m)^{\prime}} \to \wt{\cX}^{\ast}_{\tGam_0(p^m)^{\prime}}
\]
is finite \'etale away from the boundary, $\wt{\cX}^{\ast}_{\tK \cap \tGam_0(p^m)^{\prime}}(\epsilon)_a$ is normal, 
and no irreducible component of $\wt{\cX}^{\ast}_{\tK \cap \tGam_0(p^m)^{\prime}}(\epsilon)_a$ maps into the boundary. 
Note that the space $\cY_{\infty}^{gd}\defeq \wt{\cX}^{gd}_{\tK \cap \tP_{\mu,0}^{\prime}}(\epsilon)_a$ exists and is perfectoid, since it is the pullback to the perfectoid space $\wt{\cX}^{gd}_{\tP_{\mu,0}^{\prime}}(\epsilon)_a$ of $\wt{\cX}_{\tK \cap \tGam_0(p^m)^{\prime}}(\epsilon)_a \to  \wt{\cX}_{\tGam_0(p^m)^{\prime}}(\epsilon)_a$,
which is finite \'etale. The last thing we need to show is that 
\[
S_\infty'\defeq H^0(\wt{\cX}^{gd}_{\tK \cap \tP_{\mu,0}^{\prime}}(\epsilon)_a, \cO_{\wt{\cX}^{gd}_{\tK \cap \tP_{\mu,0}^{\prime}}(\epsilon)_a})
\]
is a perfectoid $\Q_p^\cycl$-algebra. 

\medskip
To see this, we argue as follows. There exists an integer $t\geq 1$ such that $\tGam(p^t) \subset \tK$; then
the quotient $H\defeq \tK /\tGam(p^t)$ is a finite group. From the discussion preceding this Proposition, we know that $\wt{\cX}^{\ast}_{\tGam(p^t) \cap \tP_{\mu,0}^{\prime}}(\epsilon)_a$
is affinoid perfectoid and that
\[
 S_{\infty} \defeq H^{0}(\wt{\cX}^{\ast}_{\tGam(p^t) \cap \tP_{\mu,0}^{\prime}}(\epsilon)_{a},\cO_{\wt{\cX}^{\ast}_{\tGam(p^{t}) \cap \tP_{\mu,0}^{\prime}}(\epsilon)_{a}}) = H^{0}(\wt{\cX}^{gd}_{\tGam(p^t) \cap \tP_{\mu,0}^{\prime}}(\epsilon)_{a},\cO_{\wt{\cX}^{gd}_{\tGam(p^t) \cap \tP_{\mu,0}^{\prime}}(\epsilon)_{a}}).
\]
We now conclude using~\cite[Prop. 3.6.22]{kedlaya-liu}, since 
\[
S_\infty' = (S_\infty)^H,
\]
which follows from the fact that $\wt{\cX}^{gd}_{\tGam(p^t) \cap \tP_{\mu,0}^{\prime}}(\epsilon)_a$ is an $H$-torsor over $\wt{\cX}^{gd}_{\tK \cap \tP_{\mu,0}^{\prime}}(\epsilon)_a$. This concludes the proof of the first part of the proposition, using Lemma \ref{3.2.24}.

\medskip

To see that $\wt{\cZ}_{\tK \cap \tP_{\mu,0}^{\prime}}(\epsilon)_a\subset \wt{\cX}^{\ast}_{\tK \cap \tP_{\mu,0}^{\prime}}(\epsilon)_a$ is strongly Zariski closed, we note that $\wt{\cZ}_{\tK \cap \tP_{\mu,0}^{\prime}}(\epsilon)_a$ is the pullback of $\wt{\cZ}_{\tP_{\mu,0}^{\prime}}(\epsilon)_a \subset \wt{\cX}^{\ast}_{\tP_{\mu,0}^{\prime}}(\epsilon)_a$ over the map $\wt{\cX}^{\ast}_{\tK \cap \tP_{\mu,0}^{\prime}}(\epsilon)_a \to \wt{\cX}^{\ast}_{\tP_{\mu,0}^{\prime}}(\epsilon)_a$, and use \cite[Lemma 2.2.9]{scholze-galois}.
\end{proof}

We immediately deduce the following generalization.

\begin{cor}\label{anticanonical tower at intermediate levels 2 Siegel Qp} We work over $\Qp$. Let $\tH \sub \tP_{\mu,0}^{\prime}$ be a closed subgroup. Then the diamond $\wt{\cX}^{\ast}_{\tH}(\epsilon)_a$ is an affinoid perfectoid space, and the boundary $\wt{\cZ}_{\tH}(\epsilon)_a \subset \wt{\cX}^{\ast}_{\tH}(\epsilon)_a$ is strongly Zariski closed. 
\end{cor}

\begin{proof}
Let $\tH_{i} \sub \tP_{\mu,0}^{\prime}$, $i\in I$, be a collection of open subgroups of $\tP^{\prime}_{\mu,0}$ containing $\tH$ such that $\bigcap_{i\in I}\tH_{i} = \tH$. We may assume that each $\tH_{i}$ is equal to $\tP_{\mu,0}^{\prime} \cap \tK_{i}$ for some open subgroup $\tK_{i} \sub \tG(\Zp)$. Then each $\wt{\cX}^\ast_{\tH_{i}}(\epsilon)_{a}$ is affinoid perfectoid by Proposition \ref{anticanonical tower at intermediate levels Siegel}, so $\wt{\cX}^{\ast}_{\tH}(\epsilon)_a = \invlim_{i} \wt{\cX}^{\ast}_{\tH_{i}}(\epsilon)_a$ is affinoid perfectoid. The boundary $\wt{\cZ}_{\tH}(\epsilon)_a$ is the pullback of $\wt{\cZ}_{\tP_{\mu,0}^{\prime}}(\epsilon)_a$, hence strongly Zariski closed by \cite[Lemma 2.2.9]{scholze-galois}.
\end{proof}

We now go back to working over $C$, and deduce the following result.

\begin{cor}\label{anticanonical tower at intermediate levels 2 Siegel}
We work over $C$. Let $\tH \sub \tP_{\mu,0}$ be a closed subgroup. Then the diamond $\wt{\cX}^{\ast}_{\tH}(\epsilon)_a$
is an affinoid perfectoid space, and the boundary $ \wt{\cZ}_{\tH}(\epsilon)_a \subset \wt{\cX}^{\ast}_{\tH}(\epsilon)_a $ is strongly Zariski closed.
\end{cor}

\begin{proof}
First assume that $\tH \sub \tP_{\mu,0}^{\prime}$. Then 
\[
\wt{\cX}^{\ast}_{\tH}(\epsilon)_a = \left( \wt{\cX}^{\ast}_{\tH,\Qp}(\epsilon)_a \times_{\Spa(\Qp)^{\lozenge}} \Spa(\Qp^\cycl)^{\lozenge} \right) \times_{\Spa(\Qp^\cycl)^{\lozenge}} \Spa(C)^{\lozenge}
\]
where the omitted ring of integral elements is the power-bounded elements. The first fiber product is affinoid perfectoid by Corollary \ref{anticanonical tower at intermediate levels 2 Siegel Qp} since it is equal to
\[
\invlim_{m} \left( \wt{\cX}^{\ast}_{\tH,\Qp}(\epsilon)_a \times_{\Spa(\Qp)^{\lozenge}} \Spa(\Qp(\zeta_{p^m}))^{\lozenge} \right);
\]
the second fiber product is then a fiber product of affinoid perfectoid spaces, hence affinoid perfectoid. The statement about the boundary follows by pullback, using the map $\wt{\cX}^{\ast}_{\tH}(\epsilon)_a \to \wt{\cX}^{\ast}_{\tH,\Qp}(\epsilon)_a$.

\medskip

For general $\tH \sub \tP_{\mu,0}$, set $\tH^{\prime} = \tH \cap G^\mathrm{der}(\Zp)$. Then $\wt{\cX}^{\ast}_{\tH}(\epsilon)_a$ is a closed union of connected components of $\wt{\cX}^{\ast}_{\tH^{\prime}}(\epsilon)_a$, and the result for $\tH$ follows from that for $\tH^{\prime}$.
\end{proof}

We now return to the Shimura varieties for $G$, and continue to work over $C$ for the rest of this paper. For any closed subgroup $H \sub \Gamma_{0}(p)$, set
\[
 \ocX_{H}(\epsilon)_{a} \defeq \ocX_{H} \times_{\wt{\cX}^{\ast}_{\tGam_{0}(p)}} \wt{\cX}^{\ast}_{\tGam_{0}(p)}(\epsilon)_{a};
 \]
and $\cX_{H}(\epsilon)_{a}= \ocX_{H}(\epsilon)_{a} \cap \cX_{H}$. These are non-empty since the ordinary locus of the Shimura varieties of $G$ is non-empty. The following is the main result of this subsection.

\begin{thm}\label{anticanonical tower at intermediate levels 2} Let $H \sub P_{\mu,0}$ be a closed subgroup. Then $\ocX_{H}(\epsilon)_{a}$ is an affinoid perfectoid space over $C$, and the boundary $\cZ_{H}(\epsilon)_a \subset \ocX_{H}(\epsilon)_a$ is strongly Zariski closed. 
\end{thm}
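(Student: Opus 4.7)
The plan is to bootstrap from the Siegel case already established in Corollary \ref{anticanonical tower at intermediate levels 2 Siegel}, by embedding our unitary Shimura variety into a Siegel one as a strongly Zariski closed subspace. Choose a sufficiently small tame level $\tK^{\prime,p}$ for $\tG^\prime$ with $\tK^{p} \sub \tK^{\prime,p}$, and let $\tH^{\prime} \sub \tG^\prime(\Zp)$ be the image of $\tH$ under the closed embedding $\tG(\Zp) \hookrightarrow \tG^\prime(\Zp)$; since $\tH \sub \tP_{\mu,0}$, we have $\tH^\prime \sub \tP_{\mu,0}^\prime$. At any finite level $(\tK,\tK^\prime)$ with $\tH \sub \tK$, $\tH^\prime \sub \tK^\prime$, and $\tK \sub \tG(\Zp) \cap \tK^\prime$, the ad hoc compactification gives, by construction, a Zariski closed embedding $\ol{\cX}_{\tK} \hookrightarrow \cX^{\prime,\ast}_{\tK^\prime}$. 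Restricting to anticanonical loci and then passing to the cofiltered limit, together with the fact that $\cX^\ast_{\tH} \to \ol{\cX}_{\tH}$ is a universal homeomorphism (hence an isomorphism of diamonds), we obtain a morphism of diamonds
\[
\cX^{\ast}_{\tH}(\epsilon)_a \toisom \ol{\cX}_{\tH}(\epsilon)_a \hookrightarrow \cX^{\prime,\ast}_{\tH^{\prime}}(\epsilon)_a,
\]
whose target is an affinoid perfectoid space by Corollary \ref{anticanonical tower at intermediate levels 2 Siegel}.

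The crucial step is to upgrade this inclusion to a strongly Zariski closed embedding. For any sufficiently small open $\tK^\prime$ containing $\tH^\prime$ (and corresponding $\tK$), the space $\ol{\cX}_{\tH}(\epsilon)_a$ is the pullback of the finite-level Zariski closed subspace $\ol{\cX}_{\tK}(\epsilon)_a \sub \cX^{\prime,\ast}_{\tK^\prime}(\epsilon)_a$ along the projection $\cX^{\prime,\ast}_{\tH^\prime}(\epsilon)_a \to \cX^{\prime,\ast}_{\tK^\prime}(\epsilon)_a$. Applying the pullback stability of strongly Zariski closed subspaces (in the spirit of \cite[Lemma 2.2.9]{scholze-galois}) then identifies $\ol{\cX}_{\tH}(\epsilon)_a$ as strongly Zariski closed inside the affinoid perfectoid space $\cX^{\prime,\ast}_{\tH^\prime}(\epsilon)_a$. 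Since strongly Zariski closed subspaces of affinoid perfectoid spaces inherit a canonical affinoid perfectoid structure, we conclude that $\cX^\ast_{\tH}(\epsilon)_a$ is affinoid perfectoid.

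For the boundary statement, $\cZ_{\tH}(\epsilon)_a$ is the pullback of the strongly Zariski closed boundary $\cZ^\prime_{\tH^\prime}(\epsilon)_a \sub \cX^{\prime,\ast}_{\tH^\prime}(\epsilon)_a$ (provided by Corollary \ref{anticanonical tower at intermediate levels 2 Siegel}) along the inclusion $\cX^\ast_{\tH}(\epsilon)_a \hookrightarrow \cX^{\prime,\ast}_{\tH^\prime}(\epsilon)_a$. Strong Zariski closedness then follows from another application of \cite[Lemma 2.2.9]{scholze-galois}.

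The main obstacle is the verification of strong Zariski closedness at infinite level. Zariski closedness at each finite level is immediate from the definition of the ad hoc compactification, but the ideal-theoretic conditions of \cite[Definition 2.2.6]{scholze-galois} impose uniform control on ideal sheaves under pullback. The success of the argument therefore relies on extracting this control from the pullback stability of \cite[Lemma 2.2.9]{scholze-galois} applied to the affinoid perfectoid projection $\cX^{\prime,\ast}_{\tH^\prime}(\epsilon)_a \to \cX^{\prime,\ast}_{\tK^\prime}(\epsilon)_a$, rather than by constructing the ideal sheaf directly on the unitary side.
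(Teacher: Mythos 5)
Your overall strategy — bootstrapping from Corollary \ref{anticanonical tower at intermediate levels 2 Siegel} via the embedding of the unitary Shimura variety into a Siegel one — is indeed the paper's strategy. But there is a concrete gap at the crucial step. You claim that for a single, sufficiently small Siegel level $\tK^\prime$ (with corresponding unitary level $\tK$), the space $\cX^\ast_{\tH}(\epsilon)_a$ \emph{equals} the pullback $\ol{\cX}_{\tK}(\epsilon)_a \times_{\cX^{\prime,\ast}_{\tK^\prime}(\epsilon)_a} \cX^{\prime,\ast}_{\tH^\prime}(\epsilon)_a$. This is false. Over the open locus, $\cX^{\prime}_{\tH^\prime}(\epsilon)_a \to \cX^{\prime}_{\tK^\prime}(\epsilon)_a$ is a $\tK^\prime/\tH^\prime$-torsor (Lemma \ref{torsor}), so the pullback of $\cX_{\tK}(\epsilon)_a$ is a $\tK^\prime/\tH^\prime$-torsor over it. In contrast, $\cX_{\tH}(\epsilon)_a \to \cX_{\tK}(\epsilon)_a$ is only a $\tK/\tH$-torsor, and with $\tK = \tK^\prime \cap \tG(\Zp)$ and $\tH^\prime = \tH$ the inclusion $\tK/\tH \hookrightarrow \tK^\prime/\tH^\prime$ is of index $[\tK^\prime:\tK^\prime\cap\tG(\Zp)] > 1$. (Already with $\tH = \wt{\mbf{1}}$, $\tK=\tG(\Zp)$, $\tK^\prime=\tG^\prime(\Zp)$ the two groups differ.) So a single pullback strictly contains $\cX^\ast_{\tH}(\epsilon)_a$, and no choice of $\tK^\prime$ fixes this: the Siegel levels necessarily shrink faster than the unitary ones.

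The fix, which is what the paper does, is to present $\cX^\ast_{\tH}(\epsilon)_a$ as the cofiltered \emph{intersection} $\bigcap_i Y_i$, where $Y_i \defeq \ol{\cX}_{\tK_i}(\epsilon)_a \times_{\cX^{\prime,\ast}_{\tK_i^{p,\prime}\tK_i^\prime}(\epsilon)_a} \cX^{\prime,\ast}_{\tH^{p,\prime}\tH^\prime}(\epsilon)_a$ over a shrinking cofinal sequence of levels $\tK_i,\tK_i^\prime,\tK_i^{p,\prime}$. Each $Y_i$ is Zariski closed in the affinoid perfectoid space $\cX^{\prime,\ast}_{\tH^{p,\prime}\tH^\prime}(\epsilon)_a$, hence itself affinoid perfectoid with strongly Zariski closed boundary, and the affinoid perfectoid property (and the boundary property, by pullback) survives the inverse limit. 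Two further points worth noting: first, you fix the tame Siegel level $\tK^{\prime,p}$, but the ad hoc compactification $\ol{\cX}_{\tK_i}$ requires both the tame and $p$-components of the ambient Siegel level to be small relative to $\tK_i$, so the tame levels $\tK_i^{p,\prime}$ must shrink too as the paper does; second, you invoke \cite[Lemma 2.2.9]{scholze-galois} for the projection $\cX^{\prime,\ast}_{\tH^\prime}(\epsilon)_a \to \cX^{\prime,\ast}_{\tK^\prime}(\epsilon)_a$, but that lemma concerns maps between \emph{affinoid perfectoid} spaces, whereas the finite-level Siegel target is only a rigid space — the correct justification is that a Zariski closed subset of an affinoid perfectoid space is itself affinoid perfectoid, and then apply Lemma 2.2.9 to the map $Y_i \to \cX^{\prime,\ast}_{\tH^{p,\prime}\tH^\prime}(\epsilon)_a$ for the boundary.
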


\begin{proof}
In this proof we will write out tame levels for the Shimura varieties for $\tG$ that appear. Choose a (countable) shrinking set of compact open subgroups $K_{i} \sub \Gamma_{0}(p)$ such that $H= \bigcap_{i} K_{i}$, and choose shrinking sets of tame levels $\tK_{i}^{p}$ and compact open subgroups $\tK_{i} \sub \tGam_{0}(p)$ such that $\ol{\cX}_{K_{i}} \sub \wt{\cX}^{\ast}_{\tK_{i}^{p}\tK_{i}}$. Set $\tH^{p}=\bigcap_{i}\tK_{i}^{p}$, $\tH=\bigcap_{i}\tK_{i}$ and define
\[
 \wt{\cX}^{\ast}_{\tH^{p}\tH}(\epsilon)_{a} \defeq \invlim_{i} \wt{\cX}^{\ast}_{\tK_{i}^{p}\tK_{i}}(\epsilon)_{a}^{\lozenge} = \invlim_{i} \wt{\cX}^{\ast}_{\tK_{i}^{p}\tH}(\epsilon)_{a}.
 \]
The second equality and Corollary \ref{anticanonical tower at intermediate levels 2 Siegel} shows that this is an affinoid perfectoid space with strongly Zariski closed boundary. We have 
\[
\ocX_{H}(\epsilon)_{a} = \invlim_{i} \ol{\cX}_{K_{i}}(\epsilon)_{a}^{\lozenge}.
\]
For every $i$, set 
\[
Y_{i} \defeq \ol{\cX}_{K_{i}}(\epsilon)_{a} \times_{\wt{\cX}^{\ast}_{\tK_{i}^{p}\tK_{i}}(\epsilon)_{a}} \wt{\cX}^{\ast}_{\tH^{p}\tH}(\epsilon)_{a}.
\]
Since $\ol{\cX}_{K_{i}}(\epsilon)_{a} \sub \wt{\cX}^{\ast}_{\tK_{i}^{p}\tK_{i}}$ is Zariski closed, $Y_{i} \sub  \wt{\cX}^{\ast}_{\tH^{p}\tH}(\epsilon)_{a} $ is Zariski closed and hence affinoid perfectoid with strongly Zariski closed boundary. It follows that $\ocX_{H}(\epsilon)_{a} = \bigcap_{i} Y_{i}$ is affinoid perfectoid with strongly Zariski closed boundary, as desired.
\end{proof}

\subsection{The Hodge--Tate period morphism}\label{hodgetatemapsection}
In this subsection we discuss the perfectoid Shimura variety $\ocX_{\mbf{1}}$ 
and the Hodge--Tate period morphism 
\[
\pi_{\mathrm{HT}}\colon \ocX_{\mbf{1}} \to \mathscr{F}\ell_{G,\mu}.
\]
Recall that, roughly speaking, the Hodge--Tate period morphism measures the relative position of the Hodge--Tate filtration on the
universal abelian variety over the perfectoid Shimura variety. 

\medskip

Recall that we defined subgroup schemes $P_\mu \sub G$ and $\tP_\mu \sub \tG$ in \S \ref{levels}, which are parabolic subgroups at good primes (in particular at $p$). At the level of schemes,  we define flag varieties over $\Zp$ as the quotients $\Flr_{G,\mu}\defeq G_{\Zp}/P_{\mu,\Zp}$ and $\Flr_{\tG,\mu}\defeq \tG_{\Zp}/\tP_{\mu,\Zp}$ (we could have made these definitions over $\Z_{(p)}$, but we will have no need for that extra generality). Note that they carry natural left actions of $G_{\Zp}$ and $\tG_{\Zp}$, respectively, which we will also consider as right actions by inversion. We have a natural closed immersion $\Flr_{G,\mu} \to \Flr_{\tG,\mu}$, which is equivariant for the action of $G_{\Zp}$. We will mostly be interested in the rigid spaces
\[
\Fl_{G,\mu} \defeq (\Flr_{G,\mu} \times_{\Spec \Zp}\Spec C)^\mathrm{ad}, \,\,\,\,\,\, \Fl_{\tG,\mu} \defeq (\Flr_{\tG,\mu} \times_{\Spec \Zp}\Spec C)^\mathrm{ad}.
\]
Even though these are rigid spaces over $C$, we will implicitly remember that they naturally arise by base change from $(\Flr_{G,\mu} \times_{\Spec \Zp}\Spec \Qp)^\mathrm{ad}$ and $(\Flr_{\tG,\mu} \times_{\Spec \Zp}\Spec \Qp)^\mathrm{ad}$ respectively; in particular this will allow us to define the $\Qp$-points of $\Fl_{G,\mu}$ and $\Fl_{\tG,\mu}$. Recall that we have right actions of  $G(\Q_p)$ and $\tG(\Qp)$ on $\ocX_{\mbf{1}}$ and $\wt{\cX}^*_{\wt{\mbf{1}}}$ respectively, and that the natural map $\ocX_{\mbf{1}} \to \wt{\cX}^*_{\wt{\mbf{1}}}$ is $G(\Qp)$-equivariant. 

\begin{thm}\label{perfectoid Shimura variety and properties 2}
The spatial diamond $\ocX_{\mbf{1}}$ is a perfectoid space, and there is a $G(\Qp)$-equivariant Hodge--Tate period map $\ocX_{\mbf{1}} \to \Fl_{G,\mu}$ of adic spaces (for the right action of $G(\Qp)$ on $\Fl_{G,\mu}$).

\end{thm} 

\begin{proof}
The first part is \cite[Theorem 4.1.1]{scholze-galois}. For the second part, the construction proving \cite[Theorem 2.1.3]{caraiani-scholze} gives a commutative diagram
  \[
    \xymatrix{\cX_{\mbf{1}} \ar[r]^-{\pi_{\HT,G}} \ar[d] & \Fl_{G,\mu}\ar[d] \\ \wt{\cX}^{\ast}_{\wt{\mbf{1}}} \ar[r]^-{\pi_{\HT,\tG}} & \Fl_{\tG,\mu}}.
    \]
Since $\cX_{\mbf{1}}$ is dense in $\ocX_{\mbf{1}}$ and $\Fl_{G,\mu} \sub \Fl_{\tG,\mu}$ is Zariski closed, it follows that $\pi_{\HT,\tG}(q(|\ocX_{\mbf{1}}|)) \sub |\Fl_{G,\mu}|$, where $q : \ocX_{\mbf{1}} \to \wt{\cX}_{\wt{\mbf{1}}}^\ast$ is the natural map. From this, it follows that $\pi_{\HT,\tG} \circ q : \ocX_{\mbf{1}} \to \Fl_{\tG,\mu}$ factors through $\Fl_{G,\mu}$; this gives our desired extension of $\pi_{\HT,G}$ (one way to see this is via Lemma \ref{ad hoc Zariski closed}). Since $\ocX_{\mbf{1}}$ is perfectoid, the morphism $\ocX_{\mbf{1}} \to \Fl_{G,\mu}^{\lozenge}$ arises by diamondification from a unique morphism $\ocX_{\mbf{1}} \to \Fl_{G,\mu}$ of adic spaces by the definition of the diamondification functor.
\end{proof}

From now on we drop the subscript $G$ and write $\pi_{\HT}$ for $\pi_{\HT,G}$. Our remaining goal in this subsection is to establish some facts about the geometry of the Hodge--Tate period map. Recall from \S \ref{sec: anticanonical} that notions such as being ordinary (or more generally the valuation of the Hasse invariant), canonical or anti-canonical are compatible for $G$ and $\tG$ as they do not depend on the endomorphism structure. To discuss ordinarity in terms of flag varieties, we will need to define some subsets of $\Fl_{G,\mu}$ and $\Fl_{\tG,\mu}$. First, we define the ``opposites'' $\ol{P}_\mu$ and $\ol{N}$ of $P_\mu$ and $N$, respectively, by the functor of points ($R$ a ring)
\begin{align*}
  \ol{P}_\mu(R) & \defeq \left\{(g,r)\in G(R) \mid g = \left(\begin{array}{cc} * & 0\\ * & * \end{array}\right) \right\}; \\
  \ol{N}(R) & \defeq \left\{(g,r)\in G(R) \mid g = \left(\begin{array}{cc} I_n  & 0 \\ * & I_n \end{array}\right) \right\}.
\end{align*}
At good primes, $\ol{P}_\mu$ is the opposite parabolic of $P_\mu$ and $\ol{N}$ is its unipotent radical. Note that $\ol{P}_\mu = J_n P_\mu J_n^{-1}$ and $\ol{N} = J_n N J_n^{-1}$, where we recall that the element $J_n$ was defined in \S \ref{subsect:unitary_group}. We may define $\ol{\tP}_\mu \sub \tG$ and $\ol{\tN} \sub \tG$ analogously. We have an affine subspace 
\[
  J_n \ol{N}_{\Zp}P_{\mu,\Zp}/P_{\mu,\Zp} = P_{\mu,\Zp}J_n P_{\mu,\Zp}/P_{\mu,\Zp} \sub \Flr_{G,\mu},
\]
and we define $\Fl_{G,\mu}^a \sub \Fl_{G,\mu}$ to be the rigid generic fiber of the formal completion of $J_n \ol{N}_{\cO_C}P_{\mu,\cO_C}/P_{\mu,\cO_C}$ along $p=0$. This is an affinoid subspace. The construction provides us with a natural $\Qp$-structure, which we will use to talk about $\Qp$-points. One may define $\Fl_{\tG,\mu}^a \sub \Fl_{\tG,\mu}$ analogously. It is not hard to see that
\[
\Fl_{G,\mu}^a = \Fl_{G,\mu} \cap \Fl_{\tG,\mu}^a, 
\]
and that $\Fl_{\tG,\mu}^a$ is equal to the locus denoted by $\Fl_{\{g+1,\dotsc,2g\}}$ in \cite[\S 3.3]{scholze-galois}. We will need the following version for $G$ of \cite[Lemmas 3.3.19 and 3.3.20]{scholze-galois} (valid for $\tG$), which characterizes ordinarity in terms of $\pi_{\HT}$. Recall that the notation $-(0)$ denotes ordinary loci.

\begin{prop}\label{explicit description of ordinary stratum} $\pi_{\mathrm{HT}}^{-1}(\Fl_{G,\mu}(\Q_p))$ is the closure of the ordinary locus $\ocX_{\mbf{1}}(0) \sub \ocX_{\mbf{1}}$, and $\pi_{\mathrm{HT}}^{-1}(\Fl^a_{G,\mu}(\Q_p))$ is the closure of the anti-canonical ordinary locus $\ocX_{\mbf{1}}(0)_{a}\sub \ocX_{\mbf{1}}$.
\end{prop}

\begin{proof}
The first part follows from \cite[Lemma 3.3.19]{scholze-galois} since $\Fl_{G,\mu}(\Qp)=\Fl_{G,\mu}\cap \Fl_{\tG,\mu}(\Qp)$ and $\ocX_{\mbf{1}}(0)$ is the pullback of $ \wt{\cX}^\ast_{\wt{\mbf{1}}}(0)$ along $\ocX_{\mbf{1}} \to \wt{\cX}^\ast_{\wt{\mbf{1}}}$ (and moreover closure and pullback commute along $\ocX_{\mbf{1}} \to \wt{\cX}^\ast_{\wt{\mbf{1}}}$ for quasi-compact open subsets). The second part then follows similarly from \cite[Lemma 3.3.20]{scholze-galois}, since $\Fl_{G,\mu}^{a}(\Qp)= \Fl_{G,\mu} \cap \Fl_{\tG,\mu}^a (\Qp)$.
\end{proof}  

The affinoid $\Fl_{G,\mu}^a$ lies inside an affine open subset 
$$ \Fl_{G,\mu}^{\mathrm{nc}} \defeq (J_n \ol{N}_C P_{\mu,C}/ P_{\mu,C} )^{\ad} \sub \Fl_{G,\mu}. $$
Here, the superscript stands for ``non-canonical'', and we will refer to it as the non-canonical locus.\footnote{We have chosen this terminology to distinguish it from the ``anti-canonical'' spaces appearing, though we are aware that ``anti-canonical'' might fit better also for these spaces.} To relate $\Fl_{G,\mu}^{a}$ and $\Fl_{G,\mu}^{\nc}$, we consider the element 
\[
\gamma \defeq \mu(p) = \begin{pmatrix} pI_n & 0 \\ 0 & I_n \end{pmatrix} \in G(\Qp).
\]

\begin{prop}\label{cover of flag nc}
We have 
\[
\Fl_{G,\mu}^{\mathrm{nc}}=\bigcup_{k\in\Z_{\geq 0}}\Fl_{G,\mu}^{a}\cdot\gamma^{k}.
\]
Moreover, the sets $\Fl_{G,\mu}^{a}\cdot \gamma^{-k}$, $k\geq 0$, form a basis of quasicompact open neighborhoods of a point in $\Fl_{G,\mu}^{a}(\Qp)$.
\end{prop}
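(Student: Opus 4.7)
The plan is to reduce to a single prime factor and then do an elementary computation with Pl\"ucker coordinates. Under the Pl\"ucker embedding $\Fl_{G,\mu}\cong\prod_{\p}\Gr(n,2n)$, the element $\gamma=(\gamma_{\p})_{\p}$ acts diagonally, while $\Fl_{G,\mu}^{a}=\prod_{\p}\Fl_{\p,\{n+1,\dots,2n\}}$ and $\Fl_{G,\mu}^{\nc}$ decompose as products of their single-factor analogues. It therefore suffices to prove both claims in a single factor $\Gr(n,2n)$ with the action of $\gamma_{\p}=\diag(p^{(n)},1^{(n)})$, and I drop the subscript $\p$.

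The key computation is that, since $\gamma$ is diagonal, it scales the Pl\"ucker coordinate $s_{J}$ by $p^{\pm|J\cap\{1,\dots,n\}|}$, with the sign determined by the right-action convention on $\Fl_{G,\mu}$. In particular $s_{\{n+1,\dots,2n\}}$ is preserved, while for every other size-$n$ subset $J'$ (necessarily with $|J'\cap\{1,\dots,n\}|\geq 1$) the Pl\"ucker ratio $s_{J'}/s_{\{n+1,\dots,2n\}}$ gets scaled by a nontrivial power of $p$. Combining this with the explicit Pl\"ucker descriptions $\Fl_{G,\mu}^{a}=\{|s_{\{n+1,\dots,2n\}}|\geq|s_{J'}|\text{ for all }J'\}$ and $\Fl_{G,\mu}^{\nc}=\{s_{\{n+1,\dots,2n\}}\neq 0\}$, one writes $\Fl_{G,\mu}^{a}\cdot\gamma^{k}$ and $\Fl_{G,\mu}^{a}\cdot\gamma^{-k}$ as rational subdomains of $\{s_{\{n+1,\dots,2n\}}\neq 0\}$ cut out by explicit inequalities of the form $|s_{J'}/s_{\{n+1,\dots,2n\}}|\leq p^{\pm k|J'\cap\{1,\dots,n\}|}$.

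For the first claim, any point $W\in\Fl_{G,\mu}^{\nc}$ has $s_{\{n+1,\dots,2n\}}(W)\neq 0$ and hence all Pl\"ucker ratios at $W$ are finite, so taking $k$ large enough places $W$ in $\Fl_{G,\mu}^{a}\cdot\gamma^{k}$; this gives $\Fl_{G,\mu}^{\nc}\subseteq\bigcup_{k\geq 0}\Fl_{G,\mu}^{a}\cdot\gamma^{k}$. The reverse inclusion is immediate because $\Fl_{G,\mu}^{\nc}$ is $\widetilde{G}(\Qp)$-stable and contains $\Fl_{G,\mu}^{a}$.

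For the second claim, each $\Fl_{G,\mu}^{a}\cdot\gamma^{-k}$ is a $\widetilde{G}(\Qp)$-translate of the quasicompact open affinoid $\Fl_{G,\mu}^{a}$ and is therefore itself quasicompact and open. As $k\to\infty$ the defining inequalities force all ratios $s_{J'}/s_{\{n+1,\dots,2n\}}$ with $J'\neq\{n+1,\dots,2n\}$ to tend to $0$, so $\bigcap_{k}\Fl_{G,\mu}^{a}\cdot\gamma^{-k}$ equals the single $\Qp$-rational point $x_{0}$ corresponding to the subspace $\langle e_{n+1},\dots,e_{2n}\rangle$, which lies in $\Fl_{G,\mu}^{a}(\Qp)$. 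Conversely, every standard rational neighborhood of $x_{0}$ in the affine chart $\{s_{\{n+1,\dots,2n\}}\neq 0\}$ contains some $\Fl_{G,\mu}^{a}\cdot\gamma^{-k}$, so these sets are cofinal in the neighborhood filter of $x_{0}$ and form a basis of quasicompact open neighborhoods of it. The argument is essentially elementary; the main point requiring care is the sign convention for the right action of $\widetilde{G}(\Qp)$ on $\Fl_{G,\mu}$, which dictates which of $\gamma^{k}$ and $\gamma^{-k}$ yields the growing versus the shrinking family.
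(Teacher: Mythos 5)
Your proposal is correct in substance and follows essentially the same Pl\"ucker-coordinate approach as the paper, just with more detail spelled out. The paper's own proof is quite terse: it verifies the forward inclusion of the first claim by the analogous Pl\"ucker computation and then dismisses the second assertion as ``also follows from an explicit computation in Pl\"ucker coordinates.'' Your explicit description of the action of $\gamma$ on the ratios $s_{J}/s_{\{n+1,\dots,2n\}}$ and your identification of the point $x_{0}=\langle e_{n+1},\dots,e_{2n}\rangle$ are both exactly the unwritten content of that sentence.

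There is one genuine error in the write-up: you justify the reverse inclusion $\bigcup_{k}\Fl_{G,\mu}^{a}\cdot\gamma^{k}\subseteq\Fl_{G,\mu}^{\mathrm{nc}}$ by asserting that $\Fl_{G,\mu}^{\mathrm{nc}}$ is $\widetilde{G}(\Q_p)$-stable. This is false. $\Fl_{G,\mu}^{\mathrm{nc}}$ is the open cell of the generalized Bruhat stratification of $\Fl_{G,\mu}$ by $P_{\mu}$-orbits, and it is a \emph{proper} open subset; since $\widetilde{G}(\Q_p)$ moves, say, $\langle e_{n+1},\dots,e_{2n}\rangle$ to $\langle e_{1},\dots,e_{n}\rangle\notin\Fl_{G,\mu}^{\mathrm{nc}}$ via a Weyl group element, the locus cannot be $\widetilde{G}(\Q_p)$-stable. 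What does hold, and what suffices here, is that $\Fl_{G,\mu}^{\mathrm{nc}}$ is $\widetilde{P}_{\mu}(\Q_p)$-stable and $\gamma\in\widetilde{P}_{\mu}(\Q_p)$; alternatively, the reverse inclusion falls out of your own computation, since $\gamma^{\pm 1}$ rescales the Pl\"ucker coordinate $s_{\{n+1,\dots,2n\}}$ by a unit, so the non-vanishing locus of that coordinate is preserved. You should replace the erroneous justification by one of these. Everything else in the argument is sound.
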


\begin{proof}
The right action of $\gamma$ on $J_n \ol{N}_{\Zp}P_{\mu,\Zp}/P_{\mu,Zp}$ is given by
\[
J_n \begin{pmatrix} I_n & 0 \\ A & I_n \end{pmatrix}P_{\mu,\Zp} \cdot \gamma = \gamma^{-1} J_n \begin{pmatrix} I_n & 0 \\ A & I_n \end{pmatrix}P_{\mu,\Zp} = J_n \begin{pmatrix} I_n & 0 \\ p^{-1}A & I_n \end{pmatrix}P_{\mu,\Zp}
\]
using that
\[
J_n^{-1} \gamma^{-1} J_n = \begin{pmatrix} I_n & 0 \\ 0 & p^{-1}I_n\end{pmatrix}.
\]
The proposition follows directly from this.
\end{proof}

To finish this section, we discuss the non-canonical locus on the level of Shimura varieties, which we define as
\[
\ocX^{\mathrm{nc}}_{\mbf{1}} \defeq \pi_{\HT}^{-1}(\Fl_{G,\mu}^{\nc}).
\]
The following result will be key for us in section \ref{stratifications}.

\begin{prop}\label{expanding the anticanonical locus} For any sufficiently small $\epsilon >0$, we have 
\[
\ocX^{\mathrm{nc}}_{\mbf{1}}= \bigcup_{k\in \Z_{\geq 0}} \ocX_{\mbf{1}}(\epsilon)_{a} \cdot \gamma^{k}.
\]
\end{prop}

\begin{proof}
By Proposition \ref{explicit description of ordinary stratum}, $\pi_{\HT}^{-1}(\Fl^{a}_{G,\mu}(\Qp))$ is the closure $\ol{\ocX_{\mbf{1}}(0)_{a}}$ of $\ocX_{\mbf{1}}(0)_{a}$ in $\ocX_{\mbf{1}}$. Note that 
\[
\ol{\ocX_{\mbf{1}}(0)_{a}} = \bigcap_{\epsilon > 0} \ocX_{\mbf{1}}(\epsilon)_a
\]
and that $\Fl_{G,\mu}^a(\Qp) = \bigcap_U U$, where $U$ runs over the set of quasicompact opens in $\Fl_{G,\mu}^a$ containing $\Fl_{G,\mu}^a(\Qp)$. In particular, we have
\[
\bigcap_{\epsilon > 0} \ocX_{\mbf{1}}(\epsilon)_a = \bigcap_U \pi_{\HT}^{-1}(U).
\]
In the constructible topology on $\ocX_{\mbf{1}}$ (which is compact and Hausdorff), the sets appearing in each intersection are both open and compact. By Cantor's intersection theorem, we may then find an $\epsilon >0$ and a $U$ such that
\[
 \pi^{-1}_{\HT}(U) \sub \ocX_{\mbf{1}}(\epsilon)_{a} \sub \pi_{\HT}^{-1}(\Fl_{G,\mu}^{a}).
\]
As $\Fl_{G,\mu}^{a}\cdot \gamma^{-m} \sub U$ for $m$ large enough by the second part of Proposition \ref{cover of flag nc}, the Proposition follows from the first part of Proposition \ref{cover of flag nc} by $G(\Qp)$-equivariance of $\pi_{\HT}$.
\end{proof}
 
\section{A stratification on the flag variety}\label{stratifications}

In this section, we will investigate the orbits of the action of the parabolic subgroup $P_{\mu}$ on the flag
variety $\Fl_{G,\mu}$ (in fact, it turns out to be advantageous to note that $P_{\mu}$ is conjugate to its opposite $\ol{P}_{\mu}$ and study $\ol{P}_{\mu}$-orbits, see Remark \ref{p vs p opp}). In the first two subsections, we briefly review the generalized Bruhat decomposition; see~\cite[\S 5]{borel-tits} for more information. We then study the corresponding stratification of $\Fl_{G,\mu}$, which we refer to as the Bruhat stratification, and its interaction with a certain collection of standard affine open subsets of $\Fl_{G,\mu}$. We finish by proving that various subsets of our Shimura varieties become perfectoid at intermediate infinite levels. This uses the aforementioned geometry of $\Fl_{G,\mu}$ and the equivariance of the Hodge--Tate period map to ``spread out'' Theorem \ref{anticanonical tower at intermediate levels 2}, and is the key geometric input we need to prove our vanishing results. We remark that our result on the perfectoidness of the non-canonical locus is generalization of a result of Ludwig \cite{ludwig} in the case of modular curves.

\subsection{Algebraic groups and Weyl groups}\label{algebraic groups and weyl groups}
Let $\mathrm{G}$ be a split connected reductive group over $\Q_p$, and let $\mathrm{T}$ be a maximal split torus of $\mathrm{G}$.
Let $\mathrm{B}$ be a Borel subgroup of $\mathrm{G}$, and let $\ol{\mathrm{B}}$ be the opposite Borel. We denote by $X^*(\mathrm{T})$ the group $\Hom(\mathrm{T}, \GG_m)$ of characters of $\mathrm{T}$.  Let $\mathfrak{g}=\Lie(\mathrm{G})$ be the Lie algebra of $\mathrm{G}$. For $\alpha\in X^*(\mathrm{T})$, we put
$$
\mathfrak{g}_{\alpha}\defeq \{X\in \mathfrak{g} \mid  \Ad(t)X=\alpha(t)X,\forall t\in \mathrm{T}\}
$$
and we define the relative root system $\Phi=\Phi(\mathrm{G},\mathrm{T})$ as
$$
\Phi \defeq \{\alpha\in X^*(\mathrm{T}) \mid \alpha\neq 0, \mathfrak{g}_{\alpha}\neq 0\}.
$$
We denote by $\Phi^+\subset \Phi$ the subset of positive roots corresponding to $\mathrm{B}$. The Weyl group $W=W(\mathrm{G},\mathrm{T})$ of the root system $\Phi$ is defined as $W=N_{\mathrm{G}}(\mathrm{T})/C_{\mathrm{G}}(\mathrm{T})$, where $N_{\mathrm{G}}(\mathrm{T})$ and $C_{\mathrm{G}}(\mathrm{T})$ are respectively the normalizer and the centralizer of $\mathrm{T}$ in $\mathrm{G}$. We denote by $\Delta$ the set of simple roots with respect to $\mathrm{T} \leq \mathrm{B}$, and denote by $S=\{s_{\alpha} \mid \alpha\in \Delta\} \subset W$ the set of simple reflections.

For  $I\subset S$, let $W_I$ be the subgroup of $W$ generated by all $s_i \in I$ and define the parabolic subgroup $\mathrm{P}_I$ as
$$
\mathrm{P}_I=\mathrm{B} W_I \mathrm{B}=\cup_{w\in W_I} \mathrm{B}w\mathrm{B}.
$$
For instance, $\mathrm{P}_\varnothing = \mathrm{B}$ and $\mathrm{P}_S = \mathrm{G}$. 
The group $\mathrm{P}_I$ is a closed, connected, self-normalizing subgroup of $\mathrm{G}$ containing $\mathrm{B}$. For $I,J\sub S$, if $\mathrm{P}_I=\mathrm{P}_J$, then $I=J$. Groups of this form are called standard parabolic subgroups, and every parabolic subgroup is conjugate to a unique standard parabolic subgroup. We let $\ol{\mathrm{P}}_I = \ol{\mathrm{B}} W_I \ol{\mathrm{B}}$ be the parabolic subgroup opposite to $\mathrm{P}_I$. As usual, we put $\Delta_{I}=\{\alpha \in \Delta \mid s_{\alpha}\in I \}$ and $\Phi_{I}=\Phi \cap \sum_{\alpha\in \Delta_{I}}\Z \alpha$; this is the root system of Levi factor $\mathrm{P}_{I}/\mathrm{N}_{I}$ (where $\mathrm{N}_{I}$ is the unipotent radical of $\mathrm{P}_{I}$, see below) and $\Delta_{I}$ are the simple roots with respect to the Borel subgroup $\mathrm{B}_{I}/\mathrm{N}_{I}$.

For $\alpha\in \Phi$, denote by $\mathrm{U}_{\alpha}$ the unipotent subgroup of $\mathrm{G}$ whose Lie algebra $\Lie(\mathrm{U}_\alpha)$ is $\mathfrak{g}_{\alpha}$. For $I\subset S$, the standard parabolic subgroup $\mathrm{P}_I$ admits a Levi decomposition, $$\mathrm{P}_I=\mathrm{L}_I \ltimes \mathrm{N}_I,$$ where
$\mathrm{N}_I\defeq \prod_{\alpha\in \Phi^+ - \Phi_I} \mathrm{U}_{\alpha}$ is the unipotent radical of $\mathrm{P}_{I}$ and $\mathrm{L}_I=\langle \mathrm{T},\mathrm{U}_{\alpha} \mid \alpha \in \Phi_I\rangle$.

\subsection{The generalized Bruhat decomposition}\label{generalized-Bruhat-decomposition}
Let the notation be as in the previous subsection. We first recall the notion of generalized Bruhat decomposition. The double cosets $\mathrm{P}_J \backslash \mathrm{G}/\mathrm{P}_I$ can be described in terms of certain Weyl group elements. 
Recall that $S$ is the set of simple reflections in $W$ corresponding to the set $\Delta$ of simple roots of $T$. The elements of $S$ generate $W$, and we call the \emph{length} of an element in $w \in W$ the smallest number $\ell$ such that $w$ can be written as $w = s_{i_1}\dotsm s_{i_\ell}$ with $s_i \in S$. We denote the length of $w$ by $\ell(w)$. We let $w_0$ be the longest element of $W$; it has order $2$. 

\medskip 

The Bruhat decomposition \cite[Thm. 5.15]{borel-tits} is
\[ \mathrm{G} = \coprod_{w \in W} \mathrm{B} w \mathrm{B} \,, \]
and there is also a generalized form \cite[Cor. 5.20]{borel-tits}
\[ \mathrm{G} = \coprod_{w \in W_J \backslash W / W_I} \mathrm{P}_J w \mathrm{P}_I \,. \]

\subsection{Some results on double cosets}
We will now state some general facts about double cosets that we will need.
These results are most naturally stated using an alternative form of the (generalized) Bruhat decomposition
\[ \mathrm{G} = \coprod_{w \in W} \ol{\mathrm{B}} w \mathrm{B}, \]
\[ \mathrm{G} = \coprod_{w \in W_J \backslash W / W_I} \ol{\mathrm{P}}_J w \mathrm{P}_I \,, \]
which can be obtained from the decompositions mentioned above using the identities
$\ol{\mathrm{B}} = w_0 \mathrm{B} w_0$, $\ol{\mathrm{P}}_J = w_0 \mathrm{P}_{w_{0}Jw_{0}} w_0$ and $W_{w_{0}Jw_{0}}=w_{0}W_{J}w_{0}$. We write $\ol{\mathrm{N}}_{I}$ for the unipotent radical of $\ol{\mathrm{P}}_{I}$.

\begin{lemma} \label{BwPcover}
For any $w \in W$,
\[ \ol{\mathrm{B}} w \mathrm{P}_I \subseteq w \ol{\mathrm{B}} \mathrm{P}_I = w \ol{\mathrm{P}}_I \mathrm{P}_I \,. \]
In particular, translates of $\ol{\mathrm{P}}_I \mathrm{P}_I$ cover $\mathrm{G}$.
\end{lemma}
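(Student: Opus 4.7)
The plan is to treat the equality and the inclusion separately, and then to deduce the final statement directly from the Bruhat decomposition.

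First I would handle the equality $w \ol{\mathrm{B}} \mathrm{P}_I = w \ol{\mathrm{P}}_I \mathrm{P}_I$. Cancelling $w$ on the left reduces this to showing $\ol{\mathrm{B}} \mathrm{P}_I = \ol{\mathrm{P}}_I \mathrm{P}_I$. The inclusion $\ol{\mathrm{B}} \mathrm{P}_I \subseteq \ol{\mathrm{P}}_I \mathrm{P}_I$ is immediate from $\ol{\mathrm{B}} \subseteq \ol{\mathrm{P}}_I$. For the reverse inclusion I would use the Levi decomposition $\ol{\mathrm{P}}_I = \ol{\mathrm{N}}_I \rtimes \mathrm{L}_I$ together with $\mathrm{L}_I \subseteq \mathrm{P}_I$ to write $\ol{\mathrm{P}}_I \mathrm{P}_I = \ol{\mathrm{N}}_I \mathrm{L}_I \mathrm{P}_I = \ol{\mathrm{N}}_I \mathrm{P}_I$; the conclusion then follows from $\ol{\mathrm{N}}_I \subseteq \ol{\mathrm{N}} \subseteq \ol{\mathrm{B}}$.

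Next I would establish the containment $\ol{\mathrm{B}} w \mathrm{P}_I \subseteq w \ol{\mathrm{B}} \mathrm{P}_I$. Using $\ol{\mathrm{B}} = \ol{\mathrm{N}} \mathrm{T}$, the fact that $\mathrm{T} \subseteq \mathrm{P}_I$, and that $w$ normalizes $\mathrm{T}$, I reduce to showing $(w^{-1}\ol{\mathrm{N}} w)\mathrm{P}_I \subseteq \ol{\mathrm{B}} \mathrm{P}_I$. For this I would invoke the standard product decomposition of $w^{-1}\ol{\mathrm{N}} w$ coming from the partition of $w^{-1}\Phi^{-}$ into $w^{-1}\Phi^{-} \cap \Phi^{-}$ and $w^{-1}\Phi^{-} \cap \Phi^{+}$, which exhibits $w^{-1}\ol{\mathrm{N}} w$ as a product $\ol{\mathrm{N}}^{w^{-1}} \cdot \mathrm{N}^{w^{-1}}$ of unipotent subgroups with $\ol{\mathrm{N}}^{w^{-1}} \subseteq \ol{\mathrm{N}}$ and $\mathrm{N}^{w^{-1}} \subseteq \mathrm{N} \subseteq \mathrm{P}_I$. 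The factor $\mathrm{N}^{w^{-1}}$ is then absorbed into $\mathrm{P}_I$ while the remaining factor $\ol{\mathrm{N}}^{w^{-1}}$ lies in $\ol{\mathrm{B}}$, giving the desired inclusion.

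Finally, for the covering assertion I would combine the Bruhat decomposition $\mathrm{G} = \coprod_{w\in W}\ol{\mathrm{B}} w \mathrm{B}$ with the first statement, yielding
\[ \mathrm{G} \subseteq \bigcup_{w \in W} \ol{\mathrm{B}} w \mathrm{P}_I \subseteq \bigcup_{w \in W} w \ol{\mathrm{P}}_I \mathrm{P}_I. \]
There is no real obstacle here: the argument is a routine manipulation with Levi decompositions and the root subgroup structure of $\ol{\mathrm{N}}$. The only point that requires a (standard) reference, e.g.\ to Borel's book on linear algebraic groups, is the product decomposition of $w^{-1}\ol{\mathrm{N}} w$ indexed by the two halves of $w^{-1}\Phi^{-}$.
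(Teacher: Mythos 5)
Your proof is correct. The treatment of the equality $\ol{\mathrm{B}}\mathrm{P}_I = \ol{\mathrm{P}}_I\mathrm{P}_I$ via the Levi decomposition $\ol{\mathrm{P}}_I = \ol{\mathrm{N}}_I\mathrm{L}_I$ and the circular chain of inclusions is the same as in the paper. For the inclusion $\ol{\mathrm{B}}w\mathrm{P}_I \subseteq w\ol{\mathrm{B}}\mathrm{P}_I$, the paper takes a slightly different (but closely related) route: it cites the refined Bruhat-cell identity $\ol{\mathrm{B}}w\mathrm{B} = (\ol{\mathrm{B}}\cap w\ol{\mathrm{B}}w^{-1})w\mathrm{B}$ from Borel--Tits Theorem 5.15, notes this is contained in $w\ol{\mathrm{B}}\mathrm{B}$, and then multiplies on the right by $\mathrm{P}_I$. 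You instead rederive the needed containment from scratch by reducing to $(w^{-1}\ol{\mathrm{N}}w)\mathrm{P}_I\subseteq\ol{\mathrm{B}}\mathrm{P}_I$ and invoking the factorization of $w^{-1}\ol{\mathrm{N}}w$ according to the partition $w^{-1}\Phi^- = (w^{-1}\Phi^-\cap\Phi^-)\sqcup(w^{-1}\Phi^-\cap\Phi^+)$. The two arguments are close kin: the Borel--Tits identity is itself proved via exactly this kind of root-group factorization of the unipotent radical, so you have effectively unpacked the citation. The paper's version is a bit shorter because it reuses a packaged statement; yours is more self-contained and makes visible which roots get absorbed into $\mathrm{P}_I$ and which survive in $\ol{\mathrm{B}}$. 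Both are equally rigorous.
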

\begin{proof}
By \cite[Thm.~5.15]{borel-tits},
\[ \ol{\mathrm{B}}w\mathrm{B} = (\ol{\mathrm{B}} \cap w\ol{\mathrm{B}}w^{-1}) w\mathrm{B} \sub \ol{\mathrm{B}}w\mathrm{B} \cap w\ol{\mathrm{B}}\mathrm{B} \subseteq w \ol{\mathrm{B}}\mathrm{B} \,. \]
Multiplying on the right by $\mathrm{P}_I$ gives $\ol{\mathrm{B}} w \mathrm{P}_I \subseteq w \ol{\mathrm{B}} \mathrm{P}_I$.
To see that $\ol{\mathrm{B}} \mathrm{P}_I = \ol{\mathrm{P}}_I \mathrm{P}_I$, observe that
\[ \ol{\mathrm{P}}_I \mathrm{P}_I = \ol{\mathrm{N}}_I \mathrm{M}_I \mathrm{P}_I = \ol{\mathrm{N}}_I \mathrm{P}_I \subseteq \ol{\mathrm{B}} \mathrm{P}_I \subseteq \ol{\mathrm{P}}_I \mathrm{P}_I \,. \qedhere \]
\end{proof}

\begin{lemma} \label{stratum dim matches group dim}
For any $w \in W$,
\[ \dim w \ol{\mathrm{P}}_I w^{-1} \cap \ol{\mathrm{N}}_I = \dim \ol{\mathrm{P}}_I w \mathrm{P}_I / \mathrm{P}_I \,. \]
\end{lemma}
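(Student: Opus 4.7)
The plan is to reduce both sides to cardinalities of subsets of the root system $\Phi$. All the subgroups in play, namely $\ol{\mathrm{P}}_I$, $\mathrm{P}_I$, $\ol{\mathrm{N}}_I$, $\mathrm{N}_I$ and their $w$-conjugates, are $\mathrm{T}$-stable, so for any such subgroup $\mathrm{H}$ we may write its Lie algebra as $(\mathfrak{t}\cap\Lie\mathrm{H}) \oplus \bigoplus_{\alpha\in R(\mathrm{H})}\mathfrak{g}_\alpha$ for a well-defined subset $R(\mathrm{H})\subseteq \Phi$. Dimensions of such subgroups and of their pairwise intersections are then governed by cardinalities of the associated root sets, plus a contribution from $\mathrm{T}$ whenever $\mathrm{T}$ is contained.

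First I would establish the identity $\dim \ol{\mathrm{P}}_I w \mathrm{P}_I/\mathrm{P}_I = \dim(\ol{\mathrm{P}}_I \cap w\ol{\mathrm{N}}_I w^{-1})$. By orbit-stabilizer for the left action of $\ol{\mathrm{P}}_I$ on $\mathrm{G}/\mathrm{P}_I$, the left side equals $\dim \ol{\mathrm{P}}_I - \dim(\ol{\mathrm{P}}_I \cap w\mathrm{P}_I w^{-1})$. The direct sum $\mathfrak{g} = w\mathfrak{p}_I w^{-1} \oplus w\ol{\mathfrak{n}}_I w^{-1}$ is a decomposition into $\mathrm{T}$-stable summands (with $\mathfrak{t}\subseteq w\mathfrak{p}_I w^{-1}$), so intersecting with the $\mathrm{T}$-stable $\ol{\mathfrak{p}}_I$ gives a direct sum
\[
\ol{\mathfrak{p}}_I = (\ol{\mathfrak{p}}_I \cap w\mathfrak{p}_I w^{-1}) \oplus (\ol{\mathfrak{p}}_I \cap w\ol{\mathfrak{n}}_I w^{-1}),
\]
whence the identity follows upon taking dimensions. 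In root-set notation, this reads $\dim \ol{\mathrm{P}}_I w \mathrm{P}_I/\mathrm{P}_I = |R(\ol{\mathrm{P}}_I) \cap wR(\ol{\mathrm{N}}_I)|$, while one has directly $\dim(w\ol{\mathrm{P}}_I w^{-1}\cap\ol{\mathrm{N}}_I) = |wR(\ol{\mathrm{P}}_I) \cap R(\ol{\mathrm{N}}_I)|$ (no $\mathrm{T}$-contribution since $\ol{\mathrm{N}}_I$ is unipotent).

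It then suffices to prove the combinatorial identity $|R(\ol{\mathrm{P}}_I) \cap wR(\ol{\mathrm{N}}_I)| = |wR(\ol{\mathrm{P}}_I) \cap R(\ol{\mathrm{N}}_I)|$. Applying the bijection $\alpha \mapsto w\alpha$ of $\Phi$ rewrites the left side as $|w^{-1}R(\ol{\mathrm{P}}_I) \cap R(\ol{\mathrm{N}}_I)|$. Using the disjoint union $\Phi = R(\ol{\mathrm{P}}_I) \sqcup R(\mathrm{N}_I)$, the identity reduces further to $|w^{-1}R(\mathrm{N}_I) \cap R(\ol{\mathrm{N}}_I)| = |wR(\mathrm{N}_I) \cap R(\ol{\mathrm{N}}_I)|$. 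Since negation $\alpha \mapsto -\alpha$ is a $W$-equivariant involution of $\Phi$ that swaps $R(\mathrm{N}_I)$ with $R(\ol{\mathrm{N}}_I)$, the left side equals $|w^{-1}R(\ol{\mathrm{N}}_I) \cap R(\mathrm{N}_I)|$, which becomes $|R(\ol{\mathrm{N}}_I) \cap wR(\mathrm{N}_I)|$ on applying $\alpha\mapsto w\alpha$ once more. The only substantive step is the $\mathrm{T}$-compatible direct sum decomposition in the second paragraph; the rest is just symmetries of the root system.
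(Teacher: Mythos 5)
Your proof is correct and follows essentially the same route as the paper: both reduce the two sides to cardinalities of subsets of $\Phi$ via root-space decompositions, compute $\dim \ol{\mathrm{P}}_I w \mathrm{P}_I / \mathrm{P}_I$ by orbit-stabilizer, and then conclude by combining the bijectivity of $w$ on $\Phi$ with the $W$-equivariance of negation. (One trivial slip: in the line "Applying the bijection $\alpha\mapsto w\alpha$" you mean $\alpha\mapsto w^{-1}\alpha$, but since any bijection preserves cardinality this does not affect the argument.)
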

\begin{proof}
  The intersection $w\ol{\mathrm{P}}_I w^{-1} \cap \ol{\mathrm{N}}_I$ is a unipotent
group whose set of roots is
\[ w(\Phi^- \cup \Phi_I) \cap (\Phi^- \setminus \Phi_I) = w(\Phi^- \cup \Phi_I) \setminus (\Phi^+ \cup \Phi_I) \,. \]
We have
\begin{IEEEeqnarray*}{rCl}
  \dim \ol{\mathrm{P}}_I w \mathrm{P}_I/\mathrm{P}_I & = & \dim \ol{\mathrm{P}}_I - \dim (\ol{\mathrm{P}}_I \cap w\mathrm{P}_Iw^{-1}) \\ & = & \#((\Phi^- \cup \Phi_I) \setminus w(\Phi^+ \cup \Phi_I)) \\ & = & \#((\Phi^+ \cup \Phi_I) \setminus w(\Phi^- \cup \Phi_I)) \,.
\end{IEEEeqnarray*}
Since $\#(\Phi^+ \cup \Phi_I) = \#(w(\Phi^- \cup \Phi_I))$,
\[ \# \left(w(\Phi^- \cup \Phi_I) \setminus (\Phi^+ \cup \Phi_I)\right) =  \#\left((\Phi^+ \cup \Phi_I) \setminus w(\Phi^- \cup \Phi_I)\right) \,. \qedhere \]
\end{proof}

\begin{remark}\label{p vs p opp} There is a further reason for stating our results in this fashion. We think of $\Flr_{\mathrm{G},I}\defeq \mathrm{G}/\mathrm{P}_I$ as the flag variety whose associated adic space receives a Hodge--Tate period morphism from a perfectoid Shimura variety (say, of Hodge type, with group split over $\Qp$). In this scenario, $\mathrm{P}_{I}$ is a parabolic associated with the Hodge--Tate filtration (which determines it up to conjugacy). The ``anti-canonical'' tower of the Shimura variety will live on the Shimura variety of level structure $\ol{\mathrm{P}}_{I}(\Zp)$ (or a conjugate of it), since it is supposed to parametrize complements of the canonical subgroup,
which split the Hodge--Tate filtration. Thus, to study how the perfectoid structure of the anti-canonical tower spreads out to the whole Shimura variety (either at full infinite level, or some intermediate levels), one is led to studying the $\ol{\mathrm{P}}_{I}$-orbits of $\Flr_{\mathrm{G},I}$. In the cases of interest in this paper, $\ol{\mathrm{P}}_{I}$ is conjugate to $\mathrm{P}_{I}$, so one can define the anti-canonical tower using $\mathrm{P}_{I}$. This is what we have done; it has the advantage of matching with \cite{scholze-galois}.
\end{remark}

\medskip
With this said, we will state the following direct consequences of Lemma \ref{BwPcover} and Lemma \ref{stratum dim matches group dim}.

\begin{cor}\label{key properties of stratification}
Assume that $I$ is such that $w_{0}Iw_{0}=I$ and let $w\in W$. Then $\mathrm{B}w\mathrm{P}_{I} \sub ww_{0}\mathrm{P}_{I}w_{0}\mathrm{P}_{I}$ and $\dim w\mathrm{P}_{I}w^{-1}\cap \mathrm{N}_{I} = \dim \mathrm{P}_{I}ww_{0}\mathrm{P}_{I}/\mathrm{P}_{I}$. In particular, Weyl group translates of $\mathrm{P}_{I}w_{0}\mathrm{P}_{I}$ cover $\mathrm{G}$. 
\end{cor}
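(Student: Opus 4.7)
The plan is to deduce the corollary directly from Lemma \ref{BwPcover} and Lemma \ref{stratum dim matches group dim} by translating via conjugation by the longest Weyl element $w_0$. The crucial role of the hypothesis $w_0 I w_0 = I$ is that it implies $W_{w_0 I w_0} = W_I$, and hence the identity $\ol{\mathrm{P}}_I = w_0 \mathrm{P}_{w_0 I w_0} w_0 = w_0 \mathrm{P}_I w_0$ quoted in the excerpt. Since $\ol{\mathrm{N}}_I$ is the unipotent radical of $\ol{\mathrm{P}}_I$, this also gives $\ol{\mathrm{N}}_I = w_0 \mathrm{N}_I w_0$. Together with $\mathrm{B} = w_0 \ol{\mathrm{B}} w_0$, these identities are all one needs.

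For the first claim, I would write
\[
\mathrm{B} w \mathrm{P}_I = w_0 \ol{\mathrm{B}} w_0 w \mathrm{P}_I = w_0 \bigl(\ol{\mathrm{B}}(w_0 w)\mathrm{P}_I\bigr),
\]
apply Lemma \ref{BwPcover} to the element $w_0 w \in W$ to obtain $\ol{\mathrm{B}}(w_0 w)\mathrm{P}_I \subseteq (w_0 w)\ol{\mathrm{P}}_I \mathrm{P}_I$, and then multiply on the left by $w_0$ and use $w_0^2 = e$, giving
\[
\mathrm{B} w \mathrm{P}_I \subseteq w_0 (w_0 w) \ol{\mathrm{P}}_I \mathrm{P}_I = w\ol{\mathrm{P}}_I\mathrm{P}_I = w(w_0 \mathrm{P}_I w_0)\mathrm{P}_I = w w_0 \mathrm{P}_I w_0 \mathrm{P}_I,
\]
as required.

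For the dimension equality, I would apply Lemma \ref{stratum dim matches group dim} to the element $w' \defeq w_0 w w_0 \in W$. On the right-hand side we get
\[
\dim \ol{\mathrm{P}}_I w' \mathrm{P}_I/\mathrm{P}_I = \dim (w_0 \mathrm{P}_I w_0)(w_0 w w_0) \mathrm{P}_I / \mathrm{P}_I = \dim w_0 \mathrm{P}_I w w_0 \mathrm{P}_I / \mathrm{P}_I = \dim \mathrm{P}_I w w_0 \mathrm{P}_I / \mathrm{P}_I,
\]
since left-multiplication by $w_0$ is an automorphism of the flag variety. On the left-hand side, conjugating by $w_0$ gives
\[
w' \ol{\mathrm{P}}_I (w')^{-1} \cap \ol{\mathrm{N}}_I = (w_0 w w_0)(w_0 \mathrm{P}_I w_0)(w_0 w^{-1} w_0) \cap w_0 \mathrm{N}_I w_0 = w_0 \bigl(w\mathrm{P}_I w^{-1} \cap \mathrm{N}_I\bigr) w_0,
\]
which has the same dimension as $w\mathrm{P}_I w^{-1} \cap \mathrm{N}_I$, finishing the proof of (2). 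Finally, the last assertion is immediate from (1) combined with the Bruhat decomposition $\mathrm{G} = \bigcup_{w \in W} \mathrm{B} w \mathrm{P}_I$: each double coset $\mathrm{B} w \mathrm{P}_I$ lies in the Weyl translate $(w w_0) \cdot \mathrm{P}_I w_0 \mathrm{P}_I$, and as $w$ ranges over $W$ so does $w w_0$. I do not foresee any real obstacle; the whole argument is a careful bookkeeping of $w_0$-conjugation, with the hypothesis $w_0 I w_0 = I$ ensuring that $\mathrm{P}_I$ is preserved (up to passing to the opposite) under this conjugation.
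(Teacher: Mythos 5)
Your proof is correct and follows exactly the same route as the paper's: both arguments conjugate by $w_0$ and invoke Lemma \ref{BwPcover} applied to $w_0 w$ for the containment and Lemma \ref{stratum dim matches group dim} applied to $w_0 w w_0$ for the dimension equality, using $\ol{\mathrm{P}}_I = w_0 \mathrm{P}_I w_0$ and $\ol{\mathrm{N}}_I = w_0 \mathrm{N}_I w_0$ (valid since $w_0 I w_0 = I$). The only difference is that you spell the steps out slightly more explicitly.
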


\begin{proof}
We have 
$$\mathrm{B}w\mathrm{P}_{I} = w_{0}\ol{\mathrm{B}}w_{0}w\mathrm{P}_{I} \sub w\ol{\mathrm{P}}_{I}\mathrm{P}_{I} = ww_{0}\mathrm{P}_{I}w_{0}\mathrm{P}_{I}$$
by Lemma \ref{BwPcover}, proving the first part. For the second part, note that 
$$ w\mathrm{P}_{I}w^{-1} \cap \mathrm{N}_{I} = w_{0}\left( w_{0}ww_{0}\ol{\mathrm{P}}_{I}w_{0}w^{-1}w_{0} \cap \ol{\mathrm{N}}_{I} \right) w_{0}; $$
$$ \mathrm{P}_{I}ww_{0}\mathrm{P}_{I}/\mathrm{P}_{I} = w_{0}\left( \ol{\mathrm{P}}_{I}w_{0}ww_{0}\mathrm{P}_{I}/\mathrm{P}_{I} \right), $$
and use Lemma \ref{stratum dim matches group dim}. The last statement follows from the first and the Bruhat decomposition.
\end{proof}

As we alluded to above, the generalized Bruhat decomposition induces a decomposition of the flag variety $\Flr_{\mathrm{G},I}= \mathrm{G}/\mathrm{P}_I$ into locally closed subvarieties, which we loosely refer to as (generalized) \emph{Schubert cells}.
For $w \in W_I \backslash W / W_I$, we define
$\Flr_{\mathrm{G},I}^{w} \defeq \mathrm{P}_I w \mathrm{P}_I / \mathrm{P}_I$
to be the cells of the decomposition
\[
\Flr_{\mathrm{G},I}= \bigsqcup_{w\in W_I\backslash W/W_I}  \Flr_{\mathrm{G},I}^w \,.
\]
The cells $\Flr_{\mathrm{G},I}^{w}$
are locally closed in $\Flr_{\mathrm{G},I}$, and $\Flr_{\mathrm{G},I}^{w_0}$
is open in $\Flr_{\mathrm{G},I}$.

\subsection{The Bruhat stratification on $\mathscr{F}\ell_{G,\mu}$}\label{bruhat stratification}

We now go back to considering the case of $G$ (over $\Qp$) and its (algebraic and rigid) flag varieties $\Flr_{G,\mu}$ and $\Fl_{G,\mu}$ from \S \ref{recollections}. In this case, our parabolic is $P_{\mu,\Qp}$. The longest element $w_0$ of the Weyl group is represented by the element $J_n$ from \S \ref{subsect:unitary_group}. One way to see this is as follows. The diagonal matrices in $G_{\Qp}$ form a maximal split torus $T$, and one checks that $J_n$ normalizes $T$. The upper triangular matrices form a Borel $B$ containing $T$, and its opposite $\ol{B}$ consists of the lower triangular matrices. In particular, one checks that $\ol{B} = J_n B J_n^{-1}$, and this characterizes the longest element in the Weyl group. Since $\ol{P}_\mu = J_n P_\mu J_n^{-1}$, this means that the discussion above (including Corollary \ref{key properties of stratification}) is available to us. 

\medskip
We let $I$ be the subset of simple roots corresponding to $P_{\mu,\Qp}$, where we use $B$ and $T$ above to define simple roots (the role of $I$ itself will be notational from this point on). Let $W$ and $W_I$ be the Weyl groups as above, specialized to $G_{\Qp}$. For any $w\in W_I \backslash W / W_I$, we have a generalized Schubert cell $\Flr_{G,\mu}^w$, and we let
\[
\Fl_{G,\mu}^w \defeq (\Flr_{G,\mu}^w \times_{\Spec \Qp} \Spec C)^{\ad}
\]
denote their analytifications over $C$. From the definitions, we see that
\[
\Fl_{G,\mu}^{w_0} = \Fl_{G,\mu}^{\nc},
\] 
which links the discussion in \S \ref{hodgetatemapsection} to our current discussion.

\medskip
Related to these generalized Schubert cells, we will also consider the open cover of $\Fl_{G,\mu}$ consisting of translates of the open cell $\Fl_{G,\mu}^{w_0}$ by elements of $N_G(T)$. For $w\in N_G(T)$, we set
\[
\cU^w \defeq \Fl_{G,\mu}^{w_0}\cdot w^{-1}.
\] 
One checks directly that the action factors through the quotient $W/W_I$. These translates cover $\Fl_{G,\mu}$ by Lemma \ref{BwPcover}, and $\cU^{\id} = \Fl_{G,\mu}^{w_0} = \Fl_{G,\mu}^{\nc}$. The key result that relates this open cover to the Bruhat stratification is Lemma~\ref{finding a J} below, whose statement involves the natural projection map $\delta\colon W/W_I\to W_I\backslash W/W_I$ and the group $N_0$ from Definition~\ref{frequent infinite level subgroups}.

\begin{lemma}\label{finding a J} Let $w\in W_I\backslash W/W_I$ and $x\in \Fl_{G,\mu}^w$ be a point. Then there exists
$w^\prime \in \delta^{-1}(ww_0)$ such that $x \cdot N_0\subset \cU^{w^\prime}$. 
\end{lemma}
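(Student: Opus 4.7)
The plan is to use the product decomposition $G(\Qp) = \prod_{\p} \GL_{2n}(\Qp)$, which is compatible with $\Fl_{G,\mu} = \prod_{\p} \Gr(n,2n)$, the Bruhat stratification, the cover $\{\Fl_{G,\mu}^J\}_J$, and the subgroup $N_{0}$. Choosing each $J_{\p}$ separately, this reduces to a linear-algebraic statement in a single factor $\Gr(n,2n)$ under the right $\GL_{2n}(\Qp)$-action (inverse to the usual left action). In that factor I will parametrize $x \in \Fl_{G,\mu}^{w}$ as the column span of an explicit block matrix, describe the $N_{0}$-action on this parametrization, and select $J_{\p}$ so that the defining incidence for $\Fl_{G,\mu}^{J_{\p}}$ holds uniformly over the whole $N_{0}$-orbit.

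Concretely, suppose $w$ is represented by $\sigma_{\tau} = \prod_{i=1}^{\tau}(i,n+i)$; by Example \ref{dimension of Schubert cells - algebraic}, $x$ corresponds to $U \subset \Qp^{2n}$ with $\dim(U \cap U_{0}) = n - \tau$, where $U_{0} = \Qp^{n} \oplus 0$. Write $U$ as the column span of a rank-$n$ matrix $\begin{pmatrix} A \\ B \end{pmatrix}$, so that $\rank B = \tau$. A direct computation with the representatives $\sigma_{k}$ shows that $\delta^{-1}(ww_{0})$ consists of $J = J_{1} \sqcup J_{2}$ with $J_{1} \subseteq \{1,\dotsc, n\}$ of size $\tau$ and $J_{2} \subseteq \{n+1,\dotsc, 2n\}$ of size $n - \tau$. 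Writing $J_{2} = \{n+j : j \in J_{2}'\}$, I propose to choose $J_{2}'$ so that $K \defeq \{1,\dotsc,n\}\setminus J_{2}'$ indexes a basis of $V \defeq \mathrm{rowspace}(B)$ among the rows of $B$; this is possible because $\rank B = \tau = |K|$. I then choose $J_{1}$ so that its complement $L \defeq \{1,\dotsc,n\}\setminus J_{1}$, of size $n-\tau$, indexes rows of $A$ that project to a basis of $\Qp^{n}/V$.

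The existence of such an $L$ is the one nontrivial linear-algebraic ingredient, and is the step I would single out as the ``main obstacle''. It follows because $U$ has dimension $n$, so the matrix $\begin{pmatrix} A \\ B \end{pmatrix}$ has column rank $n$, hence also row rank $n$; therefore $\mathrm{rowspace}(A) + V = \Qp^{n}$, the projection of the rows of $A$ to $\Qp^{n}/V$ is surjective, and one may extract $n-\tau$ such rows whose images form a basis.

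Finally, for the verification, an element $n = \begin{pmatrix} I_{n} & X \\ 0 & I_{n} \end{pmatrix} \in N_{0}$ sends $U$ to $n^{-1}U$, the column span of $\begin{pmatrix} A - XB \\ B \end{pmatrix}$. The condition $x \cdot n \in \Fl_{G,\mu}^{J}$, equivalently $n^{-1}U \cap U_{J} = 0$, is equivalent to invertibility of the $n \times n$ matrix
\[ M(X) \defeq \begin{pmatrix} (A - XB)[L, :] \\ B[K, :] \end{pmatrix}, \]
where $[I,:]$ denotes the submatrix of the indicated rows. Since the rows of $XB$ lie in $V$ and $B[K,:]$ is a basis of $V$, elementary row operations (adding combinations of the $\tau$ bottom rows to the top rows) reduce $M(X)$ to $\begin{pmatrix} A[L, :] \\ B[K, :] \end{pmatrix}$, which is invertible by the choices of $K$ and $L$ and, crucially, is independent of $X$. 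Hence $x \cdot N_{0} \subseteq \Fl_{G,\mu}^{J}$ as required.
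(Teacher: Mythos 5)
Your argument is correct, but it proceeds along a genuinely different route from the paper's. The paper first reduces to classical $(C,\cO_C)$-points via partial properness and base change, then argues group-theoretically: since $\Fl_{G,\mu}^w = P_\mu w P_\mu / P_\mu$, the Bruhat decomposition $G = \coprod_{w_1} B\,w_1\,P_\mu$ produces some $w_1 \in \delta^{-1}(w)$ with $x \in B\,w_1\,P_\mu/P_\mu$, and then Corollary~\ref{key properties of stratification} (the inclusion $\mathrm{B}\,w_1\,\mathrm{P}_I \subseteq w_1 w_0\,\mathrm{P}_I\,w_0\,\mathrm{P}_I$, which follows from the general parabolic-subgroup Lemma~\ref{BwPcover}) immediately places the whole $B$-orbit, hence the $N_0$-orbit, of $x$ inside $\Fl^{w_0}_{G,\mu}\cdot(w_1w_0)^{-1}$. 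Your proof bypasses the Bruhat-cell structure entirely in favor of explicit matrix coordinates on the Grassmannian: you pick $K$ indexing a row-basis of $B$ and $L$ indexing rows of $A$ lifting a basis of the quotient, and verify by row reduction that the resulting minor is independent of the unipotent parameter $X$. What you gain is concreteness (the argument manufactures an explicit $J$) and independence from the group-theoretic machinery of \S\ref{algebraic groups and weyl groups}; what the paper gains is brevity (the parabolic lemmas are reused elsewhere), generality (the argument works verbatim for any split reductive group with $w_0Iw_0=I$, which is the form used in the Hilbert--Siegel case), and the stronger conclusion recorded in the remark following the proof that the full $B$-orbit stays in $\Fl^J_{G,\mu}$ --- a statement that does not obviously follow from your row-reduction trick, since the Levi part of $B$ changes $B[K,:]$. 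Two small housekeeping points you should clean up: you write $U \subset \Qp^{2n}$, but $U$ should live over the residue field $D$ of the point $x$ (or over $C$, after the same reduction to rank-one points the paper performs, which you leave implicit); and while your identification of $\delta^{-1}(ww_0)$ as the sets $J$ with $|J_1| = \tau$ is correct, it is worth spelling out the computation that right multiplication by $w_0$ sends the double coset $\sigma_\tau$ to $\sigma_{n-\tau}$, since that is where the asymmetry between $\tau$ and $n-\tau$ enters.
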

\begin{proof}
Note that right multiplication by $w_0$ induces a well defined automorphism of $W_I \backslash W / W_I$, since $w_0 W_I w_0 = W_I$. By partial properness it suffices to prove the lemma for geometric rank $1$ points; by base change it then suffices to restrict to the case of $(C,\cO_{C})$-points, so we only need to consider classical rigid points and hence we can write the proof in terms of algebraic geometry over $C$; we then omit notation for change of base fields etc. Since 
\[
\Fl_{G,I}^{w}=P_{\mu}wP_{\mu}/P_{\mu} = \bigsqcup_{w_1 \in \delta^{-1}(w)} B w_1 P_\mu /P_\mu,
\]
we can choose a $w_1 \in \delta^{-1}(w)$ such that $x\in Bw_{1}P_{\mu}/P_{\mu}$. We then have
\[
x \cdot N_{0} \sub Bw_1P_\mu /P_\mu \sub w_{1}w_{0}P_{\mu}w_{0}P_{\mu}/P_{\mu}=\Fl_{G,\mu}^{w_{0}} \cdot (w_{1}w_{0})^{-1} = \cU^{w_1w_0}
\]
by Corollary \ref{key properties of stratification}. Setting $w^\prime = w_1w_0$ finishes the proof.
\end{proof}

Of course, the proof shows that the orbit of $x$ under the entire Borel is contained in $\cU^{w^\prime}$; we have chosen the above formulation since it is the precise statement we will need later.

\subsection{Perfectoid strata at intermediate levels}\label{perfectoid strata at intermediate levels}
Let $w \in W/W_I$. For any integer $m\geq 1$, define the open subgroups
\[
\Gamma_{1,w}(p^m)\defeq \Gamma_1(p^m) \cap w\Gamma_0(p^m)w^{-1}
\]
and the closed subgroup
\[
 N_{0,w} = \bigcap_{m\geq 0}\Gamma_{1,w}(p^m).
\] 
In these definitions we represent elements in $W$ by elements in $G(\Zp)$ (which we may do since $G$ is split over $\Zp$); the subgroups above are then independent of the chosen representatives. When $w=\id$, $N_{0,\id}=N_{0}$. Note that in general, $N_{0,w}$ is a subgroup of $N_{0}$ and naturally a $\Zp$-submodule of $N_{0}$.

\begin{lemma}\label{ranks} We have $\rank_{\Z_p} N_{0,w} = \dim \Fl^{w w_0}_{G,\mu}$. Moreover, $N_{0,w}$ is a saturated $\Zp$-submodule of $N_{0}$, so $\rank_{\Z_p} N_{0}/N_{0,w} = d - \dim \Fl^{w w_0}_{G,\mu}$, where $d = \rank_{\Zp}N_0$ is also the dimension of the Shimura varieties for $G$.
\end{lemma}
\begin{proof}
By definition, we have $N_{0,w}= \left( N \cap w P_{\mu}w^{-1} \right)(\Zp)$ and  $N \cap wP_{\mu}w^{-1}$ is product of unipotent subgroups $U_{\alpha}$ for a certain subset (depending on $w$) of roots $\alpha$ contained in $N_{\Zp}$. This shows that $N_{0,w}$ is saturated in $N_0$ and that $\rank_{\Z_p} N_{0,w} = \dim N_{\Zp} \cap w P_{\mu,\Zp}w^{-1}$. Thus, by Corollary \ref{key properties of stratification}, $\rank_{\Z_p} N_{0,w} = \dim \Fl_{G,\mu}^{w w_{0}}$ as desired. The last statement then follows directly from the first two statements.
\end{proof}

\noindent Let $H \sub P_{\mu,0}$ be a closed subgroup. The Hodge--Tate period morphism induces the composition 
\[
\pi_{\HT/H}\colon (\ocX_{H})_{\et} \to 
|\ocX_{H}|\cong |\ocX_{\mbf{1}}|/ H \to |\Fl_{G,\mu}|/ H,
\] 
which is a morphism of sites; here we use Lemma \ref{ad hoc exhibiting a diamond as a quotient}. Note that the stratification $\Fl_{G,\mu} = \sqcup_{w\in W_I \backslash W/W_I} \Fl^w_{G,\mu}$
descends to the quotient $|\Fl_{G,\mu}|/H$ since $H \sub P_{\mu,0}$. In particular, the open Schubert cell $\Fl_{G,\mu}^{\nc}$ 
descends to an open stratum $|\Fl_{G,\mu}^{\nc}|/H$ in $|\Fl_{G,\mu}|/ H$, and we set
\[
\ocX_{H}^{\mathrm{nc}} \defeq \pi_{\HT/H}^{-1}\left(|\Fl_{G,\mu}^{\nc}|/H \right).
\]
We will be most interested in the subgroups $N_{0,w}\sub P_{\mu,0}$. In this case, for any closed subgroup $H \sub N_{0,w}$,
$|\cU^w|/H$ is an open subset of $|\Fl_{G,\mu}|/H$.
We put 
\[
\ocX^{w}_{H} \defeq 
\pi_{\HT/H}^{-1}\left(|\cU^w|/H\right).
\]
This is an open sub-diamond of $\ocX_{H}$. The main result of this section is that it is a perfectoid space. 

\medskip

We prove this in two steps: first, we show the result for the case $w=\id$, when $\ocX^{w}_{\tH} = \ocX^{\mathrm{nc}}_{\tH}$. This can be done more generally for all closed $H \sub P_{\mu,0}$ by adapting an argument of Ludwig from the setting of modular curves~\cite[Sect. 3.4]{ludwig}. For a general $w$, we use the action of $w$ to translate the perfectoid structure and deduce the result in general from the result for the non-canonical locus. We use freely the notation from subsection \ref{hodgetatemapsection}, in particular the element $\gamma = \mu(p) \in G(\Qp)$ considered there.

\begin{thm}\label{perfectoid non-canonical stratum} Let $H\sub P_{\mu,0}$ be a closed subgroup. Then the diamond 
  $\ocX^{\mathrm{nc}}_{H}$ is a perfectoid space. More precisely, $|\ocX^{\mathrm{nc}}_{H}|$ is covered by the increasing union of quasicompact open subsets $|\ocX_{\mbf{1}}(\epsilon)_{a}|\gamma^{k}/H$ for $k \in \Z_{\geq 0}$ (and sufficiently small $\epsilon >0$), and the corresponding spatial diamonds are affinoid perfectoid with strongly Zariski closed boundary.
\end{thm}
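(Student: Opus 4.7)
The strategy is to leverage Theorem~\ref{anticanonical tower at intermediate levels 2} and the $\tG(\Qp)$-equivariance of the Hodge--Tate period morphism to spread the perfectoid structure from the anticanonical locus to the whole non-canonical locus, via the $\gamma^k$-translates from Proposition~\ref{expanding the anticanonical locus}. First, the cover follows from Proposition~\ref{expanding the anticanonical locus}, which gives
\[
\cX^{\ast,\mathrm{nc}}_{\widetilde{\mbf{1}}} = \bigcup_{k \geq 0} \cX^{\ast}_{\widetilde{\mbf{1}}}(\epsilon)_a \cdot \gamma^k
\]
for sufficiently small $\epsilon > 0$; passing to topological spaces and applying Lemma~\ref{exhibiting a diamond as a quotient} yields the claimed cover of $|\cX^{\ast,\mathrm{nc}}_{\tH}|$. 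Each piece is quasicompact and open because right multiplication by $\gamma^k$ is a homeomorphism and the quotient by the profinite group $\tH$ is an open map.

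The key step is to realize each piece $|\cX^{\ast}_{\widetilde{\mbf{1}}}(\epsilon)_a|\gamma^k/\tH$ as the topological space of an affinoid perfectoid spatial diamond. A direct block-matrix computation using the explicit description of $\tP_\mu$ and $\gamma$ gives $\gamma^k \tP_{\mu,0} \gamma^{-k} \subseteq \tP_{\mu,0}$, so $\tH_k \defeq \gamma^k \tH \gamma^{-k}$ is a closed subgroup of $\tP_{\mu,0}$. Right multiplication by $\gamma^{-k}$ is an automorphism of $\cX^{\ast}_{\widetilde{\mbf{1}}}$ restricting to an isomorphism of diamonds $\cX^{\ast}_{\widetilde{\mbf{1}}}(\epsilon)_a \cdot \gamma^k \toisom \cX^{\ast}_{\widetilde{\mbf{1}}}(\epsilon)_a$; the identity $(y h)\gamma^{-k} = (y\gamma^{-k})(\gamma^{k} h \gamma^{-k})$ shows that this isomorphism intertwines the right $\tH$-action on the source with the right $\tH_k$-action on the target. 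Passing to quotients and identifying them as the open sub-diamond of $\cX^{\ast}_{\tH}$ corresponding to $|\cX^{\ast}_{\widetilde{\mbf{1}}}(\epsilon)_a|\gamma^k/\tH$ on the one hand, and as $\cX^{\ast}_{\tH_k}(\epsilon)_a$ on the other (both using Lemma~\ref{torsor}), Theorem~\ref{anticanonical tower at intermediate levels 2} applied to $\tH_k$ gives the desired affinoid perfectoid structure with strongly Zariski closed boundary. Gluing these pieces then shows $\cX^{\ast,\mathrm{nc}}_{\tH}$ is itself a perfectoid space.

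The main technical hurdle I expect will be verifying that the geometric quotient $(\cX^{\ast}_{\widetilde{\mbf{1}}}(\epsilon)_a \cdot \gamma^k)/\tH$, viewed as a spatial diamond, genuinely coincides with the open sub-diamond of $\cX^{\ast}_{\tH}$ cut out by $|\cX^{\ast}_{\widetilde{\mbf{1}}}(\epsilon)_a|\gamma^k/\tH$, rather than merely on underlying topological spaces. This should follow from the $\tH$-torsor property of $\cX^{\ast}_{\widetilde{\mbf{1}}} \to \cX^{\ast}_{\tH}$ (Lemma~\ref{torsor}, adapted to the minimally compactified setting via Lemma~\ref{adic quotient}) combined with restriction to the relevant open subset, though the bookkeeping demands care.
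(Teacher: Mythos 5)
Your proof is correct and follows essentially the same route as the paper: use Proposition~\ref{expanding the anticanonical locus} for the cover, then use $\gamma^{-k}$ (together with the inclusion $\gamma^k\tH\gamma^{-k}\subseteq\tP_{\mu,0}$) to transport each piece back to an anticanonical locus $\cX^\ast_{\gamma^k\tH\gamma^{-k}}(\epsilon)_a$, and invoke Theorem~\ref{anticanonical tower at intermediate levels 2}.

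One remark on your closing paragraph: the ``technical hurdle'' you anticipate does not actually arise in the paper, because there is no need to form a quotient of $\cX^\ast_{\wt{\mbf 1}}(\epsilon)_a\cdot\gamma^k$ by $\tH$ and then compare it to an open sub-diamond. The paper instead uses the isomorphism of diamonds $\gamma^{-k}\colon\cX^\ast_{\tH}\toisom\cX^\ast_{\gamma^k\tH\gamma^{-k}}$ (this is set up once and for all in \S\ref{notation for infinite level} for any $g\in\tG(\Qp)$ with $\tH,g^{-1}\tH g\subseteq\tG(\Zp)$), and simply observes that it matches the open subset $|\cX^\ast_{\wt{\mbf 1}}(\epsilon)_a|\gamma^k/\tH$ of $|\cX^\ast_{\tH}|$ with the open subset $|\cX^\ast_{\wt{\mbf 1}}(\epsilon)_a|/\gamma^k\tH\gamma^{-k}$ of $|\cX^\ast_{\gamma^k\tH\gamma^{-k}}|$. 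Open sub-diamonds along quasicompact open subsets of the topological space are canonical, so no further bookkeeping is needed. Relatedly, be careful with citing Lemma~\ref{torsor} on the minimal compactification: $\cX^\ast_{\wt{\mbf 1}}\to\cX^\ast_{\tH}$ is not an $\tH$-torsor (there are stabilizers on the boundary); only the homeomorphism $|\cX^\ast_{\tH}|\cong|\cX^\ast_{\wt{\mbf 1}}|/\tH$ of Lemma~\ref{exhibiting a diamond as a quotient} holds, and that is what the argument actually uses.
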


\begin{proof} 
We consider the projection map 
\[
\ocX_{\mbf{1}}\to \ocX_{H},
\]
which by Lemma \ref{ad hoc exhibiting a diamond as a quotient} identifies $|\ocX^{\mathrm{nc}}_{H}|$ with $|\ocX^{\mathrm{nc}}_{\mbf{1}}|/H$. By Corollary \ref{expanding the anticanonical locus}, the open subdiamonds corresponding to the open sets $|\ocX_{\mbf{1}}(\epsilon)_{a}|\gamma^{k}/H$ are increasing and cover $|\ocX^{\mathrm{nc}}_{H}|$. Note here that $|\ocX_{\mbf{1}}(\epsilon)_{a}|\gamma^{k}$ is $\gamma^{-k}P_{\mu,0}\gamma^{k}$-stable and that $\gamma^{-k}P_{\mu,0}\gamma^{k} \supseteq P_{\mu,0} \supseteq H$ (by direct computation), so this quotient makes sense. It remains to prove that these opens are affinoid perfectoid for all $k\geq 0$, with strongly Zariski closed boundary. On the level of topological spaces, we have an isomorphism
\[
 \gamma^{-k} \colon |\ocX_{\mbf{1}}(\epsilon)_{a}|\gamma^{k}/H \toisom |\ocX_{\mbf{1}}(\epsilon)_{a}|/\gamma^{k}H\gamma^{-k}
 \]
which is induced from an isomorphism 
\[ \gamma^{-k} \colon \ocX_{H} \toisom \ocX_{\gamma^{k} H \gamma^{-k}}
\]
of diamonds. Since $\gamma^{k}H\gamma^{-k} \sub \gamma^{k}P_{\mu,0}\gamma^{-k} \subseteq P_{\mu,0}$, $\ocX_{\gamma^{k} H \gamma^{-k}}(\epsilon)_{a}$ is affinoid perfectoid with strongly Zariski closed boundary by Theorem \ref{anticanonical tower at intermediate levels 2} and we may conclude that the subdiamond of $\ocX^{ \nc}_{H}$ corresponding to the open subset $|\ocX_{\mbf{1}}(\epsilon)_{a}|\gamma^{k}/H$ is affinoid perfectoid with strongly Zariski closed boundary, as desired.
\end{proof}

We now consider the case of a general $w \in W/W_I$.

\begin{thm}\label{general perfectoid stratum}\leavevmode Let $H_w\sub N_{0,w}$ be a closed subgroup. Then the diamond 
  $\ocX^{w}_{H_w}$ is a perfectoid space. More precisely, $|\ocX^{w}_{H_w}|$ is covered by the increasing union of quasicompact open subsets $|\ocX_{\mbf{1}}(\epsilon)_{a}|\gamma^{k}w^{-1}/H_w$ for $k \in \Z_{\geq 0}$ (and sufficiently small $\epsilon >0$), and the corresponding spatial diamonds are affinoid perfectoid with strongly Zariski closed boundary.
\end{thm}

\begin{proof} We choose a representative $w\in G(\Zp)$ of $w\in W/W_I$. Note that right multiplication by $w^{-1}$ induces an isomorphism of diamonds $ \ocX^{\mathrm{nc}}_{w^{-1}H_w w} \toisom \ocX^{w}_{H_w}$. To see this, note that right multiplication by $w^{-1}$ induces an isomorphism of the 
diamonds $\ocX_{w^{-1}H_w w}$ and $\ocX_{H_w}$. 
Now it remains to check that multiplication by $w^{-1}$ sends the non-canonical locus to the $w$-non-canonical locus. 
This can be done on the level of topological spaces, thus after using Lemma~\ref{ad hoc exhibiting a diamond as a quotient} 
to identify $|\ocX_{H_w}|$ with $|\ocX_{\mbf{1}}|/H_w$. The claim can then be checked on the flag varieties (using equivariance of the Hodge--Tate period map), and so follows from the definitions. Moreover, this isomorphism identifies $|\ocX_{\mbf{1}}(\epsilon)_{a}|\gamma^{k}/w^{-1}H_w w$ with $|\ocX_{\mbf{1}}(\epsilon)_{a}|\gamma^{k}w^{-1}/H_w$. The theorem then follows from Theorem \ref{perfectoid non-canonical stratum}.  
\end{proof}

\noindent We denote $\Fl_{G,\mu}^{a}\cdot\gamma^{k}$ by $\Fl_{G,\mu}^{\nc}(k)$. We denote $\Fl_{G,\mu}^{\nc}(k)\cdot w^{-1}$ by $\cU^w(k)$. Later, we will also need the following result. 

\begin{lemma}\label{standard neighborhoods are stable} The open subset $\cU^w(k)\subset \Fl_{G,\mu}$ is stable under the action of $N_{0,w}$. 
\end{lemma}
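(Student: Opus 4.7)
The goal is to show that for any $h \in \tN_{0,J}$, we have $\Fl_{G,\mu}^J(k) \cdot h = \Fl_{G,\mu}^J(k)$. Since $\Fl_{G,\mu}^J(k) = \Fl_{G,\mu}^a \cdot \gamma^k w_J^{-1}$ by definition, the plan is to rewrite
\[
\Fl_{G,\mu}^a \cdot \gamma^k w_J^{-1} \cdot h \;=\; \Fl_{G,\mu}^a \cdot \bigl[\gamma^k w_J^{-1} h w_J \gamma^{-k}\bigr] \cdot \gamma^k w_J^{-1},
\]
and so reduce to showing that $g \defeq \gamma^k w_J^{-1} h w_J \gamma^{-k}$ stabilizes the affinoid $\Fl_{G,\mu}^a$.

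First, I will observe that $w_J^{-1} h w_J \in \tP_{\mu,0}$; this is immediate from the identification $\tN_{0,J} = (\tN \cap w_J \tP_\mu w_J^{-1})(\Zp)$ obtained in the proof of Lemma~\ref{ranks}. Next, I will check that conjugation by $\gamma^k$ preserves $\tP_{\mu,0}$ via a direct block matrix computation: the element $\gamma$, with $\p$-component $\diag(p I_n, I_n)$, lies in the Levi $\widetilde{M}_\mu$, so conjugation by $\gamma^k$ fixes Levi components and multiplies the upper-right unipotent block by $p^k$; hence $\gamma^k \tP_{\mu,0} \gamma^{-k} \sub \tP_{\mu,0}$. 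Together these show that $g \in \tP_{\mu,0}$.

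The remaining and most substantial step is to verify that $\tP_{\mu,0}$ stabilizes $\Fl_{G,\mu}^a$. To this end I will parametrize the open Bruhat cell $\Fl_{G,\mu}^{\nc} = \tN w_0 \tP_\mu / \tP_\mu$ by $Y \in \prod_{\p} M_n$ via $Y \mapsto [n(Y) w_0]$, where $n(Y) \defeq \bigl(\begin{smallmatrix} I_n & Y \\ 0 & I_n \end{smallmatrix}\bigr) \in \tN$. An explicit computation of Pl\"ucker coordinates, using the basis given by the rows of the $n \times 2n$ matrix $[Y \mid I_n]$, identifies $\Fl_{G,\mu}^a$ with the integral unit ball $\{Y : |Y_{ij}| \leq 1 \text{ for all } i,j\}$; the key observation is that the coordinates $s_J(Y)$ are polynomials in the entries $Y_{ij}$, and those $J$ differing from $\{n+1,\dots,2n\}$ in a single element recover the individual $Y_{ij}$ up to sign, so the remaining inequalities are automatic. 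The standard decomposition $p \cdot n(Y) w_0 = n(Y') w_0 \cdot p'$, for $p \in \tP_\mu$ with Levi part $\diag(A,D)$ and unipotent part $X \in \tN$, then yields $Y' = A(Y+X)D^{-1}$, which preserves the integral unit ball when $p \in \tP_{\mu,0}$ since in that case $A, D \in \GL_n(\Zp)$ and $X \in M_n(\Zp)$. The main obstacle I anticipate is this coordinate computation; once it is established, the rest of the argument is essentially formal.
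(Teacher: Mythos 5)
Your proposal is correct and follows essentially the same route as the paper: reduce via conjugation to showing that $\tP_{\mu,0}$ stabilizes $\Fl^a_{G,\mu}$, using $w_J^{-1}\tN_{0,J}w_J\subseteq\tP_{\mu,0}$ and $\gamma^k\tP_{\mu,0}\gamma^{-k}\subseteq\tP_{\mu,0}$. The paper dispatches the final stability of $\Fl^a_{G,\mu}$ under $\tP_{\mu,0}$ with the phrase ``follows from an explicit computation,'' and your Pl\"ucker-coordinate argument (normalizing $s_{\{n+1,\dots,2n\}}=1$, identifying $\Fl^a_{G,\mu}$ with the integral unit ball in the $Y$-coordinate, and computing $Y\mapsto A(Y+X)D^{-1}$) is precisely that computation, correctly carried out.
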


\begin{proof} By the definition of $N_{0,w}$, it is enough to see that $\cU^w(k)$ is stable under $w P_{\mu,0}w^{-1}$ or equivalently that $\Fl_{G,\mu}^{\nc}(k)$ is stable under $P_{\mu,0}$. As $\gamma^{-k}P_{\mu,0}\gamma^{k} \supseteq P_{\mu,0}$ this reduces us to the case $k=0$, which follows by explicit computation.
\end{proof}

\section{Vanishing of compactly supported cohomology}\label{vanishing}

In this section we put the ingredients together and prove the vanishing theorem. The first subsection is devoted to reducing it to a statement about the fibers of the Hodge--Tate period map (Theorem \ref{bound on higher direct images}), which is mostly formal using the machinery of \'etale cohomology of diamonds and our results from section \ref{diamonds and cohomology}. We then introduce a technical result about constructing invariant rational neighborhoods of orbits on affinoid adic spaces with a continuous action of a profinite group (Proposition \ref{rational neighborhood}), which we believe is important and interesting in its own right. We conclude by proving Theorem \ref{bound on higher direct images} using the results of section \ref{stratifications}, which finishes the proof of the vanishing theorem.

\subsection{The main result and first reductions}\label{subsect:first-reductions}

Recall that $d$ denotes the dimension of our Shimura varieties. Our main theorem is:

\begin{thm}\label{main thm} Let $r\in \Z_{\geq 1}$ and let $K \sub G(\Zp)$ be an open subgroup. If $i>d$, then 
\[
 \varinjlim_{m} H^{i}_{c}(X_{K \cap \Gamma_1(p^{m})}(\C),\Z/p^r)=0.
 \]
\end{thm}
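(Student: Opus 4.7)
The plan is to follow the strategy sketched in \S\ref{strategy}, combining the Bruhat stratification of \S\ref{stratifications} with the perfectoid input of Theorem~\ref{general perfectoid stratum}. First, by devissage on $r$ via the short exact sequences $0 \to \Z/p^{r-1}\Z \to \Z/p^r\Z \to \Z/p\Z \to 0$, one reduces to the case $r=1$. Using the standard comparison between singular cohomology of complex analytic varieties and \'etale cohomology of the associated adic spaces, together with Proposition~\ref{invlimcoh} applied to the tower $\cX^*_{\tK \cap \tN_0} = \varprojlim_m \cX^{*,\lozenge}_{\tK \cap \tGam_1(p^m)}$, the direct limit in the statement is identified with $H^i_{\et}(\cX^*_{\tK \cap \tN_0}, j_!\F_p)$, where $j$ is the open immersion of $\cX_{\tK\cap\tN_0}$ into $\cX^*_{\tK\cap\tN_0}$. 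An application of the primitive comparison theorem in $p$-adic Hodge theory then reduces matters to proving
\[
H^i_{\et}(\cX^*_{\tK \cap \tN_0}, j_!(\cO^+/p)^a) = 0 \quad\text{for } i > d.
\]

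Next I would attack this via the Leray spectral sequence for the Hodge--Tate period morphism $\pi_{\HT/\tN_0}\colon (\cX^*_{\tK\cap\tN_0})_{\et} \to |\Fl_{G,\mu}|/\tN_0$. By Lemma~\ref{spectral quotient} combined with the theorem of Scheiderer bounding cohomological dimension of spectral spaces, each Bruhat stratum $|\Fl_{G,\mu}^w|/\tN_0$ has cohomological dimension at most $\dim \Fl_{G,\mu}^w$. The desired vanishing would therefore follow from the key stalk bound
\[
(R^i\pi_{\HT/\tN_0,*}j_!(\cO^+/p)^a)_x = 0 \quad \text{for all } x \in |\Fl_{G,\mu}^w|/\tN_0,\ i > d - \dim\Fl_{G,\mu}^w.
\]

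To establish this stalk bound, I would fix $x$ and lift it to $\tilde x \in |\Fl_{G,\mu}^w|$. Lemma~\ref{finding a J} then furnishes a coset $J \in \delta^{-1}(ww_0)$ with $\tilde x \cdot N_0 \subseteq \Fl_{G,\mu}^J$; by Lemmas~\ref{ranks} and \ref{standard neighborhoods are stable}, the subgroup $\tN_{0,J}\subseteq \tN_0$ has $\Z_p$-rank $\dim\Fl_{G,\mu}^w$ and stabilizes $\Fl_{G,\mu}^J$, while $\tN_0/\tN_{0,J}$ is a pro-$p$ abelian group of $\Z_p$-rank $d-\dim\Fl_{G,\mu}^w$, hence of continuous cohomological dimension $d-\dim\Fl_{G,\mu}^w$. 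Using Proposition~\ref{rational neighborhood} (to be established in the next subsection) one constructs a cofinal basis of $\tN_0$-invariant quasicompact open neighborhoods of the $\tN_0$-orbit of $\tilde x$ sitting inside $\Fl_{G,\mu}^J$. The Hochschild--Serre spectral sequence (Theorem~\ref{hochschildserre}), applied to the $\tN_0/\tN_{0,J}$-torsor $\cX^*_{\tK \cap \tN_{0,J}} \to \cX^*_{\tK \cap \tN_0}$ (Lemma~\ref{torsor}), then reduces the key stalk bound to the intermediate vanishing $(R^i\pi_{\HT/\tN_{0,J},*}j_!(\cO^+/p)^a)_{\tilde x} = 0$ for $i > 0$.

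This intermediate vanishing is where the perfectoid input enters. Theorem~\ref{general perfectoid stratum} identifies $\cX^{*,J}_{\tK \cap \tN_{0,J}}$ as an increasing union of affinoid perfectoid open subspaces with strongly Zariski closed boundary. By Proposition~\ref{topbasechange} the stalk in question computes the \'etale cohomology of the topological fiber of $\pi_{\HT/\tN_{0,J}}$ at $\tilde x$; this fiber is an intersection of affinoid perfectoid opens with strongly Zariski closed boundary, and Lemma~\ref{rankonepointsisom} further reduces matters to a rank one point, where the desired vanishing for $j_!(\cO^+/p)^a$ is classical. I expect the main obstacle to lie in the third paragraph: establishing Proposition~\ref{rational neighborhood} in sufficient generality, and running the Hochschild--Serre argument so that the interaction between the $\tN_0$-orbit of $\tilde x$ and the perfectoid fibration at level $\tN_{0,J}$ plays out as anticipated.
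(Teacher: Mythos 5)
Your outline matches the paper's strategy up through the key stalk bound (devissage to $r=1$, primitive comparison, Leray for $\pi_{\HT/\tK\cap\tN_0}$, Scheiderer's bound), and your identification of $\tN_{0,J}$ and the cohomological dimension count via Lemma~\ref{ranks} is exactly right. However, the orchestration of the last two paragraphs, which you flag as the obstacle, does contain genuine misstatements rather than merely unfilled details. First, you ask for a cofinal system of \emph{$\tN_0$-invariant} rational opens inside $\Fl_{G,\mu}^J$ containing the $\tN_0$-orbit of $\tilde{x}$. This cannot work: Proposition~\ref{rational neighborhood} requires the profinite group to act continuously on the ambient affinoid, and the affinoid $\Fl_{G,\mu}^{J}(k')$ which must receive the orbit is only $\tN_{0,J}$-invariant (Lemma~\ref{standard neighborhoods are stable}), not $\tN_0$-invariant. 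The paper instead reduces to rank one points \emph{first} (so that by Lemma~\ref{homeomorphism onto orbit} the image orbit $x_J \cdot (\tH/\tH_J)$ is profinite), applies the Hochschild--Serre sequence to the v-cover $V_J = \pi_J^{-1}(x_J(\tH/\tH_J)) \to \pi^{-1}(x)$, then uses profiniteness of the orbit to reduce to the stalk $\pi_J^{-1}(x_J)$, and only \emph{then} invokes Proposition~\ref{rational neighborhood} to produce $\tH_J$-invariant (not $\tN_0$-invariant) rational opens inside the $\tN_{0,J}$-stable affinoid $\Fl_{G,\mu}^J(k')$.

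Second, the final claim that the fiber $\pi_J^{-1}(x_J)$ "is an intersection of affinoid perfectoid opens with strongly Zariski closed boundary" is not correct as stated, and glosses over the second real difficulty. For an $\tH_J$-invariant rational open $U_t$, the diamond $\pi_J^{-1}(|U_t|/\tH_J)$ is a quotient of an affinoid perfectoid by the compact group $\tH_J$; it is \emph{not} itself affinoid perfectoid, since the $\tH_J$-action is not free at the boundary. The paper handles this by noting that the rational set $\pi_{\HT}^{-1}(U_t)$ descends to a rational subset at a finite level $\tH_J/(\tH_J \cap \tGam(p^m))$, then applies Hansen's quotient theorem to obtain an affinoid perfectoid space $V_m/G_m$ with strongly Zariski closed boundary, which agrees with $\pi_J^{-1}(|U_t|/\tH_J)$ \emph{only away from the boundary}; Lemma~\ref{shriekbasechange} then equates the cohomologies of $j_!(\cO^+/p)^a$. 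Without this finite-level descent and the quotient theorem, the vanishing of Proposition~\ref{strongly Zariski closed vanishing} does not apply directly to the fiber. So while you correctly anticipate trouble in the interaction you name, the specific claims made in your third and fourth paragraphs would fail as written and need to be replaced by the more delicate sequencing above.
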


\noindent Here $H_{c}$ denotes compactly supported singular cohomology. For the applications in this paper, the most important case is $K = G(\Zp)$, and the reader would not lose anything by making this assumption in the proof. On the other hand, our proof allows us to treat general $K$, so we have opted for this extra generality since it is not clear to us if the case of general $K$ follows from the special case $K = G(\Zp)$. We also have the following more general version of Theorem \ref{main thm}, which is a direct consequence of it.

\begin{cor}[to Theorem \ref{main thm}]\label{cor to main}
Let $r\in \Z_{\geq 1}$ and let $K_{m} \sub \Gamma_1(p^{m})$ be open subgroups for $m \geq 1$  with $K_{m+1}\sub K_{m}$ for all $m$. If $i>d$, then 
\[
 \varinjlim_{m} H^{i}_{c}(X_{K_{m}}(\C),\Z/p^r)=0.
 \]
\end{cor}

\noindent We remark that Theorem \ref{main thm} is false in general for usual cohomology; this fact is crucial for our strategy to remove the nilpotent ideal in section \ref{Borel-Serre}.

\medskip

To prove Theorem \ref{main thm}, note first that it suffices to treat the case $r=1$ by the usual d\'evissage argument. Then, applying the usual string of comparison theorems (between singular cohomology and \'etale cohomology of schemes, and between \'etale cohomology of schemes and \'etale cohomology of adic spaces), we find that Theorem \ref{main thm} is equivalent to the statement that 
\[
 \varinjlim_{m} H^{i}_{\et}(\ocX_{K \cap \Gamma_1(p^m)},j_{!}\F_{p})=0 
\]   
for $i>d$. Here, and in the rest of this section, we write $j$ for any open immersion $\cU \to \cV$, where $\cV$ is any locally spatial diamond that carries a quasi-pro-\'etale map $\cV \to \ocX_{H}$ for some closed subgroup $H \sub G(\Zp)$, $\cU = \cV \times_{\cX^{\ast}_H}\cX_H$ and $j$ is the projection onto the first factor. To proceed from here, the primitive comparison theorem for torsion coefficients \cite[Thm. 3.13]{scholze-survey} gives us
\[
 \varinjlim_{m} H^{i}_{\et}(\ocX_{K \cap \Gamma_1(p^{m})},j_{!}\F_{p})^{a} \otimes_{\F_p}\cO_C/p \cong \varinjlim_{m} H^{i}_{\et}(\ocX_{K \cap \Gamma_1(p^{m})},j_{!}\F_{p}\otimes \cO_{\ocX_{K \cap \Gamma_1(p^m)}}^{+}/p)^{a}
 \]
and it suffices to prove that the right hand side vanishes. We record the following (more convenient) descriptions of the sheaf appearing on the right-hand side. The lemma is stated in the generality needed in the paper; further generalizations should be possible whenever one has a sufficiently well behaved notion of Zariski closed sets.

\begin{lemma}\label{lower shriek and ideals}
Let $V$ be a rigid space over a nonarchimedean field $K/\Qp$, or an affinoid perfectoid space over $K$. Let $Z\sub V$ be a Zariski closed subset of $V$ and let $U$ be the open complement. Write $j \colon U \hookrightarrow V$ for the open immersion and let $\cI^{+}\sub \cO_{V}^{+}$ be the subsheaf of functions which vanish on $Z$. Then we have natural isomorphisms
$$ (j_{!}\F_{p}) \otimes \cO_{V}^{+}/p \cong \cI^{+}/p \cong j_{!}(\cO_{U}^{+}/p). $$ 
\end{lemma}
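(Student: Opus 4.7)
The two isomorphisms are established independently by constructing natural morphisms and verifying them, either on sections over affinoid pieces or stalk-wise in an appropriate almost category (consistent with the almost-math framework in which $\cO_V^+/p$ is used throughout the paper).

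For the first isomorphism, tensor the short exact sequence
\[ 0 \to j_{!}\F_p \to \F_p \to i_{\ast}\F_p \to 0 \]
(where $i\colon Z \hookrightarrow V$ is the complementary closed immersion) over $\F_p$ with $\cO_V^+/p$. Since $\cO_V^+/p$ is $\F_p$-flat, this remains exact; comparing it with
\[ 0 \to \cI^+/p \to \cO_V^+/p \to \cO_Z^+/p \to 0, \]
obtained by reducing the defining sequence $0 \to \cI^+ \to \cO_V^+ \to \cO_Z^+ \to 0$ modulo $p$ (using that $p\cO_V^+ \cap \cI^+ = p\cI^+$, valid because $\cO_Z^+$ is $p$-torsion free in characteristic zero), and noting the natural identification $(i_{\ast}\F_p) \otimes_{\F_p}\cO_V^+/p \cong \cO_Z^+/p$, one obtains the first isomorphism.

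For the second isomorphism, construct a natural morphism $j_{!}(\cO_U^+/p) \to \cI^+/p$ via adjunction from the tautological map $\cO_U^+/p \to j^{\ast}(\cI^+/p) = \cO_U^+/p$ (using $\cI^+|_U = \cO_U^+$ as $Z \cap U = \emptyset$). To check it is an isomorphism, work stalk-wise: at geometric points of $U$ both sides agree with $\cO_U^+/p$; at geometric points $z$ of $Z$ the left-hand side vanishes tautologically, and the nontrivial content is that $(\cI^+/p)_z$ vanishes in the almost category. For this, one reduces to a local computation on an affinoid neighborhood, using that either $V$ is a rigid space and the vanishing ideal of $Z$ behaves well, or $V$ is affinoid perfectoid and $Z$ is strongly Zariski closed in the sense of \cite[Definition 2.2.6]{scholze-galois}, in which case defining functions of $Z$ can be taken to be topologically small (cf.~\S\ref{structure sheaves}).

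The principal obstacle is precisely this almost-vanishing of $\cI^+/p$ along $Z$: as an equality of ordinary sheaves of modules it can fail, and honest verification requires the strongly Zariski closed hypothesis in the perfectoid case to conclude that classes of local defining functions of $Z$ become almost zero mod $p$. Once this input is in hand, both isomorphisms follow by a straightforward stalk-wise comparison.
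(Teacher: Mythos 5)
The paper's own proof is short: construct natural maps by adjunction, observe both sides agree with $\cO_U^+/p$ over $U$, and observe that all stalks vanish on $Z$. Your framing for the second isomorphism is the same. The real divergence is in how you handle the key fact underlying the stalk check, and there you have a genuine error.

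You assert that the vanishing $(\cI^+/p)_{\bar z} = 0$ for $\bar z$ a geometric point of $Z$ (equivalently, the identification $(i_*\F_p)\otimes\cO_V^+/p \cong \cO_Z^+/p$) "can fail as an equality of ordinary sheaves" and requires the strongly Zariski closed hypothesis and an almost-vanishing argument. This is not correct, and it contradicts both the statement of the lemma (which asserts honest isomorphisms, with no almost superscript) and the paper's proof (which says plainly "on $Z$ all stalks on both sides are $0$"). In fact the stalk vanishing holds on the nose, for any Zariski closed $Z$, by an elementary local argument that does not see the almost category and does not use strong Zariski-closedness: let $\bar z \in Z$ and let $f$ be a germ in $\cI^+_{\bar z}$. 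Since $f$ vanishes on $Z$ we have $|f(\bar z)| = 0$, whereas $|p(\bar z)| \neq 0$ because $p$ is a unit of $\cO_V$. Hence $\{|f| \le |p|\}$ is a rational open neighborhood of $\bar z$ (the ideal $(f,p)$ contains the unit $p$), and on this neighborhood $f/p$ is a section of $\cO_V^+$ which still vanishes on $Z$ (as $p$ is a unit). Thus $f = p\cdot(f/p) \in p\,\cI^+$ near $\bar z$, so $(\cI^+/p)_{\bar z} = 0$. This also immediately yields the identification $(i_*\F_p)\otimes\cO_V^+/p \cong \cO_Z^+/p$ you invoke, which is equivalent to $\cI^+_{\bar z} \subseteq p\,\cO^+_{V,\bar z}$ for $\bar z \in Z$.

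The strongly Zariski closed hypothesis is a red herring for this lemma: it enters only in the subsequent Proposition~\ref{strongly Zariski closed vanishing}, where one needs the short exact sequence of almost sheaves and the surjectivity of $H^0(V,\cO_V^+)^a \to H^0(Z,\cO_Z^+)^a$ to control higher cohomology. Those are global cohomological statements; the present lemma is purely local. Your exact-sequence bookkeeping for the first isomorphism (using $\F_p$-flatness of $\cO_V^+/p$ and $p$-torsion-freeness of $\cO_Z^+$) and the adjunction set-up for the second are fine, but the argument hinges on the stalk fact you flagged as problematic, and once you correctly observe that it is an easy honest statement the whole proof collapses to essentially the paper's two-line argument.
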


\begin{proof}
We start with the first isomorphism. By applying the adjunctions for $j_{!}$ and $\otimes$, the natural map $\F_{p} \to \cO_{U}^{+}/p = j^{\ast}(\cI^{+}/p)$ gives a map $(j_{!}\F_{p}) \otimes \cO_{V}^{+}/p \to \cI^{+}/p$. This is clearly an isomorphism over $U$, and on $Z$ all stalks on both sides are $0$, so it is an isomorphism on the whole of $V$. For the second isomorphism, adjunction gives a natural map $j_{!}(\cO_{U}^{+}/p) \to \cI^{+}/p$, which is an isomorphism for the same reason as before.
\end{proof}

\noindent For later use, we also explicitly record the following fact, which is implicit in \cite[\S 4.1]{scholze-galois}.

\begin{prop}\label{strongly Zariski closed vanishing}
Let $V$ be an affinoid perfectoid space over a perfectoid field $(K,K^{+})$ over $\Qp$ and let $Z\sub V$ be a strongly Zariski closed subset, with its induced structure of an affinoid perfectoid space. Set $j \colon U=V\setminus Z \hookrightarrow V$. Then $H^{i}_{\et}(V,j_{!}(\cO_{U}^{+}/p))^{a}=0$ for $i>0$.
\end{prop}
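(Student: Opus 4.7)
By Lemma~\ref{lower shriek and ideals} (applied in the perfectoid setting) the sheaf $j_{!}(\cO_{U}^{+}/p)$ on $V_{\et}$ may be identified with $\cI^{+}/p$, where $\cI^{+}\sub \cO_{V}^{+}$ is the subsheaf of functions which vanish on $Z$. Thus, the goal becomes to show that $H^{i}_{\et}(V,\cI^{+}/p)^{a}=0$ for $i>0$. The strategy is to fit $\cI^{+}/p$ into a short exact sequence relating it to $\cO_{V}^{+}/p$ and the corresponding structure sheaf on $Z$, and to exploit the fact that the latter two have vanishing higher \'etale cohomology because both $V$ and $Z$ are affinoid perfectoid.

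Concretely, writing $V=\Spa(R,R^{+})$ and letting $I\sub R^{+}$ be the ideal cutting out $Z$, I would first establish the short exact sequence of almost \'etale sheaves on $V$
\[
0 \to (\cI^{+}/p)^{a} \to (\cO_{V}^{+}/p)^{a} \to i_{\ast}(\cO_{Z}^{+}/p)^{a} \to 0,
\]
where $i\colon Z\to V$ is the closed immersion. Exactness on the left is formal. The key point is exactness on the right: by definition of \emph{strongly} Zariski closed (\cite[Def.~2.2.6]{scholze-galois}), the almost ideal $I^{a}\sub (R^{+})^{a}$ is essentially the full ideal of functions vanishing on $Z$, and the quotient $(R^{+}/I)^{a}$ agrees with $((R_{Z}^{+}))^{a}$ for the perfectoid affinoid $Z=\Spa(R_{Z},R_{Z}^{+})$; reducing modulo $p$ and sheafifying then yields the displayed sequence. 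This also gives the almost surjectivity of $H^{0}(V,\cO_{V}^{+}/p)^{a}\to H^{0}(Z,\cO_{Z}^{+}/p)^{a}$, which is the content of the strongly Zariski closed condition at the level of global sections.

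Next, I would invoke the vanishing $H^{i}_{\et}(W,\cO_{W}^{+}/p)^{a}=0$ for $i\geq 1$ on any affinoid perfectoid space $W$ (a standard result, obtained by comparing with the (quasi-)pro-\'etale site and using that strictly totally disconnected covers have no higher cohomology, cf.~Section~\ref{structure sheaves}). Applying this to both $W=V$ and $W=Z$, together with the compatibility $H^{i}_{\et}(V,i_{\ast}(\cO_{Z}^{+}/p))\cong H^{i}_{\et}(Z,\cO_{Z}^{+}/p)$ for the closed immersion $i$, the long exact cohomology sequence attached to the short exact sequence above collapses: in degrees $i\geq 2$ the neighbouring terms already vanish, while in degree $1$ the almost surjectivity established above forces $H^{1}(V,\cI^{+}/p)^{a}=0$. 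This yields the conclusion.

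The main obstacle is the verification that the short exact sequence of almost sheaves above is actually exact on the right; this is not true for arbitrary Zariski closed subsets and is essentially the whole content of the strongly Zariski closed hypothesis. Once this is in place, along with the identification of $H^{\ast}_{\et}(V,i_{\ast}-)$ with $H^{\ast}_{\et}(Z,-)$ (which follows from the fact that closed immersions in this setting have no higher $i_{\ast}$ on constant/quasi-coherent-type sheaves, or can be reduced to a statement on stalks via Proposition~\ref{basechange}), the rest of the argument is a formal consequence of the almost vanishing on affinoid perfectoid spaces.
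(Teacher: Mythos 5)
Your argument is essentially the paper's: identify $j_{!}(\cO_{U}^{+}/p)\cong \cI^{+}/p$ via Lemma~\ref{lower shriek and ideals}, exploit the short exact sequence provided by the strongly Zariski closed hypothesis, and invoke almost vanishing of higher \'etale cohomology for affinoid perfectoid spaces. The only organizational difference is that the paper first reduces to the integral sheaf $\cI^{+}$ via the sequence $0 \to \cI^{+} \xrightarrow{\,p\,} \cI^{+} \to \cI^{+}/p \to 0$ and then uses the short exact sequence $0 \to \cI^{+a} \to \cO^{+a}_{V} \to \cO^{+a}_{Z} \to 0$ together with \cite[Prop.~7.13]{scholze-perfectoid}, whereas you reduce the latter sequence mod $p$ directly. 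Your approach works, but your claim that ``exactness on the left is formal'' in the mod-$p$ sequence
\[
0 \to (\cI^{+}/p)^{a} \to (\cO_{V}^{+}/p)^{a} \to i_{\ast}(\cO_{Z}^{+}/p)^{a} \to 0
\]
deserves a word of justification: left exactness amounts to the vanishing of $\mathrm{Tor}_{1}^{\ZZ}(\cO^{+a}_{Z},\ZZ/p)$, i.e.\ the $p$-torsion-freeness of $\cO^{+a}_{Z}$, which holds precisely because $Z$ is (affinoid) perfectoid over a characteristic-zero base and hence $\cO_{Z}^{+}$ has no $p$-torsion. The paper's version sidesteps this by only tensoring with $\ZZ/p$ at the very first step, on $\cI^{+}$, which is a subsheaf of $\cO_{V}^{+}$ and therefore visibly torsion-free. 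Once this small point is patched, the two proofs are interchangeable.
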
 

\begin{proof}
Let $\cI^{+}\sub \cO_{V}^{+}$ be the subsheaf of functions vanishing on $Z$. By Lemma \ref{lower shriek and ideals}, $j_{!}(\cO_{U}^{+}/p)\cong \cI^{+}/p$. By the long exact sequence associated with 
$$0 \to \cI^{+} \to \cI^{+} \to \cI^{+}/p \to 0,$$
it suffices to show that $H^{i}_{\et}(V,\cI^{+})^{a}=0$ for $i>0$. Since $Z\sub V$ is strongly Zariski closed, we have a short exact sequence
$$ 0 \to \cI^{+a} \to \cO^{+a}_{V} \to \cO^{+a}_{Z} \to 0 $$
of almost sheaves on $V_{\et}$ by the definition and \cite[Lemma 2.2.9]{scholze-galois}. Taking the long exact sequence and using \cite[Proposition 7.13]{scholze-perfectoid}, we see that $H^{i}_{\et}(V,\cI^{+})^{a}=0$ for $i>1$ and that $H^{1}_{\et}(V,\cI^{+})^{a}$ is the cokernel of $H^{0}(V,\cO^{+}_{V})^{a} \to H^{0}(Z,\cO^{+}_{Z})^{a}$, which is $0$ since $Z\sub V$ is strongly Zariski closed. 
\end{proof}

Keeping Lemma~\ref{lower shriek and ideals} in mind, we apply (the almost version of) Proposition \ref{invlimcoh} to get
\[
  \varinjlim_{m} H^{i}_{\et}(\ocX_{K \cap \Gamma_1(p^m)},j_{!}(\cO_{\cX_{K \cap \Gamma_1(p^m)}}^{+}/p))^{a} \cong H^{i}_{\et}(\ocX_{K \cap N_{0}},j_{!}(\cO_{\cX_{K \cap N_{0}}}^{+}/p)^{a}), 
\]
where $(\cO_{\cX_{K \cap N_{0}}}^{+}/p)^{a}$ is the almost sheaf on $(\cX_{K \cap N_{0}})_{\et}$ constructed in subsection \ref{structure sheaves}. Here we have implicitly used some results from subsections  \ref{structure sheaves} on the sheaves $(\cO^{+}/p)^{a}$, namely that $(\cO^{+}/p)^{a}$ is compatible with the usual construction on rigid spaces and that its construction commutes with pullbacks by quasi-pro-\'etale maps. Furthermore we have used Lemma \ref{shriekbasechange} and Proposition \ref{finite maps}. To avoid some notational difficulties in the future we make the following convention: for any quasi-pro-\'etale $\cV \to \cX$ we write $(\cO_{\cX}^{+}/p)^{a}$ for the almost sheaf on $(\cO_{\cV}^{+}/p)^{a}$ on $\cV_{\et}$.
This is reasonable since, by the construction in subsection \ref{structure sheaves}, $(\cO_{\cV}^{+}/p)^{a}$ is the pushforward of the almost sheaf $(\cO_{\cX}^{+}/p)^{a}$ on $\cX_{\qp}$ via the natural map $\cX_{\qp} \to \cV_{\et}$.

\medskip
To proceed from here, we will use the morphism 
\[
\pi_{\HT/K \cap N_{0}} \colon \left(\ocX_{K \cap N_{0}}\right)_{\et} \to |\Fl_{G,\mu}|/(K \cap N_{0})
\]
from subsection \ref{perfectoid strata at intermediate levels}. We have a Leray spectral sequence 
\[
 E_{2}^{rs}=H^{r}(|\Fl_{G,\mu}|/(K \cap N_{0}),R^{s}\pi_{\HT/K \cap N_{0},\ast}j_{!}(\cO_{\cX}^{+}/p)^{a}) \implies H^{r+s}_{\et}(\ocX_{K \cap N_{0}},j_{!}(\cO_{\cX}^{+}/p)^{a}). 
\] 
The key result in this section is then the following vanishing result for the stalks of $R^{i}\pi_{\HT/\tK \cap \tN_0,\ast}j_{!}(\cO_{\cX}^{+}/p)^{a}$:

\begin{thm}\label{bound on higher direct images}
Let $w\in W_{I}\backslash W / W_{I}$ and let $x\in |\Fl_{G,\mu}^{w}|/(K \cap N_{0})$. Then we have 
\[
 R^{i}\pi_{\HT/K \cap N_0,\ast}j_{!}(\cO_{\cX}^{+}/p)_{x}^{a} = 0
 \]
for $i>d-\dim \Fl_{G,\mu}^{w}$.
\end{thm}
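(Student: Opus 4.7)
The plan is to compute the stalk as the cohomology of a fiber via base change, move the fiber into a region governed by Theorem \ref{general perfectoid stratum}, and then run a Hochschild--Serre argument over an appropriate torsor. First, by (the almost version of) Proposition \ref{topbasechange},
\[
R^{i}\pi_{\HT/\tK\cap \tN_{0},*}j_{!}(\cO_{\cX}^{+}/p)_{x}^{a} \cong H^{i}_{\et}\bigl(\pi_{\HT/\tK\cap \tN_{0}}^{-1}(x),\, j_{!}(\cO_{\cX}^{+}/p)^{a}\bigr),
\]
so it suffices to bound \'etale cohomology on the fiber. I will lift $x$ to $\tilde{x}\in |\Fl_{G,\mu}^{w}|$ and apply Lemma \ref{finding a J} to obtain some $J\in \delta^{-1}(ww_{0})$ with $\tilde{x}\cdot N_{0}\subseteq |\Fl_{G,\mu}^{J}|$; hence the fiber $\pi_{\HT/\tK\cap \tN_{0}}^{-1}(x)$ sits inside the open sub-diamond $\cX^{*,J}_{\tK\cap \tN_{0}}$.

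Next, since the unipotent radical of the Siegel parabolic $\tP_{\mu}$ is abelian, $\tK\cap \tN_{0,J}$ is normal in $\tK\cap \tN_{0}$, and Lemma \ref{torsor} provides an $\tH_{J}$-torsor $\cX^{*,J}_{\tK\cap \tN_{0,J}}\to \cX^{*,J}_{\tK\cap \tN_{0}}$, where $\tH_{J}\defeq (\tK\cap \tN_{0})/(\tK\cap \tN_{0,J})$. Theorem \ref{general perfectoid stratum} presents $\cX^{*,J}_{\tK\cap \tN_{0,J}}$ as an increasing union of affinoid perfectoid quasicompact opens $Y_{k}$ with strongly Zariski closed boundary. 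The key geometric step is then to invoke Proposition \ref{rational neighborhood} to produce a cofinal system of $\tH_{J}$-invariant quasicompact open neighborhoods $\tilde{V}_{\alpha}$ of the preimage $\tilde{F}$ of $\pi_{\HT/\tK\cap \tN_{0}}^{-1}(x)$ in $\cX^{*,J}_{\tK\cap \tN_{0,J}}$ which are rational subsets of some $Y_{k}$. Such $\tilde{V}_{\alpha}$ are themselves affinoid perfectoid with strongly Zariski closed boundary, so Proposition \ref{strongly Zariski closed vanishing} yields $H^{s}_{\et}(\tilde{V}_{\alpha},\, j_{!}(\cO_{\cX}^{+}/p)^{a})=0$ for $s>0$. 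Setting $V_{\alpha}\defeq \tilde{V}_{\alpha}/\tH_{J}$, the Hochschild--Serre spectral sequence of Theorem \ref{hochschildserre} applied to $\tilde{V}_{\alpha}\to V_{\alpha}$ therefore collapses onto its bottom row, giving
\[
H^{r}_{\et}(V_{\alpha},\, j_{!}(\cO_{\cX}^{+}/p)^{a}) \cong H^{r}_{cts}\bigl(\tH_{J},\, H^{0}_{\et}(\tilde{V}_{\alpha},\, j_{!}(\cO_{\cX}^{+}/p)^{a})\bigr).
\]

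To conclude, I will bound the continuous cohomological dimension of $\tH_{J}$. The group $\tH_{J}$ embeds as an open subgroup of $\tN_{0}/\tN_{0,J}$, which by Lemma \ref{ranks} is a free $\Zp$-module of rank $d-\dim \Fl_{G,\mu}^{w_{J}w_{0}}$. Writing $w_{J}=aww_{0}b$ with $a,b\in W_{I}$ (since $\delta(J)=ww_{0}$) and using $w_{0}W_{I}w_{0}=W_{I}$, a short computation shows $W_{I}w_{J}w_{0}W_{I}=W_{I}wW_{I}$, hence $\dim \Fl_{G,\mu}^{w_{J}w_{0}}=\dim \Fl_{G,\mu}^{w}$. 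Thus $\tH_{J}$ is a torsion-free compact $p$-adic Lie group of dimension $d-\dim \Fl_{G,\mu}^{w}$, whose continuous cohomology with $p$-torsion coefficients vanishes above this rank. Combining with the collapsed spectral sequence and taking the direct limit over $\alpha$ via Proposition \ref{invlimcoh} yields the claimed vanishing of $H^{i}_{\et}(\pi_{\HT/\tK\cap \tN_{0}}^{-1}(x), j_{!}(\cO_{\cX}^{+}/p)^{a})$ for $i>d-\dim \Fl_{G,\mu}^{w}$. The hard part will be the construction of the $\tH_{J}$-invariant rational neighborhoods $\tilde{V}_{\alpha}$ via Proposition \ref{rational neighborhood}; everything else assembles standard machinery (topological base change, Hochschild--Serre, cohomological dimension of $p$-adic Lie groups, and vanishing on affinoid perfectoids with strongly Zariski closed boundary) around the perfectoidness input of Theorem \ref{general perfectoid stratum}.
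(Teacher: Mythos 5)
Your high-level template (topological base change, find $J$ via Lemma \ref{finding a J}, Hochschild--Serre over a torsor with Galois group of cohomological dimension $d - \dim \Fl^w_{G,\mu}$, then invariant rational neighborhoods and strongly Zariski closed boundary) matches the paper's. The group-theoretic computation that $\dim \Fl_{G,\mu}^{w_J w_0} = \dim \Fl_{G,\mu}^w$, via $W_I w_J w_0 W_I = W_I w W_I$, is correct and is indeed needed to invoke Lemma \ref{ranks}. However, there are three genuine gaps that together make the argument fail as written.

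First, the torsor you want, $\cX^{*,J}_{\tK\cap \tN_{0,J}}\to \cX^{*,J}_{\tK\cap \tN_{0}}$, does not exist: the diamond $\cX^{*,J}_{\tH}$ is only defined in the paper for closed $\tH\subseteq \tN_{0,J}$, precisely because $\Fl^J_{G,\mu}$ is $\tN_{0,J}$-stable (Lemma \ref{standard neighborhoods are stable}) but not $\tN_0$-stable — it is a $w_J$-translate of the open Schubert cell, invariant under $w_J\tP_{\mu,0}w_J^{-1}$, not under $\tP_{\mu,0}$. The quotient group $(\tK\cap\tN_0)/(\tK\cap\tN_{0,J})$ therefore does not act on $\cX^{*,J}_{\tK\cap\tN_{0,J}}$. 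The paper sidesteps this by never forming the $J$-locus at level $\tK\cap\tN_0$; instead it works with the subset $V_J := \pi_{\HT/\tK\cap\tN_{0,J}}^{-1}\bigl(x_J(\tH/\tH_J)\bigr)$, the preimage of the \emph{orbit}, which \emph{is} $(\tH/\tH_J)$-stable, sits inside $\cX^{*,J}_{\tK\cap\tN_{0,J}}$, and maps to $\pi^{-1}(x)$ as a $v$-cover that is an $(\tH/\tH_J)$-torsor on the open Shimura locus. This is the correct input for Theorem \ref{hochschildserre}. Second, you never reduce to rank-one points of $|\Fl^w_{G,\mu}|/(\tK\cap\tN_0)$. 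That reduction (via Lemma \ref{rankonepointsisom}) is not decorative: it guarantees via Lemma \ref{homeomorphism onto orbit} that $x_J(\tH/\tH_J)$ is profinite, which is used both to collapse the cohomology of $V_J$ to stalks and to ensure that the orbit $\tilde{x}\tH_J$ equals the intersection of a basis of $\tH_J$-stable quasicompact opens. Third, Proposition \ref{rational neighborhood} produces $H$-stable rational opens around a \emph{single point}, not around an arbitrary subset such as your preimage $\tilde{F}$. The paper applies it to the single point $\tilde{x}$ on the affinoid $\Fl^J_{G,\mu}(k')$ with the action of $\tH_J$; stability under $\tH_J$ then forces the resulting opens to contain the orbit $\tilde{x}\tH_J$, and after the Hochschild--Serre reduction to $H^s_{\et}(V_J,\cdot)$ one only needs to control stalks over that profinite orbit. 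Re-ordered in this way — Hochschild--Serre first, then reduce to a single fiber $\pi_J^{-1}(x_J)$ via the profinite base, then rational neighborhoods of the point $\tilde{x}$ — your ingredients assemble into the paper's proof.
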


Our main theorem follows rather quickly from Theorem \ref{bound on higher direct images}, using a bound on the cohomological dimension of spectral spaces due to Scheiderer.

\begin{proof}[Proof of Theorem \ref{main thm}]
By the Leray spectral sequence for $\pi_{\HT/K \cap N_{0}}$, it suffices to prove that $H^{r}(|\Fl_{G,\mu}|/(K \cap N_{0}),R^{s}\pi_{\HT/K \cap N_{0},\ast}j_{!}(\cO_{\cX}^{+}/p)^{a})=0$ for $r+s>d$. Fix $r$ and assume that $s>d-r$. Let $S_{r}$ be the set of $w\in W_{I}\backslash W /W_{I}$ for which $\dim \Fl^w_{G,\mu} < r$, and let $Y_{r}=\bigcup_{w\in S_{r}}\Fl_{G,\mu}^{w}$. Let $\ol{Y}_r$ be the Zariski closure of $Y_r$; $\ol{Y}_r$ is $N_0$-invariant and of dimension $<r$ since $Y_r$ is. By Proposition \ref{spectral quotient}, $|\ol{Y}_{r}|/(K \cap N_{0}) \sub |\Fl_{G,\mu}|/(K \cap N_{0})$ is a closed spectral subspace of dimension $<r$. 

\medskip

We claim that $\cF^{s} \defeq R^{s}\pi_{\HT/K \cap N_0,\ast}j_{!}(\cO_{\cX}^{+}/p)^{a}$ is supported on $|\ol{Y}_{r}|/(K \cap N_{0})$. To see this, let $x\notin |Y_{r}|/(K \cap N_{0})$. Then $x\in |\Fl_{G,\mu}^{w}|/(K \cap N_{0})$ for some $w$ with $\dim \Fl_{G,\mu}^{w} \geq r$, so by Theorem \ref{bound on higher direct images}, $ \cF_{x}^{s} = 0$ since $s>d-r\geq d-\dim \Fl_{G,\mu}^{w}$. This proves that $\cF^{s}$ is supported on $|\ol{Y}_{r}|/(K \cap N_{0})$, and it then follows from \cite[Corollary 4.6]{scheiderer} that $H^{r}(|\Fl_{G,\mu}|/(K \cap N_{0}),\cF^{s})=H^{r}(|\ol{Y}_{r}|/(K \cap N_{0}),\cF^{s})=0$, as desired.
\end{proof}

\noindent The rest of this section is devoted to the proof of Theorem \ref{bound on higher direct images}.

\subsection{Constructing invariant rational neighborhoods}\label{sec: invariant neighborhoods}

Before we prove Theorem \ref{bound on higher direct images}, we prove an important technical result on profinite group actions on affinoid adic spaces, which we believe is interesting in its own right.

\begin{prop}\label{rational neighborhood}
Let $(A,A^+)$ be a Huber pair, and let $H$ be a profinite group acting
continuously on $(A,A^+)$.
Let $Y = \Spa(A,A^+)$, let $x \in Y$, and let $U \subset Y$ be an $H$-stable
open containing $x$.  Then there exists an $H$-stable open $V \subset U$
containing $x$ that is a rational subset of $Y$.
\end{prop}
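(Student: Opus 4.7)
First, I would reduce to the case that $U$ is quasicompact. By Lemma \ref{spectral quotient}, $|Y|/H$ is spectral and the projection $|Y| \to |Y|/H$ is open, so the image of $x$ has a neighborhood basis of quasicompact opens, whose preimages are $H$-stable quasicompact opens in $U$. Fix a pseudo-uniformizer $\varpi \in A$ and an $H$-stable ring of definition $A_{0} \subseteq A^{+}$. Given any rational subset $W_{0} = \{y \in Y : |f_{i}(y)| \leq |g(y)|, |g(y)| \neq 0\}$ with $x \in W_{0} \subseteq U$, after rescaling the $f_{i}, g$ by a common power of $\varpi$ (which does not change $W_{0}$, since $\varpi$ is a unit in $A$), I may assume $f_{i}, g \in A_{0}$. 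Adjoining the inequality $|g(y)| \geq |\varpi^{k}|$ for $k$ chosen so that $|g(x)| \geq |\varpi^{k}|$, I may further assume $W_{0}$ has defining data in $A_{0}$ and satisfies $|g| \geq |\varpi^{k}|$ on $W_{0}$.

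The crucial observation is that $\mathrm{Stab}_{H}(W_{0})$ is open in $H$. Since the $H$-action on $A$ is continuous relative to the $\varpi$-adic topology on $A_{0}$, for every $m \geq 1$ there is an open normal subgroup $H_{m} \leq H$ with $(h-1)f_{i},\,(h-1)g \in \varpi^{m} A_{0}$ for all $h \in H_{m}$. For $m > k$, an ultrametric computation using $|g| \geq |\varpi^{k}|$ on $W_{0}$ shows that $hW_{0} \subseteq W_{0}$ for $h \in H_{m}$; applying the same argument to $h^{-1}$ gives $hW_{0} = W_{0}$. Hence $H_{m} \subseteq \mathrm{Stab}_{H}(W_{0})$, and the $H$-orbit $\{hW_{0} : h \in H\}$ consists of finitely many rational subsets, indexed by coset representatives $h_{j}$ of $H/\mathrm{Stab}_{H}(W_{0})$.

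A natural candidate is then $V := \bigcap_{j} h_{j} W_{0}$: it is rational as a finite intersection of rational subsets, $H$-stable since $H$ permutes the $h_{j} W_{0}$, and contained in $U$ since each $h_{j} W_{0} \subseteq h_{j} U = U$. The delicate point is that $x \in V$ if and only if the entire orbit $Hx$ is contained in $W_{0}$, which is not guaranteed by $x \in W_{0}$ alone.

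The hard part of the proof is therefore to arrange $Hx \subseteq W_{0} \subseteq U$ while preserving the open stabilizer property from the second paragraph. Since $Hx$ is compact inside the $H$-stable open $U$, the plan is to enlarge $W_{0}$ in a controlled way: increase $k$ so that $|g(y)| \geq |\varpi^{k}|$ holds throughout $Hx$ (possibly after modifying $g$ so that it does not vanish on the orbit), and rescale $f_{i} \mapsto \varpi^{a} f_{i}$ with $a$ large enough that $|\varpi^{a} f_{i}(y)| \leq |g(y)|$ holds on all of $Hx$. Such enlargements make $W_{0}$ bigger and could push it outside $U$, so the central technical difficulty will be to carry them out carefully enough to keep $W_{0} \subseteq U$; the $H$-stability of $U$, the compactness of $Hx$, and the finiteness of the orbit $\{h_{j} W_{0}\}$ modulo the open stabilizer should combine to make this feasible, after which the intersection $V = \bigcap_{j} h_{j} W_{0}$ is the desired $H$-stable rational subset of $U$ containing $x$.
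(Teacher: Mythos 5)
Your opening moves are sound: reducing to $U$ quasicompact, observing that the stabilizer of a rational subset is open (this is exactly the paper's Corollary~\ref{open stabilizer}, proved via Lemma~\ref{aoo-open}), and noting that if one can find a single rational $W_0$ with $Hx \subseteq W_0 \subseteq U$ then $V = \bigcap_{j} h_j W_0$ is a finite intersection of rational subsets, hence rational, and is $H$-stable and contains $x$. This reduction is valid.

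The gap is in the final step, which is also where you yourself hedge. You propose to produce such a $W_0$ by rescaling the defining data of an initial rational neighborhood: increase $k$ so $|g| \geq |\varpi^k|$ on $Hx$, and replace $f_i$ by $\varpi^a f_i$ so that the inequalities hold on all of $Hx$. This fails. Consider $Y = \Spa(\Qp\langle T_1, T_2\rangle, \Zp\langle T_1, T_2\rangle)$, $H = \Z/2$ acting by swapping $T_1 \leftrightarrow T_2$, $U = \{|T_1| \le |p|\} \cup \{|T_2| \le |p|\}$ (an $H$-stable quasicompact open which is not rational), and $x$ the classical point $(1,p)$, so $\sigma x = (p,1)$. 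Starting from $W_0 = \{|T_2| \le |p|\}$, the enlargement you describe (replacing $T_2$ by $pT_2$) yields $\{|pT_2| \le |p|\} = Y$, which is not contained in $U$; no amount of rescaling by constants can keep a cartesian box inside the L-shaped region $U$. More fundamentally, the existence of a rational $W_0$ with $Hx \subseteq W_0 \subseteq U$ is an immediate consequence of the proposition itself (take $W_0 = V$), but is not obviously easier to establish than the proposition — so the reduction, while correct, does not make progress unless accompanied by a genuine construction.

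The paper's proof takes a completely different and more delicate route. Rather than enlarging $W_0$ to contain the whole orbit, it symmetrizes the defining data: starting from a rational $W$ defined by $|a_1|, \dotsc, |a_n| \le |a_0| \ne 0$ and an open normal $K \le H$ stabilizing $W$, it forms the products $b_{f,h} = \prod_{s} \sigma(hs)\, a_{f(s)}$ over tuples of translates, takes elementary symmetric polynomials $e_{f,i}$ in the $b_{f,h}$, and defines $V$ by inequalities $|e_{f,i}|^{1/i} \le |e_{\phi,r}|^{1/r} \ne 0$ for a Newton-polygon-optimal $(\phi, r)$. That $V$ is $H$-stable, contains $x$, and is contained in $\bigcup_h Wh \subseteq U$ then requires careful ultrametric estimates. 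This construction is not an enlargement of $W$ in any elementary sense, and there is no way to recover it from the "rescale and intersect" strategy you sketch.
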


\begin{lem} \label{open stabilizer}
Let $(A,A^+)$ be Huber pair,
and let $H$ be a profinite group acting continuously on $(A,A^+)$.
If $W \subset \Spa(A,A^+)$ is a rational subset, then the stabilizer of $W$ is open in $H$.
\end{lem}
\begin{proof}
Let $W$ be defined by the inequalities
\[ |a_1|,\dotsc, |a_n| \le |a_0| \ne 0 \]
for some $a_0,...,a_n \in A$ that generate an open ideal.
Let $M$ be the set of elements of the form
$\sum_{i=0}^n \lambda_i a_i$ with $\lambda_i \in A^{\circ \circ}$.
Then $M$ is open in $A$ by \cite[Lemma 1.1]{huber-adic-spaces}.
So there is an open normal subgroup $K$ of $H$ such that
$ka_i - a_i \in M$ for all $k \in K$, $i \in \{ 0,...,n \}$.

For any $x \in W$, $k \in K$, $i \in \{0,...,n\}$,
\[ |ka_i-a_i|_x < \max_{j=0,\dotsc,n} |a_j|_x = |a_0|_x \,. \]
Therefore, by the strong triangle inequality,
\begin{IEEEeqnarray*}{rCl}
|ka_i|_x & \le & \max \{ |ka_i - a_i|_x,|a_i|_x \} \le |a_0|_x \text{, and} \\
|ka_0|_x & = & \max \{|ka_0 - a_0|_x,|a_0|_x \} = |a_0|_x \ne 0.
\end{IEEEeqnarray*}
Hence $x\cdot k \in W$.  So $K$ stabilizes $W$.
\end{proof}

\begin{proof}[Proof of Proposition \ref{rational neighborhood}]
Let $W$ be a rational subset of $Y$ containing $x$ and contained in $U$.
Let $W$ be defined by the inequalities $|a_1|, ..., |a_n| \le |a_0| \ne 0$
for some $a_0,...,a_n \in A$ that generate an open ideal, and define $K$
as in the proof of Lemma \ref{open stabilizer}.

Choose a section (not necessarily a homomorphism) $\sigma\colon H/K \to H$ of the quotient $H \to H/K$.
Let $S$ be the set of $h \in H/K$ such that the $|\sigma(h) a_i|_x$
are not all zero.  From the definition of $K$, we see that
$S$ is independent of the choice of $\sigma$.
Let $\mathscr{S}$ denote the set of functions $S \to \{0,...,n\}$.
For any $f \in \mathscr{S}, h \in H/K$, define
\[ b_{f,h} \defeq \prod_{s \in S} \sigma(hs) a_{f(s)} \,. \]
The ideal of $A$ generated by the $b_{f,h}$ contains the product of the ideals
$\sigma(s) \cdot (a_0,...,a_n)$ for $s \in S$, and hence is open.

Let $m = \#(H/K)$.
For any $f \in \mathscr{S}$ and $i \in \{1,...,m\}$,
define $e_{f,i}$ to be the $i$th elementary symmetric polynomial in
$(b_{f,h})_{h \in H/K}$.
We observe that
\[ b_{f,h}^{m} = \sum_{i=1}^{m} (-1)^{i-1} e_{f,i} b_{f,h}^{m-i} \,. \]
In particular, the ideal of $A$ generated by the $e_{f,i}$ contains the ideal
generated by the $b_{f,h}^{m}$.  The latter ideal contains the
$\left(\#(\mathscr{S} \times H/K) (m-1) + 1\right)$-st power of the ideal generated by the $b_{f,h}$.
So the $e_{f,i}$ generate an open ideal of $A$.

Choose $\phi \in \mathscr{S}$ so that $\phi(1)=0$ and
for all $s \in S$ and all $i \in \{0,...,n\}$,
\[|\sigma(s) a_i|_x \le |\sigma(s) a_{\phi(s)}|_x  \,. \]
For all $f \in \mathscr{S}$ and $h \in H/K$,
\[|b_{f,h}|_x \le |b_{\phi,1}|_x \,; \]
to see this, note that if $|b_{f,h}|_x \ne 0$, then $hs \in S$ for all $s \in S$ and thus multiplication by $h$ permutes the elements of $S$.
By the theory of Newton polygons, there exists $r \in \{1,...,m\}$
such that
\[ |b_{\phi,1}|_x = |e_{\phi,r}|_x^{1/r} \,. \]
Choose one such $r$.  Then for all $f \in \mathscr{S}$ and all
$i \in \{1,...,m\}$,
\[ |e_{f,i}|_x^{1/i} \le  |e_{\phi,r}|_x^{1/r} \,. \]

Let $V$ be the subset of $Y$ defined by the inequalities
\[ |e_{f,i}|^{1/i} \le |e_{\phi,r}|^{1/r} \ne 0 \quad \forall i \in \{1,...,m\},\, f \in \mathscr{S} \,. \]
Equivalently, $V$ is defined by the inequalities $|e_{f,i}|^{m!/i} \le |e_{\phi,r}|^{m!/r} \ne 0 $.
Since the $e_{f,i}$ generate an open ideal of $A$, the $e_{f,i}^{m!/i}$ do as well, so $V$ is rational.
Moreover, $V$ contains $x$.

Now we check that $V \subseteq \bigcup_{h \in H} W \cdot h \subseteq U$.
Let $y \in V$.  By the theory of Newton polygons, there exists some
$h \in H/K$ so that
\[ |b_{f,h'}|_y \le |b_{\phi,h}|_y \ne 0 \quad \forall f \in \mathscr{S},\, h' \in H/K \,. \]
In particular, by considering the case where $h=h'$ and $f$ and $\phi$ agree except at the identity, we find that
\[ |\sigma(h) a_i|_y \le |\sigma(h) a_0|_y \ne 0 \quad \forall i \in \{0,...,n\} \,. \]
Hence $y \in W \cdot \sigma(h)^{-1}$.

Finally, we check that $V$ is $H$-stable.  Fix $y \in V$.
First, consider, for all $h,h' \in H$,
\[ hb_{f,h'} = \prod_{s \in S} h \sigma(h's) a_{f(s)} = \prod_{s \in S} \left( \sigma(hh's) a_{f(s)} + \left( h \sigma(h's) a_{f(s)} - \sigma(hh's) a_{f(s)} \right) \right) \,. \]
When we expand the product, one of the terms is $b_{f,hh'}$.
Observe that the definition of $K$ implies that for all $i \in \{0,...,n\}$,
\[ h\sigma(h's)a_i - \sigma(hh's)a_i \in \sigma(h'hs)\left(\sum^n_{j=0} A^{\circ\circ} \cdot a_j \right)= \sum^n_{j=0}A^{\circ\circ} \cdot \sigma(h'hs)a_j. \]
Thus, we can bound the norm of the remaining terms using the inequalities
\[ |\sigma(hh's) a_{f(s)}|_y \le \max_j |\sigma(hh's) a_j|_y \,, \]
\[ \left|h \sigma(h's) a_{f(s)} - \sigma(hh's) a_{f(s)} \right| \prec \max_j |\sigma(hh's) a_j|_y \,; \]
here, $\alpha \prec \beta$ means that either
$\alpha < \beta$ or $\alpha = \beta = 0$.
We then find that
\[ |hb_{f,h'} - b_{f,hh'}|_y \prec \prod_{s \in S} \max_j |\sigma(hh's) a_j|_y = \max_{f'} |b_{f',hh'}|_y \,. \]
Since $y \in V$, the $|b_{f',h''}|_y$, where $h''$ ranges over $H$, are not all zero, so
\[ |hb_{f,h'}-b_{f,hh'}|_y < \max_{f',h''} |b_{f',h''}|_y \,. \]
Finally, for all $h \in H$, $f \in \mathscr{S}$, and $i \in \{1,...,m\}$,
a similar argument shows that
\[ |he_{f,i}-e_{f,i}|_y^{1/i} < \max_{f',h'} |b_{f',h'}|_y = \max_{f',i'} |e_{f',i'}|_y^{1/i'} = |e_{\phi,r}|_y^{1/r}  \,. \]
Then the strong triangle inequality implies that for any $h \in H$,
$f \in \mathscr{S}$, and $i \in \{1,...,m\}$,
\[ |he_{f,i}|_y^{1/i} \le |e_{\phi,r}|^{1/r} = |he_{\phi,r}|^{1/r} \ne 0 \,. \]
Hence $y \cdot h \in V$.  So $V$ is $H$-stable.
\end{proof}

\subsection{Proof of Theorem \ref{bound on higher direct images}}\label{sec: proof of main thm}

In this subsection, we prove Theorem~\ref{bound on higher direct images}. To simplify notation, we write $H \defeq K \cap N_{0}$, $\pi \defeq \pi_{\HT/K \cap N_0}$, $\cF^{i} \defeq R^{i}\pi_{\ast}j_{!}(\cO_{\cX}^{+}/p)^{a}$. Let $w\in W_{I}\backslash W /W_{I}$; write $d(w)=\dim \Fl_{G,\mu}^{w}$ and pick $x\in |\Fl_{G,\mu}^{w}|/H$. We need to show that $\cF^{i}_{x}=0$ for $i > d-d(w)$. By Proposition \ref{topbasechange} and Lemma \ref{shriekbasechange},
\[
 \cF_{x}^{i} \cong H^{i}_{\et}(\pi^{-1}(x), j_{!}(\cO_{\cX}^{+}/p)^{a}). 
 \]
Recall from Subsection~\ref{structure sheaves} that a point of the topological space underlying a locally spatial diamond is said to be of rank one if it has no proper generalizations.

\begin{lemma}\label{reduction to rank one}
It suffices to prove Theorem \ref{bound on higher direct images} for points $x\in |\Fl_{G,\mu}^{w}|/H$ which have no proper generalizations.
\end{lemma}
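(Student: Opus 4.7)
The idea is to produce a rank-one generization $y$ of $x$ within the stratum $|\Fl_{G,\mu}^{w}|/\tH$ and then identify $\cF^{i}_{x}$ with $\cF^{i}_{y}$ via Lemma~\ref{rankonepointsisom}.

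To construct $y$, first lift $x$ to some $\tilde{x} \in |\Fl_{G,\mu}^{w}|$, which is possible by surjectivity of the quotient map. Since $\Fl_{G,\mu}^{w}$ is an analytic adic space, $\tilde{x}$ admits a maximal rank-one generization $\tilde{y}$. Let $y$ be the image of $\tilde{y}$ in the quotient; by continuity of the quotient map, $y$ is a generization of $x$. Moreover, by the generalizing property of the quotient map (Lemma~\ref{spectral quotient}), any proper generization of $y$ in the quotient would lift to a proper generization of $\tilde{y}$, contradicting the rank-one property of $\tilde{y}$. Hence $y$ is rank one in the quotient.

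Since every quasicompact open of $|\Fl_{G,\mu}|/\tH$ containing $x$ must contain all generizations of $x$, and in particular $y$, we obtain a canonical inclusion $\pi^{-1}(y) \subseteq \pi^{-1}(x)$ of diamonds, as well as $\pi^{-1}(y) = \varprojlim_{V} U_{V}$, where $V$ ranges over quasicompact opens of $|\Fl_{G,\mu}|/\tH$ containing $y$ and $U_{V} \defeq \pi^{-1}(V) \cap \pi^{-1}(x)$ is a quasicompact open subdiamond of $\pi^{-1}(x)$. The crucial point to verify is that each such $U_{V}$ contains every rank-one point of $\pi^{-1}(x)$; this follows once one shows that any rank-one point of $\pi^{-1}(x)$ maps to a rank-one generization of $x$ in $|\Fl_{G,\mu}|/\tH$ which, after perhaps applying a suitable element of the residual $\tH$-action, must lie in $V$. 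Granting this, Lemma~\ref{rankonepointsisom} yields isomorphisms $H^{i}_{\et}(\pi^{-1}(x), j_{!}(\cO_{\cX}^{+}/p)^{a}) \toisom H^{i}_{\et}(U_{V}, j_{!}(\cO_{\cX}^{+}/p)^{a})$ compatibly in $V$. Taking the limit over $V$ via Proposition~\ref{invlimcoh} identifies both sides with $H^{i}_{\et}(\pi^{-1}(y), j_{!}(\cO_{\cX}^{+}/p)^{a}) = \cF^{i}_{y}$, completing the reduction. The main obstacle is the verification of the claim about images of rank-one points of $\pi^{-1}(x)$: one has to analyze the topological fiber structure carefully and exploit how the $\tH$-action interacts with rank-one generizations inside the stratum, which is where the generalizing and spectral properties from Section~\ref{diamonds and cohomology} enter decisively.
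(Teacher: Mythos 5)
Your overall approach matches the paper's: generize $x$ to a rank-one point $y$, appeal to Lemma~\ref{rankonepointsisom}, and pass to the limit. In fact your explicit presentation of $\pi^{-1}(y)=\varprojlim_{V}U_{V}$ as an inverse limit of quasicompact opens $U_V\subseteq\pi^{-1}(x)$, followed by Proposition~\ref{invlimcoh}, is an improvement over the paper's terse phrasing, since $\pi^{-1}(y)\to\pi^{-1}(x)$ is only a pro-(quasicompact open) immersion and the limit step is genuinely needed to apply Lemma~\ref{rankonepointsisom}. Your construction of $y$ via a lift $\tilde x$ is also fine (though note you implicitly need $\Fl^w_{G,\mu}$ to be partially proper so that the maximal generization $\tilde y$ of $\tilde x$ stays in the stratum).

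The issue is the step you yourself flag as "the main obstacle": you do not prove that each $U_V$ contains all rank-one points of $\pi^{-1}(x)$, and the hint you offer --- "after perhaps applying a suitable element of the residual $\tH$-action" --- is off track, since $|\Fl_{G,\mu}|/\tH$ already has the $\tH$-action quotiented out and no such adjustment is available or needed. The correct argument is the following. A rank-one point $z$ of $\pi^{-1}(x)$ is rank one in all of $|\cX^{\ast}_{\tH}|$, because $\pi^{-1}(x)=|\pi|^{-1}(\Gen(x))$ is closed under generization. Lift $z$ along the generalizing surjection $|\cX^{\ast}_{\wt{\mbf 1}}|\to|\cX^{\ast}_{\tH}|$ to a rank-one point; its image under $\pi_{\HT}$ (a morphism of analytic adic spaces, which preserves rank one) is rank one in $|\Fl_{G,\mu}|$, and pushing down through the generalizing quotient $|\Fl_{G,\mu}|\to|\Fl_{G,\mu}|/\tH$ (Lemma~\ref{spectral quotient}) shows that $|\pi|(z)$ is a rank-one generization of $x$. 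Now the generizations of $x$ in $|\Fl_{G,\mu}|/\tH$ form a totally ordered chain (this property is inherited from the analytic adic space $\Fl_{G,\mu}$ via the generalizing quotient map), so $x$ has a \emph{unique} rank-one generization; by partial properness of $\Fl^w_{G,\mu}$ it lies in the stratum and thus coincides with $y$. Hence $|\pi|(z)=y\in V$, so $z\in U_V$, which closes the gap.
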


\begin{proof}
Let $x\in|\Fl_{G,\mu}^{w}|/H$ be arbitrary. It has a unique maximal generalization $y\in |\Fl_{G,\mu}^{w}|/H$, which may be obtained by lifting $x$ to $\wt{x}\in |\Fl_{G,\mu}^{w}|$, letting $\wt{y}\in |\Fl_{G,\mu}^{w}|$ be the unique rank one generalization of $\wt{x}$ and setting $y=\wt{y}H$ (uniqueness of $y$ follows from the fact that $|\Fl_{G,\mu}^{w}| \to |\Fl_{G,\mu}^{w}|/H$ is generalizing). We then have a natural quasicompact open immersion of spatial diamonds $\pi^{-1}(y) \to \pi^{-1}(x)$ which induces maps
\[
 H^{i}_{\et}(\pi^{-1}(x), j_{!}(\cO_{\cX}^{+}/p)^{a}) \to H^{i}_{\et}(\pi^{-1}(y), j_{!}(\cO_{\cX}^{+}/p)^{a})
 \]
for all $i$. Since $\pi^{-1}(y)$ contains all rank one points of $\pi^{-1}(x)$, these maps are isomorphisms by Lemma \ref{rankonepointsisom}, and this proves the lemma.
\end{proof}

\noindent From now on, we assume that $x\in |\Fl_{G,\mu}^{w}|/H$ has no proper generalizations. Let $\wt{x}\in |\Fl_{G,\mu}^{w}|$ be any lift of $x$; this is a rank one point. By Lemma \ref{finding a J}, we choose a $w_1 \in \delta^{-1}(ww_{0})$ such that $\wt{x}N_{0} \sub |\cU^{w_1}|$; and we let $x_{1}\in |\Fl_{G,\mu}^{w}|/H_{1}$ denote the image of $\wt{x}$, where $H_{1} \defeq H \cap N_{0,w_1}$. Note that $x_1$ has no proper generalizations in $|\Fl_{G,\mu}|/H_1$, so the orbit $x_1(H/H_1)$ is profinite by Lemma \ref{homeomorphism onto orbit}. Write $\pi_1 \defeq \pi_{\HT/H_1}$ for the morphism of sites $(\ocX_{H_1})_{\et} \to |\Fl_{G,\mu}|/H_1$, and put $V_1 \defeq \pi_{1}^{-1}(x_1(H/H_1))$. $V_1$ is naturally a spatial diamond: The underlying map $|\pi_1| \colon |\ocX_{H_1}| \to |\Fl_{G,\mu}|/H_1$ is spectral and $x_1(H/H_1)$ can be written as an intersection of quasicompact opens, so $V_1$ is an inverse limit of open spatial subdiamonds of $\cX_{H_1}^{\ast}$, hence spatial by Lemma \ref{inverselimit}.

\begin{lemma}\label{cover pi^-1(x)}
The natural map $V_1 \to \pi^{-1}(x)$ is a $v$-cover, and the restriction $V_1\cap \cX_{H_1} \to \pi^{-1}(x) \cap \cX_{H}$ is an $H/H_1$-torsor. 
\end{lemma}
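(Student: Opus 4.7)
The plan is to exhibit $V_J$ as a pullback of $\pi^{-1}(x)$ along the natural map $f \colon \cX^*_{\tH_J} \to \cX^*_{\tH}$, and then deduce both assertions from general properties of $f$.

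First, I would establish an isomorphism of spatial diamonds $V_J \cong \cX^*_{\tH_J} \times_{\cX^*_{\tH}} \pi^{-1}(x)$. Since the quotient map $|\Fl_{G,\mu}| \to |\Fl_{G,\mu}|/\tH_J$ is generalizing by Lemma \ref{spectral quotient}, and $\wt{x}$ has no proper generizations, the point $x_J$ is also of rank one in $|\Fl_{G,\mu}|/\tH_J$. Writing $q \colon |\Fl_{G,\mu}|/\tH_J \to |\Fl_{G,\mu}|/\tH$ for the quotient map, the set-theoretic preimage $q^{-1}(x)$ therefore coincides with the orbit $x_J(\tH/\tH_J)$, and using $\{x\} = \bigcap_V V$ with $V$ ranging over quasicompact open neighborhoods of $x$, we may exhibit the orbit as $\bigcap_V q^{-1}(V)$. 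The commutativity $q \circ \pi_J = \pi \circ f$ then gives
\[
V_J \cong \varprojlim_V \pi_J^{-1}(q^{-1}(V)) \cong \varprojlim_V \bigl(\cX^*_{\tH_J} \times_{\cX^*_{\tH}} \pi^{-1}(V)\bigr) \cong \cX^*_{\tH_J} \times_{\cX^*_{\tH}} \pi^{-1}(x),
\]
where the last isomorphism uses that inverse limits commute with fiber products.

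Granted this identification, I would verify that $f$ is a v-cover. Indeed, $f$ is qcqs since source and target are spatial, and it is surjective on topological spaces by Lemma \ref{exhibiting a diamond as a quotient}, so \cite[Lemma 12.11]{diamonds} applies. Since v-covers are stable under base change, $V_J \to \pi^{-1}(x)$ is a v-cover, proving the first claim.

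For the torsor claim, I would restrict the pullback diagram along the open immersion $\cX_{\tH} \hookrightarrow \cX^*_{\tH}$, yielding $V_J \cap \cX_{\tH_J} \cong \cX_{\tH_J} \times_{\cX_{\tH}} (\pi^{-1}(x) \cap \cX_{\tH})$. By Lemma \ref{torsor}, $\cX_{\tH_J} \to \cX_{\tH}$ is an $\tH/\tH_J$-torsor, and since the torsor condition is defined by a sheaf-theoretic isomorphism it is stable under base change, giving the desired torsor structure on the restriction. The only slightly subtle step in the argument is the identification of $V_J$ with the fiber product, which depends on the rank-one property of $x$ being inherited by $x_J$ via the generalizing quotient map $q$; everything else is a formal consequence of the properties of $f$.
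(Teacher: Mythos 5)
Your proof is correct and follows the same approach as the paper: identify $V_J$ with the fiber product $\cX^*_{\tH_J} \times_{\cX^*_{\tH}} \pi^{-1}(x)$, then deduce both claims from the fact that $\cX^*_{\tH_J} \to \cX^*_{\tH}$ is a qcqs surjection (hence a v-cover) and, restricted to the open loci, an $\tH/\tH_J$-torsor by Lemma \ref{torsor}. The paper states the fiber-product identification without justification; you supply the verification via cofinality of the system $\{q^{-1}(V)\}$ (this cofinality, which you use implicitly, follows from a compactness argument in the constructible topology). Otherwise the two arguments are identical.
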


\begin{proof}
$V_1$ is equal to $\ocX_{H_1}\times_{\ocX_{H}}\pi^{-1}(x)$ and the natural map to $\pi^{-1}(x)$ is the projection onto the second factor. Now $\ocX_{H_1} \to \ocX_{H}$ is a (in fact quasi-pro-\'etale) map of spatial diamonds which is surjective on topological spaces by Lemma \ref{ad hoc exhibiting a diamond as a quotient}, hence a v-cover by \cite[Lemma 12.11]{diamonds}, so the pullback $V_1 \to \pi^{-1}(x)$ is a v-cover. Moreover, $\cX_{H_1} \to \cX_{H}$ is an $H/H_1$-torsor by Lemma \ref{torsor}, so the pullback $V_1\cap \cX_{H_1} \to \pi^{-1}(x) \cap \cX_{H}$ is an $H/H_1$-torsor.
\end{proof}

By this lemma, Theorem \ref{hochschildserre} gives us a spectral sequence
\[
 E_{2}^{rs}=H^{r}_{cts}(H/H_1,H^{s}_{\et}(V_1,j_{!}(\cO^{+}_{\cX}/p)^{a})) \implies H^{r+s}_{\et}(\pi^{-1}(x), j_{!}(\cO^{+}_{\cX}/p)^{a}).
 \]
Since $H/H_1 \sub N_{0}/N_{0,w_1}\cong \Zp^{d-d(w)}$ as $\Zp$-modules (using Lemma \ref{ranks}) and the index is finite, we must have $H/H_1 \cong \Zp^{d-d(w)}$ and hence continuous cohomology for $H/H_1$ has cohomological dimension $d-d(w)$ (this is presumably well known---it follows e.g.\ from \cite[Proposition 5.2.7]{nsw} and the fact that the Koszul complex of $\Zp[[\Zp^{d-d(w)}]]$ resolves the trivial module $\Zp$). Therefore, to prove that  $H^{i}_{\et}(\pi^{-1}(x), j_{!}(\cO^{+}_{\cX}/p)^{a})=0$ for $i>d-d(w)$ it suffices to show that $H^{s}_{\et}(V_1,j_{!}(\cO^{+}_{\cX}/p)^{a})=0$ for all $s>0$. To do this, we start by considering the natural map
\[
 \pi_{V_1} \colon (V_1)_{\et} \to x_1(H/H_1).
 \]
Since $x_1(H/H_1)$ is profinite, higher cohomology vanishes and we have
\[
 H^{s}_{\et}(V_1,j_{!}(\cO^{+}_{\cX}/p)^{a}) = H^{0}(x_1(H/H_1), R^{s}\pi_{V_1,\ast}j_{!}(\cO_{\cX}^{+}/p)^{a}).
 \]
Thus, we need to show that $R^{s}\pi_{V_1,\ast}j_{!}(\cO_{\cX}^{+}/p)^{a}=0$ for $s>0$, which we can check on stalks. If $y\in x_1(H/H_1)$, then
\[
 R^{s}\pi_{V_1,\ast}j_{!}(\cO_{\cX}^{+}/p)^{a}_{y}\cong H^{s}_{\et}(\pi_1^{-1}(y), j_{!}(\cO_{\cX}^{+}/p)^{a})
 \]
by Proposition \ref{topbasechange}, so we need to show that the right-hand side vanishes for $s>0$. As $\wt{x}$, and hence $x_1$, was chosen arbitrarily, without loss of generality $y=x_1$.

\medskip

We now apply Proposition \ref{rational neighborhood}. Consider the orbit $\wt{x}H_1 \sub |\cU^{w_1}|$. By Theorem \ref{general perfectoid stratum}, we may choose $k$ large enough such that $|\ocX_{\mbf{1}}(\epsilon)_{a}|\gamma^{k}w_1^{-1} \supseteq \pi^{-1}_\HT(\wt{x}H_1)$ (for some sufficiently small $\epsilon >0$, which we fix). We then choose a large enough $k^{\prime}$ such that $|\ocX_{\mbf{1}}(\epsilon)_{a}|\gamma^{k}w_1^{-1} \sub \pi^{-1}_{\HT}(\cU^{w_1}(k^{\prime}))$, which we may do by Theorem \ref{general perfectoid stratum} and Lemma \ref{standard neighborhoods are stable}. Note that $\cU^{w_1}(k^{\prime})$ is affinoid and $N_{0,w_1}$-stable by Lemma \ref{standard neighborhoods are stable}, hence $H_1$-stable. By Proposition \ref{rational neighborhood}, we can find a collection $U_{t}$, $t\in T$,
of $H_1$-stable opens of $\cU^{w_1}(k^{\prime})$ which are rational subsets and
\[
 \bigcap_{t\in T}U_{t} = \wt{x}H_1;
 \]
we may and will also assume that $\pi_{\HT}^{-1}(U_{t}) \sub |\ocX_{\mbf{1}}(\epsilon)_{a}|\gamma^{k}w_1^{-1}$ for all $t\in T$. It follows that $\bigcap_{t\in T}|U_{t}|/H_1 = x_1$ and hence that
\[
 H^{s}_{\et}(\pi_1^{-1}(x_1), j_{!}(\cO_{\cX}^{+}/p)^{a}) = \varinjlim_{t\in T} H^{s}_{\et}(\pi_1^{-1}(|U_{t}|/H_1), j_{!}(\cO_{\cX}^{+}/p)^{a}), 
 \]
so it suffices to show that $H^{s}_{\et}(\pi_1^{-1}(|U_{t}|/H_1), j_{!}(\cO_{\cX}^{+}/p)^{a})=0$ for $s>0$ and $t\in T$.

\begin{prop}\label{cohomology vanishing for rational neighborhoods}
$H^{s}_{\et}(\pi_1^{-1}(|U_{t}|/H_1), j_{!}(\cO_{\cX}^{+}/p)^{a})=0$ for $s>0$ and $t\in T$.
\end{prop}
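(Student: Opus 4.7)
The strategy is to show that $\pi_J^{-1}(|U_t|/\tH_J)$ is itself an affinoid perfectoid subspace of $\cX^{*,J}_{\tH_J}$ with strongly Zariski closed boundary. Granting this, Proposition \ref{strongly Zariski closed vanishing} will yield the desired vanishing directly.

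By Theorem \ref{general perfectoid stratum} together with the choice of $k$, $\pi_J^{-1}(|U_t|/\tH_J)$ sits as an open subspace of the affinoid perfectoid space $Y_k$ corresponding to $|\cX^{\ast}_{\wt{\mbf{1}}}(\epsilon)_{a}|\gamma^{k}w_{J}^{-1}/\tH_{J}$, which has strongly Zariski closed boundary. Set $W_k \defeq \cX^{\ast}_{\wt{\mbf{1}}}(\epsilon)_{a}\gamma^{k}w_{J}^{-1}$, which is affinoid perfectoid with strongly Zariski closed boundary and maps to $Y_k$ via a quasi-pro-\'etale cover that restricts to a $\tH_J$-torsor on the open Shimura variety locus (Lemma \ref{torsor}). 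Writing $W_k = \Spa(A, A^+)$ and $Y_k = \Spa(B_k, B_k^+)$, the construction of $Y_k$ via Theorem \ref{general perfectoid stratum} (ultimately via the passage-to-finite-invariants argument of Proposition \ref{anticanonical tower at intermediate levels Siegel}) identifies $B_k$ with the $\tH_J$-invariants of $A$. The pullback $\pi_{HT}^{-1}(U_t) \subseteq W_k$ is a rational subset of $W_k$, cut out by the functions $\pi_{HT}^{\ast} a_i \in A$ pulled back from defining functions of the rational subset $U_t \subseteq \Fl^J_{G,\mu}(k')$.

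The technical core of the proof is to realize $\pi_J^{-1}(|U_t|/\tH_J)$ as a rational subset of $Y_k$, i.e.\ to produce genuinely $\tH_J$-invariant defining functions lying in $B_k$. The idea is to apply the orbit and symmetric-polynomial construction from the proof of Proposition \ref{rational neighborhood} to the continuous $\tH_J$-action on $A$ itself: one forms the orbit products $b_{f,h}$ and elementary symmetric polynomials $e_{f,i}$ over a finite quotient $\tH_J/\tK$, producing elements of $A$ that cut out $\pi_{HT}^{-1}(U_t)$ and are approximately $\tH_J$-invariant, with error controlled (in the sense of the strong triangle inequality) by the defining function. A Cauchy-limit argument along a shrinking tower of open normal subgroups $\tH_J \supseteq \tK_m \to \{1\}$, exploiting $p$-adic completeness of $A^+$ and continuity of the $\tH_J$-action, then yields truly $\tH_J$-invariant refinements $\tilde e_{f,i} \in A^{\tH_J} = B_k$ that cut out precisely the same rational subset.

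Granting this descent, $\pi_J^{-1}(|U_t|/\tH_J)$ is affinoid perfectoid as a rational subset of the affinoid perfectoid $Y_k$, and its boundary is the intersection with the strongly Zariski closed boundary of $Y_k$, hence strongly Zariski closed by \cite[Lemma 2.2.9]{scholze-galois}. Proposition \ref{strongly Zariski closed vanishing} then gives the desired vanishing $H^s_{\et}(\pi_J^{-1}(|U_t|/\tH_J), j_!(\cO_\cX^+/p)^a) = 0$ for $s > 0$. The main obstacle is the descent step: the candidate generators coming out of Proposition \ref{rational neighborhood} are only approximately invariant, and one must carefully trade approximate invariance for true invariance without changing the rational subset they cut out.
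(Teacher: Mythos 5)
Your strategy aims to prove that $\pi_J^{-1}(|U_t|/\tH_J)$ is itself an affinoid perfectoid space with strongly Zariski closed boundary, and the crucial step is the ``Cauchy-limit argument'' that is supposed to upgrade the approximately invariant elements $e_{f,i}$ from Proposition~\ref{rational neighborhood} into genuinely $\tH_J$-invariant defining functions in $A^{\tH_J}$. This step is a genuine gap. The construction in Proposition~\ref{rational neighborhood} depends on the finite quotient $\tH_J/\tK$ in an essential way: the number of generators $e_{f,i}$, the size of the index sets $\mathscr{S}$ and $H/K$, the degrees of the $b_{f,h}$ and hence of the elementary symmetric polynomials all change as $\tK$ shrinks, so there is no fixed indexing along which a Cauchy sequence of ``refinements'' could form. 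The estimate $|he_{f,i}-e_{f,i}|_y^{1/i} < |e_{\phi,r}|_y^{1/r}$ is what makes $V$ $\tH_J$-stable for a single, fixed finite quotient; it is not an estimate that tightens as $\tK_m \to \{1\}$ and it does not assert $p$-adic proximity of $e_{f,i}$ to anything in $A^{\tH_J}$. More fundamentally, it is not a formal fact that an $\tH_J$-stable rational subset of $\Spa(A,A^+)$ descends to a rational subset of $\Spa(A^{\tH_J},(A^+)^{\tH_J})$ when $\tH_J$ is profinite (rather than finite), and I do not see how to establish this by the soft argument you propose.

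The paper avoids the question entirely by a different and cleaner route. Since $\pi_{\HT}^{-1}(U_t)$ is a rational subset of the inverse limit $|\cX^{*}_{\wt{\mbf{1}}}(\epsilon)_a|\gamma^k w_J^{-1} = \invlim_m |\cX^{*}_{\wt{\mbf{1}}}(\epsilon)_a|\gamma^k w_J^{-1}/(\tH_J\cap\tGam(p^m))$, it arises as a rational subset $V_m$ at some \emph{finite} level, which is therefore affinoid perfectoid. Then $G_m := \tH_J/(\tH_J\cap\tGam(p^m))$ is a \emph{finite} group, and Hansen's theorem produces an affinoid perfectoid quotient $V_m/G_m$. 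The subtle but decisive observation is that $V_m/G_m$ need \emph{not} agree with $\pi_J^{-1}(|U_t|/\tH_J)$; they agree only away from the boundary. But that is exactly enough, because the coefficient sheaf $j_!(\cO^+_\cX/p)^a$ has trivial stalks on the boundary, so by Lemma~\ref{shriekbasechange}(2) the two spaces have the same cohomology, and Proposition~\ref{strongly Zariski closed vanishing} is applied to $V_m/G_m$ rather than to $\pi_J^{-1}(|U_t|/\tH_J)$. The paper thus never needs to decide whether $\pi_J^{-1}(|U_t|/\tH_J)$ is affinoid, which is precisely the question your proposal gets stuck on.
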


\begin{proof}
In this proof we will (as we have occasionally done elsewhere) conflate an open subset of the topological space of a perfectoid space or diamond with the corresponding open subspace or open subfunctor; we hope that this will not cause any trouble. With this in mind, note that we can write 
\[
 |\ocX_{\mbf{1}}(\epsilon)_{a}|\gamma^{k}w_1^{-1} = \invlim_{m\geq 0} \left( |\ocX_{\mbf{1}}(\epsilon)_{a}|\gamma^{k}w_1^{-1}/(H_1 \cap \Gamma(p^{m})) \right)
\]
and that all spaces appearing are affinoid perfectoid with strongly Zariski closed boundary, by Theorem \ref{general perfectoid stratum}. We get a corresponding inverse limit description
\[
 |\pi^{-1}_{\HT}(U_{t})| = \invlim_{m} \left( |\pi_{\HT}^{-1}(U_{t})|/(H_1\cap \Gamma(p^{m})) \right).
\]
Note that all spaces here are quasicompact perfectoid spaces and that the bottom space $|\pi_{\HT}^{-1}(U_{t})|/H_1$ is equal to $\pi_1^{-1}(|U_{t}|/H_1)$. By our choices of $k$ and $k^{\prime}$ and the construction of $U_{t}$, $\pi_{\HT}^{-1}(U_{t})$ is a rational subset of $|\ocX_{\mbf{1}}(\epsilon)_{a}|\gamma^{k}w_1^{-1}$. As a rational subset in an inverse limit must come from a rational subset at some finite level, there exists an $m$ such that $|\pi_{\HT}^{-1}(U_{t})|/(H_1\cap \Gamma(p^{m}))$ is a rational subset of $|\ocX_{\mbf{1}}(\epsilon)_{a}|\gamma^{k}w_1^{-1}/(H_1 \cap \Gamma(p^{m}))$, and hence affinoid perfectoid. Fix such an $m$ and put $V_{m} \defeq |\pi_{\HT}^{-1}(U_{t})|/(H_1\cap \Gamma(p^{m}))$ and $G_{m} \defeq H_1/(H_1 \cap \Gamma(p^{m}))$. Note that $G_m$ is a finite group. The map
\[
 V_{m} \to \pi_1^{-1}(|U_{t}|/H_1)
 \]
is a $G_{m}$-equivariant map of perfectoid spaces and is a $G_{m}$-torsor away from the boundary by Lemma \ref{torsor}. By \cite[Theorem 1.2, Theorem 3.5]{hansen}, the quotient $V_{m}/G_{m}$ exists (in the category of adic spaces) and is affinoid perfectoid. We therefore get an induced map
\[
 V_{m}/G_{m} \to \pi_1^{-1}(|U_{t}|/H_1) 
 \]
which is an isomorphism away from the boundary (since we had a $G_{m}$-torsor away from the boundary). By Lemma \ref{shriekbasechange}, we have
\[
 H^{s}_{\et}(\pi_1^{-1}(|U_{t}|/H_1), j_{!}(\cO_{\cX}^{+}/p)^{a}) = H^{s}_{\et}(V_{m}/G_{m}, j_{!}(\cO^{+}_{\cX}/p)^{a})
\]
for all $s\geq 0$. Moreover, the boundary of $V_{m}/G_{m}$ is strongly Zariski closed since it is the pullback of the boundary on $|\cX_{\mbf{1}}^{\ast}(\epsilon)_{a}|\gamma^{k}w_1^{-1}/H_1$. By Proposition \ref{strongly Zariski closed vanishing} $H^{s}_{\et}(V_{m}/G_{m}, j_{!}(\cO^{+}_{\cX}/p)^{a})=0$, which finishes the proof.
\end{proof}

\noindent This finishes the proof of Theorem \ref{bound on higher direct images}, and hence also the proof of Theorem \ref{main thm}.

\section{Eliminating the nilpotent ideal}\label{Borel-Serre}

We begin by setting up notation for this section, which will be \emph{different} from that used in previous section. The main difference is that we will use symplectic/unitary group in this section instead of the similitude groups used in previous sections.

\medskip

As before, $F$ will denote a CM field with totally real subfield $F^+$ and complex conjugation $c$. We will let $n\geq 2$ be an integer, and we will fix a rational prime $p$ throughout which is assumed to be totally split in $F$. As in \S \ref{subsect:unitary_group}, we let $\Psi_n$ denote the $n\times n$ matrix with $1$'s along the anti-diagonal and $0$'s elsewhere, and we set
\[
J_n = \begin{pmatrix} 0 & \Psi_n \\ -\Psi_n & 0 \end{pmatrix}\in \GL_{2n}(\Z).
\]
$J_n$ defines a perfect pairing $(x,y) = x^t J_n y^c$ on $L \defeq \cO_F^{2n}$, which is alternating if $F$ is totally real and skew-Hermitian if $F$ is imaginary CM. We define the symplectic/unitary group $G_0$ over $\cO_{F^+}$ by
\[
G_0(R) = \{ g \in \Aut_{\cO_F\otimes_{\cO_{F^+}}R}( L \tensor_{\cO_{F^+}} R)  \mid g^t J_n g^c = J_n \},
\]
for $\cO_{F^+}$-algebras $R$. The condition defining the group is equivalent to preserving the pairing $(-,-)$. We then define 
\[
G \defeq \mathrm{Res}_{\cO_{F^+}/\Z}G_0.
\]
Over primes $\ell$ unramified in $F$, $G_{\Z_{(\ell)}}$ could have equivalently been defined by the condition $\psi(gx,gy)= \psi(x,y)$ for all $x,y\in L$, where $\psi$ is the alternating form in \S \ref{subsect:unitary_group}. Similarly, $G(\Z_\ell)$ for \emph{all} $\ell$ could have equivalently been defined by the condition $\psi(gx,gy)= \psi(x,y)$. We define $P_0 \sub G_0$ to be the stabilizer of $\cO_F^n \oplus 0 \sub L$, and $M_0 \sub P_0$ to be the subgroup which additionally stabilizes $0 \oplus \cO_F^n$. One sees easily that $M_0 \cong \mathrm{Res}_{\cO_F/\cO_{F^+}}\GL_n$; see \cite[Lemma 5.1(2)]{newton-thorne} for the imaginary CM case, the same proof works in the totally real case. We then define $P \defeq \mathrm{Res}_{\cO_{F^+}/\Z}P_0$ and $M \defeq \mathrm{Res}_{\cO_{F^+}/\Z}M_0 \cong \mathrm{Res}_{\cO_F/\Z}\GL_n$; these are subgroups of $G$. Whenever $\ell$ is unramified in $F$, $P_{Z_{(\ell)}}$ is a parabolic subgroup of $G_{\Z_{(\ell)}}$, and $M_{\Z_{(\ell)}}$ is a Levi subgroup of $P_{\Z_{(\ell)}}$. We finish by defining some compact open subgroups in $G(\Qp)$. Set
\begin{align*}
  \Gamma_0(p^m) & \defeq \left\{g\in G(\Zp) \mid (g \bmod p^m) \in P(\Z/p^m) \right\}; \\
  \Gamma_1(p^m) & \defeq \left\{g \in G(\Zp) \mid (g \bmod p^m) \in N(\Z/p^m) \right\}; \\
  \Gamma(p^m) & \defeq \left\{g \in G(\Zp) \mid g \equiv I_{2n} \bmod p^m \right\}.
\end{align*}
Note that, in essence, most of what we have done here is make the ``same'' definitions for symplectic/unitary groups as we did for the corresponding similitude groups earlier. We remark that $G(\R)$ is connected. We finish this preamble by defining some principal congruence subgroups for $M$. We set
\[
K_{M,m,p} \defeq \mathrm{Ker}(M(\Zp) \to M(\Z/p^m))
\]
for all $m\geq 0$, and for a fixed $K_M^p \sub M(\A_f^p)$, which we will allow to be defined by the context in what follows, we set
\[
K_{M,m} \defeq K_M^p K_{M,m,p}.
\]

\subsection{Introduction} \label{nilpotent intro}

To state our results, we first discuss the definitions of locally symmetric spaces that we will use in this section. If $\mathrm{G}$ is a general connected linear algebraic group over $\Q$, 
we consider a space of type $S-\Q$ for $\mathrm{G}$,
in the sense of~\cite[\S 2]{borel-serre}. This is a pair consisting of a homogeneous
space for $\mathrm{G}(\R)$ and a family of Levi subgroups
of $\mathrm{G}_{\R}$ satisfying certain conditions; we will suppress the Levi subgroups from the notation. The underlying homogeneous space of a space of type $S-\Q$ only depends on $\mathrm{G}$ (up to isomorphism of homogeneous spaces); this follows from \cite[Lem. 2.1]{borel-serre}. Moreover, this homogeneous space is a symmetric space for $\mathrm{G}$ and we will simply refer to it as \emph{the} symmetric space for $\mathrm{G}$, and denote it by $X^{\mathrm{G}}$.  
Whenever $K \sub \mathrm{G}(\A_f)$ is a compact open subgroup, we set 
\[
X^\mathrm{G}_{K}\defeq \mathrm{G}(\Q)\backslash \left(X^\mathrm{G}\times \mathrm{G}(\A_f)\right)/K;
\] 
this is a locally symmetric space (and in particular a Riemannian manifold) when $K$ is neat. In the cases we will consider, we will have $\mathrm{G}=\mathrm{Res}_{F^+/\Q}\mathrm{G}_0$ and $\mathrm{G}_0$ will have a natural model over $\cO_{F^+}$; for the purpose of this discussion let us denote it by $\mathrm{G}_0$ as well. We will assume throughout this section that all compact open subgroups $K\sub \mathrm{G}(\A_f)$ appearing are decomposed as
\[
K = \prod_v K_v, \,\,\,\,\, K_v \sub \mathrm{G}_0(\cO_{F^+,v}),
\]
where $v$ runs over the finite places of $F^+$. This is mostly done to simplify the exposition. We say that $K \sub \mathrm{G}(\A_f)$ (decomposed as above) is \emph{small} (following \cite[Definition 5.6]{newton-thorne}) if there exists a rational prime $q\neq p$ such that $K_v \sub \mathrm{Ker}(\mathrm{G}(\cO_{F^+,v}) \to \mathrm{G}(\cO_{F^+}/q^\epsilon))$ for all $v\mid q$, where $\epsilon=1$ if $q\neq 2$ and $\epsilon =2 $ if $q=2$. Such subgroups will always be neat in all cases we consider. We will need this notion to apply the main result of \cite{scholze-galois}; for everything else neatness will be enough.

\medskip

Let us now return to the groups $G$ and $M$ defined in the beginning of this section. We will write $X^G$ for what, if one strictly followed the notation above, would write $X^{G_\Q}$, and similarly with levels and for $M$. To discuss the relation between the $X_K^G$ and the Shimura varieties considered in previous sections, let us write $(G^{\mathrm{old}},X^{\mathrm{old}})$ for the Shimura datum denoted by $(G_\Q,X)$ in \S \ref{subsubsect:Sh_datum}. We have $G_\Q \sub G^{\mathrm{old}}$ and one checks easily that $X^G$ is a connected component of $X^{\mathrm{old}}$. If $K^{\mathrm{old}} \sub G^{\mathrm{old}}(\A_f)$ is neat and $K = K^{\mathrm{old}} \cap G(\A_f)$, then one checks that the natural map
\[
X_K^G \to G^{\mathrm{old}}(\Q) \backslash X^{\mathrm{old}} \times G^{\mathrm{old}}(\A_f)/K^{\mathrm{old}}
\]
is injective, and hence the left hand side is a union of connected components of the right hand side. Set 
\[
d\defeq\frac{1}{2}\mathrm{dim}_{\R}(X^G) = \dim_{\C}(X^G).
\]
Since the injections above are compatible with changing $K^\mathrm{old}$, we obtain the following as a direct corollary to Theorem \ref{main thm}, along the lines of Corollary \ref{cor to main}.

\begin{thm}\label{main thm non-similitude} Let $r\in \Z_{\geq 1}$ and fix $K^p \sub G(\A_f^p)$ small. If $i>d$ and $K_{p,m} \sub \Gamma_1(p^m)$ is compact open for all $m$, then 
\[
 \varinjlim_{m} H^{i}_{c}(X_{K^p K_{p,m}}(\C),\Z/p^r)=0.
 \]
\end{thm}

\noindent In this section, we will only use the special case when  $K_{p,m}= \Gamma_1(p^m)$ for all $m$.

\medskip

Let us now turn our attention to the Levi $M\sub G$. Let $K_M\subseteq M(\A_f)$ be a small compact open subgroup. The space $X^M_{K_M}$ has dimension $d-1$ (as a real manifold; it has no complex structure in general). Let $\lambda$ be a Weyl orbit of weights of $M$. This determines an irreducible algebraic representation $\sigma_{\lambda}$ of $M$, which can
be defined over $\Q_p$ (as $M$ is split over $\Q_p$). There is a natural $M(\Z_p)$-stable 
lattice $\sigma^\circ_{\lambda}\subset \sigma_{\lambda}$, known as the dual Weyl module (see~\cite{jantzen} for more details). This gives rise in 
the usual way to a local system on $X^M_{K_M}$, which we denote by $\cV_{\lambda}$; for further discussion see \S \ref{subsec: loc sym space}. 

\medskip

Let $\iota\colon \overline{\Q}_p\toisom \C$ be an isomorphism. We let $S$ be a finite set of primes of $\Q$, containing $p$, all the primes which ramify in $F$, and all the primes where the level $K_M$ is not hyperspecial. We let $\mathbb{T}^S_{M}$ denote the abstract Hecke algebra for $M$ over $\Z_p$, defined as the product of spherical Hecke algebras at places away from $S$
\[
\mathbb{T}^S_{M}\defeq  \otimes_{l\not\in S,w\mid l} \mathbb{T}_{M,w}, \quad \mathbb{T}_{M,w} 
\defeq \Z_p[\GL_n(F_w)//\GL_n(\cO_{F,w})],
\]
where $w$ runs over the primes of $F$ above $l\not\in S$. 
For such a prime $w$, we let $q_w$ be the cardinality of its residue field and $q_{w}^{1/2}\in \overline{\Z}_p$ denote the inverse image of the positive 
square root of $q_{w}$ in $\C$ under $\iota$. The Satake transform gives a canonical isomorphism
\[
\mathbb{T}_{M,w}[q^{1/2}_{w}]\simeq \Z_p[q_{w}^{1/2}][X^{\pm 1}_1,\dots, X^{\pm 1}_{n}]^{S_{n}},
\]
with $S_n$ the symmetric group on $n$ elements.
For $i=1,\dots, n$, define $T_{i,w}$ to be $q_w^{i(n-i)/2}$ times the $i$-th elementary symmetric polynomial in $X_1,\dots,X_{n}$.

\medskip

The Hecke algebra $\mathbb{T}^S_{M}$ acts in the usual way on $H^*(X^M_{K_M}, \cV_\lambda)$ (see \S \ref{subsec: loc sym space} for further discussion). We consider its image 
\[
\mathbb{T}^S_{M}\left(K_M,\lambda\right)\defeq \mathrm{Im}\left(\mathbb{T}^S_{M}\to \mathrm{End}_{\Z_p}\left(H^*(X^M_{K_M}, \cV_\lambda)\right)\right).
\] 
Let $\m\subset \mathbb{T}^S_{M}\left(K_M,\lambda\right)$ be a maximal ideal. The following is~\cite[Cor.~5.4.3]{scholze-galois} 
(with slightly different normalizations, which are consistent with~\cite{newton-thorne}).

\begin{thm}\label{residual Galois rep} There exists a unique continuous semisimple Galois representation 
\[
\bar{\rho}_{\m}\colon \mathrm{Gal}(\overline{F}/F)\to \GL_n(\overline{\F}_p)
\]
such that, for every prime $w$ of $F$ above $l\not\in S$, the characteristic polynomial of $\bar{\rho}_{\m}(\Frob_w)$ is equal to the image of
\[
P_{M,w}(X) = X^{n} - T_{1,w}X^{n-1}+\dots + (-1)^iq_{w}^{i(i-1)/2}T_{i,w}X^{n-i} +\dots + q_{w}^{n(n-1)/2}T_{n,w}
\]
modulo $\m$. 
\end{thm}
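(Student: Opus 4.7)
The plan is to realize the system of Hecke eigenvalues defined by $\m$ inside the cohomology of a Hodge-type Shimura variety for $G$, and then to invoke (a torsion version of) the existence of Galois representations attached to such cohomology classes, following Scholze's strategy. The first step is to relate $H^\ast(X^M_{K_M},\cV_{\lambda})$ to the cohomology of the boundary of the Borel--Serre compactification of a locally symmetric space for $G$. For a suitably chosen compact open $K\subset G(\A_f)$ whose $P$-part is compatible with $K_M$, the boundary stratum of $X^{G,\mathrm{BS}}_K$ corresponding to the Siegel parabolic $P$ is a fiber bundle over a locally symmetric space for $M$, with fibers modeled on nilmanifolds for $N$. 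A Leray spectral sequence together with Kostant's theorem decomposes the cohomology of this stratum in terms of cohomology of locally symmetric spaces for $M$ with coefficients in irreducible algebraic $M$-representations. After choosing $K$ and the coefficient system $\tilde{\cV}$ on the Shimura variety so that the appropriate summand matches $H^\ast(X^M_{K_M},\cV_{\lambda})$ (up to an explicit twist), one embeds the Hecke module carrying $\m$ into the boundary cohomology of $X^{G,\mathrm{BS}}_K$; the dictionary between the spherical Hecke operators of $M$ and $G$ at primes above $l \notin S$ is built into this embedding.

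Next, I would transfer this eigenvalue system from the boundary to the cohomology of the Shimura variety itself using the excision long exact sequence
\[
\cdots \to H^i_c(X^G_K,\tilde{\cV}) \to H^i(X^{G,\mathrm{BS}}_K,\tilde{\cV}) \to H^i(\partial X^{G,\mathrm{BS}}_K,\tilde{\cV}) \to H^{i+1}_c(X^G_K,\tilde{\cV}) \to \cdots,
\]
combined with the fact that the open immersion $X^G_K \hookrightarrow X^{G,\mathrm{BS}}_K$ is a homotopy equivalence. This forces the system of Hecke eigenvalues cut out by $\m$ to appear either in $H^\ast(X^G_K,\tilde{\cV})$ or in $H^\ast_c(X^G_K,\tilde{\cV})$, reducing the problem to constructing Galois representations for torsion Hecke eigenvalue systems living in the cohomology of a Hodge-type Shimura variety for $G$.

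The final and decisive step is to invoke Scholze's construction of Galois representations attached to torsion Hecke eigenclasses on Hodge-type Shimura varieties. One works with the perfectoid Shimura variety $\cX^\ast_{\Gamma(p^\infty)}$ and uses the Hodge--Tate period morphism to $p$-adically interpolate torsion Hecke eigenvalue systems by characteristic-zero (classical) automorphic ones, for which Galois representations are known by previous work (Shin, Morel, Chenevier--Harris, Clozel--Harris--Labesse, and others in the unitary case, and analogous constructions in the symplectic case). The Satake relation between the spherical Hecke operators of $M$ and those of $G$ at unramified primes $w \mid l$, combined with the explicit shape of the characteristic polynomial of Frobenius on the Galois representation for $G$, then extracts a degree-$n$ semisimple representation $\bar{\rho}_{\m}$ of $\mathrm{Gal}(\overline{F}/F)$ satisfying the prescribed relation with $P_{M,w}(X)$. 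Uniqueness of $\bar{\rho}_{\m}$ is forced by Chebotarev density together with semisimplicity.

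The hardest step, and the heart of the argument, is the $p$-adic interpolation in the final paragraph: congruences between torsion classes where $\m$ lives and characteristic-zero automorphic classes where classical Galois representations are available. It is precisely here that the perfectoid geometry of Shimura varieties at level $\Gamma(p^\infty)$ and the Hodge--Tate period map play an essential role, and this is the input for which no purely automorphic substitute is presently known. The rest of the construction---matching up Hecke algebras via the boundary of the Borel--Serre compactification, running the excision sequence, and extracting the relevant Frobenius-eigenvalue data---is essentially formal given this input.
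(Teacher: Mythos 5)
The paper does not prove this theorem: it is stated as a citation to \cite[Cor.~5.4.3]{scholze-galois} (with normalizations adjusted to match \cite{newton-thorne}), and no argument is given in the text. So there is no ``paper's own proof'' to compare against; what you have written is a sketch of Scholze's construction, which is being invoked as a black box.

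As a summary of Scholze's strategy, your outline is broadly faithful: Borel--Serre boundary of the locally symmetric space for $G$ to import the Hecke eigensystem from $M$, excision to pass between open, boundary, and compactly supported cohomology, and the perfectoid Shimura variety with the Hodge--Tate period map to interpolate torsion eigensystems by classical automorphic ones for which Galois representations are known. Two inaccuracies worth flagging. First, Kostant's theorem is a rational (characteristic-zero) statement; with $\Lambda=\ZZ/p^r$ (or $\overline{\FF}_p$) coefficients one does not get a decomposition of the nilmanifold cohomology into irreducible algebraic $M$-representations --- instead one works directly with the cohomology of $N(\Zp)$, which is an exterior algebra, and identifying the relevant graded pieces as $M(\Zp)$-modules (twists by powers of $\psi$) is a separate computation. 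Second, the interpolation step does not literally produce a characteristic-zero eigenclass congruent to a given torsion class; it produces, via fake Hasse invariants on the flag variety and the Hodge--Tate period morphism, a Hecke-equivariant map from a complex computing completed cohomology into spaces of overconvergent $p$-adic forms, and then uses Chenevier's formalism of determinants (pseudorepresentations) to glue. This is why Theorem~\ref{existence of determinant} in the paper is phrased in terms of $\mathbb{T}^S_{cl}$ and continuous determinants, not in terms of individual congruences. Neither issue affects the truth of the theorem you are asked about --- it really is \cite[Cor.~5.4.3]{scholze-galois} --- but if you wanted to reproduce the proof in full these are the places that need care.
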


\noindent From now on, we also impose the following \emph{non-Eisenstein} condition on $\m$. 

\begin{assumption}\label{non-Eisenstein} The Galois representation $\bar{\rho}_{\m}$ is absolutely irreducible. 
\end{assumption} 

The goal of this section is to prove a 
strengthening of~\cite[Thm.~5.4.4]{scholze-galois} and~\cite[Thm.~1.3]{newton-thorne} under our assumption on $p$. 
In order to state the main result, we need to introduce a derived Hecke algebra\footnote{This is in the sense of~\cite{newton-thorne}, who consider an enhancement of the
usual notion of Hecke algebra living in the derived category, rather than in the sense of 
Venkatesh, who considers additional ``derived'' Hecke operators.}, which we define as follows: 
\[
\mathbb{T}^S_{M}\left(K_M, \lambda\right)^{\mathrm{der}}\defeq \mathrm{Im}\left(\mathbb{T}^S_{M}\to \mathrm{End}_{D(\Z_p)}\left(R\Gamma(X^M_{K_M},\mathcal{V}_{\lambda})\right)\right).
\]
Note that we have a surjection 
\[
\mathbb{T}^S_{M}\left(K_M,\lambda\right)^{\mathrm{der}}\twoheadrightarrow \mathbb{T}^S_{M}\left(K_M,\lambda\right),
\]
with nilpotent\footnote{Nilpotency follows from \cite[Lemma 2.5]{khare-thorne}.} kernel; in particular this surjection induces a bijection of maximal ideals. Thus, we can take our non-Eisenstein maximal ideal Let $\m\subset \mathbb{T}^S_{M}\left(K_M,\lambda\right)$ and form the localization $\mathbb{T}^S_{M}\left(K_M,\lambda\right)^{\mathrm{der}}_\mf{m}$. The following is the main theorem of this section.

\begin{thm}\label{eliminating nilpotent ideal} Let $p$ be a rational prime which splits completely in $F$ and $\m$ a non-Eisenstein maximal ideal. 
Then there exists a unique continuous Galois 
representation 
\[
\rho_{\m}\colon \mathrm{Gal}(\overline{F}/F)\to \GL_n\left(\mathbb{T}^S_{M}\left(K_M,\lambda\right)^{\mathrm{der}}_{\m}\right)
\]
such that, for every prime $w$ of $F$ above $l\not\in S$, the characteristic polynomial of $\rho_{\m}(\Frob_w)$ is equal to 
\[
P_{M,w}(X) = X^{n} - T_{1,w}X^{n-1}+\dots + (-1)^iq_{w}^{i(i-1)/2}T_{i,w}X^{n-i} +\dots + q_{w}^{n(n-1)/2}T_{n,w}.
\]
\end{thm}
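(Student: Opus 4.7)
\medskip

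\textbf{Proof proposal.} The plan is to construct $\rho_\m$ by transferring a (nilpotent-free) Galois representation coming from the cohomology of Shimura varieties for $G$ across the boundary of the Borel--Serre compactification, using Theorem~\ref{unified main theorem} to eliminate the ambiguity that in \cite{newton-thorne} was encapsulated by the degree-$4$ nilpotent ideal. Throughout I would work with trivial $\F_p$-coefficients first (as in \cite{newton-thorne}, the passage to $\cV_\lambda/p^r$-coefficients and to inverse limits in $r$ is formal), and at a profinite level structure $\widetilde{K}_p \defeq \widetilde{K} \cap \Gamma_1(p^\infty)$ on the Shimura variety side, keeping a suitable compatibility with the finite level $K_M$ on the $\GL_n$ side.

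First I would write down the excision long exact sequence
\[
\dotsb \to H^i_c(X^G_{\widetilde{K}_p},\F_p) \to H^i(X^G_{\widetilde{K}_p},\F_p) \to H^i(\partial X^{G,\mathrm{BS}}_{\widetilde{K}_p},\F_p) \to H^{i+1}_c(X^G_{\widetilde{K}_p},\F_p) \to \dotsb
\]
(using that $X^G_{\widetilde{K}_p}\hookrightarrow X^{G,\mathrm{BS}}_{\widetilde{K}_p}$ is a homotopy equivalence). By Theorem~\ref{unified main theorem}, the group $H^{i+1}_c(X^G_{\widetilde{K}_p},\F_p)$ vanishes for $i\geq d$, so after localizing at a non-Eisenstein $\tilde\m$ lifting $\m$ we obtain a Hecke-equivariant \emph{surjection}
\[
H^i(X^G_{\widetilde{K}_p},\F_p)_{\tilde\m} \twoheadrightarrow H^i(\partial X^{G,\mathrm{BS}}_{\widetilde{K}_p},\F_p)_{\tilde\m} \quad (i\ge d),
\]
which moreover can be upgraded, by replacing the long exact sequence with the distinguished triangle, to a statement in the derived category $D(\Z_p)$. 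Next I would invoke Theorem~\ref{direct summand}: an ordinary projector in the sense of Hida theory, applied to the $P$-stratum of the boundary (for the Siegel parabolic $P=MN$), realizes $R\Gamma(X^M_{K_M},\cV_\lambda/p^r)_\m$ (up to a fixed twist by a character of the Levi) as a direct summand of $R\Gamma(\partial X^{G,\mathrm{BS}}_{\widetilde{K}_p},\F_p)_{\tilde\m}$ in degrees $\ge d$. The non-Eisenstein condition, together with Lemma~\ref{eliminating degree 0}, ensures there is no interference from the constant-functions part and that the localization is compatible with the ordinary projector.

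Finally, the inputs from \cite{scholze-galois} (plus the usual enhancement to the derived Hecke algebra, as in the proof of \cite[Thm.~1.3]{newton-thorne}) produce a Galois representation
\[
\rho_{\widetilde{K}_p}\colon \Gal(\ol F/F) \to \GL_n\bigl(\mathbb{T}^S_M\bigl(\text{acting on } R\Gamma(X^G_{\widetilde{K}_p},\F_p)_{\tilde\m}\bigr)\bigr)
\]
with the correct characteristic polynomials of Frobenius, and (crucially, since this is a map out of the compact Shimura variety cohomology) \emph{without} any residual nilpotent ideal: the construction via the Hodge--Tate period map and the pseudorepresentations carried by the cohomology of the minimal compactification gives an honest Galois representation into the Hecke algebra acting on $H^*(X^G_{\widetilde{K}_p})_{\tilde\m}$. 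Composing with the excision surjection and the direct-summand inclusion gives a Galois representation with values in the derived Hecke algebra acting on the summand, which by construction equals $\mathbb{T}^S_M(K_M,\lambda)^{\mathrm{der}}_\m$. Uniqueness is standard from Chebotarev density and absolute irreducibility of $\bar\rho_\m$.

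The main obstacle is Theorem~\ref{direct summand}, i.e.\ producing the ordinary-part splitting at level $\Gamma_1(p^\infty)$. At finite level the relation between boundary cohomology and $H^*(X^M_{K_M})$ is only spectral-sequence-theoretic and has no natural section; what makes the argument work at $\Gamma_1(p^\infty)$ is that a Hecke operator $U_p$ at the Siegel parabolic becomes invertible on the ordinary part in the limit, allowing Hida's idempotent $e=\lim U_p^{n!}$ to carve out the desired summand. Verifying that this projector is well-defined on the derived complex, commutes with localization at $\tilde\m$, and that its image is genuinely $R\Gamma(X^M_{K_M},\cV_\lambda/p^r)_\m$ (rather than a quotient or subcomplex thereof), will constitute the technical heart of the argument, and is what genuinely requires infinite level --- Theorem~\ref{scholze's thm} at level $\Gamma(p^\infty)$ is not enough, because at that level the relevant part of the boundary cohomology itself vanishes above the middle degree.
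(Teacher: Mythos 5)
Your high-level strategy matches the paper's: use the vanishing theorem at level $\Gamma_1(p^\infty)$ to turn the excision triangle into a quasi-isomorphism $\tau_{\geq d+1}C_1(K^p,r)\toisom\tau_{\geq d+1}\partial C_1(K^p,r)$, carve out $C(K_M^p,-n,r)_\m[-d]$ from the $P$-stratum of the boundary by an ordinary projector (Theorem~\ref{direct summand}), pass down to finite level using compactness of the perfect complex, and then transfer a Galois-theoretic object from the $G$-side. But the step where you produce the Galois representation is mis-described, and this is where the real remaining work lies. What~\cite{scholze-galois} gives (Corollary~5.1.11, quoted here as Theorem~\ref{existence of determinant}) is not a representation into the Hecke algebra acting on $H^*(X^G_{\tK_p},\F_p)_{\mathfrak{M}}$; it is a $2n$-dimensional (or $2n+1$-dimensional) \emph{group determinant} defined on any \emph{continuous discrete quotient} of the topological algebra $\mathbb{T}^S_{cl}$ coming from coherent cohomology of the minimal compactification. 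Two substantive pieces are then needed that you do not address: (i) one must prove that the derived Hecke algebra $\mathbb{T}^S_M(K_M,\lambda,r)^{\mathrm{der}}_\m$ is such a continuous discrete quotient of $\mathbb{T}^S_{cl}$ --- this is exactly where the direct-summand result gets used, but the deduction goes through a chain of topological commutative algebra (injective resolutions with continuous $\mathbb{T}$-action, Lemma~\ref{injective resolution with continuous action} and Proposition~\ref{getting rid of C}) and Theorem~\ref{factors through classical}, which is nontrivial and is the analogue of the hardest technical part of~\cite{newton-thorne}; and (ii) one must extract an $n$-dimensional \emph{representation} from the $2n$- (or $2n+1$-) dimensional determinant --- the twisting-by-$\chi$ argument of Proposition~\ref{big determinant} and the absolute irreducibility of $\bar\rho_\m$ are used for \emph{existence} of $\rho_\m$, not just for its uniqueness.

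A smaller but real slip: you assert the absence of nilpotents ``since this is a map out of the compact Shimura variety cohomology,'' but $X^G_{\tK_p}$ is not compact (the Shimura varieties here are $\mathrm{U}(n,n)$- or $\mathrm{Sp}_{2n}$-varieties). The determinant is nilpotent-free because it lives on $\mathbb{T}^S_{cl}$ by construction; the nilpotent ideal in~\cite{newton-thorne} arose entirely in descending from $\mathbb{T}^S_{cl}$ to the derived Hecke algebra on the $\GL_n$-side, and it is exactly the clean direct-summand in the derived category (rather than only on cohomology) that kills it.
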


\begin{rem}
In \cite[Thm.~5.4.4]{scholze-galois}, a Galois representation valued in $\mathbb{T}^S_{M}\left(K_M,\lambda\right)/I$ is constructed, where $I$ is an ideal with $I^n =0$ for some (computable) $n$ independent of $\lambda$. In \cite[Thm.~1.3]{newton-thorne}, a Galois representation valued in $\mathbb{T}^S_{M}\left(K_M,\lambda\right)^\mathrm{der}/J$ with $J^4 =0$ is constructed (for $F$ imaginary CM). Theorem \ref{eliminating nilpotent ideal} thus refines these results by removing the nilpotent ideal; see Remark \ref{relevance of nilpotent ideal} for some further comments.
\end{rem}

\noindent The proof of Theorem \ref{eliminating nilpotent ideal} will take up the remainder of Section \ref{Borel-Serre}. We note that localizing $\mathbb{T}^S_{M}\left(K_M,\lambda\right)^{\mathrm{der}}\twoheadrightarrow \mathbb{T}^S_{M}\left(K_M,\lambda\right)$ at $\mf{m}$ gives a surjection $\mathbb{T}^S_{M}\left(K_M,\lambda\right)^{\mathrm{der}}_\mf{m}\twoheadrightarrow \mathbb{T}^S_{M}\left(K_M,\lambda\right)_\mf{m}$, so Theorem~\ref{eliminating nilpotent ideal} also gives rise to a Galois representation valued in $\mathbb{T}^S_{M}\left(K_M,\lambda\right)_\mf{m}$.

\medskip

Let us now discuss some notation that will be used throughout this section. First, we define $\Lambda \defeq \Z /p^r$ throughout this section (the choice of $r\geq 1$ being arbitrary but fixed). We will also need some notations for various categories that will arise. Whenever $\mathrm{G}$ is a group and $R$ is a commutative ring, $\mathrm{Mod}(\mathrm{G},R)$ will denote the category of (left) $\mathrm{G}$-representations on $R$-modules. If $\mathrm{G}$ is profinite, $\mathrm{Mod}_{sm}(\mathrm{G},R)$ will denote the full subcategory of smooth $\mathrm{G}$-representations. If $S$ is a (possibly noncommutative) ring, then $\mathrm{Mod}(S)$ will denote the category of left $S$-modules. If $S$ is an $R$-algebra, then we will write $\mathrm{Mod}(S \times \mathrm{G},R)$ for the category of left $S \otimes_R R[\mathrm{G}]$-modules. Moving on, if an (abstract) group $\mathrm{G}$ acts (from the right) on a topological space $X$, then $\mathrm{Sh}_{\mathrm{G}}(X)$ will denote the category of $\mathrm{G}$-equivariant sheaves on $X$ (see \cite[\S 2.3]{newton-thorne}). When $R=\Z$, we will typically omit it from the notation.

\medskip

All of these are abelian categories, and their homotopy and bounded below derived categories will be denoted using self-explanatory notations involving the letters $K$ and $D$, respectively. Please do note that we will write $D$ instead of $D^+$ for bounded below derived categories; we apologize for this choice, which was made to avoid making already heavy notation even heavier.
Our convention for shifts of complexes is that $(C^\bullet[1])^n = C^{n+1}$. We will freely use the fact morphisms between complexes of injective objects in the derived category may be computed in the homotopy category. Also, group cohomology will refer to \emph{discrete} group cohomology unless otherwise stated.

\medskip

We end this introduction with a brief overview of this section and of our argument. Subsections \ref{subsec: loc sym space}-\ref{subsec: ordinary parts} are preliminary. In \S \ref{subsec: loc sym space} we recall some material on locally symmetric spaces, in particular the method to compute Hecke actions introduced in \cite{newton-thorne}, and in \S \ref{comm alg} we collect various homological algebra results that we will need. In \S \ref{subsec: ordinary parts} we set up some theory of ordinary parts (Hida theory) in the derived setting, essentially following \cite{khare-thorne}. We then begin our argument to prove Theorem \ref{eliminating nilpotent ideal}. Like in \cite{scholze-galois,newton-thorne}, our strategy consists of three main steps. 
\begin{enumerate}
\item The first, carried out in \S \ref{determinants}, is to construct Galois representations for the cohomology of the $X^G_K$; our work here is a slight refinement of \cite{scholze-galois,newton-thorne}. 
\item The second step is to relate cohomology of $X^G_K$ to cohomology of $X_{K_M}^M$ by studying the boundary of the Borel--Serre compactification. This is done in \S \ref{subsec: boundary} and represents the main innovation of our work in this section. The two key new ingredients are Theorem \ref{main thm non-similitude} and the use of the ordinary parts functor from $G$ to $M$, as studied in \S \ref{subsec: ordinary parts}. The final result of this analysis, Theorem \ref{direct summand}, should also be relevant to the study of local-global compatibility at $\ell = p$ for the representations $\rho_\m$ above. 
\item Finally, \S \ref{determinants 2} finishes the proof of Theorem \ref{eliminating nilpotent ideal}, following the argument in \cite{newton-thorne}; the key point here is to extract an $n$-dimensional determinant from a $2n$- or $(2n+1)$-dimensional one using character twists.
\end{enumerate}

\subsection{Locally symmetric spaces}\label{subsec: loc sym space} In this subsection we recall some generalities on locally symmetric spaces that we will need later. We go back to the situation when $\mathrm{G}$ be a connected linear algebraic group over $\Q$. Let us briefly recall the Borel--Serre compactification (see \cite{borel-serre}; some useful summaries are \cite[\S 3.1]{newton-thorne} and \cite[\S 2.1]{accghlnstt}). The space $X^\mathrm{G}$ admits a partial compactification $\ol{X}^\mathrm{G}$ \cite[\S 7.1]{borel-serre} with an action of $G(\Q)$, and every torsion-free arithmetic subgroup of $G(\Q)$ acts freely on $\ol{X}^\mathrm{G}$ \cite[\S 9.5]{borel-serre}. We define
\[
\ol{X}^\mathrm{G}_{K}\defeq \mathrm{G}(\Q)\backslash \left(\ol{X}^\mathrm{G}\times \mathrm{G}(\A_f)\right)/K;
\]
this is a compact differentiable manifold with corners, whose interior is $X^\mathrm{G}_{K}$ (see \cite[\S 7.1, Theorem 9.3, \S 9.5]{borel-serre}). As a consequence\footnote{It is a standard consequence of the (global) collar neighbourhood theorem \cite[Theorem 2]{brown} that the inclusion of the interior inside a topological manifold with boundary is a homotopy equivalence. To apply this to $X^\mathrm{G}_{K} \hookrightarrow \ol{X}^\mathrm{G}_{K}$, note that ``corners'' and ``boundary'' are the same thing for topological manifolds.}, the inclusion $X^\mathrm{G}_{K} \hookrightarrow \ol{X}^\mathrm{G}_{K}$ is a homotopy equivalence. We will write $\partial \ol{X}^\mathrm{G}_{K} \defeq \ol{X}^\mathrm{G}_{K} \setminus X^\mathrm{G}_{K}$ for the boundary.

\medskip

We now recall some material on local systems on $X_K^\mathrm{G}$ and $\ol{X}_K^\mathrm{G}$, and their cohomology, following \cite{newton-thorne}. Consider the space
\[ 
\ol{\mf{X}}^\mathrm{G} \defeq \mathrm{G}(\Q)\backslash \left(\ol{X}^\mathrm{G}\times \mathrm{G}(\A_f)\right),
\]
where $\mathrm{G}(\A_f)$ is given the \emph{discrete} topology prior to taking the quotient. Since $\mathrm{G}(\Q) \times K$ acts freely on $\ol{X}^\mathrm{G}\times \mathrm{G}(\A_f)$,
$K$ acts freely on $\ol{\mf{X}}^\mathrm{G}$. We will also use the subspace
\[ 
\mf{X}^\mathrm{G} \defeq \mathrm{G}(\Q)\backslash \left( X^\mathrm{G}\times \mathrm{G}(\A_f)\right),
\]
which $K$ acts freely on as well. A direct consequence of the freeness of these actions is the following lemma.

\begin{lemma}\label{free action} Let $K, K'\sub \mathrm{G}(\A_f)$ be neat compact open subgroups with $K'\sub K$ normal. Then the finite group $K/K'$ acts freely on $\ol{X}^{\mathrm{G}}_{K'}$ with quotient $\ol{X}^{\mathrm{G}}_{K}$.   
\end{lemma}

We now discuss another consequence of the fact that $K$ acts freely on $\ol{\mf{X}}^\mathrm{G}$. Any $K$-equivariant sheaf $\cF$ on $\ol{\mf{X}}^\mathrm{G}$ descends to a sheaf on $\ol{X}^\mathrm{G}_{K}$, which we will also denote by $\cF$ by abuse of notation. In practice, our $K$-equivariant sheaves will come by restriction from $\mathrm{G}(\A_f)$-equivariant sheaves, and this gives rise to Hecke actions on cohomology. We recall how these Hecke operators may be defined and computed from \cite[\S 2.3]{newton-thorne}, to which we refer for more details. There is a diagram
  \[
    \xymatrix@C+1pc{ \mathrm{Sh}_{\mathrm{G}(\A_f)}(\ol{\mf{X}}^\mathrm{G}) \ar[r]^-{\Gamma(\ol{\mf{X}}^\mathrm{G},-)} \ar[d]^-{\forget} & \mathrm{Mod}(\mathrm{G}(\A_f)) \ar[r]^-{\Gamma(K,-)} & \mathrm{Mod}(\cH(\mathrm{G}(\A_f),K)) \ar[d]^{\forget} \\ \mathrm{Sh}_{K}(\ol{\mf{X}}^\mathrm{G}) \ar[r]^-{\descent} & \mathrm{Sh}(\ol{X}^{\mathrm{G}}_K) \ar[r]^-{\Gamma(\ol{X}^{\mathrm{G}}_K,-)} &  \mathrm{Mod}(\Z) }
    \]
of categories and functors, commutative up to natural isomorphism. Here $\cH(\mathrm{G}(\A_f),K)$ denotes the Hecke algebra of $K$-biinvariant compactly supported functions $\mathrm{G}(\A_f) \to \Z$. The commutative diagram may be derived, and for a $\mathrm{G}(\A_f)$-equivariant sheaf $\cF$ on $\ol{\mf{X}}^\mathrm{G}$, this gives a canonical homomorphism
\[
\cH(\mathrm{G}(\A_f),K) \to \mathrm{End}_{D(\Z)}(R\Gamma(\ol{X}^{\mathrm{G}}_K,\cF))
\]
which is computed from the object $R\Gamma(K, R\Gamma(\ol{\mf{X}}^\mathrm{G},\cF)) \in D(\cH(\mathrm{G}(\A_f),K))$. This way of defining Hecke operators agrees with the traditional one using correspondences; for all this see \cite[Proposition 2.18]{newton-thorne} and the discussion following it. We remark that there are obvious versions of the above when $\Z$ is replaced by an arbitrary commutative ring $R$. Also, if $K^\prime \sub K$ is an open normal subgroup, one may descend a $K$-equivariant sheaf $\cF$ on $\ol{\mf{X}}^\mathrm{G}$ to a $K/K^\prime$-equivariant sheaf on $\ol{X}^\mathrm{G}_{K^\prime}$, and an obvious version of the above yields a Hecke action
\[
\cH(\mathrm{G}(\A_f^S),K^S) \to \mathrm{End}_{D(K/K^\prime)}(R\Gamma(\ol{X}^{\mathrm{G}}_{K^\prime},\cF)),
\]
where $S$ is a finite set of places such that $K^S=(K^\prime)^S$. Finally, also note that the entire discussion of equivariant sheaves and Hecke actions apply equally well to the (locally) symmetric spaces themselves and not just their Borel--Serre compactifications. 
\medskip

We have inclusions $\mf{X}^\mathrm{G} \hookrightarrow \ol{\mf{X}}^\mathrm{G}$ and $X^\mathrm{G}_K \hookrightarrow \ol{X}^\mathrm{G}_K$, both of which we will denote by $j$. We will be interested in two types of $\mathrm{G}(\A_f)$-equivariant sheaves on $\ol{\mf{X}}^\mathrm{G}$. The first are local systems; any right $\mathrm{G}(\A_f)$-module gives rise to a $\mathrm{G}(\A_f)$-equivariant sheaf on $\ol{\mf{X}}^\mathrm{G}$. When $\cF$ is a local system on $\ol{X}^{\mathrm{G}}_K$ obtained this way, pullback along the homotopy equivalence $j:X^{\mathrm{G}}_{K}\hookrightarrow \ol{X}^{\mathrm{G}}_{K}$ induces a Hecke-equivariant isomorphism between $R\Gamma(\ol{X}^{\mathrm{G}}_K, \cF)$ and $R\Gamma(X^{\mathrm{G}}_K, j^*\cF)$. 
The second type are extensions by zero of $\mathrm{G}(\A_f)$-equivariant local systems on $\mf{X}^\mathrm{G}$; these will be used to compute Hecke actions on compactly supported cohomology on the $X^\mathrm{G}_K$. They give exact functors $j_! : \mathrm{Sh}_{\mathrm{G}(\A_f)}(\mf{X}^\mathrm{G}) \to \mathrm{Sh}_{\mathrm{G}(\A_f)}(\ol{\mf{X}}^\mathrm{G})$ and $j_! : \mathrm{Sh}(X^\mathrm{G}_K) \to \mathrm{Sh}(\ol{X}^\mathrm{G}_K)$ such that the diagram
\[
    \xymatrix{ \mathrm{Sh}_{\mathrm{G}(\A_f)}(\mf{X}^\mathrm{G}) \ar[r]^-{j_!} \ar[d]^{\descent} & \mathrm{Sh}_{\mathrm{G}(\A_f)}(\ol{\mf{X}}^\mathrm{G}) \ar[d]^{\descent} \\ \mathrm{Sh}(X^{\mathrm{G}}_K) \ar[r]^-{j_!} & \mathrm{Sh}(\ol{X}^{\mathrm{G}}_K) }
\]
commutes up to natural isomorphism. If $\cF$ is $\mathrm{G}(\A_f)$-equivariant sheaf on $\mf{X}^\mathrm{G}$ with descent $\cF$ to $X_K^\mathrm{G}$, this gives us a Hecke action 
\[
\cH(\mathrm{G}(\A_f),K) \to \mathrm{End}_{D(\Z)}(R\Gamma_c(X^{\mathrm{G}}_K,\cF))
\]
which is computed from the object $R\Gamma(K, R\Gamma(\ol{\mf{X}}^\mathrm{G},j_!\cF)) \in D(\cH(\mathrm{G}(\A_f),K))$. Once again this action agrees with the traditional one defined using correspondences, and as above we have obvious versions for sheaves of $R$-modules and in the $K/K^\prime$-equivariant setting when $K^\prime \sub K$ is open and normal. 

\medskip

Let us record a few results on computations of Hecke actions and changing levels.

\begin{prop}\label{finite to finite basic}
Let $S$ be a finite set of places and let $K=K^S K_S \sub \mathrm{G}(\A_f)$ be a neat compact open subgroup. Let $K^\prime = K^S K_S^\prime$ be another compact open subgroup with $K^\prime_S \sub K_S$ normal. Let $\cF \in D(\mathrm{Sh}_{\mathrm{G}(\A_f)}(\ol{\mf{X}}^\mathrm{G})$. Then there is a natural isomorphism
\[ 
R\Gamma(K/K^\prime , R\Gamma(\ol{X}^\mathrm{G}_{K^\prime},\cF)) \cong R\Gamma(\ol{X}^\mathrm{G}_K,\cF) 
\]
which is equivariant for the action of $\cH(\mathrm{G}(\A_f^S),K^S)$ on both sides.
\end{prop}

\begin{proof}
By the formalism above, $R\Gamma(\ol{X}^\mathrm{G}_K,\cF)$ with its $\cH(\mathrm{G}(\A_f^S),K^S)$-action is computed by the object $R\Gamma(K, R\Gamma(\ol{\mf{X}}^\mathrm{G},\cF)) \in D(\cH(\mathrm{G}(\A_f^S),K^S))$, and $R\Gamma(\ol{X}^\mathrm{G}_{K^\prime},\cF)$ with its actions of $\cH(\mathrm{G}(\A_f^S),K^S)$ and $K/K^\prime$ is computed by the object $R\Gamma(K^\prime, R\Gamma(\ol{\mf{X}}^\mathrm{G},\cF)) \in D(\cH(\mathrm{G}(\A_f^S),K^S)\times K/K^\prime)$. To conclude, we use the formalism, noting that we have a (natural) isomorphism
\[
R\Gamma(K, R\Gamma(\ol{\mf{X}}^\mathrm{G},\cF)) \cong R\Gamma(K/K^\prime, R\Gamma(K^\prime, R\Gamma(\ol{\mf{X}}^\mathrm{G},\cF))) 
\]
in $D(\cH(\mathrm{G}(\A_f^S),K^S))$ and that
\[
    \xymatrix@C+2pc{ D(\cH(\mathrm{G}(\A_f^S),K^S)\times K/K^\prime) \ar[r]^-{R\Gamma(K/K^\prime,-)} \ar[d]^{\forget} & D(\cH(\mathrm{G}(\A_f^S),K^S)) \ar[d]^{\forget} \\ D(K/K^\prime) \ar[r]^-{R\Gamma(K/K^\prime,-)} & D(\Z) }
\]
commutes up to natural isomorphism. To see this last point, note that the corresponding underived diagram commutes up to natural isomorphism, and then use that the forgetful morphism $\mathrm{Mod}(\cH(\mathrm{G}(\A_f^S),K^S)\times K/K^\prime) \to \mathrm{Mod}(K/K^\prime)$ preserves injectives since its left adjoint $\cH(\mathrm{G}(\A_f^S),K^S)\otimes_\Z -$ is exact.
\end{proof}

Again, there are obvious versions with $\Z$ replaced by a commutative ring $R$, and equivariant versions. For the next proposition, we continue to use the notation of Proposition \ref{finite to finite basic}; $K = K^S K_S$ and $K^\prime =K^S K_S^\prime$ with $K_S^\prime \sub K_S$ normal. Below, tensor products are given diagonal actions.

\begin{prop}\label{going from finite to finite} Let $V$ and $W$ be left $\Z[G(\A_f^S)\times K_S]$-modules which are finite and free as $\Z$-modules, and assume that the $K^\prime$-action on $W$ is trivial. We may view both $V$ and $W$ as $G(\A_f^S)\times K_S$-equivariant local systems on $\ol{\mf{X}}^\mathrm{G}$. Then we have a $\cH(\mathrm{G}(\A_f^S),K^S)$-equivariant isomorphism
\[
R\Gamma(\ol{X}_K^\mathrm{G},V\otimes_\Z W) \cong R\Gamma(K/K^\prime, R\Gamma(\ol{X}_{K^\prime}^\mathrm{G}, V) \otimes_\Z W)
\]
in $D(\Z)$. 
\end{prop}

\begin{proof} 
By Proposition \ref{finite to finite basic} it suffices to show that 
\[
R\Gamma(\ol{X}_{K^\prime}^\mathrm{G},V\otimes_\Z W) \cong R\Gamma(\ol{X}_{K^\prime}^\mathrm{G}, V) \otimes_\Z W
\]
$\cH(\mathrm{G}(\A_f^S),K^S)$-equivariantly in $D(K/K^\prime)$, which would follow from an isomorphism
\[
R\Gamma(K^\prime, R\Gamma(\ol{\mf{X}}^\mathrm{G},V\otimes_\Z W)) \cong R\Gamma(K^\prime, R\Gamma(\ol{\mf{X}}^\mathrm{G}, V)) \otimes_\Z W
\]
in $D(\cH(\mathrm{G}(\A_f^S),K^S)\times K/K^\prime)$, where $W$ is viewed as a trivial $\cH(\mathrm{G}(\A_f^S),K^S))$-module. To prove this, we argue as follows. First, note that trivial local systems are acyclic for $\Gamma(\ol{\mf{X}}^\mathrm{G},-)$, and so one computes that
\[
R\Gamma(\ol{\mf{X}}^\mathrm{G},V\otimes_\Z W) \cong \Gamma(\ol{\mf{X}}^\mathrm{G},V \otimes_\Z W) = \Gamma(\ol{\mf{X}}^\mathrm{G},V) \otimes_\Z W \cong R\Gamma(\ol{\mf{X}}^\mathrm{G},V) \otimes_\Z W
\]
where the middle equality follows from the equality $\Gamma(\ol{\mf{X}}^\mathrm{G},V) = \mathrm{Fun}(\pi_0(\ol{\mf{X}}^\mathrm{G}),V)$ and similarly for $V\otimes_\Z W$, using that $W$ is a finite free $\Z$-module (here $\mathrm{Fun}(X,Y)$ denotes the functions between two sets $X$ and $Y$). To conclude from here, it suffices to show that the diagram 
\[
    \xymatrix@C+2pc{ D(\mathrm{G}(\A_f^S)\times K_S) \ar[r]^-{R\Gamma(K^\prime,-)} \ar[d]^{-\otimes_\Z W} & D(\cH(\mathrm{G}(\A_f^S),K^S)\times K/K^\prime) \ar[d]^{-\otimes_\Z W} \\ D(\mathrm{G}(\A_f^S)\times K_S) \ar[r]^-{R\Gamma(K^\prime,-)} & D(\cH(\mathrm{G}(\A_f^S),K^S)\times K/K^\prime) }
\]
commutes. The corresponding underived diagram commutes, so it suffices to show that there are enough injectives $I$ in $\mathrm{Mod}(\mathrm{G}(\A_f^S)\times K_S)$ for which $I\otimes_\Z W$ is injective. For this, we may take the injectives $I_M \defeq \mathrm{Fun}(\mathrm{G}(\A_f^S) \times K_S,M)$ for injective $\Z$-modules $M$, with the left action induced by right translation. These are enough, and by a standard untwisting argument (using that $W$ is a finite free $\Z$-module) $I_{M}\otimes_\Z W \cong I_{M\otimes_\Z W}$; note that $M\otimes_\Z W$ is an injective $\Z$-module since $W$ is finite free.
\end{proof}

Once again we have an obvious analogue for arbitrary commutative rings; we will use it for $\Lambda = \Z/p^r$.

\subsection{Homological algebra}\label{comm alg}

In this subsection we collect some algebraic facts and constructions that we will need. We start by discussing idempotents and direct summands in derived categories. Let $\cD$ be an additive category. We assume that $\cD$ is \emph{idempotent complete}, meaning that if $X$ is an object of $\cD$ and $e\in \mathrm{End}_{\cD}(X)$ is idempotent, then there exists a direct sum decomposition $X \cong Y \oplus Z$ given in part by morphisms $i : Y \to X$ and $p : X \to Y$ such that $ip=e$. A choice of $i$ and $p$ gives an additive homomorphism
\[
 pr_{X,Y} : \mathrm{End}_{\cD}(X) \to \mathrm{End}_{\cD}(Y)
\]
given by $f \mapsto pfi$. If $R$ is a commutative ring, then we define an action of $R$ on $X \in \cD$ to be a ring homomorphism $a_X : R \to \mathrm{End}_{\cD}(X)$. Suppose that $R$ acts on $X$ and that $Y$ is a direct summand of $X$ corresponding to an idempotent $e=ip$ as above. We single out two situations that will be relevant for this paper. 
\begin{enumerate}
\item Assume that we also have an action $a_Y$ of $R$ on $Y$. Then we say that $Y$ is an $R$-equivariant direct summand of $X$ if $a_Y = pr_{X,Y}\circ a_X$. 
\item Assume that $e$ commutes with $a_X(R)$, without assuming that there is a given action of $R$ on $Y$. Then one may define an action of $R$ on $Y$ by $a_Y \defeq pr_{X,Y}\circ a_X$; one checks easily that this is a ring homomorphism. By construction, $Y$ is then an $R$-equivariant direct summand of $X$.
\end{enumerate}
In the situation when $\cD$ is a derived category of an abelian category $\cA$, one has the following compatibilities. The induced morphism on cohomology $H^\ast(i) : H^\ast(Y) \to H^\ast(X)$ is an isomorphism onto $\mathrm{Im}(H^\ast(e))$. One can define two actions of $R$ on $\mathrm{Im}(H^\ast(e))$: One by restriction from $H^\ast(X)$, since the action $R$ commutes with $H^\ast(e)$, and one by transporting the action on $H^\ast(Y)$ coming from $a_Y$ via the isomorphism $H^\ast(i)$. These actions are easily seen to agree.

\medskip

Our source of idempotents will come from the following situations. Keep the notation above. Our first situation is when $T \sub \mathrm{End}_{\cD}(X)$ is a commutative subring with finitely many maximal ideals, and we have a decomposition 
$$ T = \prod_{\mf{m}}T_\mf{m} $$
where $\mf{m}$ runs over the maximal ideals of $T$. This happens for example if $T$ has finite cardinality, or is a finite $\Zp$-algebra. Fix a particular maximal ideal $\mf{m}$, then the identity in $T_\mf{m}$ is an idempotent $e\in T$. The other situation is inspired by Hida theory. Suppose that $f \in \mathrm{End}_{\cD}(X)$ generates a subring of finite cardinality (for example when $\mathrm{End}_{\cD}(X)$ is finite). Then, for large enough $k$, $f^{k!}$ stabilizes and is an idempotent. For further discussion on idempotents, see \cite[\S 2.4]{khare-thorne} and \cite[\S 3.2]{newton-thorne}.

\medskip

We move on. Later in Theorem \ref{direct summand}, which is the main technical result of this section, we will need to ``glue'' complexes computing cohomology groups of towers of locally symmetric spaces at finite level, as the level varies, and deduce consequences in the limit from information at finite level. The following two lemmas will provide a rather concrete context for this, and in \S \ref{subsec: ordinary parts} we will discuss explicit complexes computing cohomology of locally symmetric spaces to which they may be applied. Let $\mathrm{G}_\infty$ be a compact $p$-adic analytic group, which is the inverse limit of a countable sequence
\[
\dots \to \mathrm{G}_3 \to \mathrm{G}_2 \to \mathrm{G}_1
\]
of finite groups. Let $\mathrm{H}_c = \mathrm{Ker}(\mathrm{G}_\infty \to \mathrm{G}_c)$.  The following is a variation on \cite[Lemma 2.13]{khare-thorne}. Throughout, we will say that a complex $C^\bullet$ is \emph{concentrated} in an interval $[a,b]$ (with $a,b$ always assumed to be integers) if $C^i=0$ whenever $i \notin [a,b]$.

\begin{lemma}\label{gluing complexes}
Let $V_c^\bullet$ be a complex of finitely generated injective $\mathrm{G}_c$-representations over $\Lambda$, together with $\mathrm{G}_{c}$-equivariant isomorphisms $f_{c} : V_c^\bullet \to \Gamma(\mathrm{H}_c,V_{c+1}^\bullet)$, for $1\leq c < \infty$. Assume that $V_c^\bullet$ is concentrated in $[a,b]$ for all $c$.  Then:

\smallskip

\begin{enumerate}
\item $V_\infty^\bullet = \varinjlim_{c<\infty}V_c^\bullet $ is a complex of smooth admissible injective $\mathrm{G}_\infty$-representations, concentrated in $[a,b]$. Moreover, we have natural isomorphisms $F_c : V_c^\bullet \to \Gamma(\mathrm{H}_c,V_{\infty}^\bullet)$ for all $c$. 

\smallskip

\item The natural map $\varinjlim_{c<\infty}H^\ast(V_c^\bullet) \to H^\ast(V_\infty^\bullet)$ is an isomorphism.

\smallskip

\item Suppose that $W_c^\bullet$, $1\leq c < \infty$, is another system of complexes of finitely generated injective $\mathrm{G}_c$-representations over $\Lambda$, concentrated in $[a^\prime,b^\prime]$, together with $\mathrm{G}_{c}$-equivariant isomorphisms $g_{c} : W_c^\bullet \to \Gamma(\mathrm{H}_c,W_{c+1}^\bullet)$. Suppose that we have morphisms $t_c \in \Hom_{K(\mathrm{G}_c,\Lambda)}(V_c^\bullet, W_c^\bullet)$ such that $t_{c+1}\circ f_c = g_c \circ t_c$ in $K(\mathrm{G}_{c},\Lambda)$ for $1\leq c <\infty$. Then there exists a unique $t_\infty \in \Hom_{K(\mathrm{G}_\infty,\Lambda)}(V_\infty^\bullet, W_\infty^\bullet)$ such that the natural diagram 
\[
    \xymatrix{ \Gamma(\mathrm{H}_c,V_\infty^\bullet) \ar[r]^-{t_\infty} \ar[d] & \Gamma(\mathrm{H}_c,W_\infty^\bullet) \ar[d] \\ V_c^\bullet \ar[r]^-{t_c} & W_c^\bullet }
\]
commutes in $K(\mathrm{G}_c,\Lambda)$.

\smallskip

\item Suppose that we have $V_c^\bullet$ and $W_c^\bullet$ as above for $1\leq c \leq \infty$, but now suppose that we have a $\mathrm{G}_\infty$-equivariant map $u_\infty : V_\infty^\bullet \to W_\infty^\bullet$. Set $u_c = \Gamma(H_c,u_\infty): V_c^\bullet \to W_c^\bullet$ and let $\mathrm{Cone}(u_c)$ denote the mapping cone (for $1\leq c\leq \infty$). Then $\mathrm{Cone}(u_\infty)$ is a complex of smooth  admissible injective $G_\infty$-representations, and we have $\Gamma(H_c,\mathrm{Cone}(u_\infty))= \mathrm{Cone}(u_c)$, which is a complex of finitely generated injective $\mathrm{G}_c$-representations.

\end{enumerate}
\end{lemma}

\begin{proof}
We start with part (1). The transition maps $V_c^\bullet \to V_{c+1}^\bullet$ are the $f_c$ composed with the natural inclusions $\Gamma(\mathrm{H}_c,V_{c+1}^\bullet) \hookrightarrow V_{c+1}^\bullet$. Then everything is clear, apart from injectivity of the $V_\infty^i$. For this, first note that by \cite[Proposition 2.1.9]{emerton-ord} it suffices to prove that the $V_\infty^i$ are injective in the category of admissible $G_\infty$-representations. Then note that if $M$ is an another admissible $G_\infty$-representation over $\Lambda$, then $\Hom_{\mathrm{G}\infty}(M, V_\infty^i) \cong \varprojlim_c \Hom_{\mathrm{G}_c}(\Gamma(\mathrm{H}_c,M),V_c^i)$, and the right hand side gives an exact functor in $M$ by the injectivity of $V_c^i$, the finiteness of the $\Hom_{\mathrm{G}_c}(\Gamma(\mathrm{H}_c,M),V_c^i)$ and the Mittag-Leffler condition. This finishes part (1). Part (2) then follows from exactness of direct limits. Part (3) is proved exactly as \cite[Lemma 2.13(3)]{khare-thorne}. Finally part (4) follows immediately from the definition of the mapping cone.
\end{proof}

\begin{lemma}\label{idempotents}
Assume that we have $(V_c,f_c)_{1\leq c \leq \infty}$ as in Lemma \ref{gluing complexes}. Assume further that we have idempotents $e_c \in \End_{K(\mathrm{G}_c,\Lambda)}(V_c^\bullet)$ such that $e_{c+1}\circ f_c = f_c \circ e_c$ in $K(\mathrm{G}_{c+1},\Lambda)$. By Lemma \ref{gluing complexes}(3), this data is equivalent to an idempotent $e_\infty \in \End_{K(\mathrm{G}_\infty,\Lambda)}(V_\infty^\bullet)$. Then the direct summand of $V_\infty^\bullet$ in $D_{sm}(\mathrm{G}_\infty,\Lambda)$ cut out by $e_\infty$ is isomorphic to a complex $W_\infty^\bullet$ of admissible injective $\mathrm{G}_\infty$-representations concentrated in $[a,b]$. Moreover, for each $1\leq c < \infty$, $W_c^\bullet \defeq \Gamma(\mathrm{H}_c,W_\infty^\bullet)$ is a complex of finitely generated injective $\mathrm{G}_c$-representations, and it is isomorphic in $D(\mathrm{G}_c,\Lambda)$ to the direct summand of $V_c^\bullet$ cut out by $e_c$.
\end{lemma}

\begin{proof}
We show that the direct summand of $V_\infty^\bullet$ in $D_{sm}(\mathrm{G}_\infty,\Lambda)$ cut out by $e_\infty$ is isomorphic to a complex $W_\infty^\bullet$ of admissible injective $\mathrm{G}_\infty$-representations concentrated in $[a,b]$; the rest then follows, using functoriality and the fact that $\Gamma(\mathrm{H}_c,-)$ preserves injectives. We may work in $D_{\mathrm
{adm}}(\mathrm{G}_\infty, \Lambda)$. By \cite[Equation (2.2.12)]{emerton-ord1}, the category of admissible $\mathrm{G}_\infty$-representations over $\Lambda$ is anti-equivalent to the category of finitely generated left modules for the (left and right) Noetherian ring $\Lambda \llbracket \mathrm{G}_\infty \rrbracket$. The assertion then follows from Lemma \ref{direct summands of perfect complexes} below.
\end{proof}

For convenience, we use chain complexes in the next lemma. Truncations always refer to good truncations.

\begin{lemma}\label{direct summands of perfect complexes}
Let $R$ be a left Noetherian ring (in particular, $R$ need not be commutative); all $R$-modules will be left modules. Assume that $C_\bullet$ is a complex of finitely generated projective $R$-modules concentrated in $[a,b]$, and assume that we have a direct sum decomposition $C_\bullet \cong D_\bullet \oplus E_\bullet$ in $D(R)$. Then $D_\bullet$ is isomorphic in $D(R)$ to a complex of finitely generated projective $R$-modules $D_\bullet^{\prime}$ concentrated in $[a,b]$.
\end{lemma}

\begin{proof}
This is presumably well known, we sketch a proof. Note that if $i\notin [a,b]$, then $H_i(D_\bullet)=0$ and $\mathrm{Ext}_R^i(D_\bullet, M)=0$ for any $R$-module $M$, since both these things are true for $C_\bullet$ and are preserved under taking direct summands. We explain how to modify $D_\bullet$ to satisfy the requirements of the lemma. First, by truncation, we may assume that $D_i=0$ in degrees $<a$. Hence, we may replace it with a quasi-isomorphic complex of projective $R$-modules which is $0$ in degrees $<a$. By truncation, using the facts in the second sentence of the proof and some standard dimension shifting arguments, we may assume that $D_i = 0$ when $i \notin [a,b]$. Finally, applying \cite[Chapter II, \S 5, Lemma 1]{mumford} (whose proof does not require commutativity of the ring in question) and remembering that finitely generated flat $R$-modules are projective, we may assume that the $D_i$ are in addition finitely generated. This finishes the sketch of proof.
\end{proof}

\medskip

We now move on to discuss some results related to continuity of actions on objects in derived categories which we will need. We regard $\Lambda$ as a topological ring with the discrete topology. Let $\mathbb{T}$ be a commutative topological $\Lambda$-algebra, with the topology defined by a decreasing sequence of ideals $\dots I_k\subseteq I_{k-1}\subseteq \dots \subseteq I_1\subset \mathbb{T}$ such that $\mathbb{T}/I_k$ is a finite free $\Lambda$-algebra for every $k\in \Z_{\geq 1}$.

\begin{lemma}\label{injective resolution with continuous action}  Let $\mathrm{G}$ be a compact $p$-adic analytic group.
Let $K^\bullet$ be a bounded below complex of smooth $\mathrm{G}$-representations with $\Lambda$-coefficients and with a continuous action of $\mathbb{T}$ on each term that commutes with the $\Lambda[\mathrm{G}]$-module structure. Then there exists a bounded below complex $I^\bullet$ of injective objects in $\mathrm{Mod}_{\mathrm{sm}}(\mathrm{G},\Lambda)$ with a continuous $\mathbb{T}$-action on each term, that commutes with the $\Lambda[G]$-module structure, and such that there is a $\mathbb{T}$-equivariant quasi-isomorphism $K^\bullet \to I^\bullet$.
\end{lemma}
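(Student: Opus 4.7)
The plan is to build the resolution by first proving an object-level version of the statement and then propagating it to a complex using a standard inductive construction. The object-level sub-lemma is: given any smooth $\mathrm{G}$-representation $M$ with a continuous $\mathbb{T}$-action commuting with the $\Lambda[\mathrm{G}]$-module structure, there exists a $\mathbb{T}$-equivariant embedding $M \hookrightarrow J$ into an object $J$ that is injective in $\mathrm{Mod}_{\mathrm{sm}}(\mathrm{G},\Lambda)$ and carries a continuous $\mathbb{T}$-action commuting with $\mathrm{G}$.

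To prove the sub-lemma, I would first exploit the continuity of the $\mathbb{T}$-action (combined with the discrete topology on the smooth $\mathrm{G}$-representation $M$) to write $M = \varinjlim_k M[I_k]$, where $M[I_k]$ denotes the submodule annihilated by $I_k$; this is a smooth $\mathrm{G}$-representation over $\mathbb{T}/I_k$. The category $\mathrm{Mod}_{\mathrm{sm}}(\mathrm{G},\mathbb{T}/I_k)$ is a Grothendieck category with enough injectives, and since $\mathbb{T}/I_k$ is finite free over $\Lambda$, restriction of scalars has an exact left adjoint $(-\otimes_\Lambda \mathbb{T}/I_k)$, so it preserves injectives: any injective in $\mathrm{Mod}_{\mathrm{sm}}(\mathrm{G},\mathbb{T}/I_k)$ is injective in $\mathrm{Mod}_{\mathrm{sm}}(\mathrm{G},\Lambda)$. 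Next, I would inductively build a chain of injections $J_1 \hookrightarrow J_2 \hookrightarrow \dotsb$ where each $J_k$ is injective in $\mathrm{Mod}_{\mathrm{sm}}(\mathrm{G},\mathbb{T}/I_k)$ and receives a compatible embedding of $M[I_k]$: having constructed $J_{k-1}$, form the pushout $J_{k-1} \sqcup_{M[I_{k-1}]} M[I_k]$ in $\mathrm{Mod}_{\mathrm{sm}}(\mathrm{G},\mathbb{T}/I_k)$ and embed it into an injective $J_k$. Setting $J \defeq \varinjlim_k J_k$ gives a smooth $\mathrm{G}$-representation with continuous $\mathbb{T}$-action that contains $M$. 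To see that $J$ is injective in $\mathrm{Mod}_{\mathrm{sm}}(\mathrm{G},\Lambda)$, I would use that this category is locally noetherian: the generators $\Lambda[\mathrm{G}/H]$ for open subgroups $H \subseteq \mathrm{G}$ are finite (hence noetherian) since $\Lambda$ is finite and $\mathrm{G}/H$ is finite, so filtered colimits of injectives are injective by the standard theorem of Gabriel.

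Given the sub-lemma, I would build the complex $I$ inductively in the usual way. Shifting if necessary, assume $K^n = 0$ for $n < 0$. Set $I^{-1} \defeq 0$; having constructed injectives $I^0,\dotsc,I^{n-1}$ equipped with continuous $\mathbb{T}$-action and differentials, together with a $\mathbb{T}$-equivariant map $K^{\leq n-1} \to I^{\leq n-1}$ compatible with differentials, form the pushout $P^n \defeq K^n \sqcup_{K^{n-1}} I^{n-1}$ (taken in the category of smooth $\mathrm{G}$-representations with continuous $\mathbb{T}$-action, which is closed under finite colimits) and apply the sub-lemma to embed $P^n$ into an injective $I^n$; the differential $I^{n-1} \to I^n$ is the evident composition. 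The resulting map $K \to I$ is a $\mathbb{T}$-equivariant quasi-isomorphism by a standard Cartan--Eilenberg type argument.

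The main obstacle in the plan is ensuring that the injectivity constructed level-by-level (with respect to the coefficient rings $\mathbb{T}/I_k$) can be promoted to injectivity in $\mathrm{Mod}_{\mathrm{sm}}(\mathrm{G},\Lambda)$ after passing to the filtered colimit; this hinges on the local noetherianity of $\mathrm{Mod}_{\mathrm{sm}}(\mathrm{G},\Lambda)$, which is available here precisely because $\Lambda$ is a finite ring and $\mathrm{G}$ is compact. Verifying that the pushouts used in the inductive step stay in the correct category (smooth $\mathrm{G}$-representations with continuous $\mathbb{T}$-action) is routine but must be tracked carefully to preserve the commutation of the two actions.
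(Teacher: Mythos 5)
Your proposal takes essentially the same approach as the paper's proof: the same reduction $M = \varinjlim_k M[I_k]$, the same inductive pushout construction of the injective hull at the object level, the same observation that restriction of scalars along $\Lambda \to \mathbb{T}/I_k$ preserves injectives because its left adjoint $(-\otimes_\Lambda \mathbb{T}/I_k)$ is exact, and then passage from objects to bounded-below complexes. The only differences are cosmetic: where you re-derive local noetherianity of $\mathrm{Mod}_{\mathrm{sm}}(\mathrm{G},\Lambda)$ to get that filtered colimits of injectives are injective, the paper cites Emerton's lemma to the same effect (and indeed the paper's follow-up remark isolates precisely the Noetherian input you highlight); and where you spell out the Cartan--Eilenberg inductive step, the paper instead verifies that the relevant category is abelian and invokes the corresponding Stacks Project tag.
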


\begin{proof} First, we prove that for every $M\in \mathrm{Mod}_{\mathrm{sm}}(\mathrm{G},\Lambda)$ with a continuous $\mathbb{T}$-action there is an injection 
\[
M\hookrightarrow I(M) 
\] 
where $I(M)\in \mathrm{Mod}_{\mathrm{sm}}(\mathrm{G},\Lambda)$ is an injective object with a continuous $\mathbb{T}$-action. For each $k\in \Z_{\geq 1}$, let $M_k$ denote the $\Lambda[\mathrm{G}]$-submodule of $M$ on which the $\mathbb{T}$-action factors through $\Lambda_k\defeq \mathbb{T}/I_{k}$. Since $M$ is a discrete $\mathbb{T}$-module, we have $M=\varinjlim_{k}M_k$. We will construct $I(M)\defeq \varinjlim_{k}I(M)_k$ inductively, where each $I(M)_k$ will be an injective object in $\mathrm{Mod}_{\mathrm{sm}}(\mathrm{G},\Lambda_k)$. The forgetful functor from $\mathrm{Mod}_{\mathrm{sm}}(\mathrm{G}, \Lambda_k)$ to $\mathrm{Mod}_{\mathrm{sm}}(\mathrm{G},\Lambda)$ has an exact left adjoint $-\otimes_{\Lambda}\Lambda_k$ (exact because $\Lambda_k$ is finite free over $\Lambda$), so it preserves injectives, and hence the $I(M)_k$ will be injective in $\mathrm{Mod}_{\mathrm{sm}}(\mathrm{G},\Lambda)$ as well. We conclude that $\varinjlim_{k}I(M)_k$ is also an injective object in $\mathrm{Mod}_{\mathrm{sm}}(\mathrm{G},\Lambda)$ by~\cite[Lemma 2.1.3]{emerton-ord}. 

\medskip

For $k=1$, simply let $M_1 \hookrightarrow I(M)_1$ be any embedding of $M_1$ into an injective object of $\mathrm{Mod}_{\mathrm{sm}}(\mathrm{G},\Lambda_1)$. Now let $k\geq 2$ and assume that we have already constructed $I(M)_{k-1}$. Define $M'_{k}$ to be the pushout $M_k\oplus_{M_{k-1}}I(M)_{k-1}$ in $\mathrm{Mod}_{\mathrm{sm}}(\mathrm{G},\Lambda_k)$. Note that $M_{k-1}$ injects into both $M_k$ and $I(M)_{k-1}$, so that we have injections $M_k\hookrightarrow M'_k$ and $I(M)_{k-1}\hookrightarrow M'_k$. Then we let $I(M)_k$ be an injective object in $\mathrm{Mod}_{\mathrm{sm}}(\mathrm{G},\Lambda_k)$ such that $M'_{k}\hookrightarrow I(M)_k$. This gives us  a compatible system of injections $M_k \hookrightarrow I(M)_k$, and hence the desired $I(M)$.

\medskip

This now allows us to prove the lemma. If we forget about the topology on $\mathbb{T}$, the category $\mathrm{Mod}_{\mathrm{sm}}(\mathrm{G}, \mathbb{T})$ is abelian. The full subcategory $\cA$ consisting of objects $M$ for which the action map 
\[
\mathbb{T}\times M\to M
\]
is continuous, when $\mathbb{T}$ is equipped with the above topology and $M$ is equipped with the discrete topology, is an abelian subcategory.
One way to see this is by noting that $\mathbb{T}$ acts continuously on $M$ if and only if the annihilator ${\rm Ann}_{\mathbb{T}}(m)$ is open in $\mathbb{T}$ for every $m\in M$, and using this criterion it is easy to check that $\cA$ is closed under taking kernels and cokernels, contains the zero object and is closed under taking the direct sum of two objects. Let $\cI$ denote the subset of objects of $\cA$ that are injective after applying the forgetful functor to $\mathrm{Mod}_{\mathrm{sm}}(\mathrm{G},\Lambda)$. The set $\cI$ contains the zero object. We have shown above that any object in $\cA$ admits an injective morphism to an object of $\cI$. The fact that any bounded below complex $K^\bullet$ with terms in $\cA$ admits a quasi-isomorphism to a bounded below complex with terms in $\cI$ then follows from~\cite[Tag 05T6]{stacks-project}.
\end{proof}

\begin{remark}
We remark that we crucially used that the ring $\Lambda[\![\mathrm{G}]\!]$ is Noetherian in the proof. This would fail if we worked with $\cO_C/p^r$-coefficients instead of $\Lambda$-coefficients.
\end{remark}

The following proposition and its proof are due to James Newton. We thank him for allowing us to include it here.

\begin{prop}\label{getting rid of C} Suppose $\mathrm{G}$ is a finite group and $P^\bullet$ is a perfect complex in $D(\mathrm{G},\Lambda)$ with an action of $\mathbb{T}$. Suppose $I^\bullet$ is a bounded below complex of injectives in $D(\mathrm{G}, \Lambda)$ with a continuous $\mathbb{T}$-action on each term and such that $P^\bullet$ is a $\mathbb{T}$-equivariant
direct summand of $I^\bullet$ in $D(\mathrm{G},\Lambda)$. Then the map
\[
\mathbb{T}\to \mathrm{End}_{D(\mathrm{G},\Lambda)}(P^\bullet)
\]
is continuous for the discrete topology on the target.
\end{prop}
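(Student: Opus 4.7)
The plan is to exploit the fact that $P$ is a compact object of $D(\mathrm{G}, \Lambda)$ in a strong sense: namely, it is quasi-isomorphic to a bounded complex of finitely generated projective $\Lambda[\mathrm{G}]$-modules. I will replace $P$ with such a strict representative without loss of generality. Let $\phi : \mathbb{T} \to \mathrm{End}_{D(\mathrm{G},\Lambda)}(P)$ denote the action map; the target is a finite set because $\Lambda[\mathrm{G}]$ is a finite ring and $P$ has finitely many nonzero finitely generated projective terms, so it suffices to exhibit an open ideal $J \subseteq \mathbb{T}$ contained in $\ker \phi$.

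By hypothesis there are $\mathbb{T}$-equivariant morphisms $\iota : P \to I$ and $\pi : I \to P$ in $D(\mathrm{G},\Lambda)$ with $\pi \circ \iota = \mathrm{id}_P$. Since $I$ is a bounded below complex of injectives, it is K-injective, so $\mathrm{Hom}_{D(\mathrm{G},\Lambda)}(P, I) = \mathrm{Hom}_{K(\mathrm{G},\Lambda)}(P, I)$, and $\iota$ can be represented by an honest chain map $\tilde\iota : P \to I$. Because $P$ is bounded and each $P^n$ is a finitely generated $\Lambda[\mathrm{G}]$-module, $\tilde\iota$ is completely determined by its values on a finite set $\{x_1, \dotsc, x_N\}$ of generators of the $P^n$.

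The continuous action of $\mathbb{T}$ on each discrete $\Lambda[\mathrm{G}]$-module $I^n$ (commuting with the $\Lambda[\mathrm{G}]$-action) forces the annihilator of every single element, and hence of the finite set $\{\tilde\iota(x_1), \dotsc, \tilde\iota(x_N)\}$, to be an open ideal $J \subseteq \mathbb{T}$. For any $t \in J$, the chain map $m_t \circ \tilde\iota$ vanishes on every generator, hence is identically zero, so $m_t \circ \iota = 0$ in $\mathrm{Hom}_{D(\mathrm{G},\Lambda)}(P,I)$. The $\mathbb{T}$-equivariance of $\iota$ gives $\iota \circ \phi(t) = m_t \circ \iota$ in $D(\mathrm{G},\Lambda)$, and composing with $\pi$ yields $\phi(t) = \pi \circ \iota \circ \phi(t) = \pi \circ m_t \circ \iota = 0$. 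Thus $J \subseteq \ker \phi$, proving continuity.

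The argument is essentially formal once the setup is in place; the only real content is the passage from $\iota$ as a morphism in the derived category to a chain-level representative $\tilde\iota$ that is pinned down by finitely many elements of the discrete modules $I^n$. The main point to verify will therefore be that choosing a perfect strict representative of $P$ is compatible with the $\mathbb{T}$-equivariant splitting (which it is, since the splitting lives in the derived category and passes through any quasi-isomorphism), and that the continuity of the $\mathbb{T}$-action on each $I^n$ indeed implies that a finite set of elements has open annihilator, which is immediate from the definition of a smooth $\mathbb{T}$-module at the topological level.
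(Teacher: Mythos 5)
Your argument is correct, and it runs along essentially the same lines as the paper's: replace $P$ by a strict bounded complex of finitely generated projective $\Lambda[\mathrm{G}]$-modules, lift $\iota$ to a genuine chain map $\tilde\iota\colon P\to I$, observe that the continuity of the $\mathbb{T}$-action on each discrete $I^n$ forces the annihilator of the finite image generating set (equivalently, of the subcomplex $\tilde\iota(P)$) to be an open ideal, and conclude via $\mathbb{T}$-equivariance of $\iota$ and the splitting $\pi\circ\iota=\mathrm{id}_P$. The one point of genuine divergence is how you handle $\pi$: the paper lifts $\pi$ to a chain map $g\colon I\to P$ and closes the argument in a separate lemma by comparing $g\circ T\circ f$ with a chain lift $T_P$ of $\phi(T)$ up to null homotopy; to represent $g$ at the chain level it invokes the observation that the terms of $P$ are \emph{injective} $\Lambda[\mathrm{G}]$-modules (since $\Lambda$ is self-injective and $\mathrm{G}$ is finite). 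You avoid this entirely by leaving $\pi$ as a morphism in $D(\mathrm{G},\Lambda)$ and composing with it only after showing $m_t\circ\iota=0$ there; this removes the need for the self-injectivity observation and is a mild streamlining. Both arguments are sound.
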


\begin{proof} We can replace $P^\bullet$ with a bounded complex of finite projective $\Lambda[\mathrm{G}]$-modules, which are also injective $\Lambda[\mathrm{G}]$-modules since $\Lambda$ is self-injective and $\mathrm{G}$ is a finite group. The fact that $P^\bullet$ is a direct summand of $I^\bullet$ in $D(\mathrm{G},\Lambda)$ implies that we have $\Lambda[\mathrm{G}]$-equivariant morphisms of complexes 
\[
f\colon P^\bullet \to I^\bullet,
\]
\[
g\colon I^\bullet \to P^\bullet
\] 
giving the $\mathbb{T}$-equivariant splitting in $D(\mathrm{G},\Lambda)$. (In particular, $g\circ f$ is homotopic to the identity on $P$). For each of the finitely many $n$ such that $P^n\not=0$, we choose generators $x^n_1, \dots, x^n_{k_n}$ of the finitely generated $\Lambda[\mathrm{G}]$-modules $P^n$. Since $\mathbb{T}$ acts continuously on $I^n$, the annihilator in $\mathbb{T}$ of $f(x^n_i)$ is an open ideal for each $i,n$. Taking the intersection over all the finitely many $i$ and $n$, we see that the annihilator in $\mathbb{T}$ of the subcomplex $f(P^\bullet)\subset I^\bullet$ is an open ideal.  The fact that the map 
\[
\mathbb{T}\to \mathrm{End}_{D(\mathrm{G},\Lambda)}(P^\bullet)
\]
has open kernel is now an immediate consequence of Lemma~\ref{maps to 0} below. 
\end{proof}

\begin{lemma}\label{maps to 0} Suppose $T \in \mathbb{T}$ annihilates $f(P^\bullet)$. Then $T$ maps
to zero in $\mathrm{End}_{D(\mathrm{G},\Lambda)}(P^\bullet)$.
\end{lemma}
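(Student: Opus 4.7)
The plan is to use the splitting to transfer the action of $T$ from $P$ to $I$, where by hypothesis $T$ already acts as zero on the image of $f$. Concretely, recall the setup: we have morphisms of complexes $f \colon P \to I$ and $g \colon I \to P$ such that $g \circ f$ is homotopic to $\mathrm{id}_P$, and $f$ and $g$ are $\mathbb{T}$-equivariant \emph{in the derived category} $D(\mathrm{G},\Lambda)$ (where $\mathbb{T}$ acts on $P$ through $\End_{D(\mathrm{G},\Lambda)}(P)$ and on $I$ termwise).

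First I would observe that, although the $\mathbb{T}$-action on $P$ only exists in the derived category, the $\mathbb{T}$-action on $I$ is by honest endomorphisms of the complex. In particular, writing $T_I$ for the termwise action of $T$ on $I$ and $T_P$ for the image of $T$ in $\End_{D(\mathrm{G},\Lambda)}(P)$, the hypothesis that $T$ annihilates the subcomplex $f(P) \subseteq I$ means precisely that $T_I \circ f = 0$ as a morphism of complexes (and hence in $D(\mathrm{G},\Lambda)$).

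The key computation is then the following chain of equalities in $D(\mathrm{G},\Lambda)$:
\[ T_P \;=\; T_P \circ \mathrm{id}_P \;=\; T_P \circ g \circ f \;=\; g \circ T_I \circ f \;=\; g \circ 0 \;=\; 0, \]
where the second equality uses that $g \circ f$ is homotopic to $\mathrm{id}_P$ (so they agree in the derived category), and the third equality uses the $\mathbb{T}$-equivariance of $f$ in $D(\mathrm{G},\Lambda)$, namely $T_I \circ f = f \circ T_P$, together with the $\mathbb{T}$-equivariance of $g$, i.e.\ $g \circ T_I = T_P \circ g$ (either suffices to rewrite $T_P \circ g \circ f$ as $g \circ T_I \circ f$).

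There is essentially no obstacle here: the lemma is really just a formal consequence of the splitting and the $\mathbb{T}$-equivariance recorded in Proposition~\ref{getting rid of C}. The only point worth being careful about is that $\mathbb{T}$-equivariance is only asserted in $D(\mathrm{G},\Lambda)$ and not at the level of complexes, but this is exactly the category in which we are computing, so no difficulty arises.
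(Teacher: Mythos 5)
Your argument is correct and is essentially the paper's proof, phrased directly in $D(\mathrm{G},\Lambda)$ rather than via explicit chain-level homotopies: both exploit that $T_I\circ f=0$ as an honest morphism of complexes together with the $\mathbb{T}$-equivariant splitting $g\circ f\simeq\mathrm{id}_P$ to conclude $T_P=g\circ T_I\circ f=0$. Working in the derived category from the start (rather than lifting $T_P$ to a chain endomorphism and invoking injectivity of $P$ to realize the equality as a chain homotopy, as the paper does) is a mild streamlining but not a different route.
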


\begin{proof} Choose a lift $T_P$ of $T$ to an endomorphism of the complex $P^\bullet$ (such a lift is well-defined up to homotopy). Since $P^\bullet$ is a complex of injectives and $f$ and $g$ cut out $P^\bullet$ as a $\mathbb{T}$-direct summand of $I^\bullet$, $T_P$ coincides with $g \circ T \circ f$ up to a
$\Lambda[\mathrm{G}]$-linear null homotopy of $P^\bullet$. Since $T$ annihilates $f(P^\bullet)$, this shows that $T_P$ is a null homotopy of $P^\bullet$, which is what we needed to prove. 
\end{proof}

\subsection{Explicit complexes and ordinary parts}\label{subsec: ordinary parts} We now recall some material on explicit complexes that can be used to compute cohomology of locally symmetric spaces; this may be found (in varying levels of generality) in \cite{ash-stevens,hansen-thesis,khare-thorne}. We will then discuss some aspects of Hida theory in the situation we require.

As in \S \ref{subsec: loc sym space}, let $\mathrm{G}$ be a connected linear algebraic group over $\Q$. We let $C_{\A,\bullet}$ denote the complex of singular chains of $\ol{X}^\mathrm{G} \times \mathrm{G}(\A_f)$ with $\Z$-coefficients; this is naturally a left $\mathrm{G}(\Q) \times \mathrm{G}(\A_f)$-module under the diagonal left action of $\mathrm{G}(\Q)$ on $\ol{X}^\mathrm{G} \times \mathrm{G}(\A_f)$ and the left action by $\mathrm{G}(\A_f)$ given by inverting the natural right action on $\ol{X}^\mathrm{G} \times \mathrm{G}(\A_f)$ via the second factor. For any neat compact open subgroup $K\sub \mathrm{G}(\A_f)$ and any left $K$-module $V$, we set $C_{\A}^\bullet(K,V) \defeq \Hom_{\mathrm{G}(\Q)\times K}(C_{\A,\bullet},V)$, where $V$ is given the trivial $\mathrm{G}(\Q)$-action. The proof of \cite[Proposition 6.2]{khare-thorne} works in full generality and shows that $C_{\A}^\bullet(K,V) \cong R\Gamma(\ol{X}^\mathrm{G}_K,V)$ in $D(\Z)$ (see also \cite[Proposition 2.1.]{hansen-thesis}). Moreover, whenever $V$ is a $\mathrm{G}(\A_f)$-module, the action of double coset operators on $R\Gamma(\ol{X}^G_K,V)$ can be made explicit at the level of complexes on $C_{\A}^\bullet(K,V)$. More generally, if $K,K^\prime$ are neat compact open subgroups and $g\in \mathrm{G}(\A_f)$, we have a map $[KgK^\prime]^\ast : C_{\A}^\bullet(K^\prime,V) \to C_{\A}^\bullet(K,V)$ given by 
\[
\left( [KgK^\prime]^\ast (\varphi) \right) (\sigma) = \sum_i g_i \varphi(g_i^{-1}\sigma),
\]
where $\varphi \in C_{\A}^\bullet(K^\prime,V)$, $\sigma \in C_{\A,\bullet}$ and $KgK^\prime = \bigsqcup_i g_i K^\prime$; this is easily checked to be independent of the choice of $g_i$. 
If $K^\prime \sub K$ and $V$ is a left $K^\prime$-module, we write $\mathrm{Ind}_{K^\prime}^K V$ for the left $K$-module of functions $f : K \to V$ satisfying $f(k^\prime k) = k^\prime f(k)$ for all $k\in K, k^\prime \in K^\prime$, with the left $K$-action coming from right translation.

\begin{lemma}\label{pushforward of adelic complexes}
Let $K^\prime \sub K$ and let $V$ be a $K^\prime$-module. Then there is a natural isomorphism $C_\A^\bullet(K^\prime,V) \cong C_\A^\bullet(K,\mathrm{Ind}_{K^\prime}^K V)$.
\end{lemma} 

\begin{proof}
One defines maps $\mathrm{ext} : C_\A^\bullet(K^\prime,V) \to C_\A^\bullet(K,\mathrm{Ind}_{K^\prime}^K V)$ and $\mathrm{res} : C_\A^\bullet(K,\mathrm{Ind}_{K^\prime}^K V) \to C_\A^\bullet(K^\prime, V)$ by $\mathrm{ext}(\varphi)(\sigma)(k) = \varphi(k \sigma)$ and $\mathrm{res}(\psi)(\sigma) = \psi (\sigma) (1)$; it is then straightforward to check that they are mutual inverses.
\end{proof}

Note that in the entire discussion above we could have replaced $\ol{X}^\mathrm{G}$ by  $\partial \ol{X}^\mathrm{G}$, 
in which case we denote the corresponding complexes by $C_{\A,\partial}^\bullet(K,V)$. 

The complexes $C_{\A}^\bullet(K,V)$ are rather large. To define complexes computing $R\Gamma(\ol{X}^\mathrm{G}_K,V)$ with good finiteness properties, we fix a neat compact open $K_0$ and only consider compact opens $K \sub K_0$. Fix a finite triangulation of $\ol{X}^\mathrm{G}_{K_0}$ such that $\partial \ol{X}^\mathrm{G}_{K_0}$ is a subcomplex of the triangulation. Choose representatives $g_i$ of $\mathrm{G}(\Q) \backslash \mathrm{G}(\A_f)  / K_0$; the disjoint union of the embeddings $\wt{g}_i : \ol{X}^\mathrm{G} \to \ol{X}^\mathrm{G} \times \mathrm{G}(\A_f)$ induce an isomorphism
\[
\bigsqcup_i \Gamma_{0,i} \backslash \ol{X}^\mathrm{G} \toisom \ol{X}_{K_0}^\mathrm{G},
\]
where $\Gamma_{0,i} = \mathrm{G}(\Q) \cap g_i K_0 g_i^{-1}$. Restricting to a component, the triangulation of $\ol{X}^\mathrm{G}_{K_0}$ gives rise to a $\Gamma_{0,i}$-invariant triangulation of $\ol{X}^\mathrm{G}$ with the boundary as a subcomplex, giving bounded complexes $F_\bullet^i$ and $F_\bullet^{i,\partial}$ of finite free $\Z[\Gamma_{0,i}]$-modules. For each $K_0$-module $V$ and $? \in \{\emptyset,\partial\}$, this gives complexes 
\[
C_{?}^\bullet(K,V) \defeq \bigoplus_i \Hom_{\Z[\Gamma_{0,i}]}(F_\bullet^{i,?},V_i),
\]
where $V_i$ denotes $V$ with the $\Gamma_{0,i}$-module structure obtained by restricting the $K_0$-module structure via $\Gamma_{0,i} \to K_0$, $\gamma \mapsto g_i^{-1}\gamma g_i$. Let $C^?_\bullet(\ol{X}^\mathrm{G})$ denote the complex of singular chains of $?\ol{X}^\mathrm{G}$. We fix once and for all maps $s_i^? : F_\bullet^{i,?} \to C^?_\bullet(\ol{X}^\mathrm{G})$ and $t_i^? : C^?_\bullet(\ol{X}^\mathrm{G}) \to F_\bullet^{i,?}$ of chain complexes of $\Gamma_{0,i}$-modules which are mutual inverses in $K(\Gamma_{0,i})$. The proof of \cite[Proposition 6.2]{khare-thorne} works by showing that
\[
\oplus \wt{g}_{i,\ast} : C_{sing,?}^\bullet(K_0,V) \defeq \bigoplus_i \Hom_{\Z[\Gamma_{0,i}]}(C^?_\bullet(\ol{X}^\mathrm{G}),V_i) \to C^\bullet_{\A,?} (K_0,V)
\]
is an isomorphism. Thus, the $s_i^?$ and $t_i^?$ induce chain homotopy equivalences $C_?^\bullet(K_0,V) \cong C_{\A,?}^\bullet(K_0,V)$, showing that the former are isomorphic to $R\Gamma(?\ol{X}^\mathrm{G}_{K_0},V)$ in $D(\Z)$. Combining this with Lemma \ref{pushforward of adelic complexes}, we see that 
\[
C_?^\bullet(K_0,\mathrm{Ind}_K^{K_0}V)\cong R\Gamma(?\ol{X}^\mathrm{G}_{K},V)
\]
in $D(\Z)$ for $K$-modules $V$. 

We list the most important properties of $C_?^\bullet(K_0,V)$, which follow from the definitions. First, $C_?^\bullet(K_0,V)$ is bounded independently of $K$ and $V$, and each term is isomorphic as a $\Z$-module to a finite sum of copies of $V$. We also have an equivariant version of this. If $K^\prime \sub K \sub K_0$ with $K^\prime \sub K$ normal and $V$ is a $K$-module, then $(k\ast f)(x) = k f(k^{-1}x)$ defines a left $K/K^\prime$-module structure on $\mathrm{Ind}_{K^\prime}^{K_0}V$ commuting with the previously defined $K_0$-module structure, and each term of $C_?^\bullet(K_0,\mathrm{Ind}_{K^\prime}^{K_0}V)$ is isomorphic to a finite sum of copies of $\Z[K/K^\prime]\otimes_{\Z}V$, with $V$ considered as a $\Z$-module, by untwisting. In particular, whenever $V$ is finite projective, this shows that $R\Gamma(?\ol{X}^\mathrm{G}_{K^\prime},V)$ is perfect in $D(K/K^\prime)$. Moreover, we have equalities $\Gamma(K/K^\prime,C_?^\bullet(K_0,\mathrm{Ind}_{K^\prime}^{K_0}V)) = C^\bullet_?(K_0,\mathrm{Ind}_K^{K_0}V)$. Upon noting that $\Lambda[K/K^\prime]$ is self-injective, Lemma \ref{gluing complexes} gives us the following lemma, which can also be proved directly (note that $\Map_{\mathrm{cont}}(K_{0,p},V) = \varinjlim_{K^p=K_0^p} \mathrm{Ind}_K^{K_0}V$):

\begin{lemma}\label{perfect complex for completed cohomology}
Let $V$ be a $K_0$-representation which is finite and free as a $\Lambda$-module, and such that $K_0^p$ acts trivially on $V$. 
The complex $C_?^{\bullet}(K_0, \Map_{\mathrm{cont}}(K_{0,p},V)) = \varinjlim_{K^p=K_0^p} C_?^\bullet(K_0,\mathrm{Ind}_K^{K_0}V)$ is a bounded complex of smooth admissible 
injective $K_{0,p}$-representations.
\end{lemma} 

We may also treat compactly supported cohomology. There is a natural map $C^\bullet(K,V) \to C^\bullet_\partial(K,V)$ coming from $F_\bullet^{i,\partial} \to F_\bullet^{i}$. Using this, we may define
\[
C_c^\bullet(K_0,V) \defeq \mathrm{Cone}(C^\bullet(K_0,V) \to C^\bullet_\partial(K_0,V))[-1],
\]
where $\mathrm{Cone}$ denotes the mapping cone. From the definitions and the excision sequence, $C_c^\bullet(K_0,V) \cong R\Gamma_c(X^\mathrm{G}_{K_0},V)$ in $D(\Z)$. Moreover, $C_c^\bullet(K_0,V)$ inherits all the properties above from $C^\bullet(K_0,V)$ and $C^\bullet_\partial(K_0,V)$, i.e.\ everything above holds with $?=c$. We have already touched upon it, but let us finish by noting explicitly that as in \S \ref{subsec: loc sym space}, we have versions of the above with other coefficients as well as equivariant versions.

The remainder of this section will discuss one particular aspect of Hida theory in the $P$-ordinary setting, analogous to \cite[Lemma 6.10]{khare-thorne}. We work over $\Lambda$, and we will work with our symplectic/unitary group $G$ and ordinary parts with respect to the Siegel parabolic $P$, though our discussion is rather general and we will make use of a few obvious variants in \S \ref{subsec: boundary}. Fix a neat $K^p$. We will need to define some level subgroups. For $m\geq 1$, we define
\[
\Gamma_0(p^m) \defeq \{ g\in G(\Zp) \mid (g\,\, \mathrm{mod} \,\, p^m) \in P(\Z/p^m) \};
\]
\[
\Gamma_m \defeq \Gamma_1(p^m) \defeq \{ g\in G(\Zp) \mid (g\,\, \mathrm{mod} \,\, p^m) \in N(\Z/p^m) \}.
\]
More generally, for a pair of integers $\ell\geq m \geq 0$, $\ell\geq 1$, let $\Gamma_{m,\ell} \sub \Gamma_0(p^\ell)$ be the subgroup defined by the Iwahori factorization $\ol{N}(\Zp)^{p^\ell}K_{M,m,p}N(\Zp) \supseteq \Gamma_1(p^\ell)$. $\Gamma_{m,\ell}$ is normal in $\Gamma_0(p^\ell)$, $\Gamma_{m,\ell} / \Gamma_1(p^\ell) \cong K_{M,m,p}/K_{M,\ell,p}$, and $\Gamma_0(p^\ell)/\Gamma_{m,\ell} \cong M(\Z/p^m)$. In particular, $\Gamma_m = \Gamma_{m,m}$ with the notation above. We also set
\[
\Gamma_0 \defeq \Gamma_0(p).
\]
We make analogous definitions for compact open subgroups of $G(\A_f)$; set $K_{m,\ell}=K^p\Gamma_{m,\ell}$, and $K_m = K^p \Gamma_m$ for $m\geq 0$. We will make use of a particular Hecke operator, which we will define using the matrix
\[
\gamma \defeq \prod_{\iota} \mathrm{diag}(p,\dots,p,p^{-1},\dots,p^{-1})\in M(\Q_p),
\]
where in each matrix $p$ and $p^{-1}$ occur $n$ times on the diagonal and the product runs over all embeddings $\iota \colon F\hookrightarrow \C$ with $\iota\in \Psi$ when $F$ is imaginary CM and over all embeddings $\iota \colon F\hookrightarrow \R$ when $F$ is totally real. We consider the double coset $\Gamma_{m,\ell}\gamma \Gamma_{m,\ell^\prime}$ when $\ell^\prime -1 \leq \ell$. Using the Iwahori factorization one easily verifies the decomposition
\[
\Gamma_{m,\ell}\gamma \Gamma_{m,\ell^\prime} = \bigsqcup_i \beta_i \gamma \Gamma_{m,\ell^\prime},
\]
where the $\beta_i$ run through a set of coset representatives of $N(\Zp)^{p^2}$ in $N(\Zp)$. This gives us maps $[K_{m,\ell}\gamma K_{m,\ell^\prime}]^\ast : C_\A^\bullet(K_{m,\ell^\prime},\Lambda) \to C_\A^\bullet(K_{m,\ell},\Lambda)$. When $\ell=\ell^\prime$ we will denote this Hecke operator by $U_p$; since $\gamma$ is central in $M$ the $U_p$-action commutes with the action on $M(\Z/p^m)$. One has the following lemma.

\begin{lemma}\label{hida contraction}
$U_p$ commutes with the inclusion maps $C^\bullet_\A(K_{m,\ell},\Lambda) \to C_\A^\bullet(K_{m^\prime,\ell^\prime},\Lambda)$ for $m\leq m^\prime$, $\ell \leq \ell^\prime$. When $\ell\geq 2$ and $\ell>m$, the image of $U_p$ acting on $C_\A^\bullet(K_{m,\ell},\Lambda)$ is inside $C_\A^\bullet(K_{m,\ell-1},\Lambda)$.
\end{lemma}

\begin{proof}
The first part follows from the formula for $U_p$ since the coset decomposition does not change. The second follows from the same fact, since the formula for $U_p$ and for $[K_{m,\ell-1}\gamma K_{m,\ell}]^\ast : C_\A^\bullet(K_{n,\ell},\Lambda) \to C_\A^\bullet(K_{m,\ell-1},\Lambda)$ are the same.
\end{proof}

Recall that $K_0=K_{0,1}=K^p\Gamma_0(p)$. We may transport the $U_p$-action from $C_\A^\bullet(K_{m,\ell},\Lambda) \cong C_\A^\bullet(K_0,\mathrm{Ind}_{K_{m,\ell}}^{K_0}\Lambda)$ to $C^\bullet(K_0,\mathrm{Ind}_{K_{a,b}}^{K_0}\Lambda)$ via the chosen chain homotopy equivalences; this gives an operator $\wt{U}_p$ which acts as $U_p$ up to homotopy. Tedious but straightforward calculations using the definitions above and the maps in Lemma \ref{pushforward of adelic complexes} show that when $m\leq m^\prime$ and $\ell \leq \ell^\prime$, the diagram 
\[
    \xymatrix{ C^\bullet(K_0,\mathrm{Ind}_{K_{m,\ell}}^{K_0}\Lambda) \ar[r]^-{\wt{U}_p} \ar[d] & C^\bullet(K_0,\mathrm{Ind}_{K_{m,\ell}}^{K_0}\Lambda) \ar[d] \\ C^\bullet(K_0,\mathrm{Ind}_{K_{m^\prime,\ell^\prime}}^{K_0}\Lambda) \ar[r]^-{\wt{U}_p} & C^\bullet(K_0,\mathrm{Ind}_{K_{m^\prime,\ell^\prime}}^{K_0}\Lambda) }
\]
commutes, where the vertical maps are injective, induced by the inclusion $\mathrm{Ind}_{K_{m,\ell}}^{K_0}\Lambda \to \mathrm{Ind}_{K_{m^\prime,\ell^\prime}}^{K_0}\Lambda$, and also that $\wt{U}_p$ commutes with the action of $M(\Z/p^m)$. By finiteness, $\wt{U}_p^{k!}$ stabilizes to an idempotent as $k\to \infty$ as discussed in \S \ref{comm alg}. We will denote the corresponding direct summand of $C^\bullet(K_0,\mathrm{Ind}_{K_{m,\ell}}^{K_0}\Lambda)$ by $C^\bullet(K_0,\mathrm{Ind}_{K_{m,\ell}}^{K_0}\Lambda)^{\mathrm{ord}}$. Since the diagram above commutes, we have inclusions $C^\bullet(K_0,\mathrm{Ind}_{K_{m,\ell}}^{K_0}\Lambda)^\mathrm{ord} \to C^\bullet(K_0,\mathrm{Ind}_{K_{m^\prime,\ell^\prime}}^{K_0}\Lambda)^\mathrm{ord}$ when $m \leq m^\prime$, $\ell\leq \ell^\prime$. Let $V_\infty$ be the smooth induction $\mathrm{sm-Ind}_{N(\Zp)}^{K_0} \Lambda = \varinjlim_{m,\ell}\mathrm{Ind}_{K_{m,\ell}}^{K_0}\Lambda$. $V_\infty$ is an injective smooth $M(\Zp)$-representation over $\Lambda$. We set 
\[
C^\bullet(K_0, V_\infty)^\mathrm{ord} \defeq \varinjlim_{m,\ell}C^\bullet(K_0,\mathrm{Ind}_{K_{m,\ell}}^{K_0}\Lambda)^\mathrm{ord};
\]
this is a direct summand of $C^\bullet(K_0,V_\infty)$ and hence a bounded complex of injective smooth $M(\Zp)$-representations. The following finiteness result is key.

\begin{prop}\label{ordinary finiteness}
The inclusion $C^\bullet(K_0,\mathrm{Ind}_{K_m}^{K_0}\Lambda)^\mathrm{ord} \to \Gamma(K_{M,m,p},C^\bullet(K_0, V_\infty)^\mathrm{ord})$ is a quasi-isomorphism of complexes of $M(\Z/p^m)$-representations.
\end{prop}

\begin{proof}
One directly computes $\Gamma(K_{M,m,p},C^\bullet(K_0, V_\infty)^\mathrm{ord}) = \varinjlim_{\ell\geq m} C^\bullet(K_0,\mathrm{Ind}_{K_{m,\ell}}^{K_0}\Lambda)^\mathrm{ord}$ (the transition maps are injective). Thus, it suffices to show that $C^\bullet(K_0,\mathrm{Ind}_{K_{m,\ell}}^{K_0}\Lambda)^\mathrm{ord} \to C^\bullet(K_0,\mathrm{Ind}_{K_{m,\ell+1}}^{K_0}\Lambda)^\mathrm{ord}$ is a quasi-isomorphism for all $\ell$, i.e that $H^i(\ol{X}_{K_{m,\ell}}^G,\Lambda)^\mathrm{ord} \to H^i(\ol{X}_{K_{m,\ell+1}}^G,\Lambda)^\mathrm{ord}$ is an isomorphism for all $i$. This follows from Lemma \ref{hida contraction} as in the proof of \cite[Lemma 6.10]{khare-thorne}.
\end{proof}

\begin{prop}\label{hida admissibility}
$C^\bullet(K_0, V_\infty)^\mathrm{ord}$ is quasi-isomorphic to a bounded complex of injective admissible $M(\Zp)$-representations. 
\end{prop}

\begin{proof}
Since $C^\bullet(K_0, V_\infty)^\mathrm{ord}$ is a bounded complex of injective smooth $M(\Zp)$-representations, the proof of \cite[Chapter II, \S 5, Lemma 1]{mumford} shows that it suffices to show that its cohomology groups are admissible. Consider the (first quadrant) hypercohomology spectral sequence
\[
H^j_{cts}(K_{M,m,p},H^i(C^\bullet(K_0, V_\infty)^\mathrm{ord})) \implies H_{cts}^{i+j}(K_{M,m,p},C^\bullet(K_0, V_\infty)^\mathrm{ord}).
\]
By Proposition \ref{ordinary finiteness}, the terms in the abutment are finite. It follows that $H^0(C^\bullet(K_0, V_\infty)^\mathrm{ord})$ is admissible. Admissibility of $H^i(C^\bullet(K_0, V_\infty)^\mathrm{ord})$ for $i\geq 1$ then follows by induction, since admissible $M(\Zp)$-representations have finite continuous $K_{M,m,p}$-cohomology.
\end{proof}

In other words, $C^\bullet(K_0, V_\infty)^\mathrm{ord}$ is quasi-isomorphic to a ``good'' complex of the form appearing as the output in Lemma \ref{gluing complexes}(1),
which will allow us (for example) to glue morphisms. We note that this entire discussion applies equally well to boundary cohomology, giving us a complex $C_\partial^\bullet(K_0, V_\infty)^\mathrm{ord}$ quasi-isomorphic to a bounded complex of injective smooth admissible $M(\Zp)$-representations. Taking cones and shifting by $-1$ as above, we obtain ``good'' complexes for compactly supported cohomology as well by Lemma \ref{gluing complexes}(4).

\subsection{Constructing determinants}\label{determinants} We start on the road towards proving Theorem \ref{eliminating nilpotent ideal}. To do so, we will use Chenevier's notion of a determinant~\cite{chenevier} and their basic properties freely. Let $G_{F,S}$ denote the Galois group of the maximal extension of $F$ unramified outside primes above $S$. By \cite[Theorem 2.22(i)]{chenevier} and our non-Eisenstein assumption on the maximal ideal $\mf{m}$, it suffices to construct a $\mathbb{T}^S_M(K_M,\lambda)_\mf{m}^{\mathrm{der}}$-valued continuous determinant of $G_{F,S}$ with the correct characteristic polynomial of Frobenii in order to get the desired Galois representation $ \rho_{\m,r}\colon\mathrm{Gal}(\overline{F}/F)\to \GL_n\left(\mathbb{T}^S_M(K_M,\lambda)_\mf{m}^{\mathrm{der}}\right)$; this is what we will do.

\medskip

Our goal in this subsection is to construct determinants valued in (derived) Hecke algebras for $G$. The results are slight refinements of results from \cite{scholze-galois,newton-thorne}. For this, we start by first recalling some material on Hecke algebras for $G$. Recall the groups $G_0$ that we defined in the beginning of \S \ref{Borel-Serre}. In this subsection, all compact open subgroups $K$ appearing are assumed to be small, and the prime-to-$p$ part $K^p$ is assumed to be fixed throughout. We define the (abstract, spherical) Hecke algebra $\TT_G^S$ by:
\[
\mathbb{T}^S_G \defeq \otimes_{l\not\in S, \bar{w}\mid l}\mathbb{T}_{G,\bar{w}}, \quad \mathbb{T}_{G,\bar{w}} = \Z_p\left[G_0(F^+_{\bar{w}})/ /G_0(\cO_{F^+,\bar{w}})\right],
\]
where the product runs over primes $\bar{w}$ of $F^+$ above a prime $l\not\in S$. Let us recall the explicit description of $\TT_{G,\bar{w}}$ given by the Satake isomorphism. This can be found in
\cite[Lem.\ 5.1.6]{scholze-galois} and in
\cite[Prop.-Def. 5.2]{newton-thorne} for $F$ imaginary CM; we use
a normalization for the Hecke operators that is consistent with the latter reference (but not the former). If $F$ is imaginary CM and $\bar{w}$ splits in $F$,
\[ \TT_{G,\bar{w}}[q_{\bar{w}}^{1/2}] \cong \ZZ_p[q_{\bar{w}}^{1/2}][Y_1^{\pm 1},\dotsc,Y_{2n}^{\pm 1}]^{S_{2n}} \,. \]
We write $T_{G,\bar{w},i}$ for $q^{i(2n-i)/2}_{\bar{w}}$ times the $i$th elementary
symmetric polynomial in $Y_1,\dotsc,Y_{2n}$.
If $F$ is imaginary CM and $\bar{w}$ is inert in $F$,
\[  \TT_{G,\bar{w}}[q_{\bar{w}}^{1/2}] \cong \ZZ_p[q_{\bar{w}}^{1/2}][X_1^{\pm 1},\dotsc,X_{2n}^{\pm 1}]^{S_n \rtimes (\ZZ/2\ZZ)^n} \,. \]
and the unramified endoscopic transfer from
$G(F_{\bar{w}})$ to $\GL_{2n}(F_{\bar{w}})$ is dual to the map
\[ \ZZ_p[q_{\bar{w}}^{1/2}][Y_1^{\pm 1},\dotsc,Y_{2n}^{\pm 1}]^{S_{2n}}
\to \ZZ_p[q_{\bar{w}}^{1/2}][X_1^{\pm 1},\dotsc,X_{2n}^{\pm 1}]^{S_n \rtimes (\ZZ/2\ZZ)^n}
\]
that sends $\{Y_1, \dotsc, Y_{2n} \}$ to $\{X_1^{\pm 1}, \dotsc, X_n^{\pm 1} \}$.
We write $T_{G,\bar{w},i}$ for $q^{i(2n-i)/2}_{\bar{w}}$ times the $i$th elementary
symmetric polynomial in $Y_1,\dotsc,Y_{2n}$.
If $F$ is totally real,
\[ \TT_{G,\bar{w}}[q_{\bar{w}}^{1/2}] \cong \ZZ_p[q_{\bar{w}}^{1/2}][X_1^{\pm 1},\dotsc,X_{n}^{\pm 1}]^{S_n \ltimes (\ZZ/2\ZZ)^n} \]
and the unramified endoscopic transfer from $G(F_{\bar{w}})$ to $\GL_{2n+1}(F_{\bar{w}})$ is dual to the map
\[ \ZZ_p[q_{\bar{w}}^{1/2}][Y_1^{\pm 1},\dotsc,Y_{2n+1}^{\pm 1}]^{S_{2n+1}} \to
\ZZ_p[q_{\bar{w}}^{1/2}][X_1^{\pm 1},\dotsc,X_{n}^{\pm 1}]^{S_n \ltimes (\ZZ/2\ZZ)^n} \]
that sends $\{Y_1,\dotsc,Y_{2n+1}\}$ to $\{X_1^{\pm 1},\dotsc,X_n^{\pm 1},1\}$.
We write $T_{G,\bar{w},i}$ for $q^{i(2n+1-i)/2}_{\bar{w}}$ times the $i$th elementary
symmetric polynomial in $Y_1,\dotsc,Y_{2n+1}$.

\medskip

Recall our complete, algebraically closed extension $C$ of $\overline{\Q}_p$. We fix once and for all embeddings $\overline{\Q}\subset C$ and $\ol{\Q}\sub \C$. Set $A\defeq \cO_C^a/p^r$. The locally symmetric space $X^{G}_K$ is a ``Shimura variety of Hodge-type'' in the sense of the non-standard definition in \cite[\S 4, Introduction]{scholze-galois}. In particular, this gives us a complex structure on $X_K^G$, making it canonically into an algebraic variety over $\C$. This algebraic variety has a canonical model over $\overline{\Q}$, which can be base changed and analytified to an adic space $\cX_K$ over $(C,\cO_C)$. A similar discussion holds for minimal compactifications; we let $\cX^{*}_{K}$ be the minimal compactification of $\cX_K$. We let $\cI\subset \cO_{\cX^*_{K}}$ be the ideal sheaf of the boundary and $\omega_K$ the usual ample line bundle. Following ~\cite[Thm. 4.3.1]{scholze-galois}, we fix some sufficiently divisible $m'\in \Z_{\geq 1}$ and define $\mathbb{T}^S_{cl,c}$ to be $\mathbb{T}^S_G$ equipped with the weakest topology for which all the maps
\[
\mathbb{T}^S_G \to \mathrm{End}_C\left(H^0\left(\cX^*_{K}, \omega_{K}^{\otimes m'k}\otimes \cI\right)\right) 
\] 
are continuous, for varying $k\in \Z_{\geq 1}$ and $K_p$, where the right-hand side is a finite-dimensional $C$-vector space endowed with the $p$-adic topology\footnote{Our $\mathbb{T}^S_{cl,c}$ is denoted by $\mathbb{T}_{cl}$ is \cite{scholze-galois}}. We recall~\cite[Cor. 5.1.11]{scholze-galois} on the existence of determinants for certain quotients of $\mathbb{T}^S_{cl,c}$, but with our choice of normalizations.

\begin{thm}\label{existence of determinant} \leavevmode
\begin{enumerate}%
\item Assume $F$ is imaginary CM. For any continuous quotient $\mathbb{T}^S_{cl,c}\to B$ with $B$ discrete there exists a unique $2n$-dimensional continuous determinant $D$ of $G_{F,S}$ with values in $B$, such that for any prime $w$ of $F$ away from $S$ and above a prime $\bar{w}$ of $F^+$ we have
\[
D(X-\mathrm{Frob}_{w}) = X^{2n} - T_{G, \bar{w}, 1}X^{2n-1}+\dots +q^{n(2n-1)}_w T_{G,\bar{w},2n}.
\]
\item  Assume $F$ is totally real. For any continuous quotient $\mathbb{T}^S_{cl,c}\to B$ with $B$ discrete there exists a unique $2n+1$-dimensional continuous determinant $D$ of $G_{F,S}$ with values in $B$, such that for any prime $\bar{w}$ of $F^+$ away from $S$ we have
\[
D(X-\mathrm{Frob}_{w}) = X^{2n+1} - T_{G, \bar{w}, 1}X^{2n}+\dots -q^{(2n+1)n}_{\bar{w}} T_{G,\bar{w},2n+1}.
\]
\end{enumerate}
\end{thm}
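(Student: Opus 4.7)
The plan is to reduce the statement to its analogue \cite[Cor.\ 5.1.11]{scholze-galois} and then carefully track how the choice of normalizations for Hecke operators and Satake parameters differs between the two references. By the very definition of the topology on $\mathbb{T}^S_{cl}$, a continuous quotient $\mathbb{T}^S_{cl} \to B$ with $B$ discrete must factor through the image of $\mathbb{T}^S_G$ inside $\prod_{K_p, k} \End_C(H^0(\cX^*_{K^p K_p}, \omega_{K}^{\otimes m'k} \otimes \cI))$ modulo a cofinite ideal, so it suffices to construct a compatible system of determinants on each of these finite level Hecke algebras.

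First, I would fix $K_p$ and $k$ and interpret the space $H^0(\cX^*_{K^p K_p}, \omega_K^{\otimes m'k} \otimes \cI)$ as a space of classical cuspidal automorphic forms for $G$ of suitable regular weight (using standard results on automorphic vector bundles on minimal compactifications and Koecher-type principles). By the spectral decomposition, this space decomposes as a sum over cuspidal automorphic representations $\pi$ of $G(\mathbb{A})$ contributing to it, and each such $\pi$ determines a system of Hecke eigenvalues on $\mathbb{T}^S_G$. For each such $\pi$, classical constructions produce a continuous semisimple Galois representation
\[
\rho_\pi \colon G_{F,S} \to \GL_d(\overline{\mathbb{Q}}_p),
\]
where $d = 2n$ in the imaginary CM case (via the standard base change to $\GL_{2n}/F$ followed by Shin's construction for RACSDC representations, cf.\ the references in \cite[\S 5.1]{scholze-galois}), and $d = 2n+1$ in the totally real case (via Arthur's endoscopic transfer to $\GL_{2n+1}$). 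The characteristic polynomials $\det(1 - X \cdot \mathrm{Frob}_w | \rho_\pi)$ at unramified primes $w \nmid S$ are determined by the Satake parameters of $\pi$ at $w$.

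The main bookkeeping task is to verify that these characteristic polynomials match the formulas stated in the theorem. By the explicit description of $\mathbb{T}_{G,\bar{w}}$ and the unnormalized Satake transform given in the previous subsection, the elements $T_{G,\bar{w},i}$ are (up to the specified powers of $q_{\bar{w}}$) the elementary symmetric polynomials in the Satake parameters $Y_1, \dots, Y_d$ under the normalizations of \cite{newton-thorne}. Since the local-global compatibility for $\rho_\pi$ gives
\[
\det(1 - X \cdot \mathrm{Frob}_w | \rho_\pi) = \prod_{i=1}^d (1 - Y_i(\pi) X),
\]
expanding this product yields exactly the formulas in (1) and (2) after relabeling via the unnormalized Satake identification. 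Taking the product over all $\pi$ contributing at level $K^p K_p$ and weight $m'k$ produces a determinant $D_{K_p,k}$ of $G_{F,S}$ valued in the image Hecke algebra, and by construction these determinants are compatible under the natural restriction maps as $(K_p,k)$ vary. Passing to the continuous quotient $B$ yields the desired determinant $D$.

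Uniqueness is automatic from Chenevier's reconstruction theorem \cite{chenevier} together with Chebotarev density, since the characteristic polynomial values at all $\mathrm{Frob}_w$ for $w \nmid S$ determine the determinant. The main obstacle I anticipate is purely bookkeeping: matching the normalization conventions (including the various powers of $q_{\bar{w}}$ absorbed into the definitions of the $T_{G,\bar{w},i}$, and the choice of geometric versus arithmetic Frobenius) between \cite{scholze-galois} and \cite{newton-thorne}, so that the explicit formulas come out with the signs and exponents as written.
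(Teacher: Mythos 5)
Your proposal is essentially correct and matches the paper's approach: the paper does not give a fresh proof of this theorem, but simply recalls \cite[Cor.\ 5.1.11]{scholze-galois} "with our choice of normalizations," which is exactly your reduction. The additional sketch you give of the underlying argument (factoring a discrete quotient through finite-level data, decomposing the cuspidal spaces into automorphic representations, invoking base change to $\GL_{2n}/F$ resp.\ Arthur's transfer to $\GL_{2n+1}/F$, and interpolating the pointwise Galois representations into a determinant over the Hecke algebra) accurately summarizes the proof of the cited result, although it glosses over the step where one passes from a collection of $\overline{\QQ}_p$-valued representations attached to individual $\pi$'s to a single determinant valued in the image Hecke algebra; this uses Chenevier's interpolation/gluing machinery rather than a naive product. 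This extra material is not needed for the paper's logic, since the paper treats the theorem as a black-box citation.
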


\noindent The main point of this section is Theorem \ref{factors through classical} below, which will be our starting point in the proof of Theorem \ref{eliminating nilpotent ideal}. It is essentially a minor refinement of results of~\cite{newton-thorne}, but we need to use the results of \S \ref{comm alg} to correct an error in the handling of the Hecke algebra there\footnote{This error stems from a mistake in the first version of \cite{scholze-galois}.}. We thank James Newton for his help with the proof below; any mistake is due to the authors.

\medskip

It will be convenient to also work with a (possibly) different topology
on $\mathbb{T}^S_G$.  Let $\iota \colon \mathbb{T}^S_G \to \mathbb{T}^S_G$
be the involution that sends a Hecke operator $[K_{\bar{w}}gK_{\bar{w}}]$ to $[K_{\bar{w}}g^{-1}K_{\bar{w}}]$. In terms of the action of $\TT_G^S$ by correspondences, this corresponds to sending
$X^G_{K^p K_p} \xleftarrow{f} X^G_{K^{p \prime} K_p} \xrightarrow{g}
X^G_{K^p K_p}$ to its opposite
$X^G_{K^p K_p} \xleftarrow{g} X^G_{K^{p \prime} K_p} \xrightarrow{f}
X^G_{K^p K_p}$. 
If $F$ is totally real, then one can use the explicit description
of the Hecke algebra above to check that $\iota$ is
the identity, but $\iota$ is not the identity when $F$ is imaginary CM. Let $\mathbb{T}^S_{cl} \defeq \iota(\mathbb{T}^S_{cl,c})$ as a topological ring\footnote{We emphasize again that $\mathbb{T}_{cl}^S$ is a priori different from the topological ring denoted by $\mathbb{T}_{cl}$ in \cite{scholze-galois}, which is our $\mathbb{T}_{cl,c}^S$.}.

\begin{lem} \label{existence of determinant dual}
Theorem \ref{existence of determinant} holds when $\mathbb{T}^S_{cl,c}$
is replaced by $\mathbb{T}^S_{cl}$.
\end{lem}
\begin{proof}
The argument is essentially the same as that
of \cite[Lemma 4.1]{newton-thorne}.  Suppose $F$ is imaginary CM.
Given a continuous map $\mathbb{T}^S_{cl} \to B$, we can compose with
$\iota$ to get a continuous map $\mathbb{T}^S_{cl,c} \to B$,
and then apply Theorem \ref{existence of determinant} to construct
a determinant $D$.
We have $\iota(T_{G,\bar{w},i})=\frac{T_{G,\bar{w},{2n-i}}}{T_{G,\bar{w},2n}}$ for $0 \le i \le 2n$
(here we take $T_{G,\bar{w},0} = 1$), so $D$ satisfies
\[
D(X - \mathrm{Frob}_{w}) = X^{2n}+\dotsb+(-1)^j q_w^{j(j-1)/2} \frac{T_{G, \bar{w}, {2n-j}}}{T_{G,\bar{w},2n}}  +\dotsb +q^{n(2n-1)}_w T_{G,\bar{w},2n}^{-1} \,.
\]
Now compose this determinant with the antiautomorphism
$B[G_F] \to B[G_F]$ that sends a group-like element $g$ to
$\chi(g)^{2n-1} g^{-1}$, where $\chi$ is the cyclotomic character.
The resulting determinant $D'$ satisfies
\[
D'(X - \mathrm{Frob}_{w}) = X^{2n}+\dotsb+(-1)^j q_{w}^{j(j-1)/2} T_{G, \bar{w}, j}+\dotsb +q^{n(2n-1)}_w T_{G,\bar{w},2n} \,,
\]
which is what we need.

\medskip

In the totally real case, a similar argument applies, or one can just use the above observation that
$\iota$ is the identity.
\end{proof}

We now come to the main theorem of this subsection. Our determinants will be constructed modulo $p^r$ first for every $r$, and then glued together at the end. Recall the subgroups $K_m$ from \S \ref{subsec: ordinary parts}.

\begin{thm}\label{factors through classical} For every $m\geq 0$, the map 
\[
\mathbb{T}^S_{cl}\to \mathrm{End}_{D(M(\Z/p^m), \Lambda)}\left(R\Gamma(X_{K_m}^G,\Lambda)\right)
\] 
is continuous for the discrete topology on the target.
\end{thm}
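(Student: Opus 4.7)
The plan is to apply Proposition \ref{getting rid of C}. Combined with Lemma \ref{injective resolution with continuous action}, the proposition reduces matters to realizing the perfect complex $P \defeq C(K^p\Gamma_1(p^m), r)$ as a $\mathbb{T}^S_{cl}$-equivariant direct summand in $D(M(\Z/p^m\Z), \Lambda)$ of some bounded below complex of smooth $M(\Z/p^m\Z)$-representations on which $\mathbb{T}^S_{cl}$ acts continuously (and commuting with the $\Lambda[M(\Z/p^m\Z)]$-module structure).

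I would construct such a resolution from Scholze's perfectoid picture. The standard chain of comparison isomorphisms identifies $P$, $\mathbb{T}^S_G$-equivariantly, with $R\Gamma_\et(\cX^*_{K^p\Gamma_1(p^m)}, j_!\Lambda)$, and the primitive comparison theorem (\cite[Theorem 3.13]{scholze-survey}) then produces an almost $\mathbb{T}^S_G$-equivariant quasi-isomorphism
\[
P \otimes_\Lambda \cO_C/p^r \;\simeq\; R\Gamma_\et\bigl(\cX^*_{K^p\Gamma_1(p^m)}, j_!(\cO^+/p^r)\bigr)^a.
\]
Applying the Leray spectral sequence for the Hodge--Tate period morphism at level $\Gamma_1(p^m) \cap \tG(\Zp)$, together with the perfectoid description of its fibres (Theorem \ref{general perfectoid stratum} and the invariant rational neighborhoods of Section \ref{sec: invariant neighborhoods}), the right-hand side should be expressible in terms of classical coherent cohomology $H^0(\cX^*_{K_p}, \omega_{K_p}^{\otimes m'k} \otimes \cI)$ on finite-level Shimura varieties, for $K_p \sub \tG(\Zp)$ running through a suitable family and $k$ in a finite range dictated by the bounded range of degrees in which $P$ has nonzero cohomology. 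On such classical spaces $\mathbb{T}^S_{cl}$ acts continuously \emph{by the very definition} of its topology.

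To pass from continuity at the level of cohomology to continuity on the derived endomorphism algebra of $P$, observe that $\End_{D(M(\Z/p^m\Z), \Lambda)}(P)$ is a finite ring and that the natural map to $\End_{M(\Z/p^m\Z)}(H^*(P))$ has nilpotent kernel. Idempotent lifting, via Lemma \ref{injective resolution with continuous action}, then produces an injective resolution with continuous $\mathbb{T}^S_{cl}$-action on each term and with $P$ as a summand, and Proposition \ref{getting rid of C} concludes.

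The main obstacle will be the second step --- the identification of the torsion almost-\'etale cohomology of $j_!(\cO^+/p^r)$ with classical coherent cohomology --- in a manner that is Hecke-equivariant at the level of complexes rather than merely at the level of cohomology, and that is compatible with descent from almost $\cO_C/p^r$-coefficients back to $\Lambda$-coefficients. The interplay of Proposition \ref{getting rid of C} and Lemma \ref{injective resolution with continuous action} with idempotent lifting is exactly what is designed to absorb this slack between cohomology-level continuity and continuity in the derived category.
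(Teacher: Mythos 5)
Your top-level framing is correct: the task does reduce, via Proposition~\ref{getting rid of C} and Lemma~\ref{injective resolution with continuous action}, to realizing $C(K^p\Gamma_1(p^m),r)$ as a $\mathbb{T}^S_{cl}$-equivariant direct summand, in an equivariant derived category, of a complex of injectives on which the Hecke algebra acts termwise continuously. But there are two genuine gaps in how you propose to execute this, and your "main obstacle" paragraph concedes without resolving the one that actually matters.

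The first gap is that you point at the wrong geometric input. The complex of smooth representations with continuous $\mathbb{T}^S_{cl}$-action is not produced by the Bruhat-stratified Hodge--Tate analysis (Theorem~\ref{general perfectoid stratum} and the rational neighborhoods of \S\ref{sec: invariant neighborhoods}) --- those ingredients belong to the vanishing argument. The correct input is the coherent-to-\'etale comparison packaged by Scholze as a bounded complex $\check C$ of smooth $G(\Zp)$-representations with $\cO_C/p^r$-coefficients quasi-isomorphic to $C(K^p,r)\otimes_\Lambda \cO_C^a/p^r$; the paper takes this wholesale from \cite[Lemma~5.23]{newton-thorne} rather than rebuilding it. One then passes to $\Gamma_1(p^m)$ by first taking $\Gamma(p^m)$-invariants and then $N(\Z/p^m\Z)$-invariants, rather than working directly at $M(\Z/p^m\Z)$-level as you propose.

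The second and more serious gap is the coefficient descent from $\cO_C/p^r$ back to $\Lambda$. You name this as the obstacle but leave it open, and your suggested remedy (idempotent lifting via Lemma~\ref{injective resolution with continuous action}) does not apply: that lemma constructs injective resolutions with continuous actions, it has nothing to do with lifting idempotents, and the paper's proof of this theorem does not use idempotent lifting at all. What actually closes the gap is quite different: one applies the exact functor $(\;)_!$ to pass from almost modules to $\m_C/p^r\m_C$-modules, observes that for $\alpha\in\m_C\setminus p\m_C$ the inclusion $\Lambda\alpha\hookrightarrow\m_C/p^r\m_C$ splits because $\Lambda$ is self-injective, and thereby exhibits $P\otimes_\Lambda\Lambda\alpha\cong P$ as a $\mathbb{T}$-equivariant direct summand of $P\otimes_\Lambda\m_C/p^r\m_C\simeq J$ in $D(\mathrm G,\Lambda)$. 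This is the step that your proposal is missing; without it you cannot invoke Proposition~\ref{getting rid of C} over $\Lambda$.
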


We will need the following lemma for the proof.

\begin{lem} \label{verdier}
Let $K_p \sub G(\Zp)$ be an open normal subgroup and set $\mathrm{G}=G(\Zp)/K_p$. There is a commutative diagram
\[
\begin{tikzcd}
\mathbb{T}^S_G \arrow[r] \arrow[d,"\iota"] & \mathrm{End}_{D(\mathrm{G},\Lambda)}(R\Gamma_c(X_{K^p K_p}^G,\Lambda)) \arrow[d,dashed] \\
\mathbb{T}^S_G \arrow[r] & \mathrm{End}_{D(\mathrm{G},\Lambda)}(R\Gamma(X_{K^p K_p}^G,\Lambda))
\end{tikzcd}
\]
where the horizontal maps are the usual Hecke actions,
and dashed arrow is the anti-isomorphism induced by Verdier duality.
\end{lem}

\begin{proof}
We claim that it is possible to choose an orientation on
$X^G \times G(\A_f)$
that is preserved by the left $G(\QQ)$- and right $G(\A_f)$-actions.
The group $G(\RR)$ is connected, so any choice of orientation on $X^G$
is preserved by $G(\RR)$, and in particular by $G(\QQ)$.
The orientation can then
be pulled back to $X^G \times G(\A_f)$, proving the claim.

\medskip

We therefore get compatible, $K/K_p$-equivariant choices of orientation on
$X_{K^{p\prime} K_p}^G$ for each $K^{p \prime} \subseteq K^p$.
Hence by \cite[\S 3.3]{kashiwara-schapira}
, we get an isomorphism between
$\Lambda[\dim X^G]$ and the dualizing complex on each of these
manifolds, and this isomorphism is $\mathrm{G}=G(\Zp)/K_p$-equivariant.
Applying $\mathrm{G}$-equivariant Verdier duality (see for example
\cite[\S 3.5]{bernstein-lunts})
gives an anti-isomorphism
\[ \mathrm{End}_{D(\mathrm{G},\Lambda)}(R\Gamma_c(X_{K^p K_p}^G,\Lambda))
\to \mathrm{End}_{D(\mathrm{G},\Lambda)}(R\Gamma(X_{K^p K_p}^G, \Lambda)) \,. \]
which is the dashed arrow in the statement of the lemma.
Verdier duality interchanges pullback and exceptional pushforward,
so it reverses the arrows in Hecke correspondences.
\end{proof}

\begin{proof}[Proof of Theorem \ref{factors through classical}] This proof uses some almost mathematics for $\cO_C$ with respect to its maximal ideal for which we refer to \cite{gabber-ramero}; the almost algebra/module corresponding to an $\cO_C$-algebra/module $M$ will be denoted by $M^a$. Assume first that $F$ is imaginary CM. Lemma 5.23 of \cite{newton-thorne}, together with the comparison results of Lemmas 5.15 to 5.18 of~\cite{newton-thorne}, gives a bounded complex $\check{C}$ of smooth $G(\Z_p)$-representations with $\cO_C/p^r$-coefficients, with an action of $\mathbb{T}^S_{cl,c}$ on each term and with an isomorphism
\[
\check{C}^a   \toisom R\Gamma_{\ol{X}_{K^p}^G}(j_! \Lambda) \otimes_{\Lambda} (\cO_C/p^r)^a
\]
in $D_{\mathrm{sm}}(G(\Z_p),(\cO_C/p^r)^a)$ (we refer to \cite[\S 2.6]{newton-thorne} for the notion of smooth representations over $(\cO_C/p^r)^a$). Here $R\Gamma_{\ol{X}_{K^p}^G}(j_!\Lambda)$ denotes a complex computing compactly supported completed cohomology on the tower
\[ 
X_{K^p}^G = \varprojlim_{K_p}X_{K^p K_p}^G \overset{j}{\longrightarrow} \varprojlim_{K_p}\ol{X}_{K^p K_p}^G = \ol{X}_{K^p}^G ,
\]
constructed by the formalism of \cite[\S 2.5]{newton-thorne}. The topology on $\TT^S_{cl,c}$ is defined so that the termwise action on $\check{C}$ is continuous; see \cite[Proof of Theorem 4.3.1]{scholze-galois}. Recall the exact functor $(\ )_{!}$ from $\cO_{C}^{a}$-modules to $\cO_{C}$-modules from \cite[\S 2.2.21]{gabber-ramero}. Applying it, we obtain an isomorphism
\[
\check{C}\otimes_{\cO_C/p^r}\m_C/p^r\m_C \toisom R\Gamma_{\ol{X}_{K^p}^G}(j_! \Lambda) \otimes_{\Lambda} \m_C/p^r \m_C
\]
in $D_{\mathrm{sm}}(G(\Z_p),\cO_C/p^r)$, which we may think of as an isomorphism in $D_{\mathrm{sm}}(G(\Z_p), \Lambda)$
via the forgetful functor.  Note that $\mathbb{T}^S_{cl,c}$ still acts continuously on each term of $\check{C}\otimes_{\cO_C/p^r}\m_C/p^r\m_C$. By Lemma~\ref{injective resolution with continuous action} applied to $\mathbb{T}_c\defeq \mathbb{T}^S_{cl,c}\otimes_{\Z_p}\Lambda$ with the induced topology, we can replace $\check{C}\otimes_{\cO_C/p^r}\m_C/p^r\m_C$ with a quasi-isomorphic bounded below complex $I^\bullet$ of injective objects in $\mathrm{Mod}_{\mathrm{sm}}(G(\Z_p), \Lambda)$ with a continuous $\mathbb{T}_c$-action on each term. Let $K_p\subseteq G(\Z_p)$ be a normal open subgroup. We have a sequence of isomorphisms
\[
R\Gamma_c(X_{K^p K_p}^G,\Lambda) \otimes_{\Lambda}\m_C/p^r\m_C\toisom R\Gamma\left(K_p, R\Gamma_{\ol{X}_{K^p}^G}(j_! \Lambda)\right)\otimes_{\Lambda}\m_C/p^r\m_C
\]
\[
\toisom R\Gamma\left(K_p, R\Gamma_{\ol{X}_{K^p}^G}(j_! \Lambda)\otimes_{\Lambda}\m_C/p^r\m_C \right) \toisom \Gamma(K_p,I^\bullet)
\]
in $D(G(\Z_p)/K_p, \Lambda)$; the first is \cite[Lemma 2.39]{newton-thorne}, the second is \cite[Lemma 2.38]{newton-thorne}, and the third follows from the definition of $I^\bullet$ and the discussion above. 

\medskip

Set $\mathrm{G}\defeq G(\Z_p)/K_p$. Since $\Gamma(K_p, I^\bullet)$ is a $\mathbb{T}_c$-stable subcomplex of $I^\bullet$, it still has a continuous action of $\mathbb{T}_c$ on each term. We apply Lemma~\ref{injective resolution with continuous action} again, for the finite group $\mathrm{G}$, in order to replace $\Gamma(K_p,I^\bullet)$ with a quasi-isomorphic bounded below complex of injective $\Lambda[\mathrm{G}]$-modules $J^\bullet$ with a continuous action of $\mathbb{T}_c$ on each term\footnote{Applying Lemma \ref{injective resolution with continuous action} twice in this proof is necessary: The first application sets up the possibility of applying it to $R\Gamma_c(X_{K^p K_p}^G,\Lambda) \otimes_{\Lambda}\m_C/p^r\m_C$.}. Now $R\Gamma_c(X_{K^p K_p}^G,\Lambda)$ is isomorphic to a bounded complex of finite projective $\Lambda[\mathrm{G}]$-modules $P^\bullet$, by the discussion in \S \ref{subsec: ordinary parts}.  Choose $\alpha\in \m_C\setminus p\m_C$. We have an injection of $\Lambda$-modules
\[
\Lambda\alpha \hookrightarrow \m_C/p^r\m_C
\] 
and, since $\Lambda$ is self-injective, this admits a splitting. Thus, $P^\bullet\otimes_{\Lambda}\Lambda\alpha$ is a $\mathbb{T}_c$-equivariant direct summand of $P^\bullet \otimes_{\Lambda}\m_C/p^r\m_C\simeq J^\bullet$ in $D(\mathrm{G},\Lambda)$.
By Proposition~\ref{getting rid of C}, the map 
\[
\mathbb{T}_c \to \mathrm{End}_{D(G,\Lambda)}(P^\bullet) = \mathrm{End}_{D(G,\Lambda)}(R\Gamma_c(X_{K^p K_p}^G,\Lambda))
\]
is continuous for the discrete topology on the target.
By Lemma \ref{verdier},
\[
\mathbb{T}\to \mathrm{End}_{D(G,\Lambda)}(P^\bullet) = \mathrm{End}_{D(G,\Lambda)}(R\Gamma(X_{K^p K_p}^G,\Lambda))
\]
is also continuous for the discrete topology on the target,
where $\mathbb{T} \defeq \mathbb{T}^S_{cl} \otimes_{\Zp} \Lambda$.

\medskip

Now take $K_p=\Gamma(p^m)\defeq \mathrm{Ker}(G(\Zp) \to G(\Z/p^m))$ when $m\geq 1$. We have a $\mathbb{T}^S_{cl}$-equivariant isomorphism
\[
R\Gamma(X_{K_m}^G,\Lambda)\toisom R\Gamma\left(N(\Z/p^m\Z), R\Gamma(X_{K^p \Gamma(p^m)}^G,\Lambda) \right)
\]
in $D(M(\Z/p^m\Z),\Lambda)$, by (an equivariant version of) Proposition \ref{finite to finite basic}. The map in the statement of the theorem
factors as 
\[
\mathbb{T}^S_{cl}\to \mathrm{End}_{D(G(\Z/p^m\Z),\Lambda)}\left(R\Gamma(X_{K^p \Gamma(p^m)}^G,\Lambda)\right)\to \mathrm{End}_{D(M(\Z/p^m\Z), \Lambda)}\left(R\Gamma(X_{K_m}^G,\Lambda)\right),
\]
where we have seen above that the first map is continuous and the second map is continuous as both the source and the target have the discrete topology. This finishes the argument when $m\geq 1$; for $m=0$ one chooses $K_p=\Gamma(p)$ and argues in the same way using that $\Gamma_0/\Gamma(p) \cong P(\Z/p)$.

\medskip

This finishes the proof when $F$ is imaginary CM. The argument when $F$ is totally real is the same; the argument to prove of~\cite[Lemma 5.23]{newton-thorne} for $\mathrm{Res}_{F/\Q}\mathrm{Sp}_{2n}$-Shimura varieties is exactly the same, since \cite[\S 4]{scholze-galois} applies to them.
\end{proof}

\subsection{The boundary of the Borel--Serre compactification}\label{subsec: boundary}

Our task now is to prove a relation between the Hecke actions on the cohomology of $X^G_K$ and $X_{K_M}^M$. We begin with a quick recap of the method of \cite{newton-thorne}, which may be viewed as a derived version of that of \cite{scholze-galois}. The link between $X^G_K$ and $X_{K_M}^M$ is through the boundary $\partial \ol{X}^G_K$ of the Borel--Serre compactification. There is an open subset $X_K^{G,P} \sub \partial \ol{X}_K^G$ and, if $K_P \defeq K \cap P(\A_f)$ and $K_M \defeq K \cap M(\A_f)$, maps
\[
 X_{K_M}^M \twoheadleftarrow X_{K_P}^P \hookrightarrow X_K^{G,P}.
\]
The surjective map is a torus fibration and the injective map is an open and closed immersion, and one may use this to realize the cohomology of  $X_{K_M}^M$ as a direct summand of the cohomology of $X_K^{G,P}$. After localization at a maximal ideal, the cohomologies of $X_K^{G,P}$ and $\partial \ol{X}_K^G$ are equal, so it remains  to relate the cohomology of $X_K^{G}$ and $\partial \ol{X}_K^G$. In \cite{newton-thorne} this is done via the excision sequence. This is the source of the nilpotent ideal in their method, but they also note that a suitable vanishing results could remove this (cf. Theorem 1.4 of \emph{loc.\ cit.}).

\medskip

In light of this, we will use our Theorem \ref{main thm non-similitude} to remove the nilpotent ideal. Since our result is at \emph{infinite level}, we need an analysis along the above lines in which we have compatibility as the level changes, but this is not the case for the method outlined above (the main issue is the realization of the cohomology of  $X_{K_M}^M$ as a direct summand of the cohomology of $X_K^{G,P}$). To remedy this we take ordinary parts from $G$ (or $P$) to $M$ (as discussed in \S \ref{subsec: ordinary parts}). This simultaneously solves the compatibility problem and cuts down the cohomology groups to objects with good finiteness properties in the limit, which is useful for the technical aspects of the analysis. The resulting method is the main invention of this section; the end result is Theorem \ref{direct summand}. Beyond the applications in this paper, we believe that our method will be useful in reducing questions of local-global compatibility from $M$ to $G$. 

\medskip

Let us now turn to the mathematics, starting with a discussion of the structure of $\partial \ol{X}_K^G$. As a set, 
\[
\ol{X}^G = \bigsqcup_Q X^Q,
\]
where $Q$ runs through the parabolics of $G$ defined over $\Q$. For any parabolic $Q$, the subset $X(Q) \defeq \bigsqcup_{Q^\prime \supseteq Q} X^{Q^\prime}$ is open in $\ol{X}^G$. Since the Siegel parabolic $P$ is maximal, it follows that $X^P$ is open inside $\partial \ol{X}^G = \ol{X}^G \setminus X^G$. We denote by $X^{G,P}$ the union of all translates of $X^P$ inside $\partial \ol{X}^G$ by the action of $G(\Q)$; as a set this is the disjoint union of $X^Q$ where $Q$ ranges over the $\Q$-parabolic subgroups of $G$ which are $\Q$-conjugate to $P$. If $K$ is a neat level, we obtain an open subset
\[
X^{G,P}_K \defeq G(\Q) \backslash X^{G,P} \times G(\A_f)/K \sub \partial \ol{X}^G_K,
\]
and one sees that $X^{G,P}_K= P(\Q) \backslash X^{P} \times G(\A_f)/K$. Note that if $K^\prime \sub K$, then $X^{G,P}_{K^\prime} = X^{G,P}_K \times_{\ol{X}^G_K} \ol{X}^G_{K^\prime}$. For each $g\in  G(\A_f)$ we have an open and closed immersion
\[
\iota_g : P(\Q) \backslash X^{P} \times P(\A_f)/(gKg^{-1} \cap P(\A_f)) \to P(\Q) \backslash X^{P} \times G(\A_f)/K
\]
given by $\iota_g(x,p) = (x,pg)$. Two maps $\iota_g$ and $\iota_h$ have equal image if $h\in P(\A_f)gK$, and otherwise the images are disjoint. We will think of $X^P_{K\cap P(\A_f)}$ as an open and closed subset of $X^{G,P}_K$ via $\iota_1$. To simplify notation, we will simply write $X^P_K$ for $X^P_{K\cap P(\A_f)}$. Note that if $K^\prime \sub K$, then inclusion $X^P_{K^\prime} \sub X^P_K \times_{X^{G,P}_K}X^{G,P}_{K^\prime}$ is not an equality in general; this is the ``failure of compatibility'' when changing $K$ that we will have to work around later.

\medskip

Our first goal is Proposition \ref{XP to XGP}, relating $R\Gamma(X^P_K,\Lambda)$ and $R\Gamma(X^{G,P}_K,\Lambda)$ and their Hecke actions after taking ordinary parts, in a way that is compatible with changing $K$ (for certain choices of $K$). $X_K^P$ is acted on by $\cH(P(\A_f),K\cap P(\A_f))$ and $X^{G,P}_K$ is acted on by $\cH(G(\A_f),K)$. Recall $P_0\subset G_0$ defined in the beginning of \ref{Borel-Serre}. Inside $\cH(P(\A_f),K\cap P(\A_f))$ we have
the abstract spherical Hecke algebra $\mathbb{T}^S_P$, defined by 
\[
\mathbb{T}^S_P \defeq \otimes_{l\not\in S, \bar{w}\mid l}\mathbb{T}_{P,\bar{w}}, \quad \mathbb{T}_{P,\bar{w}} = \Z_p\left[P_0(F^+_{\bar{w}})/ /P_0(\cO_{F^+,\bar{w}})\right].
\]
$\mathbb{T}_P^S$ is related to its counterpart $\mathbb{T}_G^S$ for $G$ via the homomorphism $r_P \colon \mathbb{T}^S_G\to \mathbb{T}^S_{P}$, which restricts a function on $G(\A_f^S)$ to $P(\A_f^S)$. For further discussion of $r_P$ and a proof that it is a homomorphism we refer to \cite[\S 2.2.3]{newton-thorne}.

\medskip

We now recall some notation from \S \ref{subsec: ordinary parts}. We recall the notation $K_{m,\ell} = K^p \Gamma_{m,\ell}$ (for $\ell\geq m$, $\ell \geq 1$, $m\geq 0$) and $K_m = K_{m,m} = K^p \Gamma_1(p^m)$, $K_0 = K^p\Gamma_0 = K^p \Gamma_0(p)$. Also recall the matrix
\[
\gamma \defeq \prod_{\iota} \mathrm{diag}(p,\dots,p,p^{-1},\dots,p^{-1})\in M(\Q_p),
\]
where in each matrix $p$ and $p^{-1}$ occur $n$ times on the diagonal and the product runs over all embeddings $\iota \colon F\hookrightarrow \C$ with $\iota\in \Psi$ when $F$ is imaginary CM and over all embeddings $\iota \colon F\hookrightarrow \R$ when $F$ is totally real. We have the decomposition
\[
\Gamma_{m,\ell} \gamma \Gamma_{m,\ell} = \bigsqcup_i \beta_i \gamma \Gamma_{m,\ell},
\]
where the $\beta_i$ run through a set of coset representatives of $p^2 N(\Zp)$ in $N(\Zp)$. We will also use the double coset $(P(\Qp) \cap \Gamma_{m,\ell}) \gamma (P(\Qp) \cap \Gamma_{m,\ell})$, which defines an operator in $\cH(P(\A_f),K \cap P(\A_f))$. We have the decomposition
\[
(P(\Qp) \cap \Gamma_{m,\ell}) \gamma (P(\Qp) \cap \Gamma_{m,\ell}) = \bigsqcup_i \beta_i \gamma (P(\Qp) \cap \Gamma_{m,\ell}),
\] 
again independent of $m$ and $\ell$. For simplicity, we will denote both Hecke operators/correspondences by $U_p$. In \S \ref{subsec: ordinary parts} we discussed ordinary parts for $G$ with respect to $U_p$; the same discussion holds for $P$ and $U_p$ (in fact everything is easier, since $X^P_{K_m}$ is the quotient of $X_{K_{m+1}}^P$ by $K_{M,m,p}$). Since the action of $U_p$ on $R\Gamma(X^P_{K_{m,\ell}},\Lambda)$ commutes with the action of $\mathbb{T}_P^S$, $R\Gamma(\XP,\Lambda)^{\mathrm{ord}}$ inherits an action of $\mathbb{T}_P^S$ as discussed in \S \ref{comm alg}. Similar remarks apply for $G$.

\begin{prop} \label{XP to XGP}
$R\Gamma(\XP,\Lambda)^{\ord}$ is a direct summand
of $R\Gamma(\XGP,\Lambda)^{\ord}$ in $D(M(\Z/p^{m}), \Lambda)$, compatibly with changing the level and Hecke-equivariant for the restriction of functions $r_P \colon \mathbb{T}^S_G \to \mathbb{T}^S_P$.
\end{prop}

Let us spell out what compatibility when changing the level means. When $\ell \geq m$, we have canonical isomorphisms 
\[
R\Gamma(\XP,\Lambda)^{\ord}\cong R\Gamma(K_{M,m,p}/K_{M,\ell,p}, R\Gamma(\XP[\ell],\Lambda)^\mathrm{ord})
\]
and 
\[
R\Gamma(\XGP,\Lambda)^{\ord}\cong R\Gamma(K_{M,m,p}/K_{M,\ell,p}, R\Gamma(\XGP[\ell],\Lambda)^\mathrm{ord})
\]
in $D(M(\Z/p^{m}), \Lambda)$, by Proposition \ref{finite to finite basic} and Proposition \ref{ordinary finiteness}. In general, let $\epsilon_i$ be the idempotent on $R\Gamma(\XGP[i],\Lambda)^\mathrm{ord}$ cutting out $R\Gamma(\XP[i],\Lambda)^\mathrm{ord}$. Compatibility then means that $\epsilon_m = R\Gamma(K_{M,m,p}/K_{M,\ell,p},e_\ell)$ under the isomorphisms above. This will allows us to get idempotents in the limit later, using Lemma \ref{gluing complexes}. The same notion of compatibility, with the obvious modifications, will be used in Propositions \ref{XM to XP} and \ref{XG to XGP} later.

\begin{proof}
To break up the proof into parts, it will be convenient for us to define an intermediate space $Y^{G,P}_K$ by
\[ 
Y^{G,P}_K \defeq P(\QQ) \backslash X^P \times P(\A_f^p) G(\Qp) / (K \cap P(\A_f^p) G(\Qp)),. 
\]
for $K =K^p K_p$ compact open. Then $Y^{G,P}_K$ is an open and closed subset of $X^{G,P}_K$. Compared to $X^P_K$, it has the advantage that $Y^{G,P}_{K^\prime} = Y^{G,P}_K \times_{X^{G,P}_K} X^{G,P}_{K^\prime}$ whenever $K^\prime = K^p K_p^\prime \sub K$. Note that $\cH(G(\Qp),\Gamma_{m,\ell})$ and $\cH(P(\A_f^p),K^p \cap P(\A_f^p))$ act on $Y^{G,P}_{K_{m,\ell}}$. In particular, it has a $U_p$-action. Since the decomposition of the double coset of $\gamma$ into single right cosets is identical for $G$ and $P$, the $U_p$-actions on the spaces $X_{K_{m}}^P$, $Y_{K_{m}}^{G,P}$ and $X_{K_{m}}^{G,P}$ commute with pullback along the open and closed immersions $X_{K_m}^P \sub Y_{K_m}^{G,P} \sub X_{K_m}^{G,P}$.

\medskip

We begin by exhibiting $R\Gamma(\XP, \Lambda)^\mathrm{ord}$ as a direct summand of $R\Gamma(\YGP,\Lambda)^\mathrm{ord}$ compatibly with changing $m$.
Let $i_m \colon \XP \hookrightarrow \YGP$ be the natural open and closed
immersion.  Let $e_m$ be the ordinary projector on $R\Gamma(\YGP,\Lambda)$.
Then by the commutativity of $U_p$ and the pullback functor $i_m^*$
and since $i_m^*\circ i_{m*}$ is equal to the identity on $R\Gamma(\XP, \Lambda)$,
we have $i_m^*\circ U_p\circ i_{m*} = U_p$. We conclude that
$i_m^* \circ e_m \circ i_{m*}$ is the ordinary projector on
$R\Gamma(\XP,\Lambda)$.
For $1 \le k \le m$, let $r_{k m}$ be the idempotent on
$R\Gamma(\YGP,\Lambda)$ corresponding to the open and closed immersion
$X^P_{K_{0}} \times_{Y^{G,P}_{K_{0,k}}} Y^{G,P}_{K_m} \hookrightarrow Y^{G,P}_{K_m}$.

Consider the maps
\[ \begin{tikzcd}[column sep=6em]
R\Gamma(\XP, \Lambda) \arrow[r,"e_m \circ r_{1m} \circ e_m \circ i_{m*}",shift left=.6ex] & \arrow[l,"i_m^*",shift left=.6ex] R\Gamma(\YGP,\Lambda)\,.
\end{tikzcd} \]
We claim that $i_m^* \circ (e_m \circ r_{1m} \circ e_m \circ i_{m*})$
is the ordinary projector $i_m^* \circ e_m \circ i_{m*}$, giving
a realization of $R\Gamma(\XP, \Lambda)^{\ord}$ as a direct summand of 
$R\Gamma(\YGP,\Lambda)^{\ord}$.
By commutativity of $U_p$ and the pullback functor $i_m^*$, we have
$i_m^* \circ e_m = i_m^* \circ e_m \circ r_{mm}$.
So
\begin{IEEEeqnarray*}{c}
i_m^* \circ (e_m \circ r_{1m} \circ e_m \circ i_{m*}) =
i_m^* \circ e_m \circ r_{mm} \circ r_{1m} \circ e_m \circ i_{m*} \\
= i_m^* \circ e_m \circ r_{mm} \circ e_m \circ i_{m*} = i_m^* \circ e_m \circ e_m \circ i_{m*} = i_m^* \circ e_m \circ i_{m*} \,.
\end{IEEEeqnarray*}

It remains to check that the maps $i_m^*$ and
$e_m \circ r_{1m} \circ e_m \circ i_{m*}$ are compatible with change of
level.  Let $\ell > m$.
Observe that
\[ R\Gamma(K_{M,m,p}/K_{M,\ell,p},\Gamma(Y^{G,P}_{K_{\ell}}, \Lambda)) \cong R\Gamma(Y^{G,P}_{K_{m,\ell}}, \Lambda) \]
in $D(M(\Z/p^{m}), \Lambda)$ by the argument of Proposition \ref{finite to finite basic}.
When we take the quotient by $K_{M,m,p}/K_{M,\ell,p}$,
the map $i_{\ell}$ descends to the natural open and closed immersion
$i_{m\ell} \colon \XP \hookrightarrow Y^{G,P}_{K_{m,\ell}}$,
$e_{\ell}$ descends to the ordinary projector $e_{m\ell} \in \End R\Gamma(Y^{G,P}_{K_{m,\ell}},\Lambda)$,
and $r_{k\ell}$ descends to the idempotent $r_{km\ell} \in \End R\Gamma(Y^{G,P}_{K_{m,\ell}},\Lambda)$
corresponding to
the open and closed immersion $X^P_{K_0} \times_{Y^{G,P}_{K_{0,k}}} Y^{G,P}_{K_{m,\ell}} \hookrightarrow Y^{G,P}_{K_{m,\ell}}$.  Let $f_{\ell m} \colon Y^{G,P}_{K_{m,\ell}} \to Y^{G,P}_{K_m}$ denote the projection map.

It is straightforward to check that the maps
$i_m^*$, $e_m$, and $r_{1m}$ are
compatible with change of level in the sense that
$i_m^* = i_{m\ell}^* \circ f_{\ell m}^*$, $f_{\ell m}^* \circ e_m = e_{m \ell} \circ f_{\ell m}^*$,
$f_{\ell m}^* \circ r_{1m} = r_{1m \ell} \circ f_{\ell m}^*$.  However, $i_{m*}$ is not compatible with the
change of level; instead $i_{m\ell*} = r_{\ell m\ell} \circ f_{\ell m}^* \circ i_{m*}$.

To prove compatibility of the direct sum decompositions, we need to show that
\[ f_{\ell m}^* \circ e_m \circ r_{1m} \circ e_m \circ i_{m*} = e_{m \ell} \circ r_{1m\ell} \circ e_{m\ell} \circ i_{m\ell *} \,. \]
Using the above identities, we find that the left-hand side equals
$e_{m \ell} \circ r_{1m \ell} \circ e_{m \ell} \circ f_{\ell m}^* \circ i_{m*}$,
while the right-hand side equals
$e_{m \ell} \circ r_{1m \ell} \circ e_{m \ell} \circ r_{\ell m\ell} \circ f_{\ell m}^* \circ i_{m*}$.
So it suffices to show that
$r_{1m \ell} \circ e_{m \ell} \circ r_{\ell m \ell} = r_{1m \ell} \circ e_{m\ell}$.
Then it suffices to show that for large enough $k$, in the
$U_p^k$-correspondence
\[ Y^{G,P}_{K_{m,\ell}} \xleftarrow{\gamma^k} Y^{G,P}_{K_{m,\ell}\cap \gamma^k K_{m,\ell}\gamma^{-k}} \to Y^{G,P}_{K_{m,\ell}} \,, \]
the preimage of $X^P_{K_{0}} \times_{Y^{G,P}_{K_{0,\ell}}} Y^{G,P}_{K_{m,\ell}}$
along the first map contains the preimage
of $X^P_{K_0} \times_{Y^{G,P}_{K_{0,1}}} Y^{K,P}_{K_{m,\ell}}$ along the second map.
This is equivalent to the statement
\[ P(\Qp) \Gamma_{0,1} \gamma^k \subseteq P(\Qp) \Gamma_{0,\ell}, \]
which holds for $k \ge \left\lceil \frac{\ell-1}{2} \right\rceil$
by Lemma \ref{contract}.

\medskip

This finishes the proof that the direct summand $R\Gamma(\XP, \Lambda)^\mathrm{ord}$ of $R\Gamma(\YGP,\Lambda)^\mathrm{ord}$ is compatible with changing $m$. Showing that the direct summand $R\Gamma(\YGP,\Lambda)^\mathrm{ord}$ of $R\Gamma(\XGP,\Lambda)^\mathrm{ord}$ is compatible with changing $m$
is easier; it follows from $U_p$-equivariance of pullback and pushforward along all inclusions, since in this case we have $Y^{G,P}_{K_{m,\ell}} = \YGP \times_{\XGP} X^{G,P}_{K_{m,\ell}}$.

\medskip

It remains to check that our realization of $R\Gamma(\XP,\Lambda)^{\ord}$ as a direct summand of $R\Gamma(\XGP,\Lambda)^\mathrm{ord}$ is compatible with $r_P: \mathbb{T}^S_G \to \mathbb{T}^S_P$. It is immediate that $R\Gamma(\XP,\Lambda)^\mathrm{ord}$
is a $\mathbb{T}^S_P$-equivariant direct summand of $R\Gamma(\YGP,\Lambda)^\mathrm{ord}$, since $\XP \sub \YGP$ is $\mathbb{T}_P^S$-stable and $U_p$ (and hence the ordinary projector) with commutes $\mathbb{T}^S_P$. So it remains to check that the realization of $R\Gamma(\YGP,\Lambda)^\mathrm{ord}$ as a direct summand of $R\Gamma(\XGP,\Lambda)^\mathrm{ord}$ is compatible with $r_P$, which we may do before applying ordinary projectors. 

\medskip

For this we will compute Hecke actions by the methods of \S \ref{subsec: loc sym space}. $R\Gamma(\XGP,\Lambda)$ with its $\mathbb{T}_G^S$-action is computed by the object $R\Gamma(K_m, R\Gamma(\frakX^{G,P},\Lambda))\in D(\mathbb{T}_G^S \times M(\Z/p^m))$, where $\frakX^{G,P} \defeq P(\Q)\backslash X^P \times G(\A_f)$. $R\Gamma(\YGP,\Lambda)$ can be viewed in two ways. On its own, its $\mathbb{T}_P^S$-action is naturally computed by $R\Gamma(K_m\cap P(\A_f), R\Gamma(\mf{Y}_1^{G,P},\Lambda))\in D(\mathbb{T}_P^S \times M(\Z/p^r))$, where $\mf{Y}_1^{G,P} \defeq P(\Q)\backslash X^P \times P(\A_f^p)G(\Qp)$. However, viewing it as direct summand of $R\Gamma(\XGP,\Lambda)$, it corresponds to the direct summand $R\Gamma(K_m, R\Gamma(\mf{Y}_2^{G,P},\Lambda))$ of $R\Gamma(K_m, R\Gamma(\frakX^{G,P},\Lambda))$, where 
\[
\mf{Y}_2^{G,P} \defeq \YGP \times_{\XGP} \mf{X}^{G,P} = P(\Q) \backslash X^P \times G(\A_f^S)P(\A_S)K_{m,S} G(\Qp)
\]
(the equality on the right follows by direct computation, noting that $G(\A_f^S)=P(\A_f^S)K^S$). It remains to show that $R\Gamma(K_m, R\Gamma(\mf{Y}_2^{G,P},\Lambda))\cong r_P^\ast R\Gamma(K_m\cap P(\A_f), R\Gamma(\mf{Y}_1^{G,P},\Lambda))$, where $r_P^\ast$ denotes restriction of scalars via $r_P$. The right hand side is isomorphic to 
\[
r_P^\ast R\Gamma\left( K_P^S, R\Gamma \left( K_{P,S}, R\Gamma(\mf{Y}_1^{G,P},\Lambda)\right) \right),
\]
where $K_P^S = K^S \cap P(\A_f^S)$ and $K_{P,S} = K_{m,S} \cap P(\A_S)G(\Qp)$. This is in turn is isomorphic to 
\[
R\Gamma\left( K^S, \mathrm{Ind}_{P(\A_f^S)}^{G(\A_f^S)} R\Gamma \left( K_{P,S}, R\Gamma(\mf{Y}_1^{G,P},\Lambda)\right) \right)
\]
by \cite[Corollary 2.6]{newton-thorne}. It is then straightforward to show that $R\Gamma(K_{P,S},R\Gamma(\mf{Y}_1^{G,P},\Lambda)) = H^0(P(\Q)\backslash P(\A_f^p)G(\Qp)/K_{P,S}, \Lambda)$, using that $P(\Q)\backslash P(\A_f^p)G(\Qp)$ is a free $K_{P,S}$-space\footnote{Also recall that in these computations, all groups are given the discrete topology.}. One then writes $R\Gamma(K_m, R\Gamma(\mf{Y}_2^{G,P})) \cong R\Gamma(K^S, R\Gamma(K_{m,S},R\Gamma(\mf{Y}_2^{G,P})))$. As above, one sees that
\[
R\Gamma(K_{m,S},R\Gamma(\mf{Y}_2^{G,P},\Lambda)) = H^0(P(\Q)\backslash G(\A_f^S)P(\A_S)K_{m,S}/K_{m,S}, \Lambda),
\]
and the proof is complete upon noting (with a little bit of computation) that the right hand side is isomorphic to $\mathrm{Ind}_{P(\A_f^S)}^{G(\A_f^S)}H^0(P(\Q)\backslash P(\A_f^p)G(\Qp)/K_{P,S}, \Lambda)$.
\end{proof}

The following lemma was used in the proof of Proposition \ref{XP to XGP}.
\begin{lemma}\label{contract}~
For any integer $k \ge 0$, we have $P(\Qp) \Gamma_{0,1} \gamma^k = P(\Qp) \Gamma_{0,1+2k}$.
\end{lemma}
\begin{proof}
The Iwahori factorization gives
$\Gamma_{0,1+2k} = N(\Zp)K_{M,0,p}\ol{N}(\Zp)^{p^{1+2k}}$.
We then compute
\begin{IEEEeqnarray*}{c}
P(\Qp) \Gamma_{0,1} \gamma^k
= P(\Qp) \ol{N} (\Zp)^p \gamma^k
= P(\Qp) \gamma^k \ol{N} (\Zp)^{p^{1+2k}} = \\
= P(\Qp) \ol{N}( \Zp)^{p^{1+2k}}
= P(\Qp) \Gamma_{0,1+2k} \,.
\end{IEEEeqnarray*}
\end{proof}

Our next goal is to compute $R\Gamma(\XP,\Lambda)^\mathrm{ord}$, which will be done in Proposition \ref{XM to XP}. To do so, it will be desirable to understand the ordinary projector in the context of the Hecke action computations of \S \ref{subsec: loc sym space}. For this we need some preparation. The quotient map $P \to M$ induces a map $\Phi : \mf{X}^P \to \mf{X}^M$, which is $P(\A_f)$-equivariant when $\mf{X}^M$ is given the $P(\A_f)$-action obtained by inflation from $M(\A_f)$, which we do from now on. For now, let us simplify the notation by omitting $m$; We set $K_{P} = P(\A_f)\cap K_m$, $K_{M} = M(\A_f)\cap K_m$ and $K_{N} = N(\A_f)\cap K_m$. For any $P(\A_f)$-space $\mf{S}$ on which $N(\A_f)$ acts trivially, there is a derived functor
\[
R\Gamma(K_{N,S},-) : D_{P(\A_f)}(\mf{S},\Lambda) \to D_{P(\A_f^S)\times K^\prime_{M,S}}(\mf{S},\Lambda),
\]
of $K_{N,S}$-invariants, where $K^\prime_{M,S} \defeq K_{M,S}M(\Zp)$. The examples of such spaces $\mf{S}$ that we will use are the point $pt$ and $\mf{X}^M$. We set
\[
Z_M^+ \defeq \{m\in Z_M(\Qp) \mid mN(\Zp)m^{-1} \sub N(\Zp) \},
\]
where $Z_M \sub M$ is the center. Note that $Z_M^+$ is a monoid, and that $\gamma \in Z_M^+$. If $V \in \mathrm{Sh}_{P(\A_f)}(\mf{S},\Lambda)$, we may define a homomorphism  
\[
Z_M^+ \to \End_{\mathrm{Sh}_{P(\A_f^S)\times K^\prime_{M,S}}(\mf{S},\Lambda)}(\Gamma(K_{N,S},V))
\]
of multiplicative monoids using the formula $m.v = \sum n_i mv $, where the sum runs over a set of coset representatives $n_i$ of $N(\Zp)/mN(\Zp)m^{-1}$. We refer to \cite[Definition 3.1.3, Lemma 3.1.4]{emerton-ord1} and the surrounding discussion for a proof that this is well defined and does give a homomorphism of monoids. The definition naturally extends to complexes and gives us an action of $Z_M^+$ on $R\Gamma(K_{N,S},V)$ for any $V \in D_{P(\A_f)}(\mf{S},\Lambda)$. Consider the $P(\A_f)$-equivariant map $\pi : \mf{X}^M \to pt$. We have a diagram 
\[
    \xymatrix@C+2pc{ D(P(\A_f),\Lambda) \ar[r]^-{R\Gamma(K_{N,S},-)} \ar[d]^{\pi^\ast} & D(P(\A_f^S)\times K_{M,S}^\prime, \Lambda ) \ar[d]^{\pi^\ast} \\ D_{P(\A_f)}(\mf{X}^M,\Lambda) \ar[r]^-{R\Gamma(K_{N,S},-)} & D_{P(\A_f^S)\times K_{M,S}^\prime}(\mf{X}^M,\Lambda) }
\]
which commutes up to natural isomorphism, where we have identified $D_H(pt,\Lambda)$ with $D(H,\Lambda)$ for any group $H$ (note that the standard injectives $\mathrm{Fun}(P(\A_f),M) \in \mathrm{Mod}(P(\A_f),\Lambda)$, $M$ injective $\Lambda$-module, pull back to $R\Gamma(K_{N,S},-)$-acyclic sheaves under $\pi$). By direct computation, the composition
\[
Z_M^+ \to \End_{D(P(\A_f^S)\times K^\prime_{M,S})}(R\Gamma(K_{N,S},V)) \overset{\pi^\ast}{\to} \End_{D_{P(\A_f^S)\times K^\prime_{M,S}}(\mf{X}^M,\Lambda)}(R\Gamma(K_{N,S},\pi^\ast V))
\]
is the action $Z_M^+ \to \End_{D_{P(\A_f^S)\times K^\prime_{M,S}}(\mf{X}^M,\Lambda)}(R\Gamma(K_{N,S},\pi^\ast V))$ defined above.

\medskip

Before proceeding to Proposition \ref{XM to XP}, we prove a lemma that will be used in the proof. For the proof, it will be convenient to note that (discrete) group cohomology for $N(\Zp)$ is often equal to continuous group cohomology: the natural map
\[
 H_{cts}^{i}(N(\Zp),A) \to H^{i}(N(\Zp),A) 
\]
is an isomorphism for all $i$ and all $N(\Zp)$-modules $A$ of finite cardinality by \cite[Theorem 2.10]{fkrs} (since $N(\Zp)\cong \Zp^{d}$). For the lemma, we also need to define a character. There is an algebraic character $\psi \colon M \to \GG_m$ over $\Zp$, which is defined on points as the composition
\[
 \GL_{n}(\cO_{F} \otimes_{\Zp} R) \overset{{\rm det}}{\longrightarrow} (\cO_{F} \otimes_{\Zp}R)^{\times} \overset{{\rm Nm}}{\longrightarrow} R^{\times},
\]
where $R$ is any $\Zp$-algebra, ${\rm det}$ is the determinant and ${\rm Nm}$ is the norm map. For any $k\in \Z$, we write $\Lambda(\psi^k)$ for $\Lambda$ with $M(\Zp)$ acting via $\psi^k$. For any group $H \twoheadrightarrow M(\Zp)$, we regard $\Lambda(\psi^k)$ as an $H$-module by inflation. Continue to use the notation above.

\begin{lemma}\label{splitting off the top degree} $H^d(K_{N,S}, \Lambda)[-d]$ is a direct summand of $R\Gamma(K_{N,S},\Lambda)$ in $D(P(\A_f^S)\times K_{M,S}^\prime, \Lambda)$, and the idempotent cutting it out is a polynomial in $\gamma \in Z_M^+$ which reduces to $\gamma^{k!}$, for sufficiently divisible $k$, on cohomology. Moreover, $H^d(K_{N,S}, \Lambda)\cong \Lambda(\psi^{-n})$ as $P(\A_f^S) \times K_{M,S}^\prime$-modules.
\end{lemma}

\begin{proof}
We start with the first part. Since $R\Gamma(K_{N,S},-) \cong R\Gamma(K_{N,S}^P,R\Gamma(N(\Zp),-))$, we claim that it suffices to prove the first part when $S=\{p\}$\footnote{Of course, in the larger context of the paper this need not be the case, but this lemma and the discussion preceding makes sense for arbitrary finite $S$ containing $p$.}. To see this, it suffices to show that $H^d(K_{N,S},\Lambda) = R\Gamma(K_{N,S}^p, H^d(N(\Zp),\Lambda))$. This follows from the fact that $H^d(N(\Zp),\Lambda)$ is a finite $\Lambda$-module with trivial $K_{N,S}^p$-action and $K_{N,S}^p$ is isomorphic to a finite product of groups of the form $\Z_\ell$, $\ell \in S\setminus \{p\}$. So, we need to construct an idempotent 
\[ 
e\in \End_{D(P(\A_f^p) \times M(\Zp),\Lambda)}(R\Gamma(N(\Z_p), \Lambda))
\]
of the desired form such that $e$ acts as $0$ on $H^i(N(\Z_p), \Lambda)$ for $i\neq d$ and as the identity on $H^d(N(\Z_p), \Lambda)$. Consider the endomorphism (induced by) $\gamma$. We may write it as a composition in the following way: Let $\Lambda \to I^{\bullet}$ be an $H$-acyclic resolution of $\Lambda$ as a $P(\A_f)$-representation; then $(I^{\bullet})^H$ represents $R\Gamma(H,\Lambda)$ in $D_{\mathrm{sm}}(P(\A_f^p)\times M(\Zp),\Lambda)$ for $H=N(\Zp),\gamma N(\Zp)\gamma^{-1}$. We may define a homomorphism
$$ m_{\gamma} \colon R\Gamma(N(\Z_p), \Lambda) \to R\Gamma(\gamma N(\Z_p)\gamma^{-1}, \Lambda) $$
by termwise multiplication by $\gamma$ from $(I^{\bullet})^{N(\Zp)}$ to $(I^{\bullet})^{\gamma N(\Zp) \gamma^{-1}}$. This is an isomorphism with inverse $m_{\gamma^{-1}}$ (defined in the same way). Next, there is a corestriction map
$$ {\rm cores} \colon R\Gamma(\gamma N(\Z_p)\gamma^{-1}, \Lambda) \to  R\Gamma(N(\Z_p), \Lambda) $$
given explicitly by $v \mapsto \sum_{n \in N(\Zp)/\gamma N(\Zp) \gamma^{-1}}nv $ termwise from $(I^{\bullet})^{\gamma N(\Zp) \gamma^{-1}}$ to $(I^{\bullet})^{N(\Zp)}$; from the definitions we see that $\gamma = {\rm cores} \circ m_{\gamma}$. Writing $\gamma$ in this way allows us to analyze the $p$-divisibility of the action of $\gamma$ on cohomology. Since $m_{\gamma}$ is an isomorphism, this boils down to computing ${\rm cores}$ on cohomology groups, which we may do after restricting $\Lambda$ to an $N(\Zp)$-representation. So we wish to understand
$$ {\rm cores} \colon H^{i}(\gamma N(\Zp) \gamma^{-1},\Lambda) \to H^{i}(N(\Zp),\Lambda) $$
for all $0\leq i\leq d$. Choose an isomorphism $N(\Zp)\cong \Zp^{d}$; then $\gamma N(\Zp) \gamma^{-1} \cong (p^{2}\Zp)^{d}$ and, using the K\"unneth formula, it will be sufficient to understand 
$$ {\rm cores} \colon H^{i}(p^{2}\Zp,\Lambda) \to H^{i}(\Zp,\Lambda). $$
We resolve $\Lambda$ by $\Map_{cts}(\Zp,\Lambda) \to \Map_{cts}(\Zp,\Lambda)$ (with the translation action on both terms), where the map is the difference operator $\Delta(f)(x)=f(x+1)-f(x)$; this computes continuous group cohomology of $\Lambda$ and hence discrete group cohomology as well by the discussion preceding the lemma. From the definitions, ${\rm cores}$ is then computed by taking cohomology of the rows of the diagram
\begin{center}
\begin{tikzcd}
\Map(\Z/p^{2}\Z,\Lambda) \arrow[d,"\mathrm{cores}"] \arrow[r,"\Delta"] & \arrow[d,"\mathrm{cores}"] \Map(\Z/p^{2}\Z,\Lambda) \\
\Lambda \arrow[r,"0"] & \Lambda.
\end{tikzcd}
\end{center}
Here the top row is the $p^{2}\Zp$-invariants of $\Map_{cts}(\Zp,\Lambda) \to \Map_{cts}(\Zp,\Lambda)$ and the bottom row is the $\Zp$-invariants (with the obvious identifications), and ${\rm cores}$ is given by ${\rm cores}(f)=\sum_{i\in \Z/p^{2}\Z}f(i)$. As the kernel of $\Delta \colon \Map(\Z/p^{2}\Z,\Lambda) \to \Map(\Z/p^{2}\Z,\Lambda)$ is the constant functions and the image of $\Delta$ is precisely ${\rm Ker}(\mathrm{cores})$, one sees that ${\rm cores}$ is divisible by $p$ (indeed by $p^{2}$) on $H^{0}$'s and is an isomorphism on $H^{1}$'s. Returning to $N(\Zp)\cong \Zp^{d}$, the K\"unneth formula now implies that $ {\rm cores} \colon H^{i}(\gamma N(\Zp) \gamma^{-1},\Lambda) \to H^{i}(N(\Zp),\Lambda)$ is divisible by $p$ when $i<d$ and is an isomorphism when $i=d$. It follows that the action of $\gamma^m$ on $H^{i}(N(\Zp),\Lambda)$ is zero when $i<d$ and the identity when $i=d$, for any $m\geq 1$ divisible by $rp^{r-1}(p-1)$. Fix such an $m$; we have shown that $\gamma^m \in \End_{D(P(\A_f)\times M(\Zp),\Lambda)}(R\Gamma(N(\Z_p), \Lambda))$ gives rise on cohomology to an idempotent with the desired property. Since the natural ring homomorphism
\[
\End_{D(P(\A_f^p)\times M(\Zp),\Lambda)}(R\Gamma(N(\Z_p), \Lambda)) \to \End_{P(\A_f^p)\times M(\Zp)}(H^{\ast}(N(\Z_p), \Lambda))
\]
has nilpotent kernel (for example by the proof of \cite[Lemma 2.5(2)]{khare-thorne}), the idempotent lifting lemma \cite[Tag 00J9]{stacks-project} shows that there is a unique idempotent $e\in \End_{D(P(\A_f^p)\times M(\Zp),\Lambda)}(R\Gamma(N(\Z_p), \Lambda))$ which is a polynomial in $\gamma$ and reduces to $\gamma^m$ on cohomology. This finishes the proof of the first part.

\medskip
For the second part, observe that any $K_{N,S}$-acyclic resolution of $\Lambda$ as a $P(\Qp)$-module to inflates to a $K_{N,S}$-acyclic resolution by $P(\A_f)$-modules. Together with the observation above that 
\[
H^d(K_{N,S},\Lambda) = R\Gamma(K_{N,S}^p, H^d(N(\Zp),\Lambda)) = H^0(K_{N,S}^p,H^d(N(\Zp),\Lambda)),
\] 
this shows that it suffices to show that $H^d(K_{N,S}, \Lambda)\cong \Lambda(\psi^{-n})$ as $M(\Zp)$-modules, when $\Lambda$ is considered as a $P(\Qp)$-module. This is a special case of equation (3.6.5) in the proof of \cite[Proposition 3.6.2]{emerton-ord}, upon noting that the algebraic character denoted by $\alpha$ in \emph{loc.\ cit} is equal to $\psi^{n}$.
\end{proof}

Before proceeding to Proposition \ref{XM to XP}, we recall that there is a ring homomorphism $r_M \colon \mathbb{T}^S_P\to \mathbb{T}^S_M$ defined in \cite[\S 2.2.4, 2.2.5]{newton-thorne}, to which we refer for the precise definition.

\begin{prop} \label{XM to XP} For any $m$, we have an isomorphism
\[
R\Gamma(X_{K_{M,m}}^M,\Lambda(\psi^{-n}))[-d] \cong R\Gamma(\XP,\Lambda)^{\ord}
\]
in $D(M(\Z/p^{m}\Z), \Lambda)$, compatible with changing the level, and Hecke-equivariant for the action of $\mathbb{T}_P^S$, where the $\mathbb{T}_P^S$-action on $R\Gamma(X_{K_{M,p,m}}^M,\Lambda(\psi^{-n}))[-d]$ is given by restricting the natural $\mathbb{T}_M^S$-action along $r_M$. 
\end{prop}

\begin{proof}
We continue to use the notation above. This proof follows a part of the proof of \cite[Lemma 4.4]{newton-thorne} closely; we sketch that argument\footnote{Strictly speaking, it is assumed in \cite[Lemma 4.4]{newton-thorne} that the ambient group is $\mathrm{Res}_\Q^F\GL_n$ and that $r=1$, but the parts that we use are completely general.} and add the extra arguments needed. $R\Gamma(\XP,\Lambda)$ with its $\mathbb{T}_P^S$-action is computed by $R\Gamma(K_P,R\Gamma(\mf{X}^P,\Lambda))$; we will need another description. Recall the quotient map $\Phi : \mf{X}^P \to \mf{X}^M$. By \cite[Equation (4.4)]{newton-thorne}, we have (using the notation in the proof of Lemma \ref{splitting off the top degree})
\[
R\Gamma(K_P,R\Gamma(\mf{X}^P,\Lambda)) \cong R\Gamma \left( K_P^S \times K_{M,S}, R\Gamma \left( \mf{X}^M,R\Gamma(K_{N,S},R\Phi_\ast \Lambda) \right) \right)
\]
in $D(\mathbb{T}_P^S \times M(\Z/p^m),\Lambda)$. Further, by \cite[Equation (4.7)]{newton-thorne}, the right hand side is isomorphic to $R\Gamma \left( K_P^S \times K_{M,S}, R\Gamma \left( \mf{X}^M,R\Gamma(K_{N,S},\Lambda) \right) \right) $. By the discussing preceding Lemma \ref{splitting off the top degree}, $R\Gamma(K_{N,S},\Lambda) \in D_{P(\A_f^S)\times K_{M,S}^\prime}(\mf{X}^M,\Lambda)$ carries an action of $\gamma \in Z_M^+$ which comes from the action of $\gamma$ on $R\Gamma(K_{N,S},\Lambda) \in D(P(\A_f^S)\times K_{M,S},\Lambda)$ by pullback along $\pi : \mf{X}^M \to pt$. By explicit computation using an injective resolution, we see that applying  $R\Gamma \left( K_P^S \times K_{M,S}, R\Gamma ( \mf{X}^M,-) \right)$ to the action of $\gamma$ one gets the action of $U_p$ on $R\Gamma(K_P,R\Gamma(\mf{X}^P,\Lambda))$. By uniqueness in the idempotent lifting lemma \cite[Tag 00J9]{stacks-project}, it follows that the idempotent constructed in Lemma \ref{splitting off the top degree} maps to the ordinary projector on $R\Gamma(\XP,\Lambda)$, since both are equal to $U_p^{k!}$ (for sufficient divisible $k$) on cohomology. In particular, using Lemma \ref{splitting off the top degree}, we see that $R\Gamma(\XP,\Lambda)^\mathrm{ord}$ is computed by 
\[
R\Gamma \left( K_P^S \times K_{M,S}, R\Gamma \left( \mf{X}^M,\Lambda(\psi^{-n}) \right) \right)[-d].
\]
It remains to show that 
\[
R\Gamma \left( K_P^S \times K_{M,S}, R\Gamma \left( \mf{X}^M,\Lambda(\psi^{-n}) \right) \right) \cong r_M^\ast R\Gamma(K_{M,m},R\Gamma(\mf{X}^M,\Lambda(\psi^{-n}))),
\] 
since the right hand side computes $R\Gamma(X_{K_{M,m}}^M,\Lambda(\psi^{-n}))$. This is proven in \cite[p. 58-59]{newton-thorne}, more specifically it is the argument beginning in the last paragraph of p. 58 and continuing onto p. 59. In fact that argument shows that we have an isomorphism  
\[
R\Gamma \left( K_P^S, R\Gamma \left( \mf{X}^M,\Lambda(\psi^{-n}) \right) \right) \cong r_M^\ast R\Gamma(K^S_{M,m},R\Gamma(\mf{X}^M,\Lambda(\psi^{-n})))
\]
and the previous isomorphism is obtained by applying $R\Gamma(K_{M,S},-)$. This implies the compatibility of the isomorphism with changing the level, and finishes the proof.
\end{proof}

The last part of our analysis of the boundary will be to compare $R\Gamma(\partial \ol{X}_{K_m}^G, \Lambda)$ and $R\Gamma(\XGP,\Lambda)$. The open immersion $\XGP \to \partial \ol{X}_{K_m}^G$ gives us a $\mathbb{T}_G^S$-equivariant pullback map $R\Gamma(\partial \ol{X}_{K_m}^G, \Lambda) \to R\Gamma(\XGP,\Lambda)$ in $D(M(\Z/p^m),\Lambda)$. Our next result, Proposition \ref{XG to XGP}, asserts that this becomes an isomorphism after localization at a certain maximal ideal of $\mathbb{T}_G^S$. To define this maximal ideal, we recall our non-Eisenstein maximal ideal $\mf{m}\sub \mathbb{T}_M^S$ from \S \ref{nilpotent intro}. The composition
\[
\cS = r_M \circ r_P
\]
defines a ring homomorphism $\mathbb{T}_G^S \to \mathbb{T}_M^S $ and we put $\mf{M} \defeq \cS^{-1}(\mf{m})$; this is a maximal ideal since the residue field of $\mf{m}$ is finite. We remark that the  localizing $R\Gamma(\XGP,\Lambda)$ and $R\Gamma(\partial \ol{X}_{K_m}^G, \Lambda)$ at $\mathfrak{M}$ gives direct summands by the discussion in \S \ref{comm alg}, since $\TT_G^S$ acts through a finite quotient, and that $\TT_G^S$ acts on the localization. Similar remarks apply to all localizations in this paper (the reader may consult \cite[\S 3.2]{newton-thorne} for a more extensive discussion). The homomorphism $\cS$ is the unnormalized Satake transform. Let us give an explicit description of $\cS$ place by place, using the Satake isomorphisms for $M$ and $G$, following \cite[Lem. 5.2.5]{scholze-galois} and \cite[Prop.-Def. 5.3]{newton-thorne} (we adopt the normalizations of the latter reference). If $F$ is imaginary CM and $v$ splits in $F$, then $\cS$ is determined by the map
\[ 
\Zp[q_v^{1/2}][Y_1^{\pm 1},\dots,Y_{2n}^{\pm 1}]^{S_{2n}} \to \Zp[q_v^{1/2}][W_1^{\pm 1},\dots,W_n^{\pm 1},Z_1^{\pm 1},\dots,Z_n^{\pm 1}]^{S_n \times S_n} 
\]
that sends the set $\{Y_1,\dots,Y_{2n}\}$ to the set $\{q_v^{-n/2} W_1,\dots,q_v^{-n/2} W_n,q_v^{n/2} Z_1^{-1},\dots,q_v^{n/2} Z_n^{-1} \}$.
If $F$ is imaginary CM and $v$ is inert in $F$, $\cS$ is determined by the map
\[ 
\Zp[q_v^{1/2}][X_1^{\pm 1},\dotsc,X_n^{\pm 1}]^{S_n \rtimes (\ZZ/2\ZZ)^n}
\to \Zp[q_v^{1/2}][W_1^{\pm 1},\dotsc,W_n^{\pm 1}]^{S_n} 
\]
that sends the set $\{ X_1,\dotsc,X_n \}$ to the set
$\{ q_v^{-n/2} W_1, \dotsc, q_v^{-n/2} W_n \}$. Finally, if $F$ is totally real, $\cS$ is determined
by the map
\[ \Zp[q_v^{1/2}][X_1^{\pm 1},\dots,X_n^{\pm 1}]^{S_n \ltimes (\ZZ/2\ZZ)^n} \to \Zp[q_v^{1/2}][W_1^{\pm 1},\dots,W_n^{\pm 1}]^{S_n} \]
that sends the set $\{X_1,\dots,X_n\}$ to the set
$\{q_v^{-(n+1)/2} W_1,\dots,q_v^{-(n+1)/2} W_n \}$. We now come to Proposition \ref{XG to XGP}.

\begin{prop} \label{XG to XGP}
The pullback map $R\Gamma(\partial \ol{X}_{K_m}^G, \Lambda) \to R\Gamma(\XGP,\Lambda)$ in $D(M(\Z/p^m),\Lambda)$ induces an isomorphism $R\Gamma(\partial \ol{X}_{K_m}^G, \Lambda)_{\mf{M}} \to R\Gamma(\XGP,\Lambda)_{\mf{M}}$ in $D(M(\Z/p^m),\Lambda)$ after localization at $\mf{M}$, equivariant for the action of
$\cH(G(\A_f),K_m)$. It therefore induces an isomorphism
\[
R\Gamma(\partial \ol{X}_{K_m}^G, \Lambda)^{\mathrm{ord}}_{\mf{M}} \to R\Gamma(\XGP,\Lambda)^{\mathrm{ord}}_{\mf{M}}
\]
in $D(M(\Z/p^m),\Lambda)$, which is compatible with the action of $\mathbb{T}_G^S$ and with changing levels.
\end{prop}

\begin{proof}
Let $F$ be imaginary CM. The statement that $R\Gamma(\partial \ol{X}_{K_m}^G, \Lambda)_{\mf{M}} \to R\Gamma(\XGP,\Lambda)_{\mf{M}}$ is an isomorphism in $D(M(\Z/p^m),\Lambda)$ can be checked on cohomology groups, hence after passing to $D(\Lambda)$, and this is then \cite[Theorem 2.4.2]{accghlnstt} (for the trivial weight, after quotienting out by $p^r$). Note here that the space denoted by $\wt{X}_{\wt{K}}^P$ in \emph{loc.\ cit} is our $\XGP$, and not our $\XP$. Note further that the isomorphism in the statement of \cite[Theorem 2.4.2]{accghlnstt} is the inverse of our pullback map: The proof of \emph{loc.\ cit} shows simultaneously that the left map and the composition below
\[
R\Gamma_c(\XGP,\Lambda)_{\mf{M}} \to R\Gamma(\partial \ol{X}_{K_m}^G, \Lambda)_{\mf{M}} \to R\Gamma(\XGP,\Lambda)_{\mf{M}}
\]
are isomorphisms, where the left map is the natural map on compactly supported cohomology induced by the open immersion $\XGP \to \partial \ol{X}_{K_m}^G$ and the right map is the pullback map. The isomorphism in \emph{loc.\ cit} is then obtained by inverting the composition and then applying the left map. Compatibility with the $\cH(G(\A_f),K_m)$-action is clear, and this directly gives us the $\mathbb{T}_G^S$-equivariant isomorphism $R\Gamma(\partial \ol{X}_{K_m}^G, \Lambda)^{\mathrm{ord}}_{\mf{M}} \to R\Gamma(\XGP,\Lambda)^{\mathrm{ord}}_{\mf{M}}$. Finally, compatibility with changing levels follows from the fact that 
\[
    \xymatrix@C+2pc{ R\Gamma(K/K^\prime, R\Gamma(\partial \ol{X}_{K^\prime}^G,\Lambda)) \ar[r]^-{R\Gamma(K/K^\prime,j^\ast)} \ar[d] & R\Gamma(K/K^\prime, R\Gamma(X_{K^\prime}^{G,P},\Lambda)) \ar[d] \\ R\Gamma(\partial \ol{X}_{K}^G,\Lambda) \ar[r]^-{j^\ast} & R\Gamma(X_{K}^{G,P},\Lambda) }
\]
commutes whenever $K^\prime \sub K$ is normal, where $j^\ast$ denotes the pullback map and the vertical maps are the isomorphisms of Proposition \ref{finite to finite basic}. This finishes the proof when $F$ is imaginary CM.

\medskip
The proof when $F$ is totally real is formally identical, as \cite[Theorem 2.4.2]{accghlnstt} may be proven in exactly the same way, using the description of the unnormalized Satake transform given before the Proposition.
\end{proof} 

We can now put all of the above results together with Theorem \ref{main thm non-similitude} to prove the main technical result of this section. 

\begin{thm}\label{direct summand}
There exists an additive homomorphism
\[
\End_{D(M(\Z/p^m),\Lambda)}(R\Gamma(X_{K_m}^G,\Lambda)_{\mathfrak{M}}^{\mathrm{ord}}) \to \End_{D(M(\Z/p^m),\Lambda)}(R\Gamma(X_{K_{M,m}}^M,\Lambda(\psi^{-n}) )_{\mathfrak{m}})
\]
making the diagram
\[
    \xymatrix{ \mathbb{T}_G^S \ar[r] \ar[d]^{\cS} & \End_{D(M(\Z/p^m),\Lambda)}(R\Gamma(X_{K_m}^G,\Lambda)_{\mathfrak{M}}^{\mathrm{ord}}) \ar[d] \\ \mathbb{T}_M^S \ar[r] & \End_{D(M(\Z/p^m),\Lambda)}(R\Gamma(X_{K_{M,m}}^M,\Lambda(\psi^{-n}) )_{\mathfrak{m}}) }
\]
commute, where the horizontal maps are the natural actions and we recall that $\cS = r_M \circ r_P$ is the Satake transform.
\end{thm}

\begin{proof}
Putting together Propositions \ref{XP to XGP}, \ref{XM to XP} (after localization) and \ref{XG to XGP} together, we see that $\cS^\ast R\Gamma(X_{K_{M,m}}^M,\Lambda(\psi^{-n}) )_{\mathfrak{m}}[-d]$ is a $\mathbb{T}_G^S$-equivariant direct summand of $R\Gamma(\partial \ol{X}_{K_m}^G,\Lambda)_{\mathfrak{M}}^{\mathrm{ord}}$, compatible with changing levels. Using Lemma \ref{gluing complexes}, Lemma \ref{idempotents} (to handle localizations) and Lemma \ref{perfect complex for completed cohomology}, we may glue the $R\Gamma(X_{K_{M,m}}^M,\Lambda(\psi^{-n}) )_{\mathfrak{m}}$ into a complex in $\mathrm{Mod}_{sm}(M(\Z_p),\Lambda)$ which we will call $A_\infty$. Similarly, using Lemma \ref{gluing complexes}. Lemma \ref{idempotents} and Proposition \ref{hida admissibility}, we may glue the $R\Gamma(?\ol{X}_{K_m}^G,\Lambda)_{\mathfrak{M}}^{\mathrm{ord}}$ into a complex $B_\infty^{?}$, for $?\in \{\emptyset, \partial\}$. By compatibility and Lemma \ref{gluing complexes}, the idempotents and the Hecke actions at finite level glue as well, showing that $\cS^\ast A_\infty[-d]$ is a $\mathbb{T}_G^S$-equivariant direct summand of $B_\infty^\partial$.

\medskip
We now work in $D_{sm}(M(\Z_p),\Lambda)$ and apply the truncation functor $\tau_{\geq d+1}$. By Lemma \ref{eliminating degree 0} below, this does not change $\cS^\ast A_\infty[-d]$. Moreover, we claim that the natural map $B_\infty \to B_\infty^\partial$ in $D_{sm}(M(\Z_p),\Lambda)$ becomes an isomorphism after applying $\tau_{\geq d+1}$. This follows from Theorem \ref{main thm non-similitude}, since the cohomology of the cone is $\varinjlim_m H_c^{i+1}(X_{K_m}^G,\Lambda)_{\mathfrak{M}}^{\mathrm{ord}}$ in degree $i$ (using Lemma \ref{gluing complexes}(4)), which vanishes for $i\geq d$ by Theorem \ref{main thm non-similitude} (our $X_{K_m}^G$ are connected components of the Shimura varieties considered there). Thus, $\cS^\ast A_\infty[-d]$ is $\mathbb{T}_G^S$-equivariant direct summand of $\tau_{\geq d+1}B_\infty$. Applying $R\Gamma(K_{M,m,p},-)$ and using Lemma \ref{gluing complexes}, we see that $\cS^\ast R\Gamma(X_{K_{M,m}}^M,\Lambda(\psi^{-n}) )_{\mathfrak{m}}[-d]$ is a $\mathbb{T}_G^S$-equivariant direct summand of $
R\Gamma( K_{M,m,p}, \tau_{\geq d+1}B_\infty )$.
This gives us a projection map on endomorphisms, and the theorem is proved by composing this map with the map
\[
\End_{D(M(\Z/p^m),\Lambda)}\left( R\Gamma(K_{M,m,p},B_\infty) \right) \to \End_{D(M(\Z/p^m),\Lambda)}\left( R\Gamma(K_{M,m,p},\tau_{\geq d+1}B_\infty ) \right)
\]
given by $R\Gamma(K_{M,m,p},-) \circ \tau_{\geq d+1}$, upon noting that $R\Gamma(K_{M,m,p},B_\infty) \cong R\Gamma(\ol{X}_{K_m}^G,\Lambda)_{\mathfrak{M}}^{\mathrm{ord}}$ by Proposition \ref{ordinary finiteness}.
\end{proof}

The following lemma was used in the proof.

\begin{lemma}\label{eliminating degree 0} Let $K_M\sub M(\widehat{\Z})$ be a neat open compact subgroup and let $V$ be a $\Lambda[K_{M,S}]$-module which is finite over $\Lambda$. Equip it with the trivial $M(\A_f^S)$-action. Then $H^0(X^M_{K_M},V)_{\m} = 0$.
\end{lemma}

\begin{proof}
By \cite[Thm.\ 4.2]{newton-thorne}, the map $H^0_c(X^M_{K_M},V)_{\m} \to H^0(X^M_{K_M},V)_{\m}$ is an isomorphism. Every connected component of $X^M_{K_M}$ is noncompact and $V$ is a local system on $X_{K_M}^M$, so $H^0_c(X^M_{K_M},V)=0$.
\end{proof}

\subsection{Proof of Theorem \ref{eliminating nilpotent ideal}}\label{determinants 2}

Armed with Theorem \ref{direct summand}, we may now complete the proof of Theorem \ref{eliminating nilpotent ideal} following Scholze and Newton--Thorne. We fix $K^p$ so that $K=K^pK_p$ is small for any ope compact $K_p \sub G(\Zp)$ and we set $K^p_M = K^p \cap M(\A_f^p)$; then $K_M = K_M^p K_{M,p}$ is small for any $K_{M,p}\sub M(\Zp)$. Let $\lambda$ be a Weyl orbit of weights for $M$. We define a mod $p^r$ Hecke algebra
\[
\mathbb{T}^S_{M}\left(K_M,\lambda,r\right)^\mathrm{der}\defeq \mathrm{Im}\left(\mathbb{T}^S_{M}\to \mathrm{End}_{D(\Lambda)}\left(R\Gamma(X^M_{K_M}, \cV_\lambda/p^r)\right)\right)
\] 
similar to the derived Hecke algebra $\mathbb{T}^S_{M}\left(K_M,\lambda\right)^\mathrm{der}$ defined in \S \ref{nilpotent intro}. There we also defined $\mathbb{T}^S_{M}\left(K_M,\lambda\right)$, and noted that there is a surjection $\mathbb{T}^S_{M}\left(K_M,\lambda\right)^\mathrm{der} \to \mathbb{T}^S_{M}\left(K_M,\lambda\right)$ with nilpotent kernel. In particular, the maximal ideals of  $\mathbb{T}^S_{M}\left(K_M,\lambda\right)^\mathrm{der}$ and $\mathbb{T}^S_{M}\left(K_M,\lambda\right)$ coincide. By \cite[Lemma 3.11]{newton-thorne}, $\mathbb{T}^S_{M}\left(K_M,\lambda\right)^\mathrm{der} = \varprojlim_r \mathbb{T}^S_{M}\left(K_M,\lambda,r\right)^\mathrm{der}$, so it will suffice to construct Galois determinants valued in $\mathbb{T}^S_{M}\left(K_M,\lambda,r\right)^\mathrm{der}$. The first step is the following proposition.

\begin{prop}\label{big determinant} Let $\chi\colon G_{F,S}\to \Z_p^\times$ be a continuous character of finite odd order, prime to $p$.  Let $\m\subset \mathbb{T}^S_M(K_M,\lambda)$ be a non-Eisenstein maximal ideal. Then there exists a continuous group determinant, of dimension $2n$ if $F$ is imaginary CM and of dimension $2n+1$ if $F$ is totally real,  
\[
D_{M, \chi}\colon G_{F,S}\to \mathbb{T}^S_M(K_M, \lambda, r)_{\m}^{\mathrm{der}}
\]
such that for every prime $w$ of $F$ above a rational prime $l\not \in S$, the characteristic polynomial of $D_{M,\chi}(\mathrm{Frob}_{w})$ is as follows.
Set
\[
P^{\vee}_{M,w}(X)\defeq (-1)^n (q^{n(n-1)/2}_wT_{w,n})^{-1}X^nP_{M,w}(X^{-1}).
\] 
\begin{enumerate}
\item
If $F$ is imaginary CM, then the characteristic polynomial is
\[
\chi(\Frob_w)^nP_{M,w}(\chi(\Frob_{w})^{-1}X)\chi(\Frob_{w^c})^{-n}q^{n(2n-1)}_wP^{\vee}_{M,w^c}(q_{w}^{1-2n}\chi(\Frob_{w^c})X).
\]
\item If $F$ is totally real, then the characteristic polynomial is
\[
\chi(\Frob_w)^n P_{M,w}(\chi(\Frob_w)^{-1} X) \chi(\Frob_w)^{-n} q_w^{2n^2} P^{\vee}_{M,w}(q_w^{-2n} \chi(\Frob_w)X)(X-q^n) \,.
\]
\end{enumerate}  
\end{prop}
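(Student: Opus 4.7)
The plan is to construct $D_{M,\chi}$ by producing a $G$-side determinant on the endomorphism algebra of the cohomology via Theorem~\ref{existence of determinant}, transferring it to an $M$-side determinant using the direct summand result of Corollary~\ref{cor direct summand}, moving to the target level and weight via Proposition~\ref{going from finite to finite}, and finally accommodating the character $\chi$ by a factorization argument enabled by the non-Eisenstein condition.

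First, I fix $m \geq 1$ large enough that $K_{M,p,m} \subseteq K_{M,p}$ and apply Corollary~\ref{cor direct summand} to obtain $m' \geq m$ such that $P \defeq C(K^p_M K_{M,p,m}, -n, r)_{\m}[-d]$ is a $\mathbb{T}^S_G$-equivariant direct summand of $E \defeq R\Gamma\bigl(K_{M,p,m},\, \tau_{\geq d+1} C(K^p\Gamma_1(p^{m'}), r)_{\mathfrak{M}}\bigr)$ in $D(M(\Z/p^m\Z), \Lambda)$, with the $\mathbb{T}^S_G$-action factoring through the unnormalized Satake transform. By Theorem~\ref{factors through classical}, the action of $\mathbb{T}^S_{cl}$ on $C(K^p\Gamma_1(p^{m'}), r)$ factors through a discrete quotient, a property preserved under $\tau_{\geq d+1}$, $\mathfrak{M}$-localization, and the derived $K_{M,p,m}$-invariants. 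Since $P$ is a perfect complex of $\Lambda[M(\Z/p^m\Z)]$-modules realized as a direct summand of $E$, Proposition~\ref{getting rid of C} implies that the induced $\mathbb{T}^S_{cl}$-action on $\End_{D(M(\Z/p^m\Z),\Lambda)}(P)$ is continuous for the discrete topology on the target. Theorem~\ref{existence of determinant} then provides a continuous determinant $D^G \colon G_{F,S} \to \End_{D(M(\Z/p^m\Z),\Lambda)}(P)$ of dimension $2n$ (imaginary CM) or $2n+1$ (totally real). Since the $\mathbb{T}^S_G$-action on $P$ factors through $\mathbb{T}^S_M$, $D^G$ takes values in the subring $\mathbb{T}^S_M(K^p_M K_{M,p,m}, -n, r)^{\mathrm{der}}_{\m}$.

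To reach the level $K_M$ and weight $\lambda$, I apply Proposition~\ref{going from finite to finite} with $k = -n$ and $K'_{M,p} = K_{M,p,m}$, obtaining a $\mathbb{T}^S_M$-equivariant quasi-isomorphism
\[
C(K_M, \lambda, r) \simeq R\Gamma\bigl(K_{M,p}/K_{M,p,m},\, C(K^p_M K_{M,p,m}, -n, r) \otimes_{\Lambda} (\sigma^{\circ}_{\lambda}/p^r \otimes_{\Lambda} \psi^n)\bigr).
\]
Functoriality of the derived Hecke algebra under this quasi-isomorphism produces a ring homomorphism $\mathbb{T}^S_M(K^p_M K_{M,p,m}, -n, r)^{\mathrm{der}}_{\m} \to \mathbb{T}^S_M(K_M, \lambda, r)^{\mathrm{der}}_{\m}$, and composing with $D^G$ yields a determinant $D^0_M$ valued in the target algebra; an explicit calculation with the unnormalized Satake transform, as recorded in the excerpt, shows that $D^0_M$ realizes the $\chi = 1$ case. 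To incorporate a general $\chi$, I observe that by the non-Eisenstein hypothesis on $\m$, the residue $\bar{D}^0_M$ splits into pairwise non-isomorphic pieces: the residual determinant $\bar{\rho}_{\m}$ (dimension $n$), its appropriately polarized dual at $w^c$ (dimension $n$), and, in the totally real case, a $1$-dimensional piece corresponding to the cyclotomic-type character $\Frob_w \mapsto q_w^n$. By an idempotent-lifting argument applied to Chenevier determinants over the local ring $\mathbb{T}^S_M(K_M, \lambda, r)^{\mathrm{der}}_{\m}$, this residual factorization lifts to a factorization of $D^0_M$ itself as a product of determinants. I then define $D_{M,\chi}$ by twisting the $\bar{\rho}_{\m}$-factor by $\chi$, the polarized dual factor by $\chi^{-1}$, and leaving the extra $1$-dimensional factor (when present) untouched; the standard effect of character twists on characteristic polynomials then matches the formula in the statement.

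The main obstacle will be twofold. First, the Satake calculation showing that $D^0_M$ has the claimed $\chi = 1$ characteristic polynomial requires careful bookkeeping of the powers of $q_w$ and the normalization conventions for $P_{M,w}$ and $P^\vee_{M,w^c}$; the substitutions $X \mapsto \chi(\Frob_w)^{-1} X$ etc. in the twisting step have to be verified to produce precisely the polynomial in the statement. Second, the lifting of the residual factorization to a factorization of $D^0_M$ uses Chenevier's theory of determinants together with the non-Eisenstein condition to establish that the residual pieces are pairwise non-isomorphic (in particular, $\bar{\rho}_{\m}$ is distinguishable both from its polarized dual, by absolute irreducibility together with the explicit polarization character, and from the cyclotomic-type character). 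Uniqueness of $D_{M,\chi}$ then follows immediately from Chebotarev density, since the prescribed characteristic polynomials at Frobenii away from $S$ determine any continuous determinant on $G_{F,S}$.
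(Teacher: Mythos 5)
Your $\chi=1$ argument tracks the paper's proof in its main ingredients (Corollary~\ref{cor direct summand}, Theorem~\ref{factors through classical}, Proposition~\ref{going from finite to finite}, Theorem~\ref{existence of determinant}), but two points need attention, one minor and one serious.

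The minor point is your invocation of Proposition~\ref{getting rid of C}. That proposition requires realising $P$ as a direct summand of a \emph{bounded below complex of injectives in $\mathrm{Mod}_{\mathrm{sm}}(\mathrm{G},\Lambda)$ with a continuous $\mathbb{T}$-action on each term}, and you have produced no such complex: the object $E = R\Gamma(K_{M,p,m}, \tau_{\geq d+1}C(K^p\Gamma_1(p^{m'}),r)_{\mathfrak{M}})$ is a derived-category object, not a termwise-injective complex with continuous $\mathbb{T}^S_{cl}$-action. The paper avoids this entirely: by Corollary~\ref{cor direct summand} the natural ring homomorphism $\mathbb{T}^S_{cl}\to\End_{D(K_{M,p}/K_{M,p,m},\Lambda)}(C(K^p_MK_{M,p,m},-n,r)_{\m})$ factors (set-theoretically) through $\End_{D(K_{M,p}/K_{M,p,m'},\Lambda)}(C(K^p\Gamma_1(p^{m'}),r)_{\mathfrak{M}})$, which is discrete by Theorem~\ref{factors through classical}, and continuity of the composite follows because the second leg is a map of discrete spaces. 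Proposition~\ref{getting rid of C} is the right tool inside the \emph{proof} of Theorem~\ref{factors through classical} (where one really does need to pass from $\cO_C/p^r$-coefficients back to $\Lambda$-coefficients), but it is neither applicable nor needed here.

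The serious gap is your treatment of a general $\chi$. You propose to split the residual determinant $\bar D_M^0$ into pairwise non-isomorphic pieces ($\bar\rho_\m$, its polarized dual, and possibly a one-dimensional character), lift this factorization by idempotent lifting over the local ring, and then twist each factor. But the non-Eisenstein hypothesis only says $\bar\rho_\m$ is absolutely irreducible; it does \emph{not} say $\bar\rho_\m$ is non-isomorphic to the polarized-dual residual piece $\bar\rho_\m^{\vee,c}(1-2n)$. In the (common) self-dual situation these two pieces coincide, the residual determinant does not split into distinct factors, and there is no canonical lift of a factorization to a factorization of $D_M^0$ itself. In other words, your argument presupposes precisely the separation of the two halves that the character $\chi$ is introduced to achieve; it is circular. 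The paper defers to \cite[Prop.~5.8]{newton-thorne}, whose mechanism is genuinely different: one modifies the tame level $K_M$ (and the set $S$) to absorb the character $\chi$ into the level data on the $G$-side, runs the $\chi=1$ construction there, and then descends. This produces $D_{M,\chi}$ directly, without ever needing to split a single determinant into pieces, which is why it works even when $\bar\rho_\m$ is residually self-dual.
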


\begin{proof} Recall that $\mathbb{T}_{cl}^S$ is the ring $\mathbb{T}_G^S$ equipped with the topology defined in \S \ref{determinants}. We first prove the proposition under the assumption that $\chi=1$. We need to show that the map $\mathbb{T}^S_{cl} \to \End_{D(\Lambda)}(R\Gamma(X_{K_M}^M, \cV_\lambda/p^r)_{\m})$ obtained by composing the natural action map with the Satake transform $\cS$ is continuous, when the target is endowed with the discrete topology. This would show that $\mathbb{T}^S_M(K_M,\lambda,r)_{\m}^{\mathrm{der}}$ is a continuous quotient of $\mathbb{T}^S_{cl}$ that is also discrete, and we then obtain the desired determinant by applying Lemma~\ref{existence of determinant dual}.

\medskip

To prove the continuity, we argue as follows. For sufficiently large $m$ we have, by Proposition~\ref{going from finite to finite}, a $\mathbb{T}^S_{cl}$-equivariant isomorphism
\[
R\Gamma\left( K_{M,p}/K_{M,m,p},R\Gamma(X_{K_{M,m}}^M, \Lambda(\psi^{-n})) \otimes_{\Lambda}(\sigma^\circ/p^r \otimes_{\Lambda}\psi^{n}) \right) \cong R\Gamma(X_{K_M}^M, \cV_\lambda/p^r)
\]
in $D(\Lambda)$. The map $\TT_{cl}^{S} \to \mathrm{End}_{D(\Lambda)}\left(R\Gamma(X_{K_M}^M, \cV_\lambda/p^r)_{\m}\right)$ then factors as  
\[
\mathbb{T}^S_{cl}\to \mathrm{End}_{D(K_{M,p}/K_{M,p,m},\Lambda)}\left(R\Gamma(X_{K_{M,m}}^M, \Lambda(\psi^{-n}))_{\m}\right)\to \mathrm{End}_{D(\Lambda)}\left(R\Gamma(X_{K_M}^M, \cV_\lambda/p^r)_{\m}\right).
\]
The second map is continuous, since both the source and the target have the discrete topology, so it is enough to show that
the first map is continuous. But this follows directly by combining Theorem \ref{factors through classical} and Theorem \ref{direct summand}.

\medskip 

This settles the case $\chi =1$. Deducing the case of general $\chi$ from the case $\chi = 1$ is then the second half of the proof of ~\cite[Prop. 5.8]{newton-thorne}. 
\end{proof}

We may then complete the proof of Theorem~\ref{eliminating nilpotent ideal}. Indeed, the theorem now follows from Proposition~\ref{big determinant} in the same way as~\cite[Thm. 1.3]{newton-thorne} follows from~\cite[Prop. 5.8]{newton-thorne} (see \cite[Theorem 5.9]{newton-thorne} and the discussion following it). This finishes the proof of Theorem \ref{eliminating nilpotent ideal}. 

\bibliographystyle{amsalpha}
\bibliography{ArizonaProjectBib}
\end{document}
\grid